\documentclass[UKenglish]{fundam}


\setcounter{page}{1}
\publyear{25}
\papernumber{3}
\volume{194}
\issue{2}
\theDOI{10.46298/fi.12773}

\versionForARXIV


\usepackage[utf8]{inputenc}
\usepackage{babel,csquotes}
\usepackage{xparse}			

\usepackage{amssymb}
\usepackage{thmtools,thm-restate}
\usepackage{mathrsfs}
\usepackage{enumitem}
\setlist[description]{style=sameline}
\newlist{pcases}{description}1
\setlist[pcases]{nosep,labelindent=\parindent,labelwidth=8pt,leftmargin=0pt,style=sameline}
\usepackage{mathtools}
\usepackage{stackrel}
\usepackage{booktabs}

\usepackage{ebproof}
\ebproofset{
	left label template={\normalfont\small (\inserttext)},
	right label template={\small \inserttext},
	rule margin=.5ex,
	separation=1.5em,
	label separation=.5ex,
}
\ebproofnewstyle{small}{
  template = \small$\inserttext$,
  left label template = \footnotesize{(\inserttext)},
  right label template = \footnotesize{\inserttext} }
\ebproofnewstyle{smaller}{
  template = \footnotesize$\inserttext$,
  left label template = \scriptsize{(\inserttext)},
  right label template = \scriptsize{\inserttext} }
\ebproofnewstyle{compact}{
  separation = 1.5ex, rule margin = .25ex,}
\ebproofnewstyle{tight}{
  separation = 1em, rule margin = .4ex,
  }
\newcommand{\hypod}{\hypo{\dotsm}}
\newcommand{\axiom}[2][]{\hypo{}\infer1[#1]{#2}}
\newcommand{\bud}[2][]{\hypo{\vphantom{x}#1}\infer1[\hyp]{#2}}
\NewDocumentCommand{\subproof}{sO{}m}{%
	\IfBooleanTF{#1}{%
		\hypo{#2}\infer[no rule]1{#3}
	}{%
	\hypo{}\ellipsis{$#2$}{#3}
	}
	}
\providecommand{\subproof}[2][]{\hypo{#1}\infer[no rule]1{#2}}

\usepackage[bookmarks, colorlinks = true, linkcolor = black, urlcolor = black, citecolor = black, anchorcolor = black]{hyperref}


\let\phi\varphi
\let\epsilon\varepsilon

\newcommand{\card}[1]{{\lvert{#1}\rvert}} 
\newcommand{\lh}[1]{{\lvert{#1}\rvert}} 
\newcommand{\rk}[1]{{\lvert{#1}\rvert}} 
\newcommand{\Rk}[1]{{\bigl\vert{#1}\bigr\vert}} 
%

\NewDocumentCommand{\setof}{ m o }{ \{ #1 \IfValueT{#2}{ \mid #2}  \} }
\NewDocumentCommand{\setoff}{ m o }{ \{ \, #1 \,\} \IfValueT{#2}{ _{#2} } }
\NewDocumentCommand{\Setof}{ m o }{\bigl\{ #1 \IfValueT{#2}{ \bigm| #2} \bigr\} }
\NewDocumentCommand{\SetOf}{ m o }{\Bigl\{ \, #1 \IfValueT{#2}{ \Bigm| #2} \, \Bigr\} }
\NewDocumentCommand{\SETOF}{ m o }{\Biggl\{\, #1 \IfValueT{#2}{ \Biggm| #2} \,\Biggr\} }
\providecommand{\setof}[1]{\{ #1 \}}
\providecommand{\Setof}[1]{\{ #1 \}}

\newcommand{\tuple}[1]{\langle #1 \rangle}

\newcommand*{\Child}[2]{\mathsf{child}_{#1}(#2)}
\newcommand*{\Suc}[2][]{\mathsf{child}_{#1}(#2)}
\newcommand{\res}{\mathord{\uparrow}}

\newcommand{\Lit}{\mathsf{Lit}}
\newcommand{\Var}{\mathsf{Var}}

\newcommand{\form}{\mathsf{L}_\mu}


\providecommand{\D}{}
\renewcommand{\D}{\mathbb{D}}
\newcommand{\PF}[1]{\Pi({#1})}
\newcommand{\PD}{\PF\D}


\NewDocumentCommand{\bo}{m}{[ ]}
\NewDocumentCommand{\di}{m}{\langle \rangle}

\let\diamond\diamondsuit
\newcommand{\conj}{{\textstyle\bigwedge}}
\newcommand{\disj}{{\textstyle\bigvee}}
\NewDocumentCommand{\supp}{O{}}{\circ_{#1}}
%
\NewDocumentCommand{\mf}{o}{\mu\IfValueT{#1}{^{#1}}}
\NewDocumentCommand{\nf}{o}{\nu\IfValueT{#1}{^{#1}}}
\NewDocumentCommand{\mnf}{o}{\sigma\IfValueT{#1}{^{#1}}}

\providecommand{\sf}{}
\NewDocumentCommand{\oldsf}{s}{\triangleright\IfBooleanT{#1}{^1}}
\NewDocumentCommand{\nsf}{s}{\mathrel{\not\vartriangleright}\IfBooleanT{#1}{^1}}
\RenewDocumentCommand{\sf}{s}{\vartriangleright\IfBooleanT{#1}{^*}}

\NewDocumentCommand{\sft}{}{\vartriangleright^*}

\NewDocumentCommand{\var}{m}{#1} 

\NewDocumentCommand{\subform}{om}{{\mathsf{Sub}\IfValueT{#1}{_{#1}}(#2)}}
\providecommand{\subform}[1]{\mathsf{Sub}(#1)}

\NewDocumentCommand{\oldsubs}{mmo}{{{#1}({#2}\IfNoValueF{#3}{/\var{#3}})}}	
\NewDocumentCommand{\subs}{om}{{[{\IfNoValueF{#1}{#1 \mapsto} #2}]}}	
\providecommand{\subs}[2][x]{[{#2}/{#1}]}

\renewcommand{\neg}[1]{{#1}^\bot}

\providecommand{\denote}[1]{[\!\lvert#1\rvert\!]}
\providecommand{\oper}[1]{\langle\!\rvert{#1}\rvert\!\rangle}
\newcommand{\Mod}{\mathsf{Mod}}
\newcommand{\nms}{\mathsf{N}}
\newcommand{\ann}{\mathsf{A}}
\providecommand{\Tr}[2][]{\mathsf{Thd}_{#1}(#2)}
\providecommand{\nTr}[2][]{\mathsf{Thd}^\bot_{#1}(#2)}
\providecommand{\thd}{\mathsf{Thd}}
\newcommand{\system}[1]{\mathsf{#1}}
\newcommand{\systemc}[1]{\mathsf{c#1}}
\newcommand{\kmu}{\system{K\mu}}
\newcommand{\kmm}{\system{K\bar\mu}}
\newcommand{\kmo}{\system{K\mu_\omega}}
\newcommand{\cmu}{\system{C\mu}}
\newcommand{\kmui}{\systemc{K\mu}}
\newcommand{\kmmi}{\systemc{K\bar\mu}}
\newcommand{\cmui}{\systemc{C\mu}}
\newcommand{\kmoi}{\systemc{K\mu_\omega}}
\NewDocumentCommand{\rul}{m}{\ensuremath{\mathsf{#1}}}
\newcommand{\rules}{\mathscr{R}}
%

\newcommand{\sequents}{\mathsf{Seq}}
\newcommand{\sequent}{\mathscr{S}}

\newcommand{\exc}{\rul{e}}
\newcommand{\con}{\rul{c}}

\newcommand{\weak}{\rul{w}}
\newcommand{\cut}{\rul{cut}}
\newcommand{\mcut}{\rul{mcut}}
\newcommand{\weakL}{\rul{wL}}
\newcommand{\weakR}{\rul{wR}}
\newcommand{\excL}{\rul{eL}}
\newcommand{\conL}{\rul{cL}}
\newcommand{\excR}{\rul{eR}}
\newcommand{\conR}{\rul{cR}}

\newcommand{\hyp}{\rul{b}}

\newcommand{\idL}{\rul{idL}}
\newcommand{\idR}{\rul{idR}}
\newcommand{\idLR}{\rul{id}}

\newcommand{\id}{\rul{id}}

\newcommand{\sqR}{\rul{{\square}R}}
\newcommand{\diL}{\rul{{\diamond}L}}

\newcommand{\starR}{\rul{{\star}R}}
\newcommand{\starL}{\rul{{\star}L}}

\newcommand{\conjR}{\rul{{\conj}R}}
\newcommand{\conjL}{\rul{{\conj}L}}
\newcommand{\disjR}{\rul{{\disj}R}}
\newcommand{\disjL}{\rul{{\disj}L}}

\newcommand{\muR}{\rul{{\mu}R}}
\newcommand{\muL}[1][]{\ensuremath{{\mu}^{#1}\mathsf{L}}}
\newcommand{\nuR}[1][]{\ensuremath{{\nu}^{#1}\mathsf{R}}}
\newcommand{\nuL}{\rul{{\nu}L}}
\newcommand{\sigR}[1][\sigma]{\rul{{#1}R}}
\newcommand{\sigL}[1][\sigma]{\rul{{#1}L}}

\newcommand{\sqRs}{\rul{{\square^*}R}}
\newcommand{\diLs}{\rul{{\diamond^*}L}}
\newcommand{\conjsL}{\rul{{\conj^{\!*}}L}}
\newcommand{\muhL}[1][]{\ensuremath{{\hat\mu}^{#1}\mathsf{L}}}
\newcommand{\nuhR}[1][]{\ensuremath{{\hat\nu}^{#1}\mathsf{R}}}


\newcommand{\cw}{\rul{cw}} 
\newcommand{\aw}{\rul{aw}} 
\newcommand{\awL}{\rul{aL}} 
\newcommand{\awR}{\rul{aR}} 
\newcommand{\dup}{\rul{dup}}
\providecommand{\cover}{\rul{cvr}}

\newcommand{\amuL}[1]{\ensuremath{\mu^{#1} \mathsf L}}
\newcommand{\anuR}[1]{\ensuremath{\nu^{#1} \mathsf R}}

\NewDocumentCommand{\seq}{omm}{ {\IfNoValueTF{#1}{}{#1 : }#2\Rightarrow #3} }
\newcommand*{\aseq}[3][c]{\seq[#1]{#2}{#3}}
\newcommand*{\tseq}[2]{\seq[#1]{#2}{}}
\NewDocumentCommand{\Seq}{mm}{ #1\Vdash #2 }

\newcommand*{\rseq}[1]{\seq{}{#1}}
\newcommand*{\lseq}[1]{\seq{#1}{}}

\newcommand{\VdashV}{\mathrel{\dashv\hskip.4pt\Vdash}}


%
\newcommand{\indL}{\rul{indL}}
\newcommand{\indR}{\rul{indR}}
\newcommand{\sindL}{\rul{sindL}}
\newcommand{\sindR}{\rul{sindR}}
\newcommand{\ind}{\rul{ind}}

\newcommand{\sind}{\rul{sind}}

\NewDocumentCommand{\inv}{m}{{\mathsf{inv}(#1)}}
\renewcommand{\c}[1]{\mathsf{c}_{#1}}

%
\begin{document}
\title{Demystifying $\boldsymbol\mu$\footnotetext{\textbf{ACM CCS} 
\textbullet~Theory of computation $\looparrowright$ Modal and temporal logics
 \textbullet~Theory of computation $\looparrowright$ Proof theory
}
}

\author{Bahareh Afshari\thanks{This work was supported by the Knut and Alice Wallenberg Foundation [2020.0199] and Dutch Research Council [OCENW.M20.048]. The authors are deeply grateful for the substantial feedback and highly constructive comments received from the referees during the reviewing process of the article; special thanks also go to Sebastian Enqvist for feedback on an earlier version.}\\
  Department of Philosophy, Linguistics and Theory of Science \\
  University of Gothenburg
\and Graham E.~Leigh\thanksas{1}\\
 Department of Philosophy, Linguistics and Theory of Science \\
  University of Gothenburg
\and Guillermo Men\'endez Turata\\
Institute for Logic, Language and Computation \\
University of Amsterdam
}
\address{B.~Afshari, University of Gothenburg, Box 200, 405 30 Göteborg, Sweden.}

\maketitle

\runninghead{B.~Afshari, G.E.~Leigh, G.~Men\'endez Turata}{Demystifying $\mu$}

\begin{abstract}
We explore the theory of illfounded and cyclic proofs for the propositional {modal $\mu$-calculus}. 
A fine analysis of {provability} for classical and intuitionistic modal logic provides a novel bridge between finitary, cyclic and illfounded conceptions of proof and re-enforces the importance of two normal form theorems for the logic: guardedness and disjunctiveness.
\end{abstract}

\begin{keywords}
Modal mu-Calculus, Illfounded Proofs, Cyclic Proofs, Cut Elimination, Constructive Modal Logic
\end{keywords}
\section{Introduction}
Formal proofs are an important tool in the mathematical study of computational systems, such as certifying the correctness of an abstract model or verifying that an implementation adheres to a specification.
The certification and verification role of proofs boils down to questions of \emph{proof existence} and \emph{proof synthesis}: 
Does a given formula (sequent, judgement, etc.) admit a proof?
Can a proof be generated for each provable formula (sequent, etc.)?
Both questions rest heavily on a third, often unspoken: 
What constitutes a proof?

A \emph{proof} is normally understood as a finite tree with vertices labelled from a class of \emph{deduction elements} (formulas, sequents, judgements, etc.) where each vertex together with its children match the conclusion and premises of one of a fixed collection of \emph{inferences}, the decision of which should be complexity-theoretically simple, at worst polynomial-time decidable.
This definition is computationally sufficient for many logics such as propositional logic, modal logics, even predicate logic. 
In each case, soundness and completeness theorems for the corresponding notion of proof confirm that proof existence agrees with semantic validity, and normal form theorems -- notably admissibility of cut -- give rise to proof synthesis.

Logics and theories incorporating inductive and co-inductive concepts strain the traditional notion of proof, however.
Proof existence relies on more complex completeness theorems which coax the infinitary behaviour of the inductive and co-inductive constructions into finitary induction principles.
Synthesis likewise suffers, becoming a task in discerning induction invariants from mere provability or elaborating the proof calculus to recover desirable normal forms.
The latter is the typical approach of sequent calculi for modal logics where inductive properties -- transitivity, factivity, well-foundedness, etc.\ -- are internalised in the inferences rules.

\emph{Illfounded proofs} offer to alleviate the proof-theorist's burden not by complicating the notion of proofs but by \emph{relaxing} it.
Proofs are no longer constrained to finite trees; they can be infinite trees, or even graphs, at the cost of infinite branches fulfilling some pre-determined correctness condition.
Illfounded proofs provide an alternative to the traditional notion of proof that treats logics and theories of inductive concepts as manifestations of a general infinitary framework rather than diverging extensions of finitary logic.
With a change in perspective comes the need to revisit the encompassing theory.
Indeed, the inductive construction of formal proof plays a non-trivial role in most -- if not all -- fundamental results of well-founded proof theory: proof transformations, cut elimination, computational interpretations, syntactic approaches to interpolation and so on.

This article seeks to further the development of the theory of illfounded proofs. 
We do so in the context of the \emph{modal $\mu$-calculus}, the extension of propositional modal logic by quantifiers for inductive and co-inductive properties.
The logic encompasses the vast majority of modal and temporal logics used in specification and verification. 
 As well as being highly expressive, it enjoys good meta-logical properties, namely, decidability, finite model property and uniform interpolation. 
 It serves as a natural metalanguage for comprehending the dynamics and dependencies between induction and co-induction without obscuring its core fragments.

The first syntactic characterisation of the modal $\mu$-calculus was, in retrospect, a system of illfounded proofs~\cite{JanWal95,NiwWal:96}. 
Completeness of a \emph{finitary} calculus materialised only as a consequence of an intricate analysis of the infinitary system~\cite{Waluk95compl-lics,Waluk00comp-lc}.
Kozen's finitary axiomatisation~\cite{Koz83} may provide the `correct' notion of proof according to the traditional view, yet it is the infinitary notion which has proved more relevant for, and amenable to, investigation. 
The median of these extremes are the \emph{cyclic proofs}, illfounded proofs whose infinite expression is confined to periodic patterns, that is, unfoldings of finite graphs into trees.
Such proofs display many of the advantages of both formalisms.
Cyclic illfounded proofs are known to be complete for the modal \( \mu \)-calculus and can be used to show decidability and the {small model property}~\cite{NiwWal:96}.
With refinement, cyclic proof calculi have been developed for deterministic proof-search and representation~\cite{jungteerapanich:article,stirling2014tableau,AL16-mfo,DKMV23-det,LeiWehr23GTCtoReset} and interpolation~\cite{AL22lyndonInterp,ALM21uniformInter,Shamkanov:2014Circular}.
The first purely proof-theoretic investigation of illfounded, cyclic and wellfounded notions of proof for the modal \( \mu \)-calculus was undertaken by Studer~\cite{Studer:2009Proof-Theoretic}.

Reflecting on the results and techniques mentioned above, we examine the connection between illfounded, cyclic and finitary proof systems for the modal $\mu$-calculus in the context of completeness for each notion of proof.
Two novel ingredients of the study are a focus on the \emph{constructive} modal \( \mu \)-calculus as a framework for proving completeness of the classical logic proof systems and the role of \emph{admissibility}, notably cut admissibility, in tying the role of standard normal form theorems to completeness arguments.

By \emph{constructive modal \( \mu \)-calculus} we mean  Wijesekera's \emph{constructive modal logic}~\cite{Wijesekera:1990Constructive},  conceived by Fitch~\cite{Fitch:1948Intuitionistic}, extended by operators for least and greatest fixed point of all positive propositional functions.
Constructive modal logic is an intuitionistic realisation of classical modal logic \( \system{K} \) with a natural proof-theoretic rendering via a sequent calculus for modal logic restricted to `intuitionistic' sequents \( \seq \Gamma \alpha \).
Indeed, we take this `proof-theoretic' presentation as the definition of the constructive counterpart of the illfounded and cyclic proof systems.
The logic is strictly contained in what is commonly called \emph{intuitionistic modal logic}~\cite{Plotkin:1986A-framework,Simpson:1994The-Proof}.\footnote{
For a comparison of modal logics with an intuitionistic base see, e.g.,~\cite{Das:2023On-Intuitionistic}.}
As our employment of constructive modal logic is as a tool for analysing provability in \emph{classical} logic, we restrict attention to a fragment expressively complete over the classical modal \( \mu \)-calculus, namely,  (intuitionistic) sequents of implication-free formulas allowing negated atoms.
We refer to this as the \emph{constructive fragment} of the classical modal \( \mu \)-calculus and leave open the extent to which the presented results, specifically completeness and normal form theorems, can be lifted to the `full' constructive modal \( \mu \)-calculus.
Our main results concerning the constructive fragment specifically is the admissibility of cut over illfounded proofs and the extension of known normal form theorems from classical modal logic---guardedness and disjunctivity---to the constructive fragment.
Admissibility of cut is proved via a co-inductive argument following established techniques~\cite{ForSan13,BDS16,Savateev:2017Cut-Elimination,Baelde:2022Bouncing,Sau23,Acclavio:2024Infinitary,AfsLei24-weyl}.
The \emph{disjunctive normal form theorem}, first established in~\cite{JanWal95}, is a powerful tool in the study of classical modal fixed point logics.
The result states that every formula is equivalent to a formula in which all uses of conjunction are constrained to specific combinations of the modal operators and atoms.
Satisfiability and validity checking for disjunctive formulas (over classical logic) is linear-time, and completeness of both the illfounded and finitary proof systems is trivial for this fragment.
The classical definition of disjunctive formulas is insufficient as a normal form over intuitionistic logic.
A modest generalisation, however, yields a notion of disjunctive formula which is expressively adequate over the constructive fragment.
As an immediate corollary of the expressive adequacy result is that the constructive modal \( \mu \)-calculus satisfies the uniform (Lyndon) interpolation property.

The main advantage of constructive modal logic and, in particular, its robust sequent calculi lies in an unexpected place, namely the reduction of illfounded provability to cyclic provability, specifically in isolating well-structured finite representations of illfounded proofs.
This way we establish a partial completeness theorem for the system of \emph{constructive} cyclic proofs (sans implication) which, paired with the aforementioned admissibility and normal form theorems, bootstraps the completeness argument for the full, classical, logic.
A relatively straightforward translation confirms that our classical system of cyclic proofs coincides with the finitary one, namely Kozen's axiomatisation of the propositional modal \( \mu \)-calculus.

The reader familiar with proofs of completeness for \( \mu \)-calculi will notice a similarity in our account to other completeness theorems for \( \mu \)-calculi~\cite{Waluk00comp-lc,EnqSV18-dynamics,Kaivola:1995Axiomatising,Doumane:2017Constructive}.
In broad strokes, our argument follows the Kozen--Walukiewicz strategy~(\cite{Waluk00comp-lc}) of establishing completeness for an expressively adequate fragment of the logic (negated disjunctive formulas) and showing that each formula is provably equivalent (over Kozen's axiomatisation) to a formula in the fragment.
Of the two parts of the proof, the second is the more challenging and involves identifying a tight connection between formulas and carefully chosen disjunctive equivalents.
The connection induces a consequence relation, referred to as \emph{tableaux consequence} in~\cite{Waluk00comp-lc}, which refines provable implication in certain contexts via a reduction already isolated by Kozen~\cite{Koz83}.
Our completeness argument forgoes such a consequence relation by showing that the reduction of the full logic to the disjunctive fragment is \emph{constructive} in the sense described above.
Indeed, we suspect that Walukiewicz' tableaux consequence, as well as the closely related \emph{game consequence} relation of  Enqvist et al~\cite{EnqSV18-dynamics}, realise intensional forms of constructive implication. 
Likewise, it is reasonable to ask whether the completeness proofs for linear-time \( \mu  \)-calculus due to Kaivola~\cite{Kaivola:1995Axiomatising} and Doumane~\cite{Doumane:2017Constructive}, which follow strategies similar to Walukiewicz~\cite{Waluk00comp-lc}, can also be reconceptualised as completeness via constructive logics.

\subsection*{Outline of paper}
Section~\ref{s-prelim} introduces the modal \( \mu \)-calculus, its semantics and the three fragments of importance to this article: guarded formulas, disjunctive formulas and the \( \Pi \)-fragment which comprises all formulas lacking the least fixed point quantifier.
The finitary and illfounded sequent calculus are defined in Section~\ref{s-proof-systems}. 
The finitary system is a sequent calculus rendering of Kozen's axiomatisation of the modal \( \mu \)-calculus.
The system of illfounded proofs is analogous to Niwinski and Walukiewicz' refutation tableaux system from~\cite{NiwWal:96}.

Section~\ref{s-constr-frag} introduces the constructive fragment of the illfounded calculus and establishes preliminary results, including the admissibility of cut and the expressive adequacy of the guarded fragment over intuitionistic logic.
The cyclic calculus is introduced in
Section~\ref{s-cyclic-proofs}, where it is related to the finitary and illfounded proof systems.
It is shown that cyclic proofs for classical logic coincides with Kozen's axiomatisation, and that the (cut-free) constructive fragment coincides with the illfounded proofs on implications \( \alpha \to \beta \) where \( \alpha \) is restricted to the \( \Pi \)-closure of the disjunctive formulas.

Section~\ref{s-DNF} treats the second normal form theorem, the expressive adequacy of the disjunctive formulas. 
We establish a strong form of the result in that every formula is provably equivalent to a disjunctive formula via a constructive illfounded proof.
We remark on two corollaries of the disjunctive normal form theorem.
The first is completeness of the constructive illfounded proofs for negative sequents (sequents \( \seq \alpha \bot \)), which will be utilised later. 
The second corollary is the observation that the method of assigning disjunctive formulas can be relativised to a specific selection of atoms, giving a novel proof of uniform Lyndon interpolation for the constructive fragment.

The three calculi finally converge in Section~\ref{s-completeness} where the normal form theorems and partial completeness results for constructive logics combine to establish completeness of the classical illfounded, cyclic and finitary calculi.

\section{Modal \texorpdfstring{$\boldsymbol\mu$}{mu}-Calculus: Syntax and Semantics}
\label{s-prelim}
Fix a countable set \( \Var \) of \emph{variables}. 
Symbols \( x,y,\dotsc \) serve as meta-variables for variables.
We assume a set \( \neg \Var = \setof{ \lnot x }[x\in \Var] \) of symbols, called \emph{negated variables}, disjoint from \( \Var \)
Elements of \( \Lit = \Var \cup \neg\Var \) are \emph{literals}.
The \emph{formulas} and \emph{predicates} of the modal \( \mu \)-calculus are constructed from literals via the grammars
\[\label{e-the-grammar}
	\begin{aligned}
\text{Formulas:}&&
\alpha &\Coloneqq 
	x \mid 
	\lnot x \mid 
	\square  \alpha \mid 
	\diamond \alpha \mid 
	\conj \Gamma \mid
	\disj \Gamma \mid 
	\mf \phi \mid 
	\nf \phi 
	\\
\text{Finite sets:}
&&
\Gamma &\Coloneqq  
	\emptyset \mid
	\Gamma \cup \setof{ \alpha }
	\\
\text{Predicates:}&&
\phi &\Coloneqq  
	\lambda  x\, \alpha \quad\text{provided \( \alpha \) is \( x \)-positive}
	\end{aligned}
\]
We assume the usual notion of free and bound variable occurrences.
A formula is \( V\! \)\emph{-positive} (or \emph{\( x \)-positive} if \( V = \setof x \)) if \( \lnot x \) does not occur in the formula for every variable \( x \in V\! \).
Predicates and formulas are identified up to \( \alpha \)-equivalence, understood in the usual sense.
We refer to \( \conj \) and \( \disj \) as \emph{connectives}, \( \square \) and \( \diamond \) as \emph{modal} operators, and \( \mu \) and \( \nu \) as \emph{quantifiers}.
Formulas \( \disj \Gamma \) (\( \conj \Gamma \)) are called \emph{disjunctions} (\emph{conjunctions}) and elements of \( \Gamma \) as \emph{disjuncts} (\emph{conjuncts}).
A formula of the kind \( \mu \phi \) or \( \nu \phi \) is called \emph{quantified}.
The set of formulas is denoted \( \form \). 
A set \( L \) of literals is \emph{consistent} if \( x \not\in L \) or \( \lnot x \not\in L \) for every \( x \in \Var \).

We adopt the following naming convention. Symbols \( \alpha, \beta , \dotsc \) range over formulas, \( \phi , \psi, \chi \) over predicates and \( \Gamma , \Delta , \dotsc \) over finite sets of formulas.
The symbol \( \bigcirc \) is used exclusively as a meta-variable for the two connectives \( \disj \) and \( \conj \), the symbol \( \triangle \) as a meta-variable for the modal operators, and \( \sigma \) for the quantifiers.
The following abbreviations for common formulas and constructions are utilised:
\[
\begin{aligned}
	\alpha \wedge \beta &\coloneqq \conj \setof{ \alpha , \beta }
	&
	\square \Gamma &\coloneqq \setof{ \square  \alpha }[\alpha \in \Gamma]
	& 
	\mf x \alpha &\coloneqq \mf (\lambda  x.\, \alpha)
	&
	\top &\coloneqq \conj \emptyset 
	\\
	\alpha \vee \beta &\coloneqq \disj \setof{ \alpha , \beta }
	&
	\diamond \Gamma &\coloneqq \setof{ \diamond  \alpha }[\alpha \in \Gamma]
	& 
	\nf x \alpha &\coloneqq \nf (\lambda  x.\, \alpha)
	&
	\bot &\coloneqq \disj \emptyset 
\end{aligned}
\]
For the \emph{complexity} of a formula \( \alpha \), in symbols \( \rk \alpha \), we employ the following definition: 
\[
  \begin{aligned}
  	\rk x = \rk {\lnot x}&= 
  	0
  	&
  	\rk { \triangle \alpha } &= \rk {\alpha} + 1
  	\quad
  	&
  	\Rk {\bigcirc \Gamma} &= \max \Setof{\rk {\alpha} + 1 }[ \alpha \in \Gamma]
  	\\
  	\rk {\lambda  x.\, \alpha } &= \rk {\alpha} 
  	\quad
  	&
  	\rk { \sigma  \phi } &= \rk {\phi} + 1
  \end{aligned}
\]
Negation is simulated via an involution on formulas and predicates:
\[
\begin{aligned}
	x^{\bot} &= \lnot x
	&
	\neg{ (\bigcirc \Gamma) } &= \neg\bigcirc \neg{\Gamma}
	&
	\neg{\Gamma} &= \setof{ \neg{\alpha} }[\alpha \in \Gamma ]
	\\
	\neg{ (\lnot x) } &= x
	&
	\neg{ (\triangle \alpha) } &= \neg{\triangle} \neg{\alpha}
	&
	\neg{ ( \sigma  \phi ) } &= \neg{\sigma } \neg{\phi}
	&
	\neg{ (\lambda x.\, \alpha ) } &= \lambda x\, \neg{ \alpha\subs[x]{\lnot x} }
\end{aligned}
\]
where \( \setof{ \bigcirc , \neg\bigcirc } = \setof{\conj,\disj} \) and similarly for \( \neg \triangle \) and \( \neg \sigma \), and \( \alpha\subs[x]{\lnot x} \) is the result of substituting \( \lnot x \) for all free occurrences of \( x \) in \( \alpha \). 
Observe that \( \neg{\alpha\subs[x]{\lnot x}} \) is \( x \)-positive iff \( \alpha \) is \( x \)-postive.

With the negation operation in place, the primitive operation of substitution given above can be lifted to a general form in a natural way.
We call a function \( \theta \colon V \to \form \) where \( V \subseteq \Var \) a \emph{substitution}.
Given a formula or predicate \( \eta \) and a substitution \( \theta \), define \( \eta [\theta ] \) to be the result of simultaneously substituting \( \theta x \) for \( x \) in \( \eta \) modulo renaming of bound variables in \( \eta \). The operation commutes with the connectives and quantifiers and is given by the following clauses for literals and predicates:
\[
\begin{gathered}
	x [{\theta}] =
	\begin{cases}
		\theta x, &x \in \mathsf{dom}\, \theta,
		\\
		x, &\text{otherwise,}
	\end{cases}
	\qquad
	\lnot x [\theta] = 
		\neg{(\theta x)},
	\\
	(\lambda y.\, \alpha ) [{\theta}] =
		\lambda y.\, \alpha [{\theta}],  \ 
		{\text{for $ y \not\in \mathsf{dom}\, \theta$ and \( y \) not free in \( \mathsf{Rng}\, \theta \).}}
\end{gathered}
\]
We employ notation \( \phi \beta \) as shorthand for \( \alpha\subs[x]\beta \) where \( \phi = \lambda x\, \alpha \).
\begin{lemma}
	For any predicate or formula \( \eta \), \( \rk \eta \le \rk{ \eta \subs{ \theta } } \le \rk {\eta} + \max\setof{ \rk {\theta(x)} }[x \in \mathsf{dom}\, \theta ]\). 
\end{lemma}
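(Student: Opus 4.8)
The plan is to prove both inequalities simultaneously by structural induction on \( \eta \), treating formulas and predicates in a single induction and abbreviating \( M \coloneqq \max\setof{\rk{\theta(x)}}[x \in \mathsf{dom}\,\theta] \). Before the main argument I would record two auxiliary facts, each by a routine induction on structure. First, (a) substituting a literal for a variable leaves complexity unchanged, since \( \rk{\cdot} \) assigns \( 0 \) to every literal and merely counts the modal, connective and quantifier nodes above the leaves. Second, (b) \( \rk{\neg\beta} = \rk\beta \) for every formula \( \beta \): the involution only swaps \( \conj \) with \( \disj \), \( \square \) with \( \diamond \), \( \mu \) with \( \nu \), and literals with literals, none of which affects the \( +1 \) contributions, and the one nontrivial clause \( \neg{(\lambda x.\, \alpha)} = \lambda x\, \neg{\alpha\subs[x]{\lnot x}} \) is handled using (a). Fact (b) is exactly what makes the negated-literal leaf go through, so it is the only genuinely preliminary ingredient.

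For the base cases, if \( \eta = x \) then either \( x \in \mathsf{dom}\,\theta \), so \( \eta\subs{\theta} = \theta x \) and \( \rk\eta = 0 \le \rk{\theta x} \le M = \rk\eta + M \), or \( x \notin \mathsf{dom}\,\theta \), so \( \eta\subs{\theta} = x \) and both inequalities are trivial. The case \( \eta = \lnot x \) is identical once we observe, via fact (b), that \( \rk{\eta\subs{\theta}} = \rk{\neg{(\theta x)}} = \rk{\theta x} \) when \( x \in \mathsf{dom}\,\theta \).

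The inductive cases all amount to pushing the bounds through a single node, with the lower bound free because \( {+}1 \) and \( \max \) are monotone. For \( \eta = \triangle\alpha \) substitution commutes with the modal operator, so \( \rk{\eta\subs{\theta}} = \rk{\alpha\subs{\theta}} + 1 \), and adding \( 1 \) to the induction hypothesis \( \rk\alpha \le \rk{\alpha\subs{\theta}} \le \rk\alpha + M \) yields precisely \( \rk\eta \le \rk{\eta\subs{\theta}} \le \rk\eta + M \). For \( \eta = \bigcirc\Gamma \) we have \( \rk{\eta\subs{\theta}} = \max\setof{\rk{\alpha\subs{\theta}}+1}[\alpha\in\Gamma] \); applying the hypothesis to each \( \alpha\in\Gamma \) and taking the maximum preserves both bounds, using that \( \max \) is monotone and that shifting every entry by \( M \) shifts the maximum by \( M \). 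The empty case \( \Gamma = \eset \), that is \( \top \) and \( \bot \), is immediate under the convention \( \max\eset = 0 \), and a possible collapse of distinct disjuncts to equal formulas under \( \theta \) does not change the maximum. For a quantified formula \( \sigma\phi \) with \( \phi = \lambda x.\, \alpha \), and for the predicate case itself, I would first rename the bound variable so that the side condition of the substitution clause holds; since \( \rk{\lambda y.\, \beta} = \rk\beta \) and renaming is an instance of fact (a), all ranks are unchanged, and the bound again follows by adding \( 1 \) to the hypothesis for \( \alpha \).

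The argument is entirely routine; the only places demanding care are the negated-literal leaf, which is why fact (b) and the underlying literal-substitution fact (a) are isolated in advance, and the bound-variable renaming in the quantifier clause, which must be verified not to disturb complexity.
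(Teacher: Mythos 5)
Your proof is correct. Note that the paper states this lemma without any proof at all, treating it as routine, so there is no "paper approach" to compare against; your structural induction is exactly the standard argument that fills this gap. You have also correctly isolated the only two points where such an induction could silently go wrong — that \( \rk{\neg\beta} = \rk{\beta} \) (needed for the negated-literal clause \( \lnot x \subs{\theta} = \neg{(\theta x)} \)) and that bound-variable renaming in the predicate/quantifier clause preserves rank — and both of your auxiliary facts (a) and (b) hold by the inductions you describe.
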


\subsection{Subformulas and threads}
There are two natural choices for a notion of \emph{subformula} in \( \mu \)-calculi, depending on the treatment of quantified formulas. 
We say that \( \beta \) is a \emph{literal subformula} of \( \alpha \) if \( \beta \) is among the formulas constructed in the generation of \( \alpha \) according to the grammar generating \( \form \).
In particular, \( \beta \) is a literal subformula of \( \sigma x.\, \beta \).
The second view takes its definition from the inference rules used to derive quantified (and other) formulas and specifies \( \phi(\sigma\phi) \) as a subformula of \( \sigma \phi \).
The complexity of a formula always bounds the complexity of its literal subformulas, but not of its derived subformulas.

In the context of proof systems, derived subformula plays a more important role than the literal one and, hence, is the primary notion in this paper and we drop the adjective `derived' in most cases.
The length of a sequence \( s \) is denoted \( \lh s \).
For a sequence of formulas \( \tau = ( \alpha_i)_i \) define \( \neg\tau = ( \neg{\alpha_i} )_i \).
\begin{definition}[Subformula]
	The \emph{formula graph} is the directed graph \( \tuple{\form, \sf } \) where \( \alpha \sf \beta \) iff  
	(i) \( \alpha = \bigcirc \Gamma \) and \( \beta \in \Gamma \),
	(ii) \( \alpha = \triangle \beta \), or
	(iii) \( \alpha = \sigma \phi \) and \( \beta = \phi\alpha \).
	If \( \alpha \sf \beta \) then \( \beta \) is called an \emph{immediate subformula} of \( \alpha \).
	The reflexive transitive closure of \( \sf \) is called the \emph{(derived) subformula relation} and is denoted \( \sft \). 
	A \emph{(derived) subformula} of \( \alpha \) is any \( \beta \) such that \( \alpha \sft \beta \).
	The set of subformulas of \( \alpha \) is denoted \( \subform{\alpha} \).
\end{definition}

Witnesses of the subformula relation, namely sequences \( (\alpha_i)_{i<k} \) such that \( \alpha_i \sf \alpha_{i+1} \) for each \( i \) are called \emph{threads}.
A sequence following the reflexive closure of \( \sf \) is called a \emph{weak thread}.
Formulas may admit both infinite  and infinitely many threads, such as in the formula \( \alpha = \mf x( { \diamond x \vee \square  y }) \) for which there is one infinite thread and infinitely many distinct threads witnessing \( \alpha \sft y \).
A thread that starts at \( \alpha \) is called an \( \alpha \)-thread.
We write \( \tau : \alpha \sft \beta \) to express that \( \tau \) is a finite \( \alpha \)-thread ending in \( \beta \), that is, \( \lh{\tau} > 0 \), \( \tau(0) = \alpha \) and \( \tau({\lh {\tau}-1}) = \beta \). 
If \( \beta = \alpha \) and \( \lh \tau > 1 \) the thread is called \emph{cyclic}.
Writing \( \tau : \alpha \sft_\eta \beta  \) conveys, in addition, that \( \eta \) is a formula of lowest complexity on the thread \( \tau \), namely \( \tau(i) = \eta \) for some \( i < \lh{\tau} \) and \( \rk{\eta} \le \rk{\tau(i)} \) for every \( i \le \lh{\tau} \).
Note that if \( \tau \) a thread then so is \( \neg{\tau} = ( \neg{\tau(i)} )_{i<\lh {\tau} } \).

The following proposition relates threads with substitution.
\begin{restatable}{proposition}{propthdsubs}
	\label{thd-subst}
	Let \( \theta \) be a substitution.
	If \( \tau : \alpha \subs {\theta} \sft \beta \) then either:
	\begin{enumerate}
		\item \( \tau = (\tau'  \subs {\theta}) \tau'' \) for threads \( \tau' : \alpha \sft x \in \mathsf{dom}\, \theta \) and \( \tau'' : \theta(x) \sft \beta \).
		\item \( \tau = \tau' \subs {\theta} \) for some thread \( \tau' : \alpha \sft \beta' \) where \( \beta = \beta' \subs {\theta} \).
	\end{enumerate}
\end{restatable}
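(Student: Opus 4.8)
The plan is to induct on the thread length \( \lh\tau \) rather than on the structure of \( \alpha \). Since \( \sf \) is not well-founded---a single formula may carry infinite threads, as the remark on \( \mf x(\diamond x \vee \square y) \) illustrates---there is no structural induction on \( \alpha \) available, whereas every proper suffix of \( \tau \) is strictly shorter and so falls under the inductive hypothesis. The base case \( \lh\tau = 1 \) is immediate: then \( \beta = \alpha\subs{\theta} \), and alternative (2) holds with \( \tau' = (\alpha) \) and \( \beta' = \alpha \).

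For the inductive step, set \( \gamma = \tau(1) \), so that \( \alpha\subs{\theta} \sf \gamma \) and the suffix \( \tau_{\ge 1} = (\tau(i))_{1 \le i < \lh\tau} \) is a thread \( \gamma \sft \beta \) of length \( \lh\tau - 1 \). I would then split on the principal shape of \( \alpha \). If \( \alpha \) is a literal whose image \( \alpha\subs{\theta} \) is again a literal (a variable or negated variable outside \( \mathsf{dom}\,\theta \)), then \( \alpha\subs{\theta} \) has no outgoing \( \sf \)-edge and \( \lh\tau = 1 \), contradicting the case. The one productive literal case is \( \alpha = x \) with \( x \in \mathsf{dom}\,\theta \): here \( \alpha\subs{\theta} = \theta(x) \), and alternative (1) holds at once with \( \tau' = (x) \) and \( \tau'' = \tau \), since \( \tau'\subs{\theta} = (\theta(x)) \) glues to \( \tau'' \) at their common first entry.

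If instead \( \alpha \) is not a literal, then \( \subs{\theta} \) commutes with its principal operator, so the opening edge \( \alpha\subs{\theta} \sf \gamma \) is the \( \subs{\theta} \)-image of a unique edge \( \alpha \sf \delta \) with \( \gamma = \delta\subs{\theta} \). For connectives and modal operators this is direct; for a quantifier \( \alpha = \sigma\phi \) it is the content of the substitution lemma \( (\phi(\sigma\phi))\subs{\theta} = (\phi\subs{\theta})((\sigma\phi)\subs{\theta}) \), which holds after renaming the bound variable of \( \phi \) clear of \( \mathsf{dom}\,\theta \) and \( \mathsf{Rng}\,\theta \). Applying the inductive hypothesis to \( \tau_{\ge 1} \colon \delta\subs{\theta} \sft \beta \) supplies a \( \delta \)-thread \( \rho \) from which, prepending \( \alpha \) across the edge \( \alpha \sf \delta \), I recover the conclusion: in the case-(1) outcome I take \( \tau' = (\alpha)\rho \colon \alpha \sft x \) and keep the returned \( \tau'' \colon \theta(x) \sft \beta \); in the case-(2) outcome I take \( \tau' = (\alpha)\rho \colon \alpha \sft \beta' \) with the same \( \beta' \). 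In each case one checks that \( \tau'\subs{\theta} \) reassembles \( (\alpha\subs{\theta})\,\tau_{\ge 1} = \tau \).

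The only genuinely technical point is the quantifier subcase, where the commutation of \( \subs{\theta} \) with unfolding rests on the substitution lemma and the \( \alpha \)-renaming that avoids variable capture; everything else is bookkeeping about how threads are joined---prepending a node across an \( \sf \)-edge when rebuilding \( \tau' \), and gluing \( \tau'\subs{\theta} \) to \( \tau'' \) at the shared endpoint \( \theta(x) \) in alternative (1). One subtlety deserves care: alternative (1) names a positive domain variable \( x \), so for the dichotomy to be exhaustive the variables of \( \mathsf{dom}\,\theta \) should occur only positively in \( \alpha \)---as they do in the intended use, where \( \theta \) arises from unfolding an \( x \)-positive predicate. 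Were a negated domain variable \( \lnot x \) reachable, the image \( \lnot x\subs{\theta} = \neg{(\theta x)} \) would expose the structure of \( \neg{(\theta x)} \) rather than of \( \theta(x) \), and a thread descending into it would fit neither alternative.
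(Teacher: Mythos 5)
The paper states Proposition~\ref{thd-subst} without proof: it is wrapped in a \texttt{restatable} environment whose name is never recalled, so the intended argument presumably lives in an appendix not included in this source. There is therefore no official proof to compare yours against, and it must be judged on its own merits; it holds up. Induction on \( \lh\tau \) is the right device---induction on \( \alpha \) breaks at the quantifier case, since the unfolding \( \sigma\phi \sf \phi(\sigma\phi) \) increases both syntactic size and rank---and your three-way case split (degenerate literal, productive variable \( x \in \mathsf{dom}\,\theta \), non-literal) is exhaustive for the positive reading. The quantifier case is correctly reduced to the substitution lemma \( (\phi(\sigma\phi))\subs{\theta} = (\phi\subs{\theta})\bigl((\sigma\phi)\subs{\theta}\bigr) \), which is exactly what the paper's freshness side conditions on \( (\lambda y.\,\alpha)\subs{\theta} \) are there to guarantee, and your convention of gluing \( \tau'\subs{\theta} \) to \( \tau'' \) at the shared endpoint \( \theta(x) \) is the only reading under which alternative (1) denotes a thread at all. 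Two cosmetic points: the preimage edge \( \alpha \sf \delta \) with \( \delta\subs{\theta} = \gamma \) need not be \emph{unique} when distinct members of a set \( \Gamma \) collapse under \( \subs{\theta} \) (existence is all you use); and in the closing verification of the case-(1) outcome it is \( (\tau'\subs{\theta})\tau'' \), not \( \tau'\subs{\theta} \) alone, that reassembles \( \tau \).

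Your final caveat is not a quibble; it identifies a genuine imprecision in the statement. Taking \( \alpha = \lnot x \) and \( \theta \colon x \mapsto \diamond y \), the sequence \( ( \square \lnot y , \lnot y ) \) is a thread of \( \alpha\subs{\theta} = \neg{(\diamond y)} \) fitting neither alternative, since the only \( \alpha \)-thread is \( (\lnot x) \). So the proposition needs either the hypothesis that \( \alpha \) is \( \mathsf{dom}\,\theta \)-positive, or a widened alternative (1) admitting \( \tau' : \alpha \sft \lnot x \) continued by a thread of \( \neg{(\theta x)} \). The paper's visible applications (Lemma~\ref{sigma-F-nf} and Proposition~\ref{SC-is-WC}) substitute only into \( V \)-positive formulas, so your proof, correct under that proviso, suffices for everything the paper does with the proposition.
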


The next proposition sums up the central property of threads.

%
\begin{restatable}{proposition}{propsubsumption}
	\label{thd-progress}
	Let \( \tau \) be an infinite thread. There is a unique formula \( \eta \) satisfying: (i) \( \eta = \tau(i) \) for infinitely many \( i \) and (ii) \( \rk{\eta} \le \rk{ \tau(j) } \) for all but finitely many \( j \).
	Moreover, \( \eta = \sigma \phi \) for some predicate \( \phi \) and quantifier \( \sigma  \).
\end{restatable}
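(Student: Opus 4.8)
The plan is to track how the complexity $\rk{\cdot}$ behaves along $\tau = (\alpha_i)_i$. First I would record the effect of a single step $\alpha_i \sf \alpha_{i+1}$: a connective step ($\alpha_i = \bigcirc\Gamma$, $\alpha_{i+1}\in\Gamma$) or a modal step ($\alpha_i = \triangle\alpha_{i+1}$) has $\rk{\alpha_{i+1}} < \rk{\alpha_i}$, whereas a quantifier step ($\alpha_i = \sigma\phi$, $\alpha_{i+1} = \phi\alpha_i$) gives, by the lower bound $\rk{\phi} \le \rk{\phi\alpha_i}$ from the complexity lemma, $\rk{\alpha_{i+1}} \ge \rk{\phi} = \rk{\alpha_i}-1$. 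So complexity strictly decreases except possibly at quantifier steps. Since each $\alpha_i$ lies in the (standardly finite) closure $\subform{\alpha_0}$, some formula recurs, so $m = \liminf_i \rk{\alpha_i}$ is a well-defined natural number attained infinitely often; fix $N$ with $\rk{\alpha_j} \ge m$ for all $j \ge N$. Call $j \ge N$ a \emph{minimal} position if $\rk{\alpha_j} = m$; there are infinitely many. At a minimal position the next step cannot strictly decrease complexity (that would drop it below $m$ with $j+1 \ge N$), so it is a quantifier step and $\alpha_j = \sigma\phi$ is a fixed-point formula. This yields the `moreover' clause and condition~(ii) for any candidate; it also shows, by pigeonhole over the finite closure, that some rank-$m$ fixed point recurs, so a witness exists.

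It then remains to prove \emph{uniqueness}, i.e.\ that all minimal positions carry the same formula, since the formulas meeting (i) and (ii) are exactly those of complexity $m$ occurring infinitely often. Let $i_0$ be the least minimal position and $\eta = \alpha_{i_0} = \sigma\phi$ with $\phi = \lambda x.\,\beta$, so $\rk\beta = m-1$; as $\rk{\alpha_{i_0+1}} \ge m > \rk\beta$ the unfolding is nontrivial, hence $x$ occurs in $\beta$. Writing $\theta = \subs[x]{\eta}$, the first step unfolds $\eta$ to $\beta\subs\theta$. To analyse the next return to complexity $m$ I would apply Proposition~\ref{thd-subst} to the segment $\beta\subs\theta \sft \alpha_j$ for the least $j > i_0$ with $\rk{\alpha_j}=m$: either (case~1) the thread reaches $x$, where $\theta$ reinstates $\eta$, and then runs inside $\eta$; or (case~2) $\alpha_j = \gamma\subs\theta$ for a thread $\beta \sft \gamma$ internal to $\beta$. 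In case~1 the reinstated copy of $\eta$ sits at a position of complexity $m$, hence a minimal one, so by minimality of $j$ it is $\alpha_j$ and $\alpha_j = \eta$; iterating from there gives $\alpha_j = \eta$ at every minimal position.

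The crux, and the step I expect to be hardest, is ruling out in case~2 a return to complexity $m$ at a \emph{different} fixed point. If $x \in \fv\gamma$ with $\gamma \ne x$, the occurrence of $x$ sits at depth $\ge 1$, so substituting $\eta$ (of complexity $m$) forces $\rk{\gamma\subs\theta} \ge m+1$ and $j$ is not minimal; if $\gamma = x$ then $\gamma\subs\theta = \eta$, the desired return. The delicate subcase is $x \notin \fv\gamma$, where $\gamma\subs\theta = \gamma$ is a fixed point of complexity $m$: here the thread has descended into $\beta$ and left the scope of the recursion variable. The key lemma to establish is that from such a $\gamma$ the thread can never reach $\eta$ again, i.e.\ $\eta \notin \subform\gamma$. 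Intuitively, the quantified members of $\subform\beta$ reachable with $x$ held frozen are substitution instances of the quantified \emph{literal} subformulas of $\beta$, each of which is a proper subformula and so binds over a strictly smaller body than $\beta$; none can thus equal $\eta = \sigma(\lambda x.\,\beta)$, and the same persists throughout $\subform\gamma$. Granting this, entering such a $\gamma$ would prevent $\eta$ from recurring, contradicting that $\eta$ occurs infinitely often; hence this subcase cannot occur at a minimal position, every minimal position beyond $i_0$ carries $\eta$, and $\eta$ is the unique witness. I would prove the key lemma by induction on the generation of $\subform\gamma$ through Proposition~\ref{thd-subst}, maintaining that each quantified element retains a proper literal subformula of $\beta$ as body-template and so never acquires $\beta$ itself as its body.
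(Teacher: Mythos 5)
Your overall skeleton is sound: the analysis of how \( \rk{\cdot} \) can change along a thread, the existence of the liminf rank \( m \) attained infinitely often, the observation that a step from a minimal-rank position must be an unfolding (giving the ``moreover'' clause), and the reduction of uniqueness to an analysis of returns to rank \( m \) via Proposition~\ref{thd-subst} are all correct. But the uniqueness half has two gaps, one minor and one serious. The minor one: you take \( \eta = \tau(i_0) \) at the \emph{least} minimal position and later derive your contradiction from ``\( \eta \) occurs infinitely often'', yet nothing you have proved guarantees that the formula at the first minimal position recurs at all --- that is essentially what is being proved, so the argument as written is circular. It is repairable: either choose \( i_0 \) to be a minimal position carrying a rank-\( m \) formula that your pigeonhole argument already shows to recur, or note that each occurrence of the delicate subcase strictly shrinks the finite set \( \subform{\tau(i)} \) (since, granting your key lemma, \( \eta \) is lost from it) and hence can happen only finitely often.

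The serious gap is the key lemma, and specifically the mechanism you propose for proving it. The inference ``each template is a \emph{proper} literal subformula and so binds over a strictly smaller body than \( \beta \); none can thus equal \( \eta \)'' is a non sequitur: substitution can grow the body, and a proper body-template can perfectly well acquire \( \beta \) itself as its body. Concretely, let \( \beta = \nu z.\, \diamond (\mu y.\, z) \) and \( \eta = \mu x.\, \beta \), so that \( x \notin \fv{\beta} \). Unfolding gives \( \beta \sf \diamond(\mu y.\, \beta) \sf \mu y.\, \beta \), and since formulas are identified up to \( \alpha \)-equivalence and neither \( x \) nor \( y \) is free in \( \beta \), we have \( \mu y.\, \beta = \eta \). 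Thus \( \eta \in \subform{\beta} \), reached by a thread never touching \( x \): the quantified element \( \mu y.\, \beta \) arises from the proper template \( \mu y.\, z \), whose body is the single variable \( z \), yet after substituting \( \beta \) for \( z \) its body \emph{is} \( \beta \), and the element \emph{is} \( \eta \). So the key lemma is outright false without the hypothesis that \( x \) occurs free in \( \beta \), and your proposed induction never uses that hypothesis --- hence it cannot succeed as described. You do establish \( x \in \fv{\beta} \) earlier (from rank-minimality of \( i_0 \)), and with that hypothesis the lemma is, I believe, true; but proving it is exactly where the content of this proposition is concentrated. One has to work at the level of occurrences in a single well-named formula and argue about the binding structure (the same machinery that underlies the finiteness of \( \subform{\alpha} \), which you invoke as ``standard''), rather than about sizes of body-templates, which substitution does not respect. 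As it stands, the hardest step of the proof is asserted rather than proved, and the justification offered for it is refuted by the example above.
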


\begin{definition}
	An infinite thread is \( \nu \) if the formula associated to the thread by Proposition~\ref{thd-progress} has the form \( \nf \phi \), and is said to be \( \mu \) otherwise.
\end{definition}

\subsection{Three fragments of \texorpdfstring{$\boldsymbol{\form}$}{L-mu}}
\label{s-fragments}
%
We identify three important fragments of \( \form \).
%
The first fragment is often the focus of studies in modal \( \mu \)-calculi due to its well-structured threads.
This is the \emph{guarded} fragment comprising the formulas for which every cyclic thread among subformulas contains a modal formula.
The guarded fragment is especially relevant in the context of proof-search.
\begin{definition}[Guarded fragment]
	A formula \( \alpha \) is \emph{guarded} if every cyclic thread in the subformula graph of \( \alpha \) passes through a modality: for every \( \beta \in \subform{\alpha} \) and \( \tau : \beta \sft \beta \), if \( \lh{\tau} > 1 \) then \( \tau(i) = \triangle \tau({i+1}) \) for some \( i < \lh{\tau} \) and \( \triangle \in \setof{ \square  , \diamond } \).
\end{definition}
%

The second fragment of interest consists of all formulas in which \( \nu \) is the only quantifier.
We call this the \( \Pi \)-fragment.
In practise, however, greater stock will be put on the `closure' of certain sets of formulas under the generating operations of the \( \Pi \)-fragment.
Thus, the \( \Pi \)-fragment is defined as \( \PF \emptyset \) where, for any \( F \subseteq \form \), the set \( \PF F \) is given by
\begin{definition}[\( \PF F \)-fragment]
	Let \( F \subseteq \form \).
	The set \( \PF F \) is the smallest set of formulas satisfying:
	\begin{enumerate}
		\item \( F \cup \Lit \subseteq \PF F \),
		\item If \( \Gamma \subseteq \PF F \), then \( \disj \Gamma , \conj \Gamma \in \PF F \),
		\item If \( \alpha \in \PF F \), then \( \square   \alpha , \diamond \alpha  \in \PF F \),
		\item If \( \phi x \in \PF F \) then \( \nf \phi \in \PF F \),
		\item If \( \alpha \in \PF F \) is \( V \)-positive and \( \theta \colon V \to \PF F \), then \( \alpha \subs \theta \in \PF F \). 
	\end{enumerate}
	A \( \PF {\emptyset} \)-formula is called a \( \Pi \)-formula.
	A \( \Sigma \)-formula is a formula \( \neg \alpha \) for \( \alpha \in \PF \emptyset \).
\end{definition}

The closure operation \( F \mapsto \PF F \) induces a quantifier hierarchy for \( \form \) known as the \emph{alternation depth} or \emph{Niwiński hierarchy}, given by \( \Pi_1 = \PF{\emptyset} \) and \( \Pi_{n+1} = \PF{\Sigma_n} \) where \( \Sigma_n = \neg{\Pi_{n}} \).
The hierarchy exhausts \( \form \).
We will mostly be concerned with \( \PF F \)-formulas for two specific choices of \( F \) however:
the empty set and the set of formulas in the final fragment to be introduced.


\begin{lemma}\label{sigma-F-nf}
	If \( \mf \phi \in \PF F \), then \( \phi = \psi \subs { \theta } \) for \( \mf \psi \in F \) and a substitution \( \theta \colon V \to \PF F \).
\end{lemma}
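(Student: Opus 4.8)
The plan is to argue by induction on the generation of $\PF F$; that is, I would prove that every $\gamma \in \PF F$ satisfies the following property $P(\gamma)$: if $\gamma$ is a $\mu$-formula, say $\gamma = \mf\phi$, then $\phi = \psi\subs\theta$ for some $\mf\psi \in F$ and some substitution $\theta \colon V \to \PF F$. Phrasing the invariant conditionally on $\gamma$ being a $\mu$-formula is the crux: since the conclusion only constrains $\mu$-formulas, every closure rule whose output has a fixed, non-$\mu$ top symbol discharges its obligation vacuously.

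Inspecting the top symbol, if $\gamma \in \Lit$, or $\gamma$ is a disjunction, conjunction, $\square$- or $\diamond$-formula (rules (2)--(3)), or a $\nu$-formula (rule (4)), then $\gamma$ is not a $\mu$-formula and $P(\gamma)$ holds vacuously. If $\gamma = \mf\phi \in F$ by rule (1), take $\psi \coloneqq \phi$ and $\theta$ the empty substitution ($V = \eset$); then $\mf\psi = \gamma \in F$ and $\psi\subs\theta = \phi$. The only substantial rule is substitution, rule (5): here $\gamma = \alpha\subs{\theta'}$ with $\alpha \in \PF F$ being $V'$-positive, $\theta' \colon V' \to \PF F$ and $V' = \mathsf{dom}\,\theta'$, and the induction hypothesis supplies $P(\alpha)$ and $P(\theta'(x))$ for every $x \in V'$. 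Assume $\gamma = \mf\phi$. Since substitution commutes with all connectives and quantifiers, the top symbol of $\alpha\subs{\theta'}$ coincides with that of $\alpha$ unless $\alpha$ is a variable lying in $V'$. Positivity is used here to exclude a pitfall: were $\alpha = \lnot x$ a negated variable, then $x \notin V'$ (otherwise $\alpha$ would fail $V'$-positivity), so $\alpha\subs{\theta'} = \lnot x$ is not a $\mu$-formula; this rules out a $\mu$-top being manufactured by negating a $\nu$-formula substituted for a variable. Thus only two sub-cases can produce a $\mu$-formula. In sub-case (a), $\alpha = x \in V'$ and $\theta'(x) = \mf\phi$, so the hypothesis $P(\theta'(x))$ delivers the required $\psi$ and $\theta$ at once. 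In sub-case (b), $\alpha = \mf\chi$, whence $\phi = \chi\subs{\theta'}$.

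I expect sub-case (b) to be the main obstacle, as it forces a composition of substitutions that must be kept inside $\PF F$. The hypothesis $P(\mf\chi)$ yields $\mf\psi \in F$ and $\theta'' \colon V'' \to \PF F$ with $\chi = \psi\subs{\theta''}$, so $\phi = \psi\subs{\theta''}\subs{\theta'}$. A substitution-composition lemma rewrites this as $\phi = \psi\subs\theta$, where on $\fv\psi$ the composite acts by $\theta(u) = (\theta''(u))\subs{\theta'}$ if $u \in V''$, and by $\theta(u) = \theta'(u)$ or $\theta(u) = u$ otherwise; for the latter variables $\theta(u) \in \PF F$ is immediate. For the former I must certify $(\theta''(u))\subs{\theta'} \in \PF F$ through rule (5), which demands that $\theta''(u)$ be $V'$-positive. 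This is where positivity of $\alpha$ re-enters: since $\mf\chi = \mf(\psi\subs{\theta''})$ is $V'$-positive, the body of $\psi\subs{\theta''}$ is $V'$-positive, and for each $u \in \fv\psi \cap V''$ the formula $\theta''(u)$ occurs as a subformula of that body and is therefore $V'$-positive as well. Rule (5) then gives $(\theta''(u))\subs{\theta'} \in \PF F$, and restricting $\theta$ to $\fv\psi$ closes the case. Beyond this, I anticipate that only the routine bookkeeping of the composition lemma and the freshness/$\alpha$-renaming conventions require care; the remaining verifications reduce to the top-symbol analysis above.
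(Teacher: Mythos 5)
Your proof is correct and takes essentially the same route as the paper's: induction on the generation of \( \PF F \), with the substitution rule (5) as the only substantive case, split into the variable sub-case (discharged directly by the induction hypothesis) and the \( \mu \)-formula sub-case, where the two substitutions are composed and membership of the composite in \( \PF F \) is secured by restricting to variables free in \( \psi \) so that positivity is inherited from the formula being substituted into. Your explicit exclusion of the \( \lnot x \) case and the composition bookkeeping are details the paper leaves implicit, not a difference in method.
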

\begin{proof}
	By induction on the generation of \( \PF F \). If \( \mu  \phi \in F \) then we are done.
	Otherwise, \( \mu \phi = \alpha \subs \theta \) for some substitution \( \theta \colon V \to \PF F \) and \( V \)-positive \( \alpha \).
	We may assume that \( \alpha \not\in \Var \) as otherwise \( \mf \phi = \theta \alpha \) and the result follows from the induction hypothesis.
	Therefore, \( \alpha = \mf  \chi \) for some predicate \( \chi \), whence the induction hypothesis provides \( \mu \psi \in F \) and \( \theta' \colon V' \to \PF F \) such that \( \mu \chi = \mu\psi \subs {\theta' } \).
	Without loss of generality we may assume that each variable in \( V' \) occurs free in \( \psi \) and, therefore, \( \theta' x \) is \( V \)-positive (as \( \alpha \) is).
	Define \( \theta'' = \theta' \circ \theta \colon V \cup V' \to \PF F \) by \( \theta'' x = (\theta' x) \subs {\theta} \) for \( x \in V' \) and \( \theta'' x = \theta x \) otherwise.
	Then \( \mf \phi = \mf \psi \subs {\theta''} \).
\end{proof}

The final fragment is the \emph{disjunctive} fragment introduced by Janin and Walukiewicz~\cite{JanWal95} and employed heavily in Walukiewicz' completeness proof~\cite{Waluk00comp-lc}.
It is the class of formulas wherein use of conjunction is restricted to specific arrays of modal formulas and variables.
One of the many results concerning disjunctive formulas established in~\cite{JanWal95} is that they are every formula is equivalent to a formula in the disjunctive fragment. 

For our rendition of the disjunctive normal form theorem it is necessary to weaken the definition of disjunctive formula slightly
so that the expressive adequacy of disjunctive formulas holds constructively rather than merely under classical logic. 
The modification also has the advantage of being more amenable to standard techniques from structural proof theory while maintaining the core properties that make the fragment indispensable.
Our modification takes the form of a liberalisation of the modal operator at the heart of the disjunctive fragment.

\begin{definition}[Expanded cover modality]
	Given a set \( \Gamma \cup \Delta \) of formulas, we introduce formulas \( \nabla(\Gamma , \Delta ) \) and, more generally, \( \nabla_L(\Gamma , \Delta ) \) for \( L \subseteq \Lit \) defined by
	\[
		\nabla_L( \Gamma , \Delta ) \coloneqq \conj\bigl( L \cup \diamond  \Gamma \cup \setof{ \square \disj \Delta } \bigr)
		\qquad
		\nabla(\Gamma , \Delta ) \coloneqq 
		\nabla_\emptyset( \Gamma , \Delta ) = \conj\bigl( \diamond  \Gamma \cup \setof{ \square \disj \Delta } \bigr)
	\]
	The \emph{expanded cover modality} comprises the formulas \( \nabla_L(\Gamma , \Delta) \) where \( \Gamma \subseteq \Delta \) and \( L \) is consistent.
\end{definition}
The `standard' cover modality is the special case \( \nabla \Gamma \coloneqq \nabla ( \Gamma , \Gamma ) \).
The two model operators \( \square \) and \( \diamond \) have simple representations via the expanded cover modality: 
\( \square \alpha \) is equivalent to \( \nabla ( \emptyset , \setof \alpha ) \) and \( \diamond \alpha \) to \( \nabla( \setof{\alpha} , \setof{ \alpha , \top } ) \).
Moreover, these equivalences hold over constructive modal logic (cf.~Proposition~\ref{exp-vs-unary-cover} below).
Indeed, the representation of \( \square \alpha \) already demonstrates a difference with the unary cover modality base on which \( \square \alpha \) is expressed as \( \nabla \emptyset \vee \nabla \setof{ \alpha } \), involving an external disjunction.
More generally, the expanded cover modality \( \nabla( \Gamma , \Delta ) \) is equivalent to the disjunction \( \nabla \Gamma \vee \nabla \Gamma' \) where \( \Gamma' = \Gamma \cup \Setof{ \disj ( \Delta \setminus \Gamma) } \), but this equivalence does not hold constructively in general. 

The restriction of consistency on \( L \) does not reduce the expressivity of the cover modality as any formula \( \nabla_L (\Gamma , \Delta) \) where \( L \) is \emph{not} consistent is clearly unsatisfiable.
Nor is it simply a matter of convenience: Theorem~\ref{kmoi-comp-conj} depends specifically on the restriction to show that for a certain class of formulas employing the dual of the cover modality, provability relative to a constructive calculus coincides with validity against the classical semantics. In particular, it is necessary that no instance of the excluded middle can be expressed as the dual of a cover modality.

\begin{definition}[Disjunctive fragment]
	\label{d-conjunctive}%
	A formula is \emph{disjunctive} iff every subformula that is a conjunction is an instance of an expanded cover modality.
	The \emph{disjunctive fragment} is the set \( \D \) of disjunctive formulas.
	A formula \( \alpha \) \emph{conjunctive} iff \( \neg \alpha \) is disjunctive.
\end{definition}
Although \( \bot \) is disjunctive by definition, \( \top \) is not as every conjunction contains at least one formula. In the context of disjunctive formulas, therefore, we take in its place the disjunctive rendition of \( \nu x\, \square x \).
It is with remarking that the disjunctive fragment is more constrained than merely utilising the operator \( \nabla \) in place of conjunction in the inductive definition of formulas.
Specifically, variables occurring alongside the modal operators (the `\( V \)' in \( \nabla_V( \Gamma , \Delta) \)) can not be bound in a disjunctive formula.
The formula \( \nabla_{\setof x} ( \emptyset , \emptyset ) \), for instance, is disjunctive yet \( \alpha = \nf x .\, \nabla_{\setof x}( \emptyset , \emptyset ) \) is not because \( \conj \setof{ \alpha , \square \top } \in \subform \alpha \) can not be realised as an instance of the expanded cover modality.
However, \( \alpha \in \PD \), a fact that will be useful later.

Combining the restriction on conjunctions in disjunctive formulas and on quantifiers in the \( \Pi \) fragments is:
\begin{proposition}
	\label{SC-is-WC}
	Let \( \tau : \conj \Gamma \sft_{\mf \phi} \conj \Gamma \) be a cyclic thread. If \( \conj \Gamma \in \PD \) then \( \conj \Gamma \) is an instance of the expanded cover modality.
\end{proposition}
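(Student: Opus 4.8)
The plan is to locate the distinguished formula $\mf\phi$ inside the disjunctive seed of $\PD$ and then transport the cover-modality structure back along the thread to $\conj\Gamma$. The conceptual pivot is the contrast between the two quantifiers: the $\Pi$-closure $\PF F$ introduces $\nu$ natively (clause~4) but never $\mu$, so a \emph{least} fixed point cannot be manufactured by the $\PD$-skeleton alone. This is exactly what distinguishes the present statement from the $\nu$-situation exhibited by $\nf x\, \nabla_{\setof{x}}(\emptyset,\emptyset)$ just above, where a bound variable sits in a literal position and the unfolded conjunction fails to be a cover modality. By Lemma~\ref{sigma-F-nf} every $\mu$-formula of $\PD$ descends from $\D$, where conjunctions are already cover modalities and, by the constraint on disjunctive formulas, no bound variable occupies a literal block.

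I would organise the argument as an induction on the generation of $\conj\Gamma$ as an element of $\PF\D$. If $\conj\Gamma\in\D$ it is a cover modality by definition; the modal and quantifier clauses cannot produce a conjunction; so the work is in the conjunction clause and the substitution clause. In either case $\mf\phi$ is a subformula of $\conj\Gamma\in\PD$ and hence lies in $\PD$ (which is closed under the immediate-subformula steps, fixed-point unfolding included). Lemma~\ref{sigma-F-nf} then gives $\phi = \psi\subs{\theta}$ with $\mf\psi\in\D$ and $\theta\colon V\to\PD$, so that $\mf\phi = (\mf\psi)\subs{\theta}$; rotating $\tau$ to be based at $\mf\phi$ yields a cyclic thread $\mf\phi\sft\mf\phi$ of the same length through $\conj\Gamma$ on which $\mf\phi$ retains least complexity.

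The core step is to pull this cycle back to the disjunctive formula $\mf\psi$ by iterating Proposition~\ref{thd-subst}. Minimality, via Proposition~\ref{thd-progress}, is what keeps the cofinally recurring formula equal to the $\mu$-descendant $\mf\phi$: a segment that follows alternative~(1) and dives into a substituted subformula $\theta(x)\in\PD$ either cannot reproduce a least fixed point of this complexity or presents a strictly smaller $\PD$-instance carrying the same cycle, to which the induction hypothesis applies. Confining to alternative~(2) then gives $\conj\Gamma = (\conj\Gamma')\subs{\theta}$ for a conjunction $\conj\Gamma'$ lying on a cyclic thread of $\mf\psi$; disjunctiveness of $\mf\psi$ makes $\conj\Gamma'$ an instance $\nabla_L(\Gamma_1,\Delta_1)$ of the expanded cover modality in which the cycle-carrying bound variable occurs only in $\Gamma_1$ and $\Delta_1$, never in $L$.

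The final step is to check that $\nabla_L(\Gamma_1,\Delta_1)\subs{\theta}$ is again an instance of the expanded cover modality. Since substitution commutes with $\diamond$, $\square$, $\disj$ and $\conj$, the modal blocks $\diamond\Gamma_1$ and $\square\disj\Delta_1$ transport to the corresponding blocks of $\conj\Gamma$; the delicate point is the literal block $L$. I expect this to be the main obstacle: a variable of $L$ lying in $\mathsf{dom}\,\theta$ is replaced by an arbitrary formula of $\PD$, which could in principle spoil the cover-modality shape. The argument must therefore exploit that $L$ consists only of \emph{free} parameters of $\mf\psi$ (never the cycle-carrying bound variable) to conclude that the image $L\subs{\theta}$ still assembles, together with the modal blocks, into literals, diamonds and a single boxed disjunction. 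Combining the $\mu$-character of the thread, the disjunctive origin of $\mf\psi$, and the $\Pi$-restriction of $\PD$ precisely at this junction is where the content of the proposition resides.
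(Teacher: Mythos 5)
Your proposal retraces the paper's own proof almost exactly: the paper likewise applies Lemma~\ref{sigma-F-nf} to write \( \mf \phi = \mf\psi \subs{\theta} \) with \( \mf\psi \in \D \) and \( \theta \colon V \to \PD \), assumes without loss of generality that every variable of \( V \) occurs free in \( \psi \) (so that \( \rk{\theta(x)} < \rk{\mf\phi} \) for all \( x \in V \)), and uses the minimality of \( \mf\phi \) on \( \tau \) to exclude alternative~(1) of Proposition~\ref{thd-subst}, obtaining \( \conj\Gamma = (\conj\Gamma')\subs{\theta} \) for some \( \conj\Gamma' \in \subform{\mf\psi} \), which is an instance of the expanded cover modality because \( \mf\psi \) is disjunctive. (Your outer induction on the generation of \( \conj\Gamma \) in \( \PF\D \) is scaffolding the paper does without.) The divergence is only at the very end: the step you single out as ``the main obstacle''---that \( \nabla_L(\Gamma_1,\Delta_1)\subs{\theta} \) is again an instance of the expanded cover modality---is dispatched by the paper with a single ``whence'', with no argument about what \( \theta \) does to the literal block \( L \).

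Your suspicion about that step is well founded, but the resolution you gesture at (that \( L \) contains only free parameters of \( \mf\psi \), never the bound cycle variable) cannot close it: the domain of \( \theta \) consists precisely of free parameters of \( \mf\psi \), so nothing prevents \( \theta \) from rewriting the literals in \( L \). Indeed the step, and the proposition as literally stated, fail. Take \( \mf\psi = \mu y .\, \nabla_{\setof x}(\setof y , \setof y) \in \D \) and \( \theta \colon x \mapsto \square z \); then \( \mf\phi \coloneqq \mf\psi\subs{\theta} \in \PD \) by the substitution clause of the definition of \( \PF\D \), and its unfolding \( \conj\Gamma = \conj\setof{ \square z , \diamond \mf\phi , \square\disj\setof{\mf\phi} } \) is in \( \PD \) and lies on the cyclic thread \( \conj\Gamma \sf \diamond\mf\phi \sf \mf\phi \sf \conj\Gamma \) whose minimal formula is \( \mf\phi \); yet \( \conj\Gamma \) has two boxed conjuncts and \( z \) is not a disjunction, so \( \conj\Gamma \) is not of the form \( \conj( L \cup \diamond\Gamma_0 \cup \setof{\square\disj\Delta_0}) \) with \( L \subseteq \Lit \). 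So the gap in your attempt is genuine, but it coincides exactly with the gap the paper's proof passes over: what the shared argument actually establishes is the weaker conclusion that \( \conj\Gamma = \nabla_L(\Delta,\Delta')\subs{\theta} \) for some expanded cover modality \( \nabla_L(\Delta,\Delta') \) and a substitution \( \theta \) all of whose values have complexity below \( \rk{\mf\phi} \), and it is this weaker form (or a correspondingly restricted definition of \( \PD \)) that the proposition and its later applications would have to rely on.
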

\begin{proof}
	Suppose  \( \Gamma \subseteq \PD \) and \( \tau : \conj \Gamma \sft_{\mf \phi} \conj \Gamma \).
	In particular, \( \mu \phi \in \PD \), whereby Proposition~\ref{sigma-F-nf} implies that \( \mu  \phi = \mu  \psi \subs {\theta} \) for \( \mu \psi \in \D \) and a substitution \( \theta \colon V \to \PD \).
	Without loss of generality we may assume that all variables in \( V \) occur free in \( \psi \).
	In particular, \( \rk{\theta(x)} < \rk{\mu  \phi } \) for each \( x \in V \).
	As \( \lh{\tau(i)} \ge \lh {\mu  \phi} \) for all \( i < \lh{\tau} \) it follows that \( \Gamma = \Gamma' \subs {\theta } \) for some \( \conj \Gamma' \in \subform{\mf \psi} \) (Proposition~\ref{thd-subst}(2)), whence \( \conj \Gamma = \nabla_V( \Delta , \Delta') \) for some \( \Delta , \Delta' \) and \( V \).
\end{proof}

\subsection{Fixed-point semantics}
\label{s-semantics}

Formulas of the modal \( \mu \)-calculus are evaluated against labelled trees, where
an \emph{\( L \)-labelled tree} (henceforth simply \emph{tree}) is a pair \( \tuple{T, \lambda  } \) where \( T \subseteq \omega^{<\omega} \) is a non-empty and prefix-closed set of finite sequences of natural numbers, called \emph{vertices}, and \( \lambda  \colon T \rightarrow L \) is function assigning each \( v \in T \) a \emph{label} \( \lambda (v) \).
The empty sequence is called the \emph{root}.
A proper prefix of a vertex \( u \in T \) is called the \emph{predecessor} of \( u \); conversely, a \emph{successor} of \( u \) is a vertex that has \( u \) as a predecessor.
Denoting the prefix relation on sequences by \( \le \) and the proper prefix relation by \( < \), we have that \( u \) is a successor of \( v \) iff \( v < u \).
Proper successors of minimal length are called \emph{children}, namely the set of children of \( u \in T \) is the set \( \Suc [T] u \coloneqq T \cap \setof{ ui }[ i \in \omega ] \).

The label function associated to a tree \( T \) is written as \( \lambda _T \). We abuse notation and identify labelled trees with the set of their vertices, writing \( T = \tuple{T,\lambda _T} \).
Given \( \tuple{T,\lambda } \) and \( s \in T \) define \( \tuple{T \res s,\lambda \res s} \) as the subtree of \( T \) at \( s \), given by \( T\res s = \setof{ v \in T }[sv \in T] \) and \( \lambda \res s \colon v \mapsto \lambda (sv) \).
The set of immediate subtrees of \( T \) is denoted \( \tuple{T,\lambda }_* = \Setof{ \tuple{T\res s,\lambda \res s} }[ s \in T \text{ and } \lh{s} = 1 ] \).

A \emph{path} in \( T \) is a finite or infinite sequence \( P = v_0v_1\dotsm \) of vertices such that \( v_{i+1} \in \Suc [T]{v_i} \) for all \( i < \lh P \). 
An infinite path is also called a \emph{branch}.
Given a branch \( B \) we often write \( v \in B \) if \( v \le B(i) \) for some \( i \).
\begin{definition}
	Set \( E = 2^\Var \).
	A \emph{(tree) model} is a \( E \)-labelled tree \( \tuple{T,A} \).
	The set of models is denoted \( \Mod \). 
	Given a model \( M = \tuple{T,A} \) and a set of models \( U \subseteq \Mod \), we write \( A \subs[x]{U} \) for the function \( A' \colon T \to E \) such that \( A'( v ) = A(v) \cup \setof{x} \) if \( \tuple{T,A} \res v \in U \) and \( A'(v) = A(v) \setminus \setof {x} \) otherwise.
\end{definition}

\subsubsection{Kripke semantics}

In Kripke semantics for the modal \( \mu  \)-calculus each formula \( \alpha \) is assigned a set \( \denote{\alpha} \subseteq \Mod \) of models, called the \emph{denotation} of \( \alpha \), and each predicate \( \phi \) associated a function \( \denote{\phi} \colon 2^{\Mod} \rightarrow 2^{\Mod} \), defined by mutual recursion:
\begin{align*}
	\denote x &= \setof{ \tuple{T,A} \in \Mod }[ x \in A(\epsilon)]
	\\
	\denote { \square  \alpha } &= \setof{ M \in \Mod }[ M_* \subseteq \denote{\alpha} ]
	&
	\denote { \diamond a } &= \setof{ M \in \Mod }[ M_* \cap \denote{\alpha} \neq \emptyset ]
	\\
	\denote {\conj \Gamma} &= \bigcap \denote {\Gamma }
	&
	\denote {\mf \phi } &= \bigcap \Setof{ U \subseteq \Mod }[ \denote { \phi } U \subseteq U]
	\\
	\denote {\disj \Gamma} &= \bigcup \denote {\Gamma }
	&
	\denote {\nf \phi } &= \bigcup \Setof{ U \subseteq \Mod }[ U \subseteq \denote { \phi } U ]
\end{align*}
where
\begin{align*}
	\denote {\Gamma} &= \Setof{ \denote{\gamma} }[\gamma \in \Gamma]
	&
	\denote { \phi } &\colon {U \mapsto  \Setof{ \tuple{ T , A } }[ \tuple{ T , A \subs[x]U } \in \denote{\phi x } ]}.
	\qquad
\end{align*}
If \( \denote{\alpha} = \Mod \) we call \( \alpha \) \emph{valid}.

The proofs of the following are standard; the reader can consult, for instance, \cite{Arnold:2001Rudiments}.

\begin{proposition}
	Suppose \( \tuple{T,A} \) and \( \tuple{T,A'} \) are models and \( V \subseteq \Var \) is such that for all \( v \in T \), \( A(v) \cap V = A'(v) \cap V \).
	Then for every formula \( \alpha \) whose free variables are among \( V \),  \( \tuple{T,A} \in \denote{\alpha} \) iff \( \tuple{T,A'} \in \denote{\alpha} \).
\end{proposition}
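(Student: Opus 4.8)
The plan is to prove the equivalent but more uniform statement that the denotation \( \denote{\alpha} \) is \emph{\( V \)-saturated} whenever \( \fv\alpha \subseteq V \), where I call a set \( U \subseteq \Mod \) \( V \)-saturated if \( \tuple{T,A} \in U \) and \( A(v)\cap V = A'(v)\cap V \) for all \( v\in T \) together imply \( \tuple{T,A'} \in U \). The proposition is precisely the assertion that every \( \denote\alpha \) with \( \fv\alpha \subseteq V \) is \( V \)-saturated. I would prove this by induction on the structure of \( \alpha \); equivalently on \( \rk\alpha \), since every immediate subformula---including \( \phi x \) for a quantified \( \sigma\phi \)---has strictly smaller complexity.

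The literal, connective and modal cases are routine. For \( x \) and \( \lnot x \) with \( x\in V \), the defining condition \( x\in A(\epsilon) \) is insensitive to changing \( A \) outside \( V \). For \( \bigcirc\Gamma \) one uses that \( V \)-saturated sets are closed under arbitrary unions and intersections, together with \( \fv\gamma\subseteq V \) for each \( \gamma\in\Gamma \). For \( \triangle\alpha \) one observes that if \( \tuple{T,A} \) and \( \tuple{T,A'} \) agree on \( V \) at every vertex, then their immediate subtrees match up in \( V \)-agreeing pairs \( \tuple{T\res s,A\res s} \), \( \tuple{T\res s,A'\res s} \), so that \( V \)-saturation of \( \denote\alpha \) (induction hypothesis) transfers through the clauses for \( \square \) and \( \diamond \).

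The fixed-point case is the main obstacle. Consider \( \alpha = \nf\phi \) with \( \phi = \lambda x.\,\beta \) and, renaming if necessary, \( x\notin V \), so that \( \fv\beta\subseteq V\cup\setof x \). The naive idea of reusing a single witnessing post-fixed point \( U \) for both \( A \) and \( A' \) fails, because whether \( x \) holds at \( v \) under \( A\subs[x]U \) depends on whether \( \tuple{T,A}\res v\in U \), which need not agree with \( \tuple{T,A'}\res v\in U \). The resolution is that the \( V \)-saturation hypothesis on \( U \) is exactly what repairs this, and the key lemma is that \( \denote\phi \) maps \( V \)-saturated sets to \( V \)-saturated sets. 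Indeed, if \( U \) is \( V \)-saturated and \( \tuple{T,A},\tuple{T,A'} \) agree on \( V \) at every vertex, then the subtree models \( \tuple{T,A}\res v \) and \( \tuple{T,A'}\res v \) are \( V \)-agreeing, so \( V \)-saturation of \( U \) gives \( \tuple{T,A}\res v\in U \iff \tuple{T,A'}\res v\in U \); hence \( A\subs[x]U \) and \( A'\subs[x]U \) agree not only on \( V \) but also on \( x \), i.e.\ on all of \( V\cup\setof x \) at every vertex. Applying the induction hypothesis to \( \beta \) then yields \( \tuple{T,A\subs[x]U}\in\denote\beta \iff \tuple{T,A'\subs[x]U}\in\denote\beta \), which is exactly \( V \)-saturation of \( \denote\phi U \).

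Finally I would lift this to the fixed point. The \( V \)-saturated subsets of \( \Mod \) form a complete sublattice of \( \pow\Mod \), closed under arbitrary unions and intersections, and by the key lemma \( \denote\phi \) restricts to a monotone operator on it. It remains to check that \( \denote{\nf\phi}=\bigcup\Setof{U}[U\subseteq\denote\phi U] \), computed in \( \pow\Mod \), is already \( V \)-saturated. Writing \( G \) for this greatest fixed point and \( \hat G \) for its \( V \)-saturation, monotonicity and preservation of saturation give that \( \denote\phi\hat G \) is \( V \)-saturated and contains \( \denote\phi G\supseteq G \), so \( \hat G\subseteq\denote\phi\hat G \); as \( G \) is the largest post-fixed point this forces \( \hat G\subseteq G \), whence \( G=\hat G \) is \( V \)-saturated. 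The case \( \alpha=\mf\phi \) is dual: one takes the largest \( V \)-saturated set contained in the least fixed point and uses preservation of saturation to show it is a pre-fixed point. The only genuine subtlety throughout is this interplay in the fixed-point clause, where saturation of the approximant \( U \) is precisely what makes the induction hypothesis on \( \beta \) applicable.
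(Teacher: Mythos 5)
Your proof is correct. Note that the paper itself gives no argument for this proposition: it is listed among the facts whose ``proofs are standard'' with a pointer to Arnold--Niwi\'nski, so the comparison to make is with that textbook argument. In the standard presentation the denotation is parametrised by a valuation, \( \denote{\alpha}_\rho \), and the proof of the corresponding statement is almost trivial at the quantifier step: if \( \rho \) and \( \rho' \) agree on \( \fv{\alpha} \), then by the induction hypothesis the two monotone functionals \( U \mapsto \denote{\beta}_{\rho[x \mapsto U]} \) and \( U \mapsto \denote{\beta}_{\rho'[x \mapsto U]} \) are \emph{equal as functions}, hence have the same extremal fixed points. In the present paper the valuation is baked into the models themselves (\( A \colon T \to 2^\Var \)) and \( \denote{\alpha} \) is a single subset of \( \Mod \), so that argument is not available verbatim: the fixed point of \( \denote{\phi} \) is computed over all of \( \Mod \) at once, including models unrelated to the pair under consideration. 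Your \( V \)-saturation machinery is the right adaptation, and the two places where you do real work are exactly where the mismatch bites: the key lemma that \( \denote{\phi} \) preserves \( V \)-saturation (which is where the induction hypothesis at \( V \cup \setof{x} \) enters), and the closure argument at the fixed point itself (comparing \( \denote{\nf\phi} \) with its saturation \( \hat G \), respectively \( \denote{\mf\phi} \) with its largest saturated subset). Both steps check out: \( \denote{\phi}\hat G \) is a \( V \)-saturated superset of \( G \), forcing \( \hat G \subseteq \denote{\phi}\hat G \subseteq G \) by maximality of the greatest post-fixed point, and dually for \( \mu \). An equivalent route, closer to what one finds spelled out in the references, is to run the induction over the ordinal approximants of the fixed points, showing each approximant is \( V \)-saturated; your lattice-theoretic closure argument avoids ordinals altogether, which is a modest gain in elegance at no cost in correctness.
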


\begin{proposition}
	The denotation of predicates are monotone functions: If \( \phi \) is a predicate and \( U \subseteq V \subseteq \Mod \), then \( \denote \phi U \subseteq \denote \phi V \).
\end{proposition}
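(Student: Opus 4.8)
The plan is to reduce the statement to a single monotonicity lemma about how the denotation of a \emph{formula} responds to enlarging the truth set of its positive variables, and then prove that lemma by induction on complexity. For a set \( S \subseteq \Var \) and two models \( \tuple{T,A} \), \( \tuple{T,A'} \) over the same tree, write \( A \le_S A' \) if for every \( v \in T \) we have \( A(v)\setminus S = A'(v)\setminus S \) and \( A(v)\cap S \subseteq A'(v) \); that is, \( A' \) agrees with \( A \) off \( S \) and makes at least as many variables of \( S \) true. The key lemma I would isolate is: \emph{if \( \alpha \) is \( S \)-positive and \( A \le_S A' \), then \( \tuple{T,A}\in\denote\alpha \) implies \( \tuple{T,A'}\in\denote\alpha \).} The proposition then follows immediately: given \( \phi = \lambda x\,\alpha \) and \( U\subseteq V\subseteq\Mod \), the body \( \alpha \) is \( \setof x \)-positive and, since \( U\subseteq V \), one checks directly that \( A\subs[x]U \le_{\setof x} A\subs[x]V \); applying the lemma to \( \alpha \) turns \( \tuple{T,A\subs[x]U}\in\denote\alpha \) into \( \tuple{T,A\subs[x]V}\in\denote\alpha \), which is exactly \( \denote\phi(U)\subseteq\denote\phi(V) \).

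I would prove the lemma by induction on \( \rk\alpha \), exploiting that the relation \( A\le_S A' \) restricts to every subtree (its defining conditions quantify over all \( v\in T \), hence hold for \( A\res s \le_S A'\res s \) as well). The literal cases are immediate: for \( \alpha = x \) with \( x\in S \) use the second clause of \( \le_S \) at the root, while for \( \alpha = x \) or \( \alpha = \lnot y \) with the variable outside \( S \) (forced for \( \lnot y \) by \( S \)-positivity) use the first clause. The Boolean and modal cases are routine appeals to the induction hypothesis on the immediate literal subformulas, all of strictly smaller complexity and still \( S \)-positive, using that \( \le_S \) passes to the immediate subtrees for \( \square \) and \( \diamond \).

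The crux is the two quantifier cases, where \( \alpha = \sigma\phi \) with \( \phi = \lambda z\,\gamma \); renaming, assume \( z\notin S \), so that \( \gamma \) is \( (S\cup\setof z) \)-positive and \( \rk\gamma = \rk{\sigma\phi}-1 < \rk{\sigma\phi} \), making the induction hypothesis available for \( \gamma \) relative to \( S\cup\setof z \). For \( \sigma = \nu \), suppose \( \tuple{T,A}\in\denote{\nf\phi} \), witnessed by a post-fixpoint \( W \subseteq \denote\phi(W) \) with \( \tuple{T,A}\in W \). I would take the \( \le_S \)-upward closure \( W' = \setof{\tuple{T_0,A_1}}[\exists A_0\ (\tuple{T_0,A_0}\in W \text{ and } A_0 \le_S A_1)] \) and show \( W'\subseteq\denote\phi(W') \): given \( \tuple{T_0,A_1}\in W' \) with witness \( A_0 \), one has \( \tuple{T_0,A_0\subs[z]W}\in\denote\gamma \), and checking \( A_0\subs[z]W \le_{S\cup\setof z} A_1\subs[z]{W'} \) — the new case \( z \) reduces exactly to \( \tuple{T_0,A_0}\res v\in W \Rightarrow \tuple{T_0,A_1}\res v\in W' \), which holds by the subtree-restriction of \( \le_S \) and the definition of \( W' \) — the induction hypothesis yields \( \tuple{T_0,A_1\subs[z]{W'}}\in\denote\gamma \), i.e.\ \( \tuple{T_0,A_1}\in\denote\phi(W') \). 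Since \( W' \) is then a post-fixpoint and \( \tuple{T,A'}\in W' \), we conclude \( \tuple{T,A'}\in\denote{\nf\phi} \).

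The \( \mu \) case is dual and is where I expect the only real friction. Here I cannot simply close a single witness upward; instead, for each pre-fixpoint \( U \) with \( \denote\phi(U)\subseteq U \) I would form \( U' = \setof{\tuple{T_0,A_0}}[\forall A_1\ (A_0\le_S A_1 \Rightarrow \tuple{T_0,A_1}\in U)] \) and show it is again a pre-fixpoint by the same induction-hypothesis computation, now verifying \( A_0\subs[z]{U'} \le_{S\cup\setof z} A_1\subs[z]U \). Because \( \denote{\mf\phi} \) is the intersection of all pre-fixpoints, \( \tuple{T,A}\in\denote{\mf\phi}\subseteq U' \), and unwinding the definition of \( U' \) with \( A_1 := A' \) gives \( \tuple{T,A'}\in U \); as \( U \) was arbitrary, \( \tuple{T,A'}\in\denote{\mf\phi} \). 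The main obstacle throughout is getting the two quantifier closures right — the upward closure \( W' \) for \( \nu \) and the ``all enlargements lie in \( U \)'' set \( U' \) for \( \mu \) — and checking in each case that the induced labelling comparison at the bound variable \( z \) matches the membership condition defining the closure. Once the bookkeeping of \( \le_S \) under substitution and subtree restriction is in place, no monotonicity of \( \denote\phi \) is presupposed, so there is no circularity with the fixed-point clauses of the semantics.
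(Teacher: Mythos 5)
Your proof is correct. Note, though, that the paper offers no proof of this proposition at all: it is grouped with the statements whose ``proofs \dots\ are standard'', deferring to Arnold--Niwi\'nski, so there is no in-paper argument to compare against. What you have written is a sound, self-contained rendering of the standard argument, adapted faithfully to the paper's semantics in which variable valuations are folded into the tree labellings. Your induction loading is the right one (monotonicity of \( \denote{\alpha} \) under \( \le_S \) for \( S \)-positive \( \alpha \)), the paper's rank function indeed gives \( \rk{\gamma} < \rk{\sigma\phi} \) so the quantifier cases may invoke the hypothesis for the body \( \gamma \) relative to \( S \cup \setof{z} \), and your two closure constructions --- the \( \le_S \)-upward closure \( W' \) for \( \nu \) and the set \( U' \) of labellings all of whose \( \le_S \)-enlargements lie in \( U \) for \( \mu \) --- are verified to be post-/pre-fixpoints by exactly the computation you describe; the \( z \)-clause of the comparison between substituted labellings matches the membership conditions defining the closures, which is the only delicate point. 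The one remark worth making is that in the textbook treatment, where valuations are kept as separate parameters rather than encoded into labellings, these closures are unnecessary: one proves monotonicity in all positive variables jointly by the same induction, and the \( \mu \) case is then immediate because enlarging a parameter only shrinks the family of pre-fixpoints being intersected (dually for \( \nu \)). Your \( W' \)/\( U' \) constructions are the price of staying inside the paper's labelling-based setup, and you have paid it correctly, including the observation that no monotonicity of \( \denote{\phi} \) itself is presupposed, so there is no circularity with the fixed-point clauses.
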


\begin{lemma}
	\( \denote{\neg{\alpha}} = \Mod \setminus \denote{\alpha} \) for every formula \( \alpha \).
\end{lemma}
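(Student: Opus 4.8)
The plan is to induct on the complexity $\rk\alpha$, establishing the formula statement in tandem with the dual statement for predicates: for every predicate $\phi$ and every $U\subseteq\Mod$, $\denote{\neg\phi}(U) = \Mod\setminus\denote{\phi}(\Mod\setminus U)$. Complexity is the right measure here because negation preserves it --- a routine sub-induction on the defining clauses gives $\rk{\neg\alpha}=\rk\alpha$ --- and because substituting the literal $\lnot x$ for $x$ does not change complexity, so $\rk{\alpha\subs[x]{\lnot x}}=\rk\alpha$ by the complexity lemma. Hence the body $\neg{(\alpha\subs[x]{\lnot x})}$ of the negated predicate $\neg{(\lambda x.\,\alpha)}$ has strictly smaller complexity than $\sigma\phi$, making the induction hypothesis available precisely where it is needed.

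For formulas, the literal cases are immediate from the atomic semantics together with the identity $\neg{(\lnot x)}=x$. The remaining non-quantifier cases are De Morgan dualities settled by the induction hypothesis: since $\neg{(\conj\Gamma)}=\disj{\neg\Gamma}$ one has $\denote{\disj\neg\Gamma}=\bigcup\denote{\neg\Gamma}=\bigcup_{\gamma\in\Gamma}(\Mod\setminus\denote{\gamma})=\Mod\setminus\bigcap\denote{\Gamma}=\Mod\setminus\denote{\conj\Gamma}$, and symmetrically for $\disj$; likewise $\neg{(\square\alpha)}=\diamond{\neg\alpha}$ yields $\denote{\diamond\neg\alpha}=\setof{M}[M_*\cap\denote{\neg\alpha}\neq\emptyset]=\setof{M}[M_*\not\subseteq\denote{\alpha}]=\Mod\setminus\denote{\square\alpha}$, and symmetrically for $\diamond$.

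The quantifier cases reduce to the predicate statement via the Knaster--Tarski characterisations of the fixed points. Using $\neg{(\mf\phi)}=\nf{(\neg\phi)}$ and the predicate identity, the defining condition $U\subseteq\denote{\neg\phi}(U)$ becomes $\denote{\phi}(\Mod\setminus U)\subseteq\Mod\setminus U$; writing $V=\Mod\setminus U$ turns the union computing $\denote{\nf{(\neg\phi)}}$ into $\Mod\setminus\bigcap\setof{V}[\denote{\phi}V\subseteq V]$, which is exactly $\Mod\setminus\denote{\mf\phi}$. The case $\neg{(\nf\phi)}=\mf{(\neg\phi)}$ is the mirror image, exchanging the greatest and least fixed points.

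It remains to prove the predicate identity. Unfolding $\phi=\lambda x.\,\alpha$ and $\neg\phi=\lambda x.\,\neg{(\alpha\subs[x]{\lnot x})}$ and applying the induction hypothesis to $\neg{(\alpha\subs[x]{\lnot x})}$, the identity reduces to the coincidence
\[
	\tuple{T,A\subs[x]{U}}\in\denote{\alpha\subs[x]{\lnot x}}
	\quad\text{iff}\quad
	\tuple{T,A\subs[x]{\Mod\setminus U}}\in\denote{\alpha}.
\]
I expect this coincidence to be the main obstacle. It is a semantic substitution fact --- interpreting $x$ as $\Mod\setminus U$ has the same effect as first rewriting $x$ to $\lnot x$ and then interpreting $x$ as $U$ --- true at the atomic level because $x\in(A\subs[x]{U})(\epsilon)$ iff $\tuple{T,A}\in U$, but requiring its own induction in general. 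The care points are the descent through the modal operators, where one must check that forming a subtree commutes with the valuation update, namely $\tuple{T,A\subs[x]{U}}\res s=(\tuple{T,A}\res s)\subs[x]{U}$, and the descent through nested quantifiers, where bound variables must be kept disjoint from $x$. The $x$-positivity demanded of predicate bodies ensures $\lnot x$ does not already occur in $\alpha$, so $\subs[x]{\lnot x}$ touches only the genuine occurrences of $x$ and the induction stays clean.
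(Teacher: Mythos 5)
The paper never proves this lemma: it sits in the block of semantic facts introduced by ``The proofs of the following are standard; the reader can consult, for instance, [Arnold--Niwi\'nski]'', so there is no in-paper argument to compare against, only the standard one it points to. Your proposal is that standard proof and it is correct: induction on \( \rk{\alpha} \) is legitimate because negation and the substitution \( \subs[x]{\lnot x} \) preserve complexity; the connective and modal cases are the De Morgan dualities you compute; the quantifier cases are exactly the Knaster--Tarski complementation (postfixed points of \( \denote{\neg\phi} \) are complements of prefixed points of \( \denote{\phi} \), and conversely); and the residual coincidence \( \tuple{T,A\subs[x]{U}} \in \denote{\alpha\subs[x]{\lnot x}} \) iff \( \tuple{T,A\subs[x]{\Mod\setminus U}} \in \denote{\alpha} \) is the usual valuation-flip substitution lemma, whose genuine care points --- that restriction \( \res s \) commutes with the update \( A \subs[x]{U} \), that bound variables stay disjoint from \( x \), and (in its own quantifier case) that the \( x \)-toggle on models is an involution conjugating the two predicate operators so their fixed points correspond --- are precisely the ones you flag.
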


\begin{proposition}
	\label{denot-sem-fxpt}
	\( \denote{\phi}\denote{\sigma  \phi} = \denote{\sigma \phi} \). 
\end{proposition}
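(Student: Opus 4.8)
The plan is to recognise this as an instance of the Knaster--Tarski fixed-point theorem and to feed in the monotonicity of \( \denote{\phi} \) supplied by the preceding proposition. Since \( \sigma \) ranges over the two quantifiers, the argument splits into the cases \( \sigma = \mu \) and \( \sigma = \nu \), which are dual; I would carry out the \( \mu \) case in full and dualise for \( \nu \).

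For \( \sigma = \mu \), I would write \( P = \denote{\mf\phi} = \bigcap\Setof{U \subseteq \Mod}[\denote{\phi}U \subseteq U] \), the intersection of all prefixed points of \( \denote{\phi} \), and establish \( \denote{\phi}P = P \) in two steps. First, that \( P \) is a prefixed point: for any \( U \) with \( \denote{\phi}U \subseteq U \) we have \( P \subseteq U \) by definition of the intersection, so monotonicity gives \( \denote{\phi}P \subseteq \denote{\phi}U \subseteq U \), and intersecting over all such \( U \) yields \( \denote{\phi}P \subseteq P \). Second, that this prefixed point is in fact fixed: applying monotonicity to \( \denote{\phi}P \subseteq P \) shows \( \denote{\phi}P \) is itself a prefixed point, so minimality of \( P \) forces \( P \subseteq \denote{\phi}P \), and the two inclusions combine to \( \denote{\phi}P = P \).

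The case \( \sigma = \nu \) follows by reversing all inclusions: with \( Q = \denote{\nf\phi} = \bigcup\Setof{U \subseteq \Mod}[U \subseteq \denote{\phi}U] \), every postfixed point \( U \) satisfies \( U \subseteq Q \) and hence \( U \subseteq \denote{\phi}U \subseteq \denote{\phi}Q \) by monotonicity, so taking the union gives \( Q \subseteq \denote{\phi}Q \); monotonicity then makes \( \denote{\phi}Q \) a postfixed point, whence \( \denote{\phi}Q \subseteq Q \) by maximality of \( Q \). I do not anticipate any genuine obstacle here: the entire content is the standard two-step Knaster--Tarski pattern, with monotonicity of \( \denote{\phi} \) as the sole external ingredient, and care is needed only to track which of the two inclusions is being used at each stage.
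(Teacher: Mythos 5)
Your proof is correct: both inclusions in each case are the standard Knaster--Tarski argument, with monotonicity of \( \denote{\phi} \) (the preceding proposition) as the only ingredient. The paper does not spell out a proof at all---it defers to a standard reference for exactly this argument---so your proposal coincides with the intended one.
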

%

\subsubsection{Game semantics}

Rather than inductively ascribing a set of models to each formula we can instead view the connectives, modalities and quantifiers as operations that `reduce' the claim `\( \alpha \) is true in \( M \)' to analogous claims about immediate subformulas and submodels.
In the case of the purely modal vocabulary, it is straightforward to read the Kripke semantics in this style.
However, from this local perspective the statement `\( \mu  \phi \) is true at \( M \)' is re-expressed as the just as complex claim `\( \phi(\mu  \phi) \) is true at \( M \)'; the global constraint ensures that this reduction is permitted only finitely often.

This semantics has a natural formulation as a two-player game, one player's aim being to `verify' the claim `\( \alpha \) is true in \( M \)' against moves by their opponent.
For our purposes, we need only the `certificate' of this verification process, i.e., a winning strategy, which we call a \emph{verification}:
\begin{definition}[Verification]
A \emph{pre-verification} is an \( (\form \times \Mod ) \)-labelled tree \( V = \tuple{ T , \lambda  } \) satisfying, for every \( v \in T \):
\begin{enumerate}
	\item If \( \lambda (v) = ( x , M ) \in \Var \times \Mod \) then \( M \in \denote x \).
	\item If \( \lambda (v) = ( \disj \Gamma , M ) \) there exists \( \gamma \in \Gamma \) and \( v' \in \Suc[V] v \) with \( \lambda (v') = ( \gamma , M ) \).
	\item If \( \lambda (v) = ( \diamond\alpha , M ) \) there exists \( M ' \in M_* \) and \( v' \in \Suc[V] v \) with \( \lambda (v') = ( \alpha , M' ) \).
	\item If \( \lambda (v) = ( \conj \Gamma , M ) \) then for all \( \gamma \in \Gamma \) there exists \( v' \in \Suc[V] v \) with \( \lambda (v') = ( \gamma , M ) \).
	\item If \( \lambda (v) = ( \square \alpha , M ) \) then for all \( M' \in M_* \) there exists \( v' \in \Suc[V] v \) with \( \lambda (v') = ( \alpha , M' ) \).
	\item If \( \lambda (v) = ( \sigma  \phi , M ) \) there exists \( v' \in \Suc[V] v \) with \( \lambda (v') = ( \phi(\sigma \phi) , M ) \).
\end{enumerate}
A \emph{verification} is a pre-verification \( V \) such that every branch \( ( \alpha_i , M_i )_i \) of \( V \), the thread \( ( \alpha_i)_i \) is $\nu$.
By \( \oper{\alpha} \) is denoted the set of models \( M \) for which there exists a verification \( V \) with root \( ( \alpha , M ) \).
\end{definition}

Unlike Kripke semantics, each (pre-)verification only references subtrees of the tree modal labelling the root.
But following that semantics, predicates can be interpreted as monotone functions over the set of models:
\[ \oper{\phi} \colon U \mapsto \Setof{ \tuple{ T , A } }[ \tuple{ T , A\subs [x] U } \in \oper{\phi x } ] . \]

It can be shown that the game semantics ascribes the appropriate fixed points to the two quantifiers:

\begin{proposition}
	\label{FST-lemma}
	If \( \oper{\phi} U \subseteq U \) then \( \oper{\mu \phi} \subseteq U \) and if \( U \subseteq \oper{\phi} U \) then \( U \subseteq \oper{\nu \phi} \).
\end{proposition}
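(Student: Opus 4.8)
The two claims are precisely the induction (for \( \mf\phi \)) and coinduction (for \( \nf\phi \)) principles for the quantifiers, so the plan is to establish them by the Knaster--Tarski method transported to the verification semantics. Writing \( \phi = \lambda x.\,\alpha \), the pivotal tool in both directions is a translation between verifications of the unfolding \( \phi(\sigma\phi) \) and verifications of the body \( \alpha \) under the reinterpretation \( \subs[x]U \) of the bound variable: a verification of \( (\alpha, M\subs[x]U) \) whose \( x \)-leaves all carry models in \( U \) can be grafted into a verification of \( (\phi(\sigma\phi), M) \) by replacing each \( x \)-leaf \( (x, N) \) (admissible exactly when \( N \in U \)) by a subtree rooted at \( (\sigma\phi, N) \); conversely, a verification of \( (\phi(\sigma\phi), M) \) can be severed at its outermost \( \sigma\phi \)-descendants and relabelled into a candidate verification of \( (\alpha, M\subs[x]U) \). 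Proposition~\ref{thd-subst} is what makes the bookkeeping honest, since the outermost occurrences of \( \sigma\phi \) reached along threads from \( \phi(\sigma\phi) \) are exactly the images of the free \( x \) under \( \subs[x]{\sigma\phi} \); and \( x \)-positivity of \( \alpha \) guarantees no \( \lnot x \)-leaf can spoil the correspondence.

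For the coinductive half, assume \( U \subseteq \oper{\phi}U \) and fix \( M \in U \). Since \( M \in \oper{\phi}U \), the model \( M\subs[x]U \) lies in \( \oper{\alpha} \), so it has a verification \( V_M \) whose \( x \)-leaves all carry models in \( U \). I would build a verification of \( (\nf\phi, M) \) coinductively: pass from \( (\nf\phi,M) \) to \( (\phi(\nf\phi),M) \), graft \( V_M \) as above, and at each newly created leaf \( (\nf\phi, N) \) repeat with \( V_N \), which exists because \( N \in U \subseteq \oper{\phi}U \). The result is a pre-verification by construction, and the work is the branch condition. An infinite branch either eventually stays inside a single grafted copy of some \( V_N \), hence is an infinite branch of a genuine verification and so \( \nu \) (and, never meeting an \( x \)-leaf, its thread is untouched by the substitution); or it meets the grafting points \( (\nf\phi, \cdot) \) infinitely often, so \( \nf\phi \) recurs infinitely along it. In the latter case a short complexity computation -- each occurrence of \( \nf\phi \) is a literal subformula of every formula lying above it before the next occurrence, and the complexity of a formula bounds that of its literal subformulas, so the formulas strictly between consecutive occurrences have complexity \( \ge \rk{\nf\phi} \) -- together with Proposition~\ref{thd-progress} identifies \( \nf\phi \) as the unique eventually-minimal recurring formula, so the branch is \( \nu \). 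Hence the tree is a verification and \( M \in \oper{\nf\phi} \).

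For the inductive half, assume \( \oper{\phi}U \subseteq U \) and suppose toward a contradiction that some \( M \in \oper{\mf\phi} \setminus U \) has a verification \( V \). I would trace a branch of \( V \) maintaining the invariant that it passes through a node \( (\mf\phi, N) \) with \( N \notin U \). Given such a node, its child is \( (\phi(\mf\phi), N) \), and since \( \oper{\phi}U \subseteq U \) we have \( N \notin \oper{\phi}U \), that is \( N\subs[x]U \notin \oper{\alpha} \). Severing the subtree at \( (\phi(\mf\phi),N) \) at its outermost \( \mf\phi \)-descendants and relabelling them as \( x \)-leaves produces a candidate verification of \( (\alpha, N\subs[x]U) \); were every severed model in \( U \), this candidate would be genuine (its surviving infinite branches avoid the cut points and so are \( \nu \) branches of \( V \)), contradicting \( N\subs[x]U \notin \oper{\alpha} \). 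Therefore some outermost \( \mf\phi \)-descendant \( (\mf\phi, N') \) has \( N' \notin U \), restoring the invariant. Iterating yields an infinite branch of \( V \) meeting \( \mf\phi \)-nodes infinitely often; by the same complexity computation and Proposition~\ref{thd-progress}, its thread is \( \mu \), contradicting the fact that \( V \) is a verification. Hence \( \oper{\mf\phi} \subseteq U \).

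The main obstacle is the cut-and-graft correspondence and the attendant thread bookkeeping: one must verify that severing a verification of \( \phi(\sigma\phi) \) at its outermost \( \sigma\phi \)-descendants (and, dually, grafting at \( x \)-leaves) yields exactly a (pre-)verification of the body \( \alpha \) under \( \subs[x]U \), stripping or restoring the \( x \)-relabelling on the models and adjusting formulas along \( \subs[x]{\sigma\phi} \). The delicate point, present in both directions, is that the \( \mu \) versus \( \nu \) character of the manufactured threads must come out correctly, and this hinges entirely on \( \sigma\phi \) being the eventually-minimal recurring formula of any branch visiting its occurrences infinitely often -- precisely where Proposition~\ref{thd-progress} and the substitution-complexity lemma do the heavy lifting.
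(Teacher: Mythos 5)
The paper itself states this proposition without proof (``It can be shown that\dots''), so there is no argument of the paper to compare against; your cut-and-graft strategy is the standard one, and one half of your thread bookkeeping is sound: for branches that hit the grafting/severing points infinitely often, the formulas between consecutive occurrences of \( \sigma\phi \) are images \( \beta\subs[x]{\sigma\phi} \) of formulas \( \beta \) lying on a thread ending in \( x \), hence contain \( \sigma\phi \) as a literal subformula, and Proposition~\ref{thd-progress} then correctly identifies \( \sigma\phi \) as the minimal recurring formula.

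The genuine gap is the complementary case, and it occurs in both halves. In the coinductive half you dismiss branches that eventually stay inside one grafted copy of \( V_N \) with the parenthetical ``never meeting an \( x \)-leaf, its thread is untouched by the substitution''; in the inductive half you claim the surviving branches of the severed tree ``are \( \nu \) branches of \( V \)''. Both claims are false as stated: grafting forces every label of \( V_N \) to be replaced by its image under \( \subs[x]{\sigma\phi} \), and a formula with \( x \) free is changed by this substitution even when the branch never descends to the variable \( x \) itself (a branch cycling on \( \nu z.\, (\diamond z \vee \square x) \) never meets \( x \), yet every formula on it contains \( x \) free). So what you actually need is a lemma: an infinite thread that never passes through the formula \( x \) has the same \( \mu \)/\( \nu \) character as its pointwise image under \( \subs[x]{\sigma\phi} \). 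This is not automatic, because substitution increases complexities non-uniformly, so a recurring \( \mu \)-formula whose occurrences of \( x \) sit shallow could in principle overtake the minimal recurring \( \nu \)-formula \( \eta \) after substitution. The lemma is true, but proving it takes real work: all formulas in the tail of the thread have the same free variables as \( \eta \) (free variables can only shrink along threads, and the thread keeps returning to \( \eta \)), so if \( x \notin \fv{\eta} \) the tail is literally unchanged; while if \( x \in \fv{\eta} \) one must show that every tail formula contains \( \eta \) as a literal subformula --- which requires excluding, by a complexity induction, re-entries into \( \eta \) that do not pass through its bound variable --- whence every tail formula of the image contains \( \eta\subs[x]{\sigma\phi} \) literally and minimality is preserved. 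Your closing paragraph flags the thread bookkeeping as the delicate point, but locates it only in the branches that visit \( \sigma\phi \) infinitely often, which is exactly the case you do handle; the missing argument concerns the branches that do not.
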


The equivalence of game and Kripke semantics, often referred to as the `fundamental semantic theorem', was first established by Streett and Emerson~\cite{StrEmer89}.
{Contrary to some sources, however, the theorem admits an easy proof on the basis of a few elementary lemmas relating the two semantics.}
Indeed, we have already all the necessary ingredients.

\begin{theorem}[Fundamental Semantic Theorem]
	\label{FST}
	For all \( \alpha \), \( \denote{\alpha} = \oper{\alpha} \).
\end{theorem}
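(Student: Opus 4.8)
The plan is to prove the two inclusions $\denote{\alpha} \subseteq \oper{\alpha}$ and $\oper{\alpha} \subseteq \denote{\alpha}$ separately, since each direction corresponds to one of the two fixed-point characterisations we need to match up. Both inclusions proceed by induction on the structure of $\alpha$ (using the literal subformula ordering, since that is well-founded), but the real work is in the quantifier cases, where the inductive structure of $\denote{\cdot}$ (as a genuine least/greatest fixed point, Proposition~\ref{denot-sem-fxpt}) must be reconciled with the global $\nu$-condition defining verifications. The purely modal and propositional clauses match up almost definitionally: for instance, $M \in \denote{\square\alpha}$ iff $M_* \subseteq \denote\alpha$, which by the induction hypothesis equals $\oper\alpha$, and this is exactly what clause (5) of the verification definition requires; analogous reasoning handles $\diamond$, $\conj$, $\disj$, and literals. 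A subtlety is that verifications are defined as whole trees rather than pointwise, so I would first record the easy observation that $\oper{\cdot}$ respects the same local unfolding as $\denote{\cdot}$ at non-quantified formulas.

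\emph{For the inclusion $\oper{\alpha} \subseteq \denote{\alpha}$}, the clean route is Proposition~\ref{FST-lemma}. By induction, it suffices to treat the quantifier cases, so suppose $\alpha = \mf\phi$ and $M \in \oper{\mf\phi}$. Applying Proposition~\ref{FST-lemma} with $U = \denote{\mf\phi}$, it is enough to check $\oper\phi \denote{\mf\phi} \subseteq \denote{\mf\phi}$; but $\oper\phi\denote{\mf\phi}$ and $\denote\phi\denote{\mf\phi}$ agree by the induction hypothesis applied to the immediate subformula $\phi(\mf\phi)$, and $\denote\phi\denote{\mf\phi} = \denote{\mf\phi}$ by Proposition~\ref{denot-sem-fxpt}. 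The $\nf$ case is dual, using the second half of Proposition~\ref{FST-lemma} with $U = \denote{\nf\phi}$. I would phrase the induction carefully so that the ``induction hypothesis at $\phi(\sigma\phi)$'' is legitimate: since substitution does not increase complexity in a way that breaks the ordering, one runs the argument by induction on $\rk\alpha$ together with a subsidiary measure, or equivalently one shows the equality $\oper\psi = \denote\psi$ holds for all subformulas simultaneously.

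\emph{For the reverse inclusion $\denote{\alpha} \subseteq \oper{\alpha}$}, the analogous tactic is to establish that $\oper{\cdot}$ satisfies the defining fixed-point inequalities of $\denote{\cdot}$. Concretely, for $\mf\phi$ one shows $\oper{\mf\phi}$ is a pre-fixed point of $\denote\phi$, i.e.\ $\denote\phi\,\oper{\mf\phi} \subseteq \oper{\mf\phi}$: given a model in the left-hand side, clause (6) lets us prepend a $\mu$-unfolding step to a verification of $\phi(\mf\phi)$ at that model (using the induction hypothesis to convert $\denote{\phi(\mf\phi)}$ into $\oper{\phi(\mf\phi)}$). Since $\denote{\mf\phi}$ is the \emph{least} such pre-fixed point, $\denote{\mf\phi} \subseteq \oper{\mf\phi}$ follows. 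The dual argument handles $\nf\phi$ via greatest post-fixed points.

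\textbf{The main obstacle} is the soundness of the $\mu$-unfolding step in this last paragraph: when we prepend $\mu$-steps, we must ensure the resulting tree is a genuine \emph{verification}, i.e.\ that every infinite branch is $\nu$. The danger is a branch that unfolds $\mf\phi$ infinitely often, which by Proposition~\ref{thd-progress} would be a $\mu$-thread and hence forbidden. Controlling this requires that the $\mu$-unfoldings terminate along every branch, which is precisely the content of $\denote{\mf\phi}$ being a \emph{least} fixed point; I expect to need a well-foundedness or rank argument (an ordinal approximation $\denote{\mf\phi} = \bigcup_\xi \denote\phi^\xi(\emptyset)$) to produce verifications whose $\mu$-unfolding depth is bounded along each branch, rather than naively iterating clause (6). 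This is where the ``easy proof'' advertised before the theorem must do its delicate work, and it is the step I would write out in full detail.
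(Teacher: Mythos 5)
You follow the same route as the paper: its entire proof reads ``By induction on \( \rk{\alpha} \) using Propositions~\ref{denot-sem-fxpt} and \ref{FST-lemma}'', and your decomposition---definitional base and modal cases, Proposition~\ref{FST-lemma} applied with \( U = \denote{\sigma\phi} \) (a pre-/post-fixed point by Proposition~\ref{denot-sem-fxpt}) for one inclusion, extremality of \( \denote{\sigma\phi} \) for the other---is exactly that skeleton. However, two of your steps are genuinely flawed. First, the induction hypothesis can never be applied at the unfolding \( \phi(\sigma\phi) \), which you do in both directions: \( \rk{\phi(\sigma\phi)} \) is in general strictly larger than \( \rk{\sigma\phi} \) (take \( \phi = \lambda x.\,\diamond x \)), and no ``subsidiary measure'' can repair this, because \( \sigma\phi \) and \( \phi(\sigma\phi) \) are derived subformulas of \emph{each other}, so no well-founded ordering puts the unfolding strictly below. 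The correct move is to apply the hypothesis only to the body \( \phi x \), whose rank is \( \rk{\sigma\phi} - 1 \), and to observe that both \( \denote{\phi} \) and \( \oper{\phi} \) are by definition determined by the semantics of \( \phi x \); the hypothesis then yields \( \denote{\phi} = \oper{\phi} \) as operators on \( 2^{\Mod} \), and the unfolding never enters the induction. With this fix, your inclusion \( \oper{\mu\phi}\subseteq\denote{\mu\phi} \) (and dually \( \denote{\nu\phi}\subseteq\oper{\nu\phi} \)) is exactly the paper's argument.

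Second, in the hard direction your central step does not parse, and your proposed remedy attacks the wrong problem. From \( M \in \denote{\phi}\,\oper{\mu\phi} \) you cannot ``use the induction hypothesis to convert \( \denote{\phi(\mu\phi)} \) into \( \oper{\phi(\mu\phi)} \)'': besides the rank issue, \( \denote{\phi}\,\oper{\mu\phi} \) is not \( \denote{\phi(\mu\phi)} = \denote{\phi}\,\denote{\mu\phi} \)---identifying those two sets is tantamount to the theorem itself. What is actually needed, after rewriting \( \denote{\phi}\,\oper{\mu\phi} = \oper{\phi}\,\oper{\mu\phi} \) via the corrected hypothesis, is a composition (grafting) property of verifications: given \( M \in \oper{\phi}\,\oper{\mu\phi} \), take a verification of \( (\phi x , M') \) for the relabelled model, apply \( \subs[x]{\mu\phi} \) to its formula labels, graft at every leaf labelled \( (x , N') \) a verification of \( (\mu\phi , N) \)---these exist precisely because \( N \in \oper{\mu\phi} \)---and prepend a single clause-(6) step. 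No ordinal approximation is needed, nor would it help: the successor step of your transfinite induction requires exactly the same grafting and the same branch check. That check is discharged directly, not by bounding unfolding depth: an infinite branch of the composite either has a tail inside some graft, and is then \( \nu \) because the graft is already a complete verification, or it stays in the \( \phi x \)-part, and its thread is the \( \subs[x]{\mu\phi} \)-image of a \( \nu \)-thread, which remains \( \nu \) by Propositions~\ref{thd-subst} and \ref{thd-progress}. Leastness of \( \denote{\mu\phi} \) is then used once, abstractly. Finally, the dual hard inclusion \( \oper{\nu\phi}\subseteq\denote{\nu\phi} \) needs the converse \emph{decomposition} of a verification of \( \phi(\nu\phi) \) into a \( \phi x \)-part and verifications of \( \nu\phi \); your sketch dismisses this as ``dual'', but it is a separate argument that your proposal never supplies.
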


\begin{proof}
	By induction on \( \rk{\alpha} \) using Propositions~\ref{denot-sem-fxpt} and \ref{FST-lemma}.
\end{proof}

\begin{corollary}
	\( \oper{\neg{\alpha}} = \Mod \setminus \oper{{\alpha}} \).
\end{corollary}

\section{Finitary and Illfounded Proofs}
\label{s-proof-systems}
The basic object of deduction in the finitary and illfounded proof system is the \emph{sequent}, expressions of the form \( \Phi \Rightarrow \Psi \) where \( \Phi \) and \( \Psi \) are finite sequences of formulas.
In section~\ref{s-cyclic-proofs} we introduce a third proof system which operates on a slightly broader notion of sequent in which `formula' are constructed via a richer grammar and the sequent itself carries an additional, non-formula, component that interacts with some inferences of the calculus.
A uniform definition of `proof system' should, therefore, be broad enough to accommodate both choices of deduction elements.
In the interests of space, however, we formulate the concept in terms of `ordinary' sequents hoping that the general intension of the definition is clear.

The set of sequents is denotes \( \sequents \).
We employ symbols \( \Phi \), \( \Psi \), etc.\ as metavariables for finite sequences of formulas.
In a sequent \( \Phi \Rightarrow \Psi \), the terms \emph{antecedent} and \emph{consequent} refer to \( \Phi \) and \( \Psi \) respectively, and the sequent is \emph{intuitionistic} if \( \lh{\Psi} \le 1 \). 
In an intuitionistic sequent we refer to the formula in \( \Psi \) (if it is non-empty) as the \emph{head} and to \( \Phi \) as the \emph{context}.
In the context of illfounded proofs it is important to have a well-defined notion of formula \emph{occurrence}.
Once established, however, the ordering of formulas within sequents will be largely ignored and left implicit.

A \emph{position} is a pair \( p = (s,k) \in \setof{\rul L,\rul R} \times \omega \). If \( s = \rul L \) we refer to \( p \) as a \emph{left} position; otherwise, \( p \) is a \emph{right} position.
A \emph{formula occurrence} in a sequent \( \sequent = \seq\Phi\Psi \) is a position \( p = (s,k) \) such that \( k < \lh \Phi \) if \( p \) is a left position, and \( k < \lh{\Psi} \) otherwise.
The formula at position \( p = ( s , k ) \) in \( \sequent \) is denoted \( \sequent(p) \), where
\[ 
	(\seq\Phi\Psi)(p) =
	\begin{cases}
		\Phi(k), &\text{if } s = \rul L,
		\\
		\Psi(k), &\text{if } s = \rul R.
	\end{cases}
\]

An \emph{inference} is a pair \( ( \sequent , S )  \) where \( \sequent \) is a sequent and \( S \) is a finite set of sequents, usually presented in the form:
\[
\begin{prooftree}
	\hypo{ S }
	\infer1{ \sequent }
\end{prooftree}
\qquad\text{or}\qquad
\begin{prooftree}
	\hypo{ \sequent_0 }
	\hypod
	\hypo{ \sequent_{n} }
	\infer3{ \sequent }
\end{prooftree}
\quad\text{if \( S = \setof{ \sequent_0, \dotsc, \sequent_n } \).}
\]
The sequent \( \sequent \) is referred to as the \emph{conclusion} of the inference and elements of \( S \) as \emph{premises}.
%
A set of inferences constitutes a \emph{rule}. 
Typically, rules are specified by an inference scheme such as the rule (\( \disjR \)) comprising the inference
\[
  \begin{prooftree}
	\hypo{ \seq \Phi {\Psi , \gamma} }
	\infer1{ \seq \Phi {\Psi, \disj \Gamma }}
\end{prooftree}
\]
for all \( \Phi \), \( \Psi \), \( \Gamma \) and \( \gamma \in \Gamma \).
An inference is an \emph{instance} of a rule iff it is an element of that rule.
For our purposes each rule is further identified as being either a \emph{logical} or \emph{structural} rule, the only difference  being how they may be utilised within derivations and proofs.

\begin{definition}[Derivation]
	A \emph{derivation} over a set of rules \( \rules \) is a tree \( d \) with labelings \( \sequent_d \colon d \to \sequents \) and \( \rules_d \colon d \to \rules \) satisfying:
	\begin{enumerate}
		\item for each vertex \( v \in d \),\label{it-derivation-local} 
		\( ( \sequent _d(v) , \setof{ \sequent_d(u) }[u \in \Suc v] )  \in \rules_d(u)  \).
		\item for every branch \( B \subseteq d \) there are infinitely many \( i < \omega \) such that \( \rules_d(B(i)) \) is a logical rule.\label{it-derivation-global}
	\end{enumerate}
	The \emph{endsequent} of a derivation is the sequent labelling its root.
	A derivation is \emph{constructive} if every sequent occurring in the derivation is intuitionistic.
\end{definition}
The second condition in the definition is primarily of significance for the illfounded proof system introduced in section~\ref{s-ill-founded-proofs} where certain derivations with infinite paths qualify as proofs.
As a consequence of the first condition, leaves of a derivation are labelled by rule instances with no premises.

Not every derivation constitutes a \emph{proof}.
In finitary proof systems, only finite derivations will be permitted as proofs, while in the system of illfounded proofs infinite derivations are recognised as proofs \emph{if} their branches fulfil a syntactic correctness condition.

\subsection{Basic fixed-point logic}
\label{s-kmm}

The first system of derivations and proofs we introduce forms the core of the other calculi in this paper.
This is the system \( \kmm \) of \emph{basic fixed-point logic}.
The structural and logical rules of \( \kmm \) are listed in fig.~\ref{f-kmm-struc} and \ref{f-kmm-logic} respectively.
Rules denoted (\starL) are called \emph{left} rules and (\starR) are \emph{right} rules.
Other rules, namely (\hyp) and (\idLR), are both left and right rules.
Leaves of a derivation labelled by the structural rule (\( \hyp \)) are called \emph{buds} and serve to allow conditional proofs, i.e., proofs from \emph{assumptions}.
Thus a finite derivation in the rules of \( \kmm \) will be called a (\( \kmm \)-)\emph{proof from assumptions} with a (\( \kmm \)-)\emph{proof} being a finite derivation \emph{without} buds.

The structural rules (\excL) and (\excR) are jointly called the \emph{exchange} rules; (\weakL) and (\weakR) are \emph{weakening} rules, and (\conL) and (\conR) \emph{contraction} rules.
The rules (\idL), (\idR) and (\idLR) are collectively the \emph{identity} rules.
By (\exc) (respectively, (\weak) and (\con)) we denote the rule expressing closure under applications of the exchange (resp., weakening and exchange, contraction and exchange) rules, namely (\exc) denotes the collection of instances \( (\sequent , \setof{\sequent'} ) \) for which there is a (possibly trivial) proof from assumptions using (\excL) and (\excR) only whose conclusion is \( \sequent \) and (only) bud is \( \sequent' \).
When there is no cause for confusion we leave instances of the structural rules implicit.


In each of the rules of \( \kmm \) with the exception of (\hyp) and (\idLR), a single formula (occurrence) in the conclusion is designated the \emph{principal} formula (occurrence). 
This is the formula occurrence \( (\rul L , 0) \) for a left rule and \( (\rul R , \lh{\Psi} ) \) for a right rule with \( \Psi \) as in the figures.
In the case of (\hyp) and (\id), all formula occurrences in the conclusion are designated as principal.
The `matching' formula occurrence in the premise(s) of the rule are \emph{minor} formula( occurrence)s. In the logical rules, the minor formula is the formula occurrence at the position of the principal formula; for the structural rules it is the positions inhabited by \( \alpha \) in fig.~\ref{f-kmm-struc}.
Other formula occurrences are \emph{side} formulas. 
With the exception of the modal inferences (\sqR) and (\diL), the sequent of side formulas occurs in both conclusion and each premise in the same order.

\begin{figure*}
	\centering
	\ebproofset{small}
	\begin{tabular}{l@{\qquad}c@{\qquad}c@{\qquad}c}
		\begin{prooftree}
			\hypo{ \Phi , \alpha , \Lambda \Rightarrow \Psi }
			\infer1[\excL]{ \alpha , \Phi , \Lambda \Rightarrow \Psi }
		\end{prooftree}
		&
		\begin{prooftree}
			\hypo{ \alpha , \alpha , \Phi \Rightarrow \Psi }
			\infer1[\conL]{ \alpha , \Phi \Rightarrow \Psi }
		\end{prooftree}
		&
		\begin{prooftree}
			\hypo{ \Phi \Rightarrow \Psi }
			\infer1[\weakL]{ \alpha , \Phi \Rightarrow \Psi }
		\end{prooftree}
		&
		\begin{prooftree}
			\hypo{}
			\infer1[\hyp]{ \Phi \Rightarrow \Psi }
		\end{prooftree}
		\\[1em]
		\begin{prooftree}
			\hypo{ \Phi \Rightarrow \Psi , \alpha , \Lambda }
			\infer1[\excR]{ \Phi \Rightarrow \Psi , \Lambda , \alpha }
		\end{prooftree}
		&
		\begin{prooftree}
			\hypo{\Phi \Rightarrow \Psi , \alpha , \alpha  }
			\infer1[\conR]{ \Phi \Rightarrow \Psi , \alpha }
		\end{prooftree}
		&
		\begin{prooftree}
			\hypo{ \Phi \Rightarrow \Psi }
			\infer1[\weakR]{ \Phi \Rightarrow \Psi , \alpha }
		\end{prooftree}
	\end{tabular}
	\caption{Structural rules of \( \kmm \). Rules denoted \( \starL \) (\( \starR \)) are \emph{left} (resp.\ \emph{right}) rules.}
	\label{f-kmm-struc}
\end{figure*}
%
\begin{figure*}
	\centering
	\ebproofset{small}
	\begin{tabular}{c@{\quad}c@{\quad}c@{\quad}r}
		\begin{prooftree}
			\hypo{ \seq {\gamma , \Phi} \Psi }
			\infer[left label={$\gamma \in \Gamma$}]1[\conjL]{ \seq {\conj \Gamma , \Phi} \Psi }
		\end{prooftree}
		&
		\begin{prooftree}
			\hypo{ \Setof{ \seq {\gamma , \Phi} \Psi }[\gamma \in \Gamma] }
			\infer1[\disjL]{ \seq {\disj \Gamma , \Phi} \Psi }
		\end{prooftree}
		&
		\begin{prooftree}
			\hypo{ \seq {\phi ( \sigma  \phi ) , \Phi} \Psi }
			\infer1[\sigL]{ \seq {\sigma \phi , \Phi} \Psi }
		\end{prooftree}
		&
		\begin{prooftree}
			\hypo{ \seq {\alpha , \Phi} \Psi }
			\infer1[\diL]{ \seq {\diamond \alpha , \square \Phi} {\diamond \Psi} }
		\end{prooftree}
		\\[1.5em]
		\begin{prooftree}
			\hypo{ \setof{ \seq \Phi {\Psi , \gamma} }[ \gamma \in \Gamma ] }
			\infer1[\conjR]{ \seq \Phi {\Psi , \conj \Gamma} }
		\end{prooftree}
		&
		\begin{prooftree}
			\hypo{ \seq \Phi {\Psi , \gamma} }
			\infer[left label={$\gamma \in \Gamma$}]1[\disjR]{ \seq \Phi {\Psi , \disj \Gamma } }
		\end{prooftree}
		&
		\begin{prooftree}
			\hypo{ \seq \Phi {\Psi , \phi ( \sigma  \phi ) }}
			\infer1[\sigR]{ \seq \Phi {\Psi , \sigma \phi }}
		\end{prooftree}
		&
		\begin{prooftree}
			\hypo{ \seq \Phi {\Psi , \alpha} }
			\infer1[\sqR]{ \seq {\square \Phi} {\diamond \Psi , \square  \alpha} }
		\end{prooftree}
		\\[1.5em]
		\begin{prooftree}
			\axiom[\idL]{ \lseq {\alpha, \neg \alpha} }
		\end{prooftree}
		&
		\begin{prooftree}
			\axiom[\idR]{ \rseq  {\neg \alpha , \alpha } }
		\end{prooftree}
		&
		\begin{prooftree}
			\axiom[\idLR]{\seq {\alpha } \alpha }
		\end{prooftree}
		&
		%
		%
		%
		%
		%
	\end{tabular}
	\caption{Logical rules of system \( \kmm \). Rules denoted \( \starL \) (\( \starR \)) are \emph{left} (resp.\ \emph{right}) rules.}
	\label{f-kmm-logic}
\end{figure*}

The calculus \( \kmm \) is of little significance beyond it being the common core of the other proof systems we introduce.
We collect some observations on the rules of \( \kmm \) for later use.
\begin{proposition}
	\label{kmm-kmmi}
	Any \( \kmm \)-derivation whose endsequent has empty consequent is constructive.
\end{proposition}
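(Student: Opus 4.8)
The plan is to prove the stronger statement that \emph{every} sequent occurring in the derivation has empty consequent; since a sequent with empty consequent is trivially intuitionistic ($\lh\Psi = 0 \le 1$), this immediately yields constructivity. The reason for strengthening the claim is that the property ``the consequent has length $\le 1$'' is not self-propagating upwards: the rule $\conR$ turns a conclusion $\seq\Phi{\Psi,\alpha}$ into a premise $\seq\Phi{\Psi,\alpha,\alpha}$, so from a singleton consequent one could in principle produce a consequent of length two. Emptiness of the consequent, by contrast, \emph{is} preserved by the relevant rules, and this is what makes the induction go through.

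I would argue by induction on the length of the path from the root to a vertex, reading the derivation top-down. The root carries the endsequent, which has empty consequent by hypothesis. For the inductive step, suppose a vertex $v$ has a sequent $\sequent_d(v)$ with empty consequent, and consider the rule instance of which $\sequent_d(v)$ is the conclusion. The key observation, obtained by inspecting Figures~\ref{f-kmm-struc} and~\ref{f-kmm-logic}, is that every rule except the left structural rules, the left logical rules, $\idL$ and $\hyp$ carries a displayed formula in the consequent of its conclusion: the right logical rules $\conjR,\disjR,\sigR,\sqR$, the structural rules $\excR,\conR,\weakR$, and the axioms $\idR,\idLR$ all have conclusions with \emph{non-empty} consequent. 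Hence none of these can be the rule applied at $v$, and the rule at $v$ must be one of $\excL,\conL,\weakL,\conjL,\disjL,\sigL,\diL,\idL,\hyp$.

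It then remains to check that each of these candidate rules propagates emptiness of the consequent to its premises. For the left structural and left logical rules this is immediate, since the consequent is copied unchanged into every premise; $\idL$ and $\hyp$ are leaves and so impose nothing; and for $\diL$ one uses that the conclusion's consequent $\diamond\Psi$ is empty exactly when $\Psi$ is, so that the premise's consequent $\Psi$ is again empty. Thus every child of $v$ carries a sequent with empty consequent, completing the induction, and the argument applies verbatim to infinite (illfounded) derivations since every vertex lies at finite distance from the root. I expect no genuine obstacle here: the proof is a routine rule inspection, and the only point demanding care is the decision to track emptiness rather than intuitionisticity, together with the observation that the potentially troublesome rules $\conR$, $\weakR$ and $\idR$ are automatically excluded precisely because their conclusions always carry a formula on the right.
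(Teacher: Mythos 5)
Your proof is correct, and it is exactly the routine rule-inspection argument the paper has in mind — the authors state Proposition~\ref{kmm-kmmi} without proof as an immediate observation, since no rule of \( \kmm \) whose conclusion has empty consequent can introduce a formula into the consequent of a premise. Your one point of genuine care — strengthening the invariant from ``intuitionistic'' to ``empty consequent,'' because (\conR) shows that intuitionisticity alone does not propagate upwards — is well taken, as is the remark that the induction on depth applies verbatim to illfounded derivations.
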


\begin{proposition}
	\label{kmm-neg}
	There is a \( \kmm \)-proof from \( \Phi \Rightarrow \Psi \) assumptions \( \setof{\seq{\Phi_i}{\Psi_i}}[i \in A] \)
	 iff there is a \( \kmm \)-proof of \( \neg\Psi , \Phi \Rightarrow \) from assumptions \( \setof{\lseq{\neg{\Psi_i},\Phi_i}}[i \in A] \).
\end{proposition}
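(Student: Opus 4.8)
The plan is to translate \kmm-derivations across the negation involution, exploiting that moving a formula between antecedent and consequent corresponds to negating it. Concretely, I would send each sequent \( \seq\Phi\Psi \) to \( \lseq{\neg{\Psi},\Phi} \) and show that this operation carries valid \kmm-inferences to valid \kmm-inferences, dualising the principal connective. The heart of the argument is the following correspondence between rule instances: every right logical rule becomes the matching left logical rule on the dual operator---\( \conjR \leftrightarrow \disjL \), \( \disjR \leftrightarrow \conjL \), \( \sigR \leftrightarrow \sigL \) (with \( \sigma \) replaced by \( \neg\sigma \)) and \( \sqR \leftrightarrow \diL \)---while each left logical rule and each structural rule is preserved modulo exchange, with the block \( \neg{\Psi} \) carried along the antecedent. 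The identity rules collapse: both \( \idR \) and \( \idLR \) translate to instances of \( \idL \) using the involution \( \neg{\neg\alpha}=\alpha \), and buds \( \seq{\Phi_i}{\Psi_i} \) translate to buds \( \lseq{\neg{\Psi_i},\Phi_i} \), which is precisely the stipulated translation of assumptions.

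For the left-to-right implication I would proceed by induction on the given \kmm-proof of \( \seq\Phi\Psi \), replacing each inference by its translate and applying the induction hypothesis to the premises. The only clauses requiring more than routine inspection are the fixed-point and modal cases. For \( \sigR \leftrightarrow \sigL \) one needs \( \neg{(\phi(\sigma\phi))} = (\neg\phi)((\neg\sigma)(\neg\phi)) \), which follows from negation commuting with substitution and flipping free variables. For \( \sqR \leftrightarrow \diL \) one checks \( \neg{(\diamond\Psi)} = \square\neg{\Psi} \) and \( \neg{(\square\alpha)} = \diamond\neg{\alpha} \), so that the boxed context and single boxed principal formula of \( \sqR \) become exactly the boxed context and diamond principal of a \( \diL \) with empty consequent.

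The right-to-left implication is where the real work lies. I would first observe, by a direct induction on the rules (cf.\ Proposition~\ref{kmm-kmmi}), that every sequent occurring in a \kmm-proof of the empty-consequent sequent \( \lseq{\neg{\Psi},\Phi} \) itself has empty consequent, since no rule with empty-consequent conclusion admits a premise with non-empty consequent. Consequently only left logical rules, left structural rules, \( \idL \), \( \diL \) (with empty consequent) and buds can appear, and the translation can be inverted by designating the occurrences of \( \neg{\Psi} \) in the endsequent and tracking them upward: a left rule whose principal occurrence is designated is converted to the corresponding right rule on the un-negated formula, its minor occurrences inheriting the designation, while a rule acting on an undesignated occurrence is left unchanged (with \( \idL \) reversing to \( \idL \), \( \idLR \) or \( \idR \) according to which of its two principal occurrences are designated). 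The main obstacle is verifying that this designation propagates consistently: contraction must never force one copy of a formula to be designated and the other not, which is guaranteed since the single principal occurrence in the conclusion fixes both minor occurrences; and the modal rule must behave correctly, namely a \( \diL \) whose principal \( \diamond \)-formula is designated unfolds---after relocating the designated boxed context to the consequent as diamonds---into a genuine instance of \( \sqR \), whereas a \( \diL \) on an undesignated principal remains a \( \diL \) with its designated context moved to the now non-empty consequent. Once this bookkeeping is in place, the inverted derivation proves \( \seq\Phi\Psi \) from the original assumptions, completing the equivalence.
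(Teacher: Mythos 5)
Your left-to-right direction is precisely the paper's proof: replace every sequent \( \seq{\Phi'}{\Psi'} \) in the derivation by \( \lseq{\neg{\Psi'},\Phi'} \), so that each right rule \( \starR \) becomes the corresponding left rule \( \starL \) on the dual connective and each left rule remains a left rule up to exchange, with buds translating to translated assumptions. The paper's proof consists of essentially that one remark and leaves the converse implicit, so your right-to-left analysis---the observation that every sequent in the given proof has empty consequent, and the inversion by tracking designated occurrences---goes beyond what the paper records, and your rule-by-rule cases (identity splitting into \( \idL \), \( \idLR \), \( \idR \); contraction; the two readings of \( \diL \)) are checked correctly.

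The genuine gap is in your final sentence: ``the inverted derivation proves \( \seq\Phi\Psi \) from the original assumptions.'' Designations propagate upward from the endsequent along traces, so at a bud labelled \( \lseq{\neg{\Psi_i},\Phi_i} \) the designated occurrences are whatever happens to descend from the \( \neg\Psi \) block at the root; nothing forces these to be exactly the \( \neg{\Psi_i} \) positions. When they differ, the inverted bud is some sequent \( \seq{\Lambda_2}{\neg{\Lambda_1}} \) distinct from \( \seq{\Phi_i}{\Psi_i} \), and since no \( \kmm \) rule moves a formula across the sequent arrow, this cannot be repaired afterwards. Nor is it mere bookkeeping: the biconditional read literally fails on exactly this point. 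Take \( \Phi = \emptyset \), \( \Psi = x \), and a single assumption with \( \Phi_1 = \lnot x \), \( \Psi_1 = \emptyset \). Then \( \lseq{\neg\Psi,\Phi} \) and \( \lseq{\neg{\Psi_1},\Phi_1} \) are both the sequent \( \lseq{\lnot x} \), so the right-hand side holds via the one-vertex derivation consisting of a single bud. But the left-hand side fails: dually to your use of Proposition~\ref{kmm-kmmi}, no rule of \( \kmm \) has an empty-antecedent conclusion and a premise with non-empty antecedent, so every sequent in a derivation of \( \rseq{x} \) has empty antecedent and no bud can be labelled \( \lseq{\lnot x} \); a bud-free \( \kmm \)-proof of \( \rseq{x} \) is excluded by soundness.

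So the converse is only true under the reading in which the proposition asserts a correspondence of proofs, each bud of the right-hand proof being matched to an assumption index together with the designation of its \( \neg{\Psi_i} \) block. That is the reading under which the paper actually applies the result (Propositions~\ref{kmu-neg} and~\ref{kmo-kmoi}, where the translated proofs are constructed so the matching holds by design), and under it your inversion goes through, the bud case holding by stipulation rather than by argument. As written, though, your bud step is asserted where it is in general false, and this is the one point at which your proof---and, in fairness, the paper's two-line proof as well---needs the extra hypothesis made explicit.
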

\begin{proof}
	Replace each sequent in the proof in the way described. In the `left to right' direction, 
	this causes each instance of a right rule (\starR) to become an instance of the corresponding left rule (\starL) up to  exchange, and each left rule (\starL) to remain as a left rule (up to exchange).
\end{proof}

An important derived rule in the various calculi to follow expresses that predicates induce monotone functions:
\begin{prooftree*}
	\hypo{ {\alpha } \Rightarrow {\beta} }
	\infer1[$\phi$]{ { \phi \alpha } \Rightarrow { \phi \beta }}
\end{prooftree*}
%
%
\begin{proposition}
	\label{kmm-mono}
	The monotonicity rules are derivable in \( \kmm \):
	For each \( \phi \), \( \alpha \) and \( \beta \), 
	there is a \( \kmm \)-proof of \( \seq{\phi\alpha}{\phi\beta} \) from  assumptions \( \seq \alpha \beta \).
\end{proposition}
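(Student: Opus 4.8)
The plan is to prove a more general, multi-variable form of monotonicity and read off the statement as a special case. Call a pair of substitutions \( (\theta , \theta') \) with common domain \( V \) \emph{admissible for \( \eta \)} if \( \eta \) is \( V \)-positive. I would show: for every formula \( \eta \) and every admissible pair \( (\theta , \theta') \) there is a \( \kmm \)-derivation of \( \seq{\eta\subs{\theta}}{\eta\subs{\theta'}} \) from the assumptions \( \setof{ \seq{\theta x}{\theta' x} }[x \in V] \). The proposition is the instance \( \eta = \gamma \) (where \( \phi = \lambda x.\, \gamma \)), \( V = \setof x \), \( \theta x = \alpha \), \( \theta' x = \beta \). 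The induction is on the generation of \( \eta \) as a formula, so that the body of a quantifier counts as strictly smaller; note that induction on \( \rk{\eta} \) would \emph{not} suffice, since unfolding a fixed point can increase complexity.

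The base and propositional cases are routine. If \( \eta = x \in V \) the two sides are \( \theta x \) and \( \theta' x \) and the goal is an assumption, discharged by a bud (\hyp). If \( \eta \) is a literal outside \( V \) — a variable \( \notin V \) or a negated variable, since \( V \)-positivity forbids \( \neg x \) for \( x \in V \) — both sides coincide and we close with (\idLR). For \( \eta = \conj \Gamma \) the induction hypothesis yields \( \seq{\zeta\subs{\theta}}{\zeta\subs{\theta'}} \) for each \( \zeta \in \Gamma \); one application of (\conjL) turns each into \( \seq{\conj(\Gamma\subs{\theta})}{\zeta\subs{\theta'}} \), and a single (\conjR) over \( \Gamma \) assembles \( \seq{\conj(\Gamma\subs{\theta})}{\conj(\Gamma\subs{\theta'})} \). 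The case \( \eta = \disj \Gamma \) is dual, using (\disjL) below (\disjR). For the modalities, the hypothesis for the immediate subformula \( \zeta \) is extended by a single inference: (\sqR) when \( \eta = \square \zeta \) and (\diL) when \( \eta = \diamond \zeta \), in each case with empty side contexts.

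The quantifier case \( \eta = \sigma \psi \), \( \psi = \lambda y.\, \zeta \), is the crux. Choosing \( y \) fresh (not in \( V \), not free in the ranges of \( \theta , \theta' \)), set \( A = \sigma(\psi\subs{\theta}) \) and \( B = \sigma(\psi\subs{\theta'}) \); the target is \( \seq A B \). Applying (\sigR) then (\sigL) reduces it to \( \seq{\zeta\subs{\theta_A}}{\zeta\subs{\theta'_B}} \), where \( \theta_A \), \( \theta'_B \) extend \( \theta , \theta' \) by \( y \mapsto A \) and \( y \mapsto B \), since \( (\psi\subs{\theta})A = \zeta\subs{\theta_A} \) and likewise on the right (this is the one bookkeeping check, using freshness of \( y \)). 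As \( \zeta \) is \( (V \cup \setof y) \)-positive and smaller than \( \eta \), the induction hypothesis supplies a derivation of this sequent from \( \setof{ \seq{\theta x}{\theta' x} }[x \in V] \) \emph{together with one extra assumption \( \seq A B \)}. This last assumption is precisely the endsequent, so the induction does not close as a finite tree. Following the standard treatment of fixed points, I discharge it by a back-edge to the root, turning the derivation into a cyclic (equivalently, illfounded) \( \kmm \)-derivation. Validity of the looping branch is witnessed by the thread running through the principal occurrences of \( \sigma \psi \): on the right if \( \sigma = \nu \) and on the left if \( \sigma = \mu \), this is, by the classification of Proposition~\ref{thd-progress}, a \( \nu \)-thread on the right (respectively a \( \mu \)-thread on the left), and hence of the kind required by the proof condition. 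The remaining branches are the finite derivations produced by the other cases, so every infinite branch passes through the loop and is thereby validated.

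I expect the fixed-point case to be the only real difficulty. It has two facets: seeing that the unfolded body reduces \emph{exactly} to an instance governed by the induction hypothesis — a substitution calculation hinging on \( y \) being fresh — and verifying that closing the loop produces a genuine proof, i.e.\ that the single thread through \( \sigma \psi \) meets the global soundness condition on its branch. The propositional, atomic and modal cases are immediate once the multi-variable hypothesis is in place; the generalisation to an admissible pair \( (\theta , \theta') \) is exactly what makes the quantifier case self-supporting, since there the recursion variable \( y \) must be treated on the same footing as the parameter variables in \( V \).
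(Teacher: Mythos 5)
Your multi-variable strengthening and the literal, propositional and modal cases are fine, but the fixed-point case does not prove the stated proposition. A \( \kmm \)-proof from assumptions is, by definition (Section~\ref{s-kmm}), a \emph{finite} derivation in the rules of fig.~\ref{f-kmm-struc} and~\ref{f-kmm-logic} whose buds are labelled by the assumption sequents. There is no notion of cyclic or illfounded \( \kmm \)-proof: the ``back-edge to the root'' you use to discharge the extra assumption \( \seq A B \), and the branch condition you invoke to justify it (left \( \mu \)-threads, right \( \nu \)-threads), belong to \( \kmo \) (Definition~\ref{d-coproofs}) and \( \cmu \), calculi with different proof conditions introduced for exactly this purpose. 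So what your construction actually establishes is the analogue of Proposition~\ref{kmoi-mono} (monotonicity for the illfounded constructive system, proved there by substituting into co-proofs and unravelling) or of Proposition~\ref{cmu-mono} (where annotations discharge the buds), not a derived rule of \( \kmm \).

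The circularity you ran into is essential, not an artefact of your induction, so the gap cannot be closed within the resources you allow yourself. The unfolding rules (\sigL), (\sigR) are sound for \emph{any} interpretation assigning to a quantified formula \emph{some} fixed point of its associated operator; extremality is never used by any rule of \( \kmm \), so finite \( \kmm \)-derivations cannot separate least from greatest fixed points. Concretely, take \( \phi = \lambda x.\, \nu y\, ( x \wedge \square y ) \) and distinct variables \( p , q \), and work over the single model consisting of one infinite path on which \( p \) and \( q \) hold everywhere: interpret \( \phi p \) and its unfoldings via the greatest fixed point (the whole model) and \( \phi q \) and its unfoldings via the \emph{least} fixed point (which is empty). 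By the subformula property of the cut-free rules, any finite derivation of \( \seq{\phi p}{\phi q} \) from buds \( \seq p q \) contains only derived subformulas of \( \phi p \) and \( \phi q \), no two of which are complementary, so the identity rules involving negation never fire; every remaining rule instance is sound for this interpretation, the assumption \( \seq p q \) is true, yet the endsequent is false. Hence no such finite derivation exists: the quantifier case genuinely requires induction plus cut (in \( \kmu \), fig.~\ref{f-kmu}, where Kozen's derivation of monotonicity goes through) or the infinite/cyclic machinery of \( \kmo \) and \( \cmu \). You should flag this rather than paper over it---as literally stated for \( \kmm \), the proposition cannot be proved at all, and your argument should be presented as a proof of its illfounded/cyclic counterparts.
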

%
%

\subsection{Finitary proofs}
\label{s-inductive-proofs}
The first extension of \( \kmm \) is a rendition of Kozen's axiomatisation of the modal \( \mu  \)-calculus in the sequent calculus. This calculus, called \( \kmu \), extends basic fixed point logic by \emph{initial} sequents, three forms of sequent expressing \( \seq \alpha \alpha \), the structural rule of cut and rules of induction. The new rules are presented in fig.~\ref{f-kmu}.
Proofs (with assumptions) in the sense of \( \kmm \) are, again, the finite derivations (with buds).
We refer to proofs in \( \kmu \) as \emph{finitary} proofs.
The constructive fragment of \( \kmu \), i.e., the restriction to rules employing only intuitionistic sequents, is denoted \( \kmui \).

\begin{figure*}
	\centering
	\ebproofset{small}
	\begin{prooftree}
		\hypo{ \seq \Phi {\Psi, \phi \bigr( \conj ( \Phi \cup \neg \Psi ) \bigl)} }
		\infer1[\indR]{ \seq \Phi {\Psi, \nf \phi} } 
	\end{prooftree}
	\qquad
	\begin{prooftree}
		\hypo{ \seq {\phi \bigl( \disj(\neg \Phi \cup \Psi ) \bigr) , \Phi } \Psi }
		\infer1[\indL]{ \seq {\mf \phi, \Phi}  \Psi } 
	\end{prooftree}
	\qquad
	\begin{prooftree}
		\hypo{ \seq{\Phi} {\Psi , \alpha} }
		\hypo{ \seq{\alpha , \Lambda} \Xi }
		\infer2[\cut]{ \Phi , \Lambda \Rightarrow \Psi , \Xi }
	\end{prooftree}
	\caption{Induction and cut rules of \( \kmu \). The induction rules (\indL) and (\indR) are denoted jointly by (\ind).}
	\label{f-kmu}
\end{figure*}

The transformation of \( \kmm \)-derivations from 
Proposition~\ref{kmm-neg} carries over to \( \kmu \) though Proposition~\ref{kmm-kmmi} does not due to the presence of cut.
\begin{proposition}
	\label{kmu-neg}
	\( \kmu \vdash \Phi \Rightarrow {\Psi} \) iff \( \kmu \vdash \neg\Psi , \Phi \Rightarrow {} \).
\end{proposition}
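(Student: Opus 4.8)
The plan is to reuse the negation transformation underlying Proposition~\ref{kmm-neg}: given a $\kmu$-derivation, replace every sequent $\Sigma\Rightarrow\Delta$ occurring in it by $\neg\Delta,\Sigma\Rightarrow$. Proposition~\ref{kmm-neg} already guarantees that every $\kmm$-rule is sent to a valid inference under this replacement, with right rules turning into the matching left rules and left rules staying left (modulo exchange). It therefore suffices to check that the rules proper to $\kmu$ survive the transformation, namely the initial sequents, the two induction rules and cut.

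The initial sequents pose no difficulty. The instance $\rseq{\neg\alpha,\alpha}$ of $\idR$ is sent to $\neg{\neg\alpha},\neg\alpha\Rightarrow$, that is $\alpha,\neg\alpha\Rightarrow$, an instance of $\idL$; likewise $\idLR$ and $\idL$ map to instances of $\idL$. For the induction rules I would unwind the definitional identities for negation, namely $\neg{(\nf\phi)}=\mf{(\neg\phi)}$, the substitution identity $\neg{(\phi\gamma)}=(\neg\phi)(\neg\gamma)$ and De Morgan $\neg{(\conj\Gamma)}=\disj(\neg\Gamma)$, together with the involution $\neg{\neg\beta}=\beta$. Applying these to the premise of $\indR$ turns its principal formula $\nf\phi$ into $\mf{(\neg\phi)}$ and its minor formula $\phi(\conj(\Phi\cup\neg\Psi))$ into $(\neg\phi)(\disj(\neg\Phi\cup\Psi))$; this is exactly the premise of an $\indL$-inference with principal formula $\mf{(\neg\phi)}$ and context $\neg\Psi,\Phi$. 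Hence $\indR$ is sent to $\indL$, and by the dual computation $\indL$ is sent to $\indR$.

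The main obstacle is cut, the only rule whose active formula lies on both sides of the turnstile. The premises $\seq\Phi{\Psi,\alpha}$ and $\seq{\alpha,\Lambda}\Xi$ transform to $\neg\alpha,\neg\Psi,\Phi\Rightarrow$ and $\neg\Xi,\alpha,\Lambda\Rightarrow$, in which the cut formula now sits on the \emph{left} of both sequents, as $\neg\alpha$ and $\alpha$ respectively, so no single cut produces the required conclusion $\neg\Xi,\neg\Psi,\Phi,\Lambda\Rightarrow$. Since cut itself is available in $\kmu$, the fix is local: cutting $\neg\alpha,\neg\Psi,\Phi\Rightarrow$ against the $\idR$-axiom $\rseq{\alpha,\neg\alpha}$ on the formula $\neg\alpha$ yields $\seq{\neg\Psi,\Phi}{\alpha}$, after which a cut on $\alpha$ with the transformed second premise delivers the transformed conclusion. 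I expect this to be the only step requiring real attention; the remaining verifications are routine bookkeeping with the negation identities. The same two-line device also gives a transformation-free proof: in the forward direction one peels the members of $\Psi$ off the right of $\seq\Phi\Psi$ one at a time by cutting each against its $\idL$-axiom, and in the converse direction one peels the members of $\neg\Psi$ off the left of $\neg\Psi,\Phi\Rightarrow$ by cutting each against its $\idR$-axiom.
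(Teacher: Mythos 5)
Your proposal is correct and follows essentially the same route as the paper: invoke Proposition~\ref{kmm-neg} so that only the rules of fig.~\ref{f-kmu} need checking, observe that the induction rules commute with the translation by duality, and treat cut as the one delicate case. The paper repairs the transformed cut slightly differently---by applying (\disjL) with principal formula \( \disj\setof{\alpha,\neg\alpha} \) and then cutting against a \( \kmu \)-proof of that excluded-middle disjunction---whereas you route through two successive cuts against an identity axiom, but these are interchangeable local devices and both are sound.
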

\begin{proof}
	By Proposition~\ref{kmm-neg}, it suffices to treat the  rules in fig.~\ref{f-kmu}.
	The induction rules commute with the translation by design. 
	The transformed cut rule is replaced by (\disjL) with principal formula \( \disj \setof{ \alpha , \neg\alpha} \) and cut against a \( \kmu \)-proof of the same formula.
\end{proof}

Later we will have need for a more general formulation of the induction rules, introduced in \cite{AL17lics}:
\[
\begin{prooftree}
	\hypo{ \seq{\Phi} {\Psi , \nf x\, \phi \bigl( \conj ( \Phi \cup \neg\Psi ) \vee \var x \bigr) } } 
	\infer1[\sindR]{ \seq{\Phi} {\Psi , \nf \phi} }
\end{prooftree}
\qquad
\begin{prooftree}
	\hypo{ \seq{\mf x\, \phi \bigl( \disj ( \neg\Phi \cup \Psi  ) \wedge \var x \bigr) , \Phi } {\Psi} } 
	\infer1[\sindL]{ \seq{\mf \phi , \Phi} {\Psi} }
\end{prooftree}
\]
Instances of the two rules immediately above are referred to as \emph{strong induction}. By (\sind) we denote the union of (\sindL) and (\sindR ). The reader is referred to~\cite[Thm VI.3]{AL17lics} for a proof of the next proposition.
\begin{proposition}
	\label{kmu-sind}
	The strong induction rules are derivable in \( \kmu \). Instances of strong induction with intuitionistic conclusion are provable in \( \kmui \).
\end{proposition}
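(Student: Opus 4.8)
The plan is to derive each strong-induction rule from ordinary induction, cut, and the monotonicity rule (Proposition~\ref{kmm-mono}), exploiting a single semantic observation. Write $C = \conj(\Phi \cup \neg{\Psi})$ and $\psi = \lambda x.\, \phi(C \vee x)$, so that the premise of $\sindR$ is $\seq\Phi{\Psi, \nf\psi}$. In the context $\Phi \Rightarrow \Psi$ this premise asserts $C \subseteq \nf\psi$, and the key point is that once $C \subseteq \nf\psi$ is known, $\nf\psi$ is itself a post-fixed point of $\phi$: unfolding gives $\nf\psi = \phi(C \vee \nf\psi)$, while $C \vee \nf\psi \subseteq \nf\psi$ (using the premise), so $\nf\psi \subseteq \phi(\nf\psi)$ by monotonicity. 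Hence ordinary co-induction with invariant $\nf\psi$ yields $\nf\psi \subseteq \nf\phi$, and the conclusion $\seq\Phi{\Psi, \nf\phi}$ follows by cutting against the premise. Crucially, the invariant is the single formula $\nf\psi$, so the monotonicity step is applied context-free.

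Concretely, to derive $\sindR$ I first convert the premise $\seq\Phi{\Psi, \nf\psi}$ into $\seq C{\nf\psi}$ by fusing the antecedent into the conjunction $C$ (and, if $\Psi \neq \eset$, transferring $\Psi$ to the antecedent as $\neg{\Psi}$); this is routine with $\conjL$, the identity rules and cut. A single $\disjL$ then gives $\seq{C \vee \nf\psi}{\nf\psi}$, its second branch being an instance of identity. Applying Proposition~\ref{kmm-mono} to $\phi$ produces the context-free sequent $\seq{\phi(C \vee \nf\psi)}{\phi(\nf\psi)}$; unfolding the left-hand side with $\sigL$ (noting $\phi(C \vee \nf\psi) = \psi(\nf\psi)$) turns this into $\seq{\nf\psi}{\phi(\nf\psi)}$, which is, modulo the trivial conjunction $\conj\setof{\nf\psi}$, the premise of $\indR$ for the endsequent $\seq{\nf\psi}{\nf\phi}$. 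One application of $\indR$ yields $\seq{\nf\psi}{\nf\phi}$; weakening in $\Phi$ and $\Psi$ and cutting against the original premise on $\nf\psi$ (then contracting) delivers $\seq\Phi{\Psi, \nf\phi}$. The rule $\sindL$ is entirely dual, with $C' = \disj(\neg{\Phi} \cup \Psi)$, invariant $\psi' = \lambda x.\, \phi(C' \wedge x)$, and the roles of $\indR$, $\disjL$, $\sigL$ played by $\indL$, $\conjR$, $\sigR$; in the classical system one may instead obtain $\sindL$ from $\sindR$ via the negation duality of Proposition~\ref{kmu-neg}.

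For the constructive claim, note that the derivation of $\sindR$ above stays inside $\kmui$ whenever its conclusion is intuitionistic. Indeed, an intuitionistic conclusion forces $\Psi = \eset$, so $C = \conj\Phi$ and the conversion of the premise to $\seq C{\nf\psi}$ uses only left rules; every sequent in the derivation (those produced by $\disjL$, the $\sigL$-unfolding, $\indR$, the left weakening and the final cut) then carries at most one formula in its consequent. The one ingredient needing comment is that the monotonicity witness for $\phi$ is itself constructive: since $\phi$ is a positive formula of the implication-free fragment, the recursion underlying Proposition~\ref{kmm-mono} acts on a single principal formula at each step and keeps the consequent a singleton, so it never leaves the intuitionistic fragment.

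The main obstacle is the constructive derivation of $\sindL$. Its conclusion $\seq{\mf\phi, \Phi}{\Psi}$ can be intuitionistic with $\Phi \neq \eset$, and the dual of the argument above would require converting the premise into $\seq{\mf\psi'}{\disj(\neg{\Phi} \cup \Psi)}$---that is, transferring the antecedent $\Phi$ to the consequent as $\neg{\Phi}$. This passes through multiple-conclusion sequents and relies on the right identity rule $\idR$, i.e.\ on a form of excluded middle, and so is unavailable in $\kmui$; moreover one checks that applying $\indL$ directly to the conclusion only reduces the goal to $\seq{\phi(C'), \Phi}{\Psi}$, which does not follow from the premise by monotonicity, since the provable inclusion $\phi(C' \wedge \mf\psi') \subseteq \phi(C')$ points the wrong way on the left. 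Thus a genuinely different, single-conclusion construction is needed in this case, and securing it is where I expect the real work to lie; this is precisely the delicate point treated in the proof of \cite[Thm VI.3]{AL17lics}.
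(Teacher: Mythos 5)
Your classical derivations of $\sindR$ and $\sindL$ are correct, and they are essentially the argument the paper relies on: the paper's own ``proof'' of this proposition is nothing more than the pointer to \cite[Thm VI.3]{AL17lics}, and the argument there is exactly yours --- take the annotated fixed point ($\nf\psi$, resp.\ $\mf\psi'$) itself as invariant, show it is a post-/pre-fixed point of $\phi$ using unfolding, the premise and monotonicity, then apply ordinary (\ind) and cut. Your constructive derivation of $\sindR$ is also sound: an intuitionistic conclusion forces $\Psi=\eset$, and every sequent you produce has a singleton consequent (granting intuitionistic instances of Proposition~\ref{kmm-mono}, whose quantifier cases go through in $\kmui$ via intuitionistic instances of the induction rules).

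The genuine gap is the one you concede in your final paragraph: the constructive case of $\sindL$, i.e.\ intuitionistic conclusions $\seq{\mf\phi,\Phi}{\Psi}$ with $\Phi\neq\eset$. This is not a peripheral case --- it is precisely the form in which the proposition is later used: Lemma~\ref{cmu-in-kmu-l1}(2), and through it Theorem~\ref{cmui-in-kmui}, discharge $\muL$-companions by applying strong induction \emph{on the left} of intuitionistic sequents with non-empty context. Your diagnosis of the obstruction is accurate, and can even be sharpened: with $\phi=\lambda x.\,\diamond x$, $\Phi=\setof{\neg c}$, $\Psi=\eset$ (so $C'=\disj\setof{c}$), the premise $\seq{\mf x\,\diamond(C'\wedge x),\neg c}{}$ is $\kmui$-provable (use $\bot$ as invariant), while the $\indL$-reduct $\seq{\diamond C',\neg c}{}$ is not even valid; so any correct derivation must use the premise in a way neither of your two candidate routes does. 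But diagnosing why two natural attempts fail is not a proof, and deferring to \cite[Thm VI.3]{AL17lics} does not close the hole: that theorem concerns the classical calculus, in which the very step you identify as illegitimate --- transferring $\Phi$ across the sequent arrow via $\idR$/cut --- is available, so the intuitionistic refinement of the left rule is exactly the content that citation does not supply. As it stands, your proposal establishes the first sentence of the proposition and the $\sindR$ half of the second; the constructive $\sindL$ half, which is what the rest of the paper actually consumes, remains unproven.
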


Soundness of \( \kmu \) with respect to Kripke semantics is straightforward:
\begin{theorem}
	\label{kmu-soundness}
	If \( \kmu \vdash \Phi \Rightarrow \Psi \) then \( \conj \Phi \to \disj \Psi \) is valid.
\end{theorem}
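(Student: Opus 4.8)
If $\kmu \vdash \Phi \Rightarrow \Psi$ then $\conj \Phi \to \disj \Psi$ is valid.

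Let me think about how to prove soundness of the finitary calculus $\kmu$.

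The statement is that provability implies validity. Since $\kmu$ proofs are *finite* derivations without buds, the natural approach is induction on the structure (height) of the proof. For each rule, I need to show that if the premises have the "soundness property" (their corresponding formula is valid), then the conclusion does too.

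Let me set up the semantic invariant. For a sequent $\Phi \Rightarrow \Psi$, the associated formula is $\conj \Phi \to \disj \Psi$. Being "valid" means $\denote{\conj \Phi \to \disj \Psi} = \Mod$. Equivalently, using the earlier lemma that $\denote{\neg\alpha} = \Mod \setminus \denote{\alpha}$, this means $\denote{\conj \Phi} \subseteq \denote{\disj \Psi}$, i.e., $\bigcap \denote{\Phi} \subseteq \bigcup \denote{\Psi}$.

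Now I go rule by rule.

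**Structural rules (Figure \ref{f-kmm-struc}):** Exchange rules are trivial since denotation of $\bigwedge$ and $\bigvee$ don't depend on order (they're over sets). Contraction: $\alpha,\alpha,\Phi$ and $\alpha,\Phi$ have the same conjunction denotation (idempotence of intersection), similarly on the right. Weakening: $\denote{\conj(\alpha,\Phi)} = \denote{\alpha}\cap\denote{\conj\Phi} \subseteq \denote{\conj\Phi}$, so if the conjunction of the premise is contained in the disjunction of the consequent, adding $\alpha$ on the left shrinks the antecedent, preserving containment; analogously on the right, adding $\alpha$ to the consequent enlarges the disjunction. The bud rule $\hyp$ doesn't appear in proofs (only in proofs-from-assumptions), so it's irrelevant here.

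**Identity rules (Figure \ref{f-kmm-logic}):** For $\idLR$, $\seq{\alpha}{\alpha}$ gives $\denote{\alpha}\subseteq\denote{\alpha}$, trivially true. For $\idL$, $\lseq{\alpha,\neg\alpha}{}$: I need $\denote{\alpha}\cap\denote{\neg\alpha} \subseteq \emptyset$ (empty consequent means $\denote{\disj\emptyset}=\denote{\bot}=\emptyset$), which holds since $\denote{\neg\alpha}=\Mod\setminus\denote{\alpha}$. Similarly $\idR$.

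**Logical connective rules:** These are routine. For $\conjL$: from $\denote{\gamma}\cap\denote{\conj\Phi}\subseteq\denote{\disj\Psi}$ and $\denote{\conj(\conj\Gamma,\Phi)} = \denote{\conj\Gamma}\cap\denote{\conj\Phi} \subseteq \denote{\gamma}\cap\denote{\conj\Phi}$ (since $\denote{\conj\Gamma}=\bigcap\denote\Gamma\subseteq\denote\gamma$ for $\gamma\in\Gamma$). The $\disjL$, $\conjR$, $\disjR$ cases are dual/analogous, using the definitions $\denote{\conj\Gamma}=\bigcap\denote\Gamma$, $\denote{\disj\Gamma}=\bigcup\denote\Gamma$. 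The fixed-point rules $\sigL$, $\sigR$ use Proposition \ref{denot-sem-fxpt}: $\denote{\phi(\sigma\phi)} = \denote{\phi}\denote{\sigma\phi} = \denote{\sigma\phi}$, so premise and conclusion have the same denotation for the principal formula.

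**Modal rules $\sqR$ and $\diL$:** These require a bit of care with the submodel structure. For $\sqR$, from $\denote{\conj\Phi}\subseteq\denote{\disj(\Psi,\alpha)}$ I need $\denote{\conj\square\Phi}\subseteq\denote{\disj(\diamond\Psi,\square\alpha)}$. Take a model $M$ in the left; every immediate submodel $M'$ satisfies all of $\Phi$, hence (by the premise applied to $M'$) satisfies some formula in $\Psi$ or satisfies $\alpha$. A case analysis on whether *every* submodel satisfies $\alpha$ (giving $\square\alpha$) or some submodel satisfies a formula in $\Psi$ (giving some $\diamond\psi$) yields membership on the right. This is the one genuinely modal argument and the place I'd be most careful; the combinatorics of splitting $M_*$ according to which disjunct is witnessed is where the $\square/\diamond$ interplay lives. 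The $\diL$ case is dual.

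**Cut rule:** From $\denote{\conj\Phi}\subseteq\denote{\disj(\Psi,\alpha)}=\denote{\disj\Psi}\cup\denote{\alpha}$ and $\denote{\alpha}\cap\denote{\conj\Lambda}\subseteq\denote{\disj\Xi}$, I need $\denote{\conj\Phi}\cap\denote{\conj\Lambda}\subseteq\denote{\disj\Psi}\cup\denote{\disj\Xi}$. A straightforward set-theoretic calculation: a model in the left-hand intersection either satisfies $\disj\Psi$ (done) or satisfies $\alpha$, and being also in $\denote{\conj\Lambda}$, the second premise puts it in $\denote{\disj\Xi}$.

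**Induction rules $\indL$, $\indR$ (the main obstacle):** This is the hard part. For $\indR$, the premise is $\seq{\Phi}{\Psi,\phi(\conj(\Phi\cup\neg\Psi))}$, concluding $\seq{\Phi}{\Psi,\nf\phi}$. Set $U = \denote{\conj(\Phi\cup\neg\Psi)} = \denote{\conj\Phi}\cap\denote{\neg\disj\Psi}$ (using De Morgan, $\conj\neg\Psi = \neg\disj\Psi$). The premise validity says $\denote{\conj\Phi}\subseteq\denote{\disj\Psi}\cup\denote{\phi(\conj(\Phi\cup\neg\Psi))}$, which rearranges to $U \subseteq \denote{\phi}U$ (intersecting both sides with $\denote{\neg\disj\Psi}$ and using $\denote{\phi(\beta)}=\denote\phi\denote\beta$). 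By the greatest-fixed-point clause $\denote{\nf\phi}=\bigcup\{U : U\subseteq\denote\phi U\}$, this gives $U \subseteq \denote{\nf\phi}$. Hence $\denote{\conj\Phi}\cap\denote{\neg\disj\Psi} \subseteq \denote{\nf\phi}$, which is exactly $\denote{\conj\Phi}\subseteq\denote{\disj\Psi}\cup\denote{\nf\phi} = \denote{\disj(\Psi,\nf\phi)}$. The $\indL$ case is dual, using the least-fixed-point clause and the Knaster--Tarski characterization $\denote{\mf\phi}=\bigcap\{U:\denote\phi U\subseteq U\}$. The genuinely delicate points are (i) correctly matching the syntactic context formula $\conj(\Phi\cup\neg\Psi)$ with the semantic "invariant" $U$, relying on the earlier De Morgan lemma $\denote{\neg\alpha}=\Mod\setminus\denote\alpha$, and (ii) the monotonicity of $\denote\phi$ (the earlier monotonicity proposition) to justify the rearrangement into a post-fixed-point inequality. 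Everything else is bookkeeping.

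The overall plan is thus a routine induction on proof height, with the fixed-point semantics (Proposition \ref{denot-sem-fxpt}) and the Knaster--Tarski definitions of $\denote{\mf\phi}$, $\denote{\nf\phi}$ carrying the two induction rules, and the De Morgan lemma translating the syntactic contexts $\conj(\Phi\cup\neg\Psi)$ and $\disj(\neg\Phi\cup\Psi)$ into the clean semantic invariants to which Knaster--Tarski applies. The main obstacle is correctly handling the induction rules, since that is where the finitary calculus internalizes the infinitary fixed-point behaviour; the modal rule $\sqR$ is the secondary point requiring care.
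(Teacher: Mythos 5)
Your proof is correct and is precisely the routine argument the paper has in mind when it states the theorem with the remark that soundness ``is straightforward'' and omits the proof: induction on the finite derivation, with the Knaster--Tarski clauses for \( \denote{\mf\phi} \) and \( \denote{\nf\phi} \) (post-/pre-fixed points lie below/above the quantified formula) discharging (\indR) and (\indL), and the case split on \( M_* \) handling (\sqR) and (\diL). All rules of \( \kmu \) are accounted for and each case is sound, so nothing is missing relative to the paper.
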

%

\subsection{Illfounded proofs}
\label{s-ill-founded-proofs}

Rather of expanding basic fixed point logic by induction and cut rules, a sound and complete calculus can be obtained by taking inspiration from the game semantics and admitting infinite derivations as proofs that express winning strategies.
These will be the derivations for which every branch carries an infinite thread of a suitable kind.
The system \( \kmo \) of \emph{illfounded proofs} -- \emph{co-proofs} for short, standing for \emph{co-inductive proofs} -- is a refinement of Niwiński and Walukiewicz's refutation calculus from~\cite{NiwWal:96}.

\begin{definition}\label{d-ancestor-thread}
Let \( B \) be a branch of a \( \kmm \)-derivation \( d \), and let \( \sequent_i \) be the sequent at \( B(i) \).
A finite {trace} through \( B \) is a sequence of positions \( ( p_i )_{i \le K} \) such that \( p_0 \) is a position in \( \sequent _0 \) and for all \( i < K \),
\begin{enumerate}
	\item if \( \rules_d(B(i)) \) is not structural, then \( p_{i+1} = p_i \),
	\item if \( \rules_d(B(i)) \) is structural, then \( p_{i+1} \) is the formula occurrence corresponding to \( p_i \).
\end{enumerate}
We refer to \( p_i \) as a \emph{descendent} of \( p_0 \) at \( B(i) \).
An infinite trace is a sequence all of whose finite prefixes are traces.
A (finite or infinite) trace \( t = (p_i)_{i<K} \) through \( B \) induces a weak thread \( w = ( \sequent_i(p_i) )_{i<K} \) selecting the formula at position \( p_i \) in each \( \sequent _i \).
The thread \emph{supported} by \( t \) (along \( B \)) is the unique thread \( \tau = w \circ g \) where \( g \colon L \to K \) is the strictly increasing function such that \( j \in g[L] \) iff \( p_j \) is principal at \( B(j) \).
We say that \( B \) \emph{carries} \( \tau \) if \( \tau \) is an infinite thread that is support by some trace in \( B \).
\end{definition}

For a set of positions \( P \), set \( \thd_P(d,B) \) to be the set of infinite threads supported by a trace \( t \) (along \( B \)) satisfying \( t(0) \in P \).
If \( t(0) \) is a left (right) position we refer to the thread supported by \( t \) as a \emph{left} \emph{(right)} thread.
Note, a thread can be both left and right, even relative to the same branch.
We write \( \thd_p(d,B) \) in place of \( \thd_{\setof p}(d,B) \), and \( \thd ({d,B}) \) for \( \thd_P(d,B) \) where \( P \) is the set of all positions in the root sequent.
Given a subsequent \( \sequent \) of the endsequent of \( d \),
we may write \( \thd_{\sequent}(d,B) \) in place of \( \thd_P(d,B) \) where \( P \) is the set of positions corresponding to the formulas in~\( \sequent \).
\begin{proposition}
	Let \( d \) be a derivation in \( \kmm \) with a branch \( B \).
	If \( B \) carries an infinite thread starting in a guarded formula, then every infinite trace through \( B \) supports an infinite thread.
\end{proposition}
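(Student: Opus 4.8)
The plan is to exploit a structural peculiarity of the two modal rules. In both \( \sqR \) and \( \diL \) the conclusion sequent is composed \emph{entirely} of modal formulas (boxes in the antecedent, diamonds in the consequent, together with the modal principal formula), so an application of either rule strips the outermost modality from \emph{every} formula occurrence at once. Hence at any vertex of \( B \) labelled by a modal rule, every trace through \( B \) is forced to take a genuine \( \sf \)-step at that vertex. The task therefore reduces to showing that the carried guarded thread forces infinitely many applications of \( \sqR \) or \( \diL \) along \( B \).

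First I would harvest infinitely many modal steps from the hypothesis. Let \( \tau_0 \) be the infinite thread carried by \( B \) with \( \alpha = \tau_0(0) \) guarded; since threads follow \( \sf \), every \( \tau_0(i) \) lies in \( \subform{\alpha} \). By Proposition~\ref{thd-progress} some \( \eta \) satisfies \( \eta = \tau_0(i) \) for infinitely many \( i \). Between two consecutive such indices \( i < i' \) the segment \( \tau_0(i)\dotsm\tau_0(i') \) is a cyclic thread \( \eta \sft \eta \) of length \( >1 \) inside \( \subform{\alpha} \); guardedness of \( \alpha \) then supplies a modal step in that segment, i.e.\ some \( k \) with \( \tau_0(k) = \triangle\,\tau_0(k+1) \). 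As there are infinitely many such segments, \( \tau_0 \) has infinitely many modal steps. Now a \( \square \)-formula can be principal only in \( \sqR \) and a \( \diamond \)-formula only in \( \diL \), and in general a modal formula sheds its outermost modality only through one of these two rules (whether as the principal or as a side formula). Hence each modal step of \( \tau_0 \) is realised at a vertex of \( B \) carrying \( \sqR \) or \( \diL \), distinct steps at distinct vertices, so infinitely many vertices of \( B \) are labelled by a modal rule.

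Now fix an arbitrary infinite trace \( t = (p_i)_i \) through \( B \), with supported thread \( \tau \), and suppose for contradiction that \( \tau \) is finite. Then there is \( N \) with \( p_j \) never principal for \( j \ge N \). For such \( j \) the traced formula \( \sequent_j(p_j) \) is left unchanged by structural rules and by non-modal logical rules (side formulas are preserved, and \( p_j \) is never principal), and loses exactly one outermost modality at each modal rule. As modalities are only removed and \( \sequent_N(p_N) \) has finite modal depth, after finitely many steps the traced formula becomes a non-modal formula \( \gamma \); fix \( N' \ge N \) at which this first occurs. For \( j \ge N' \) the traced formula stays equal to \( \gamma \), since no modal rule can be applied while \( \gamma \) is present---every formula in the conclusion of \( \sqR \) or \( \diL \) is modal whereas \( \gamma \) is not. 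This contradicts the previous paragraph, by which infinitely many \( B(j) \) with \( j \ge N' \) are labelled by a modal rule. Therefore \( \tau \) is infinite, as required.

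The delicate point, and the step I expect to demand the most care, is the bookkeeping of traces through the modal rules: one must verify that a modal formula genuinely progresses along \emph{every} trace not only when it is the principal formula but also when it occurs as a side formula---which is in fact the only way \( \square \)-formulas in the antecedent and \( \diamond \)-formulas in the consequent ever progress, as these are never principal. Combined with the finiteness of modal depth, this is precisely what turns ``no principal steps beyond \( N \)'' into ``eventually non-modal and constant'', producing the clash with the infinitely many modal rules.
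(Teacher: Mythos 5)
Your proposal is correct and takes essentially the same route as the paper's proof, which likewise (i) derives from the carried guarded thread that \( B \) passes through the modal rules (\sqR), (\diL) infinitely often, and (ii) uses the fact that these rules strip an outermost modality from \emph{every} formula occurrence in the sequent, so every trace is forced to make infinitely many \( \sf \)-steps. The only difference is one of detail: your second paragraph spells out the guardedness argument the paper leaves implicit, and your third paragraph adds the modal-depth bookkeeping needed to pass from `the traced formula keeps stepping' to `the supported thread is infinite'.
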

\begin{proof}
	Fix a branch \( B \) and let \( \tau \in \thd(d,B) \).
	As \( \tau(0) \) is guarded yet infinite, it follows that \( B \) passes through a modal rule infinitely often.
	Thus, for any trace \( t \) through \( B \), the weak thread supported by \( t \) steps through the subformula relation \( \sf \) infinitely often, i.e., \( t \) supports an infinite thread.
\end{proof}

\begin{definition}\label{d-coproofs}
	The calculus \( \kmo \) comprises the rules of \( \kmm \) (fig.~\ref{f-kmm-struc} and \ref{f-kmm-logic}) with the identity rules restricted to variables.
	A proof in \( \kmo \) is a derivation such that every branch carries either a right \( \nu \)-thread or a left \( \mu \)-thread.
	Proofs in \( \kmo \) are called \emph{illfounded proofs} or \emph{co-proofs}.
\end{definition}
We commonly refer to a branch satisfying the condition above as \emph{good}. A branch carrying neither a left \( \mu \)-, nor right \( \mu \)-thread may be called \emph{bad}.

Unlike the finitary calculus, the rule of cut is not permitted in co-proofs. 
The rule can be added, though the condition on proofhood must be weakened to allow for the good threads to initiate from cut formulas: simply replace `every branch carries' by `every branch has a suffix which carries'.
Our choice of cut-free illfounded proofs is motivated by the reduction of co-proofs to cyclic proofs in Theorem~\ref{quasi-completeness} which requires a bound on the number of distinct formulas occurring in the initial co-proof that need not exist for illfounded proofs with cut.
For the same reason, although we prove admissibility of the cut rule for \( \kmo \) (in the next section), this fact does not provide a method of \emph{eliminating} arbitrary cuts in the illfounded context.
%

Our first result concerning co-proofs is soundness.
\begin{theorem}[Soundness of Co-inductive Proofs]
	\label{kmo-snd}
	If \( \kmo \vdash \seq \Phi \Psi \) then \( \seq\Phi\Psi \) is valid.
\end{theorem}
\begin{proof}
	Without loss of generality we may assume that \( \Psi \) is empty and \( \Phi \) comprises a single formula \( \alpha \).
	Let \( \pi \) be a co-proof of \( \lseq \alpha \) and let \( M \) be any model.
	We show that there is no verification of \( (\alpha, M) \).
	Assume the contrary, namely that \( V \) is a verification with root \( ( \alpha , M ) \).
	The verification induces a branch \( B \) of \( \pi \) and associated sequence of vertices \( (v_i)_i \colon \omega \to M \) with the property that for every weak thread \( \tau \) induced by a trace along \( B \), the sequence \( ( \tau(j) , v_j )_{j<\lh {\tau}} \) is a weak path in \( V\! \). 
	As \( B \) carries a left \( \mu \)-thread, this contradicts that \( V \) is a verification.
\end{proof}
The proof above easily generalises to co-proofs with cut.
\section{The Constructive Fragment}
\label{s-constr-frag}

By restricting the rules of \( \kmo \) to intuitionistic sequents, a co-inductive sequent calculus can be given which respects intuitionistic logic.
As with the previous calculi, we refer to the calculus as the \emph{constructive fragment} of \( \kmo \) and denoted it \( \kmoi \).
The robustness of the constructive fragment is central to our proof of completeness of (classical) \( \kmu \), and is the topic of much of the remainder of this article.

As remarked in the introduction, the term `fragment' is something of a misnomer as the classical calculus \( \kmo \) can be identified with the negative fragment of the constructive \( \mu \)-calculus under the transformation described in Proposition~\ref{kmm-neg}.
That is, we may reinterpret classical co-proofs as the constructive co-proofs with endsequents \( \seq \Phi {\emptyset} \).
	
The following observation sums up the syntactic properties of constructive co-proofs.
\begin{proposition}\label{kmui-basic}
	Every branch of a constructive co-proof carries at most one right thread and every co-proof with empty consequent is constructive.
\end{proposition}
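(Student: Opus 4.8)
The plan is to treat the two claims separately, since they are logically independent.

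For the second claim — that every co-proof with empty consequent is constructive — I would simply invoke Proposition~\ref{kmm-kmmi}. By Definition~\ref{d-coproofs} a co-proof is a derivation in the rules of \( \kmm \) (with the identity rules restricted to variables), so in particular it is a \( \kmm \)-derivation; if its endsequent has empty consequent, then Proposition~\ref{kmm-kmmi} applies verbatim and delivers constructivity. For completeness I would recall the underlying mechanism: inspecting figs.~\ref{f-kmm-struc} and~\ref{f-kmm-logic}, every rule whose conclusion has empty consequent forces each premise to have empty consequent as well — the right logical rules (\( \conjR \), etc.) and the right structural rules (\( \conR \), \( \weakR \)) all require a principal or active formula in the consequent of the conclusion, while the left rules and \( \diL \) preserve or shrink the consequent upward (for \( \diL \), \( \diamond\Psi \) is empty iff \( \Psi \) is) — so a straightforward induction from the root propagates the empty consequent, and hence intuitionisticness, to every sequent.

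For the first claim, fix a branch \( B \) of a constructive co-proof and let \( \sequent_i \) be the sequent at \( B(i) \). The plan rests on a single observation: because the proof is constructive, each \( \sequent_i \) is intuitionistic and hence carries at most one right position, necessarily \( (\rul R, 0) \). First I would note that traces preserve the left/right side of a position — for a logical rule the position is literally unchanged (\( p_{i+1} = p_i \)), and each structural rule of \( \kmm \) acts on a single side, so the descendent of a right position is again a right position (see Definition~\ref{d-ancestor-thread}). Combining this with the previous observation, any infinite right trace \( (p_i)_i \) along \( B \) must satisfy \( p_i = (\rul R, 0) \) at every step where \( \sequent_i \) has a nonempty consequent; that is, its positions are completely determined by \( B \). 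It then remains to rule out the two phenomena that could create genuinely distinct right traces: right contraction \( \conR \) cannot occur, since its premise \( \seq\Phi{\Psi, \alpha, \alpha} \) is never intuitionistic and so never appears in a constructive derivation; and right weakening \( \weakR \) can only terminate a trace, as the weakened formula in the conclusion has no ancestor in the premise, so no \emph{infinite} trace passes through it. Consequently the weak thread \( w = ( \sequent_i(\rul R, 0) )_i \) induced by an infinite right trace is unique, and therefore so is the thread \( \tau = w \circ g \) it supports; thus \( B \) carries at most one right thread.

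The routine part is the second claim, which is essentially a citation of Proposition~\ref{kmm-kmmi}. The care is needed in the first claim, where the only genuine obstacle I anticipate is confirming that the modal rules \( \sqR \) and \( \diL \) do not disturb the forced-trace argument: although they reshape the sequent, under the intuitionistic restriction the consequent \( \Psi \) contains at most one formula, so the unique right occurrence of the conclusion still maps to the unique right occurrence of the premise, keeping the right trace (and hence the right thread) uniquely determined.
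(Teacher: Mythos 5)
Your proposal is correct and is exactly the argument the paper intends: the paper states Proposition~\ref{kmui-basic} as an unproved observation, with the second claim being an instance of Proposition~\ref{kmm-kmmi} and the first following from the fact that intuitionistic sequents admit at most one right position, so right traces (and hence right threads) along a branch are forced, with (\conR) excluded and (\weakR) only able to terminate a trace. Nothing is missing; you have simply written out the routine verification the paper leaves implicit.
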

\begin{definition}
	By \( \Phi \Vdash \alpha \) we express that there exists a constructive co-proof of the sequent \( \Phi \Rightarrow \alpha \) in which the identity rules have a variable as a principal formula. 
	For formulas \( \alpha, \beta \), define \( \alpha \VdashV \beta \) as \( \alpha \Vdash \beta \) and \( \beta \Vdash \alpha \).
\end{definition}

We begin with some simple closure properties on \( \Vdash \).
\begin{proposition}
	\label{kmoi-mono}
	Let \( \alpha \), \( \beta \) be formulas, \( \phi \), \( \psi \) be predicates and \( x \) a fresh variable.
	\begin{enumerate}
		\item If \( \alpha \Vdash \beta \) and \( \phi x \Vdash \psi x \) then \( \phi \alpha \Vdash \psi \beta \).
		\item If \( \phi x \Vdash \psi x \) then \( \sigma  \phi \Vdash \sigma  \psi \).
	\end{enumerate}
\end{proposition}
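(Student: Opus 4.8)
The plan is to construct the required co-proofs \emph{directly}, by substituting into the given co-proofs rather than by composing them (recall that cut is not available in \( \kmoi \) at this stage). Fix a constructive co-proof \( \rho \) of \( \phi x \Rightarrow \psi x \) and, for part~(1), a constructive co-proof \( \pi \) of \( \alpha \Rightarrow \beta \). Since \( \phi x \) and \( \psi x \) are \( x \)-positive, the literal \( \lnot x \) occurs nowhere along a thread descended from the displayed occurrences of \( x \), and since \( x \) is a variable (so \( \rk x = 0 \) and \( x \) has no immediate subformula) no infinite thread of \( \rho \) passes through an occurrence of \( x \). I would first form \( \rho' \) by replacing, in every sequent of \( \rho \), each antecedent occurrence of \( x \) by \( \alpha \) and each consequent occurrence by \( \beta \). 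Because substitution commutes with the connectives, modalities and quantifiers and no rule of \( \kmm \) moves a formula between antecedent and consequent, every non-identity inference of \( \rho \) is carried to an inference of the same rule, and every identity leaf on a literal other than \( x \) is untouched. The only affected leaves are the instances of \( \idLR \) with principal formula \( x \), namely \( x \Rightarrow x \), which become \( \alpha \Rightarrow \beta \); grafting a copy of \( \pi \) at each such leaf yields a derivation of \( \phi\alpha \Rightarrow \psi\beta \) in which every identity rule again has a variable as principal formula and every sequent is intuitionistic.

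The substance of the argument is verifying that this derivation is a co-proof, i.e.\ that every branch is good. A branch that eventually enters a grafted copy of \( \pi \) has an infinite tail inside that copy, hence is good because \( \pi \) is a co-proof and goodness depends only on the tail. A branch remaining in \( \rho' \) forever is the image of an infinite branch \( B \) of \( \rho \); its witnessing thread avoids \( x \), so by Proposition~\ref{thd-subst} its substitution is again a thread following the same \( \sf \)-steps, and the point requiring care is preservation of the \( \mu/\nu \)-character. This is where I expect the real work to lie: using the complexity lemma (\( \rk\eta \le \rk{\eta[\theta]} \)) together with Proposition~\ref{thd-progress}, the unique minimal recurring formula \( \eta = \sigma\chi \) of the original thread is carried to \( \sigma(\chi[\theta]) \), which has the same outermost quantifier, so a right \( \nu \)-thread (resp.\ left \( \mu \)-thread) of \( B \) remains one. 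This establishes part~(1).

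For part~(2) I would build a \emph{cyclic} co-proof of \( \sigma\phi \Rightarrow \sigma\psi \). Starting from the endsequent, unfold both principal formulas,
\[
\begin{prooftree}
\hypo{ \phi(\sigma\phi) \Rightarrow \psi(\sigma\psi) }
\infer1[\sigL]{ \sigma\phi \Rightarrow \psi(\sigma\psi) }
\infer1[\sigR]{ \sigma\phi \Rightarrow \sigma\psi }
\end{prooftree}
\]
and then apply the substitution construction of part~(1) to \( \rho \) with \( \alpha = \sigma\phi \) and \( \beta = \sigma\psi \); but instead of grafting a separate co-proof at the leaves \( \sigma\phi \Rightarrow \sigma\psi \) (which would be circular), redirect each such leaf back to the root as a bud. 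A branch that stays inside the substituted copy of \( \rho \) is good exactly as in part~(1), while a branch returning to the root infinitely often carries the cyclic thread obtained by concatenating the unfolding step \( \sigma\psi \sf \psi(\sigma\psi) \) with the right \( x \)-thread of \( \rho \) reaching the looped leaf (when \( \sigma = \nu \)), or \( \sigma\phi \sf \phi(\sigma\phi) \) with the corresponding left \( x \)-thread (when \( \sigma = \mu \)). Since every other formula \( \gamma \) on this cycle contains \( \sigma\psi \) (resp.\ \( \sigma\phi \)) strictly below its root, the complexity computation gives \( \rk{\sigma\psi} < \rk\gamma \) (resp.\ \( \rk{\sigma\phi} < \rk\gamma \)), so by Proposition~\ref{thd-progress} the cyclic thread is a right \( \nu \)-thread when \( \sigma = \nu \) and a left \( \mu \)-thread when \( \sigma = \mu \); in either case the looping branch is good. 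Identifying this cyclic thread and confirming its quantifier type is the main obstacle of the construction, and it is precisely where the positivity of \( x \) in \( \phi x, \psi x \) is used.
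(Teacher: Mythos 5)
Your proof is correct and takes essentially the same route as the paper's: part (1) by uniformly substituting \( \alpha \) for antecedent occurrences and \( \beta \) for consequent occurrences of \( x \) throughout the co-proof of \( \phi x \Rightarrow \psi x \) and grafting the co-proof of \( \alpha \Rightarrow \beta \) onto the resulting buds, and part (2) by the same substitution with \( \alpha = \sigma\phi \), \( \beta = \sigma\psi \), identifying the buds \( \sigma\phi \Rightarrow \sigma\psi \) with the root and unravelling the cycle. The paper leaves the goodness verification entirely implicit, so your explicit check that substitution preserves the \( \mu/\nu \)-character of threads and that the looping branches carry the right cyclic thread is a sound elaboration of the same argument rather than a different one.
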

\begin{proof}
	1.\ Throughout a constructive co-proof of \( \seq {\phi x} {\psi x} \), uniformly substitute \( \alpha \) for \( x \) occurring in the context and \( \beta \) for \( x \) in the head.
	The positivity constraints on abstraction ensure that the result is a constructive derivation of \( \phi\alpha \Rightarrow \psi\beta \) in which buds are of the form \( \alpha \Rightarrow \beta \).
	So \( \phi\alpha \Vdash \psi\beta \).
	
	2.\ Using the same construction as in part 1, from a constructive co-proof of \( \phi x \Rightarrow \psi x \) a constructive derivation of \( \sigma \phi \Rightarrow \sigma \psi \) can be obtained, wherein all buds are identical to the root sequent \( \sigma \phi \Rightarrow \sigma \psi \). Identifying such buds with the root and unravelling to an illfounded derivation witnesses \( \Seq {\sigma \phi} {\sigma \psi} \) for both \( \sigma = \nu \) and \( \sigma = \mu \).
\end{proof}

The next result offers our first example of a constructive co-proof.
\begin{proposition}
	\label{kmoi-id}
	\( \alpha \Vdash \alpha \).
\end{proposition}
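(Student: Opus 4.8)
The plan is to build a single constructive co-proof of \( \seq \alpha \alpha \) by \emph{identity expansion}: a derivation that decomposes the principal operator of the formula in the antecedent and in the consequent in lockstep, recursing into immediate subformulas and re-expanding at fixed points. Concretely, I would define a derivation in which every interior node carries a sequent \( \seq \beta \beta \) with \( \beta \in \subform{\alpha} \), generating its children according to the shape of \( \beta \). If \( \beta = \conj \Gamma \), apply \( \conjR \) and then, in each premise \( \seq {\conj \Gamma} \gamma \), apply \( \conjL \) selecting the matching conjunct, arriving at \( \seq \gamma \gamma \); dually, \( \beta = \disj \Gamma \) is treated by \( \disjL \) followed by \( \disjR \). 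The modal cases \( \beta = \square \alpha' \) and \( \beta = \diamond \alpha' \) each reduce to \( \seq {\alpha'} {\alpha'} \) by a single instance of \( \sqR \), respectively \( \diL \), chosen so that the rule simultaneously unboxes the antecedent and the consequent occurrence (a routine check on the side contexts of these rules). A fixed point \( \beta = \sigma \phi \) reduces to \( \seq {\phi(\sigma \phi)} {\phi(\sigma \phi)} \) via \( \sigR \) and \( \sigL \). The recursion bottoms out at the literal sequents \( \seq x x \) and \( \seq {\lnot x} {\lnot x} \), each closed by the identity rule; these are the derivation's only identity inferences, and they act on atoms. Since \( \subform \alpha \) is finite, the (generally infinite) tree is well defined by corecursion on the subformula structure.

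Two invariants fall out immediately. First, every sequent appearing has a single formula in the consequent, so the derivation is constructive. Second, at each step the antecedent formula and the consequent formula perform the \emph{same} subformula transition \( \sf \); consequently, along any branch the left-position trace and the right-position trace support one and the same weak thread. Every inference applied is logical, so the second derivation condition (infinitely many logical rules on each branch) holds automatically, and on an infinite branch this common trace yields an infinite thread present simultaneously as a left thread and as a right thread.

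It remains to verify the co-proof correctness condition, which is the crux. Fix an infinite branch \( B \). By the previous paragraph \( B \) carries an infinite thread \( \tau \) occurring both on the left and on the right, and Proposition~\ref{thd-progress} supplies the unique dominating formula \( \eta = \sigma \phi \). If \( \sigma = \nu \), the right copy of \( \tau \) is a right \( \nu \)-thread; if \( \sigma = \mu \), the left copy is a left \( \mu \)-thread. In either case \( B \) is good, so the derivation is a constructive co-proof and \( \alpha \Vdash \alpha \). The main obstacle is precisely this last step: it is the \emph{symmetry} of the expansion that guarantees a thread on \emph{both} sides, so that whichever disjunct the dominating fixed point forces—a \( \nu \)-thread on the right or a \( \mu \)-thread on the left—is available. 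The remaining bookkeeping (that \( \sqR \) and \( \diL \) consume both modal occurrences at once, and that the intermediate \( \conj \)/\( \disj \) nodes, where one side is momentarily a side formula, do not break threadhood) is routine.
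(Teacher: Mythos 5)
Your proof is correct, and the co-proof it constructs is, up to the order of the paired left/right inferences, essentially the one the paper's argument produces; the difference lies in how the argument is packaged. The paper argues by induction on the syntactic complexity of \( \alpha \), delegating the only non-trivial case \( \alpha = \sigma\phi \) to Proposition~\ref{kmoi-mono}(2): from the inductively obtained co-proof of \( \seq{\phi x}{\phi x} \) one substitutes \( \sigma\phi \) for \( x \) on both sides, yielding a derivation of \( \seq{\sigma\phi}{\sigma\phi} \) all of whose buds repeat the root, which is then turned into a co-proof by identifying buds with the root and unravelling. Unfolding that recursion gives exactly your regular tree---lockstep decomposition with re-expansion at fixed points---and its correctness rests on the same symmetry you isolate: a branch returning to the root infinitely often carries the cyclic \( \sigma\phi \)-thread as both a left and a right thread, hence is good whichever quantifier \( \sigma \) is, while a branch that eventually stays inside a copy of the sub-co-proof inherits goodness from it. What your route buys is self-containment and a uniform, global verification of the thread condition via Proposition~\ref{thd-progress}; what the paper's buys is brevity and reuse of Proposition~\ref{kmoi-mono}, which it needs elsewhere anyway. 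One wording issue, which affects both proofs equally: your leaves \( \seq{\lnot x}{\lnot x} \) are identity instances whose principal formulas are negated variables, which a strict reading of the definition of \( \Vdash \) (``a variable as a principal formula'') would forbid; but no rule of \( \kmoi \) can derive \( \seq{\lnot x}{\lnot x} \) otherwise, the paper's own base case needs these instances too, and the \( \cmu \) rendering of identity (the side condition \( \neg\Phi \cup \Psi = \setof{x, \neg x} \)) confirms that atoms rather than variables proper are intended---so this is a quirk of the paper's phrasing, not a gap in your argument.
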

\begin{proof}
	By induction on the syntactic complexity of \( \alpha \).
	Suppose \( \alpha = \sigma \phi \). 
	The induction hypothesis implies \( \phi x \Vdash \phi x \), whence \( \Seq \alpha \alpha \) by Proposition~\ref{kmoi-mono}.
	The other cases are straightforward.
\end{proof}

It was remarked in section~\ref{s-prelim} that the expanded cover modality allows for an intuitionistic treatment of disjunctive formulas not offered by unary form of the modality.
The natural disjunctive formulas arising from the normal-form construction (detailed in Section~\ref{s-DNF} and implicit in \cite{Waluk00comp-lc,ALM21uniformInter}) are intuitionistically equivalent to the starting formula when the expanded cover modality is utilised but not for the non-expanded modality.
The distinction occurs at certain stages in the construction of the disjunctive formula where in place of \( \nabla(\Gamma,\Delta) \) a formula \( \nabla \Gamma \vee \nabla \Gamma' \) is utilised (for \( \Gamma' \) dependent on \( \Gamma \) and \( \Delta \)).
The equivalence of the two forms holds classically but is not derivable in the constructive fragment.

\begin{proposition}
	\label{exp-vs-unary-cover}
	There exist formulas not equivalent over \( \kmoi \) to any disjunctive formula employing only the non-expanded (unary) cover modality.
%
\end{proposition}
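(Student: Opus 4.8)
The plan is to take as witness the single formula $\square x$, with $x$ a variable, reading ``disjunctive formula employing only the unary cover modality'' as one in which every modality is introduced through an instance of the non-expanded cover $\nabla\Gamma$, so that bare occurrences of $\square$ and $\diamond$ are unavailable. This is the Janin--Walukiewicz convention and is precisely the setting in which the representation $\square x \equiv \nabla(\emptyset,\{x\})$ must be unpacked as the external disjunction $\nabla\emptyset \vee \nabla\{x\}$; without this convention $\square x$ would be its own unary-cover equivalent and nothing could be proved. Suppose towards a contradiction that $\delta$ is such a formula with $\square x \VdashV \delta$. By soundness (Theorem~\ref{kmo-snd}) we obtain $\denote\delta = \denote{\square x}$; in particular $\delta$ is not valid, since $\square x$ fails at any root carrying a successor at which $x$ is absent. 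It therefore suffices to refute the single direction $\square x \Vdash \delta$.

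I would analyse an arbitrary constructive co-proof of $\seq{\square x}\delta$. The antecedent $\square x$ admits no left logical rule --- there is no left rule for $\square$, and $\diL$ requires a diamond on the left --- so the only inference that can consume it is $\sqR$, which applies only once the head is a bare box. As $\delta$ is disjunctive with unary covers, every conjunction occurring in it is a cover and every box occurs inside such a cover as the conjunct $\square\disj\Gamma$; hence a bare box can reach the head only after some cover $\nabla_L\Gamma$ is decomposed by $\conjR$. The crux is that this decomposition is impossible from $\square x$: the $\conjR$ premises include $\seq{\square x}{\diamond\gamma}$ for each $\gamma\in\Gamma$, invalid at any leaf model whose root vacuously satisfies $\square x$ yet satisfies no diamond, and when $\Gamma=\emptyset$ the cover contributes the premise $\seq{\square x}{\square\bot}$, invalid at any model whose root has a successor. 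By soundness none of these premises is provable, so no cover can be broken down and $\square x$ can never be consumed.

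Consequently every co-proof of $\seq{\square x}\delta$ leaves the antecedent inert: only the right rules $\disjR$ and $\sigR$ (with copying of side formulas) are ever applied, and $\square x$ is carried unchanged or weakened away. Deleting it uniformly from the derivation yields a co-proof of $\seq{}\delta$ with the same branches and thread structure, so by soundness $\delta$ is valid, contradicting $\denote\delta = \denote{\square x}$. This refutes $\square x \Vdash \delta$, hence $\square x \VdashV \delta$, and $\square x$ is the desired witness. The step I expect to be the main obstacle is the infinitary bookkeeping behind ``inertness'': one must rule out that a branch secretly engages $\square x$ --- for instance through an infinite right $\nu$-thread that unfolds fixed points of $\delta$ forever, as happens for the valid formula $\nu y.\,y$ --- and verify that the reduction to $\seq{}\delta$ preserves the good-branch condition on every branch, so that the closing appeal to soundness is legitimate.
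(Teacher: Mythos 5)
Your proof is correct and takes essentially the same route as the paper's: the witness is \( \square x \), and the decisive step is the identical soundness computation on the premises of a (\conjR) applied to a cover — the leaf model refutes the \( \seq{\square x}{\diamond\gamma} \) premises and a one-successor model (whose successor satisfies \( x \)) refutes the \( \seq{\square x}{\square\disj\emptyset} \) premise. Where the paper compresses the structural analysis of the co-proof into ``without loss of generality \( \beta = \nabla_L(\Gamma,\Delta) \)'' and then concludes \( L = \Gamma = \emptyset \subsetneq \Delta \), you unfold the same analysis contrapositively — a unary cover can never be decomposed, so the antecedent stays inert, the deletion argument makes \( \delta \) valid, contradicting \( \denote{\delta} = \denote{\square x} \) — which makes explicit exactly the step the paper leaves implicit.
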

\begin{proof}
	Suppose \( \beta \) is a disjunctive formula satisfying \( \square x \Vdash \beta \). 
	Inspecting the witnessing co-proof we can, without loss of generality,  assume \( \beta = \nabla_L(\Gamma,\Delta) \).
	Soundness implies that \( L = \Gamma = \emptyset \) and \( \denote x \subseteq \denote{ \disj \Delta} \), i.e., \( \Gamma \subsetneq  \Delta \).
	In particular, \( \beta \) cannot be formed by use of the unary cover modality only.
%
%
%
%
\end{proof}

On the other hand, the expanded cover modality does provide equivalent representations of the usual modalities:

\begin{proposition}
	\( \square x \VdashV \nabla(\emptyset,\setof x) \) and \( \diamond x \VdashV \nabla(\setof x , \setof{x,\top}) \).
\end{proposition}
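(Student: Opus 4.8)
The statement asks me to show two equivalences, $\square x \VdashV \nabla(\emptyset,\setof x)$ and $\diamond x \VdashV \nabla(\setof x, \setof{x,\top})$, each of which unpacks into two constructive co-proofs (one in each direction). The plan is to unfold the expanded cover modalities according to their definition and then build explicit cut-free constructive co-proofs, relying on the modal rules $\sqR$ and $\diL$ together with $\conjR$, $\conjL$, $\disjR$, and the variable-restricted identity rules. Since these are \emph{finite} derivations (no genuine fixed points are involved, only literals, modalities, and the connectives $\conj$, $\disj$), there is no infinite-thread side-condition to verify beyond trivially satisfying Definition~\ref{d-coproofs}, so the essential work is purely the rule-by-rule construction.

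\emph{The $\square$ case.} By definition $\nabla(\emptyset,\setof x) = \conj(\diamond\emptyset \cup \setof{\square\disj\setof x}) = \conj\setof{\square\disj\setof x}$, which is essentially $\square\disj\setof x$ up to the singleton conjunction. First I would establish $\square x \Vdash \nabla(\emptyset,\setof x)$: reading the head upward, I apply $\conjR$ (the conjunction having the single conjunct $\square\disj\setof x$), then $\sqR$ to reduce to $\seq{x}{\disj\setof x}$, then $\disjR$ to reach $\seq{x}{x}$, closed by the variable identity rule $\idLR$. The matching $\diL$/$\sqR$ step needs the antecedent $\square x$ to sit under the box, which it does. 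For the converse $\nabla(\emptyset,\setof x)\Vdash\square x$, I peel off $\conjL$ to get $\square\disj\setof x$ in the antecedent, apply $\diL$ (with empty diamond context) or the appropriate box-left reduction to move to $\seq{\disj\setof x}{x}$, then $\disjL$ down to $\seq{x}{x}$, again closed by $\idLR$. Here I must be careful that the modal rule I invoke matches the shape in fig.~\ref{f-kmm-logic}: the only box-handling rule is $\sqR$ on the right and $\diL$ on the left, so I would route the $\square$-on-the-left step through $\sqR$ after applying the negation/transposition discipline of Proposition~\ref{kmm-neg} if a direct left rule is unavailable.

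\emph{The $\diamond$ case.} Here $\nabla(\setof x,\setof{x,\top}) = \conj(\diamond\setof x \cup \setof{\square\disj\setof{x,\top}}) = \conj\setof{\diamond x, \square\disj\setof{x,\top}}$. For $\diamond x \Vdash \nabla(\setof x,\setof{x,\top})$ I apply $\conjR$, splitting into two premises: the first, $\seq{\diamond x}{\diamond x}$, follows from $\diL$ reducing to $\seq{x}{x}$; the second, $\seq{\diamond x}{\square\disj\setof{x,\top}}$, follows by $\sqR$ to $\seq{\emptyset}{\disj\setof{x,\top}}$ — wait, the antecedent $\diamond x$ cannot sit under $\sqR$, so instead I use the $\sqR$ form $\seq{\square\Phi}{\diamond\Psi,\square\alpha}$ with $\diamond x$ in the $\diamond\Psi$ slot, reducing to $\seq{\emptyset}{x,\disj\setof{x,\top}}$; then $\disjR$ selecting the disjunct $\top = \conj\emptyset$, closed by $\conjR$ with no premises. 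For the converse, from $\nabla(\setof x,\setof{x,\top})$ I apply $\conjL$ to extract $\diamond x$, then $\diL$ directly yields $\seq{x,\dots}{x}$.

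\emph{Main obstacle.} The delicate point is matching the two available modal rules ($\sqR$ right, $\diL$ left) against every box/diamond that occurs on the `wrong' side, since $\kmm$ as presented in fig.~\ref{f-kmm-logic} has no $\square$-left or $\diamond$-right primitive. I expect the cleanest path is to prove the `$\Vdash$ into a $\square$/$\diamond$' directions directly and obtain the reverse directions either by the same rules under the context-shifting afforded by $\sqR$'s general scheme $\seq{\square\Phi}{\diamond\Psi,\square\alpha}$, or by invoking the negation symmetry of Proposition~\ref{kmm-neg} together with Proposition~\ref{kmoi-id} for the residual $\seq{x}{x}$ leaves. Verifying that each constructed derivation is genuinely \emph{constructive} (every sequent intuitionistic, i.e. at most one formula in the consequent) requires checking that the $\sqR$ applications never produce a multi-formula right side; given that the $\diamond\Psi$ components here are single diamonds, this holds, but it is the one bookkeeping condition I would confirm explicitly for each of the four co-proofs.
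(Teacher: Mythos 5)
The paper states this proposition without giving a proof, so your proposal stands or falls on its own; its overall strategy is certainly the intended one: four explicit finite derivations in the constructive fragment, with the branch condition of Definition~\ref{d-coproofs} holding vacuously since there are no infinite branches. Three of the four directions go through essentially as you describe. One small slip in the \( \square \)-converse: \( \diL \) cannot be applied ``with empty diamond context,'' since its conclusion must have a principal \( \diamond \)-formula in the antecedent and a consequent of the form \( \diamond\Psi \); but you recover in your final paragraph, where you correctly identify that the move is \( \sqR \) with \( \square\disj\setof{x} \) occupying the \( \square\Phi \) position of the conclusion \( \seq{\square\Phi}{\diamond\Psi,\square\alpha} \), reducing \( \seq{\square\disj\setof x}{\square x} \) to \( \seq{\disj\setof x}{x} \), then \( \disjL \) and \( \idLR \).

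The genuine error is in the \( \diamond \) case, at the second premise \( \seq{\diamond x}{\square\disj\setof{x,\top}} \). Your proposed inference --- ``use the \( \sqR \) form with \( \diamond x \) in the \( \diamond\Psi \) slot'' --- is not a legal rule instance: the \( \diamond\Psi \) slot of \( \sqR \) sits in the \emph{consequent} of the conclusion, whereas \( \diamond x \) here occurs in the \emph{antecedent}; no instance of \( \sqR \) matches this sequent (it demands an all-boxed antecedent), and \( \diL \) does not match either (it demands a diamond consequent). Moreover, the premise you write down, \( \seq{}{x,\disj\setof{x,\top}} \), has two formulas in its consequent, so even if it were reachable the derivation would not be constructive, which is exactly what \( \Vdash \) requires. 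The repair is simple and should be stated: apply \( \weakL \) --- a structural rule of \( \kmm \), hence available in \( \kmo \) --- to discard \( \diamond x \), reducing to \( \rseq{\square\disj\setof{x,\top}} \); then \( \sqR \) with \( \Phi \) and \( \Psi \) both empty yields \( \rseq{\disj\setof{x,\top}} \), and your own end-game --- \( \disjR \) selecting \( \top = \conj\emptyset \), closed by the zero-premise instance of \( \conjR \) --- finishes the branch. With that replacement, all four derivations are finite, intuitionistic throughout, and use identity only on the variable \( x \), as the definition of \( \Vdash \) demands.
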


The next observation confirms the claimed interpretation of classical co-proofs in the constructive fragment.

\begin{proposition}
	\label{kmo-kmoi}
	\( \kmo \vdash \seq {\Phi} \Psi \) iff \( \Seq {\Phi , \neg\Psi } {\bot} \).
\end{proposition}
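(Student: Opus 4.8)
The plan is to recognise this proposition as the illfounded counterpart of Proposition~\ref{kmm-neg} and to lift the same syntactic negation transformation to co-proofs, the only genuine work being the verification that the good-branch condition is preserved. Write \( T \) for the operation replacing each sequent \( \Lambda \Rightarrow \Xi \) occurring in a derivation by \( \neg\Xi, \Lambda \Rightarrow \) (empty consequent). By the rule-by-rule analysis underlying Proposition~\ref{kmm-neg}, \( T \) turns every right rule \( \starR \) into the corresponding left rule \( \starL \) (up to exchange) and leaves left rules as left rules; in particular the modal rules are interchanged, \( \sqR \) becoming \( \diL \) and \( \diL \) remaining \( \diL \), and each identity rule becomes \( \idL \) on the same (variable) principal formula. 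Since \( T \) leaves the underlying tree and its branches untouched and only relabels sequents and rules, it is a bijection between \( \kmo \)-derivations of \( \Phi \Rightarrow \Psi \) and empty-consequent derivations of \( \Phi, \neg\Psi \Rightarrow \).

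Next I would argue that \( T \) restricts to a bijection on co-proofs. Under negation a thread \( \tau \) is sent to the thread \( \neg\tau \), and, as negation swaps the quantifiers, \( \neg\tau \) is a \( \mu \)-thread iff \( \tau \) is a \( \nu \)-thread. A formula on the right of \( \Lambda \Rightarrow \Xi \) occurs negated on the left of \( T(\Lambda \Rightarrow \Xi) \), and conversely, so a right \( \nu \)-thread of the original branch corresponds to a left \( \mu \)-thread of the transformed branch, while a left \( \mu \)-thread is carried over unchanged. Because every sequent of \( T(d) \) has empty consequent, its branches carry no right threads, so the good-branch condition for \( T(d) \) reduces to ``every branch carries a left \( \mu \)-thread''; by the correspondence just described this holds exactly when every branch of \( d \) carries a right \( \nu \)- or left \( \mu \)-thread, i.e.\ when \( d \) is a co-proof. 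Hence \( \kmo \vdash \Phi \Rightarrow \Psi \) iff there is a co-proof of \( \Phi, \neg\Psi \Rightarrow \), which by Proposition~\ref{kmui-basic} is automatically constructive.

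It remains to reconcile the empty consequent with the head \( \bot \) demanded by \( \Vdash \). For the forward direction I would extend the empty-consequent co-proof by a single \( \weakR \) inference introducing \( \bot \), obtaining a constructive co-proof of \( \Phi, \neg\Psi \Rightarrow \bot \); the added formula \( \bot \) is inert and carries no thread, so the branch condition is unaffected. For the converse, observe that in any constructive co-proof of \( \Phi, \neg\Psi \Rightarrow \bot \) the head \( \bot = \disj\emptyset \) can never be principal (there is no disjunct to select) nor duplicated (constructivity forbids two right formulas), so it persists as an inert side formula introducible only by \( \weakR \); deleting it everywhere and discarding the resulting trivial \( \weakR \) steps yields an empty-consequent co-proof of \( \Phi, \neg\Psi \Rightarrow \), to which \( T^{-1} \) applies. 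The main obstacle is precisely this bookkeeping around \( \bot \) together with the thread-type accounting of the second paragraph: one must check that \( T^{-1} \), using the designated occurrences of \( \neg\Psi \) in the antecedent, correctly decides which left rules revert to right rules and in particular which instances of \( \diL \) revert to \( \sqR \), and that under \( T^{-1} \) a left \( \mu \)-thread through a \( \neg\Psi \)-descendant becomes a right \( \nu \)-thread, so that co-proofhood is again preserved.
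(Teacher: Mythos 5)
Your proposal is correct and takes essentially the same route as the paper: its proof likewise applies the negation transformation of Proposition~\ref{kmm-neg}, observes that it dualises traces and threads and therefore preserves the co-proof condition (reducing to the case of an empty consequent), uses the fact that derivations with empty-consequent endsequent are automatically constructive (Proposition~\ref{kmm-kmmi}), and treats the passage between the empty consequent and the head \( \bot \) as immediate. Your final paragraph simply makes explicit the (\weakR)/deletion bookkeeping around \( \bot \) that the paper leaves implicit.
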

\begin{proof}
	The proof transformation described in Proposition~\ref{kmm-neg} preserves the co-proof acceptance condition on branches as it simply dualises traces and threads.
	So we can assume that \( \Psi \) is empty.
	Moreover, it is clear that \( \Seq \Phi \bot \) iff \( \kmoi \vdash \lseq {\Phi} \) by Proposition~\ref{kmoi-id} and the definition of \( \bot \).
	Proposition~\ref{kmm-kmmi} connects the two equivalences.
\end{proof}

The following result will be useful later.
\begin{proposition}
	\label{kmoi-contr}
	If \( \Seq \Phi \alpha \) then there is a constructive co-proof of \( \seq \Phi \alpha \) in which principal formulas of contractions are not disjunctive.
\end{proposition}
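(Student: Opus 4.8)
The plan is to establish the proposition by showing that left contraction admits a \emph{disjunctive-free} elimination: any constructive co-proof can be rewritten into one in which no instance of $\conL$ has a disjunctive principal formula. I would first dispose of right contraction: since every sequent in a constructive co-proof is intuitionistic, the premise $\seq\Phi{\Psi,\alpha,\alpha}$ of $\conR$ would carry two formulas in its consequent and so cannot occur; hence only $\conL$ needs treatment. I would then define, corecursively on the given co-proof, a transformation that traverses the proof from the root and, on encountering a $\conL$ whose principal formula $\beta$ is disjunctive, replaces it by a contraction-free reorganisation dictated by the shape of $\beta$.

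The case analysis turns on the fact that, by definition of the disjunctive fragment, the only conjunction a disjunctive formula may exhibit is an expanded cover modality. If $\beta=\disj\Gamma$ I would appeal to invertibility of $\disjL$ to push the contraction onto the disjuncts $\gamma\in\Gamma$, each of which is again disjunctive but of strictly smaller complexity $\rk\gamma<\rk{\disj\Gamma}$; if $\beta=\sigma\phi$ is quantified I would unfold both copies by $\sigL$, trading the contraction for one on $\phi(\sigma\phi)$ (which may now be non-disjunctive, hence admissible, or is handled by the ongoing corecursion); and if $\beta$ is a modality $\square\gamma$ or $\diamond\gamma$ I would use that $\diL$ and $\sqR$ already collapse duplicate (un)boxed copies, so that a single copy survives the modal step. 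The crux is the remaining case $\beta=\nabla_L(\Gamma,\Delta)$. Here the defining restriction $\Gamma\subseteq\Delta$ is essential: for any diamond conjunct $\diamond\gamma$ with $\gamma\in\Gamma$ we have $\gamma\Vdash\disj\Delta$ by $\disjR$ and Proposition~\ref{kmoi-id}, so the box conjunct $\square\disj\Delta$ is redundant at the successor reached through $\diamond\gamma$. I would combine this redundancy with the observation that genuinely simultaneous uses of distinct conjuncts (two diamonds, or a literal and a diamond) are always separated by a context-sharing right rule that copies $\nabla_L(\Gamma,\Delta)$ into each premise, to conclude that no single branch needs two copies of the cover modality, whence the contraction can be deleted.

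Finally I would verify that the output is a genuine constructive co-proof. Intuitionisticity is preserved since the transformation never enlarges the consequent. For validity I would exhibit a map from traces in the transformed proof back to traces in the original that preserves the supported thread, so that every good (right-$\nu$ or left-$\mu$) thread is reflected and no branch turns bad; productivity of the corecursion follows because each elimination step is mediated by at least one logical rule, maintaining the global condition on derivations. I expect the main obstacle to be twofold: making the cover-modality case precise---in particular certifying cut-freely that the sub-derivation consuming $\square\disj\Delta$ may be redirected to consume $\gamma$ followed by $\disjR$ without disturbing the thread structure---and controlling the quantifier case, where $\rk{\phi(\sigma\phi)}$ can exceed $\rk{\sigma\phi}$, so that termination is only coinductive and the preservation of good threads must be argued in the limit rather than by a well-founded measure.
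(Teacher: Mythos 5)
Your plan follows essentially the same route as the paper's proof at its core: reduce everything to contractions on instances of the expanded cover modality (the paper does this by permuting contractions so that only conjunctions are contracted and absorbing contraction into a generalised rule \((\conjsL)\) that extracts any subset \(\Delta\subseteq\Gamma\) of conjuncts, then permuting that rule up to the first identity or modal inference), observe that the only joint use of two conjuncts at a modal step is the pair \(\diamond\gamma\), \(\square\disj\Delta\), and then exploit \(\Gamma\subseteq\Delta\) to redirect the thread \(\square\disj\Delta \sf \disj\Delta \sf \gamma\) onto \(\diamond\gamma \sf \gamma\), checking that goodness of branches is unaffected. So the crux you identify is the right one, and your trace-level argument for thread preservation matches the paper's closing remark.

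There is, however, a concrete hole in your case analysis of \enquote{genuinely simultaneous uses of distinct conjuncts}: you list two diamonds and a literal-plus-diamond, but omit a pair of \emph{literals}. The identity rule \(\idL\) of \(\kmo\) has conclusion \(\lseq{x,\lnot x}\) with both formulas on the left and empty consequent, so it is perfectly admissible in a constructive co-proof; hence two copies of \(\nabla_L(\Gamma,\Delta)\) could in principle be contracted so that \(x\) is extracted from one copy, \(\lnot x\) from the other, and the branch closes with \(\idL\) — with no intervening right rule to separate the two uses. Your separation claim is simply false for this configuration, and \(\Gamma\subseteq\Delta\) is of no help here. What dispatches it is the \emph{consistency} requirement on \(L\) built into the definition of the expanded cover modality: \(x,\lnot x\in L\) is impossible, so (since a branch has at most one identity leaf) at most one element of \(L\) can ever be needed above the contraction, and a single \((\conjL)\) suffices. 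The paper invokes this explicitly (\enquote{as \(L\) is consistent at most one element of \(L\) occurs in the premise}); your proposal never uses consistency of \(L\), so as written the argument is incomplete at exactly the point where that design restriction on the fragment earns its keep. The remainder of your plan — vacuity of right contraction in intuitionistic sequents, corecursive permutation for disjunctions, quantifiers and plain modalities, and the coinductive productivity concerns — is sound and consistent with the paper.
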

\begin{proof}
	By permuting contractions away from the root, we can assume that the principal formula any contraction is a conjunction.
	In particular, we can assume we are working with a contraction-free constructive co-proof using an expanded (\conjL) rule:
	\begin{prooftree*}
		\hypo{\seq {\Delta , \Phi}\Psi }
		\infer[left label={$\Delta \subseteq \Gamma$}]1[\conjsL]{\seq {\conj\Gamma, \Phi} \Psi }
	\end{prooftree*}
	Consider a vertex \( v \) labelled by the instance of (\conjsL) above where the principal formula \( \conj \Gamma \) is disjunctive.
	In particular, \( \Gamma = L \cup \diamond \Gamma' \cup \setof{ \square \disj \Gamma''} \) for \( L \subseteq \Lit \) and \( \Gamma' \subseteq \Gamma'' \subseteq \D \).
	We can assume that \( \card \Delta > 1 \) as otherwise an instance of either (\conjL) or (\weakL) would suffice.
	By permuting the rule with that above if possible, we may also assume that the child of \( v \) is an instance of either an identity or modal rule.
	In the former case, as \( L \) is consistent at most one element of \( L \) occurs in the premise, and the instance of (\conjsL) can be replaced by (\conjL).
	In the latter case, we can assume that \( \Delta \subseteq \diamond \Gamma' \cup \setof{ \square \disj \Gamma''} \).
	As \( \seq{\Delta , \Phi} \Psi \) is the conclusion of a modal rule, we have that \( \Delta = \setof { \diamond \gamma , \square \disj \Gamma'' } \) for some \( \gamma \in \Gamma' \) and that \( \diamond \gamma \) is principal in the premise.
	As \( \gamma \in \Gamma'' \), it suffices to choose \( \Delta = \setof{\diamond \gamma} \).
	Although this change has the potential to alter left threads by replacing subsequences \( \conj \Gamma \sf \square \disj \Gamma'' \sf \disj \Gamma'' \sf \gamma \) by \( \conj \Gamma \sf \diamond \gamma \sf \gamma \), it does not change whether the property of carrying a good thread.
\end{proof}

\begin{theorem}
	\label{kmoi-comp-conj}
	Let \( \alpha \) be a valid conjunctive formula. Then \( \Seq {\emptyset} \alpha \).
\end{theorem}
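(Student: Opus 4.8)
The plan is to construct a constructive co-proof of \( \seq{}{\alpha} \) by a validity-guided proof search and then certify that the resulting derivation is a co-proof. First I would record a structural simplification: the endsequent \( \seq{}{\alpha} \) has empty antecedent, and every right rule available in \( \kmoi \) — namely \( \conjR \), \( \disjR \), \( \sigR \) and \( \sqR \) — leaves the antecedent empty when the context is empty. Hence a root-first search keeps every sequent in the form \( \seq{}{\beta} \) with \( \beta \in \subform{\alpha} \), no branch ever carries a left thread, and by Proposition~\ref{kmui-basic} each branch carries at most one right thread; along an infinite branch the continuously traced head is an infinite right thread, so being \emph{good} reduces to that thread being \( \nu \).

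Next I would build a derivation \( d \) maintaining the invariant that the head \( \beta \) of each sequent is \emph{valid}. The cases \( \beta = \conj\Gamma \) (apply \( \conjR \); valid iff every conjunct is valid, with \( \top \) closing a branch when \( \Gamma = \emptyset \)), \( \beta = \sigma\phi \) (apply \( \sigR \); valid iff \( \phi(\sigma\phi) \) is valid by Proposition~\ref{denot-sem-fxpt}), and \( \beta = \square\gamma \) (apply \( \sqR \); \( \square\gamma \) is valid iff \( \gamma \) is, testing against the one-successor models) all preserve validity and never stall. Since a literal and a formula \( \diamond\gamma \) are never valid, a valid head is never one of these, so the only remaining case is a disjunction; as \( \alpha \) is conjunctive this disjunction is an instance of the dual cover modality, i.e.\ of the shape \( \neg{\nabla_L(\Gamma,\Delta)} = \disj\bigl( \neg L \cup \square\neg\Gamma \cup \setof{ \diamond\conj\neg\Delta } \bigr) \) with \( \Gamma \subseteq \Delta \) and \( L \) consistent. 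Here I would invoke the key \emph{Disjunction Lemma}: a valid disjunction of this shape has a valid disjunct of the form \( \square\gamma \) with \( \gamma \in \neg\Gamma \), which \( \disjR \) then selects. This lemma is exactly where the defining restrictions of the cover modality are used. Arguing contrapositively, from a model \( N_\gamma \) refuting each \( \gamma \in \neg\Gamma \) one assembles a root whose successors are the \( N_\gamma \) and whose label falsifies every literal of \( \neg L \) — possible precisely because \( \neg L \) is consistent; then each \( \square \)-disjunct fails at the corresponding \( N_\gamma \), while \( \diamond\conj\neg\Delta \) fails because \( \Gamma \subseteq \Delta \) forces every \( N_\gamma \) to satisfy a member of \( \Delta \) and hence to refute \( \conj\neg\Delta \). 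This falsifies the whole disjunction, contradicting its validity.

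It remains to show \( d \) is a co-proof. Since every head is valid, I would argue by contradiction: a bad branch \( B \) carries a right \( \mu \)-thread \( \tau \), its unique infinite right thread being non-\( \nu \) by the reduction above. The modal inferences (\( \sqR \) steps) along \( B \) determine a path model \( M \), and dualising \( \tau \) yields, via Proposition~\ref{thd-progress}, a \( \nu \)-thread certifying a verification of \( \neg\alpha \) along \( M \); by the Fundamental Semantic Theorem (Theorem~\ref{FST}) together with \( \oper{\neg\alpha} = \Mod \setminus \oper{\alpha} \), this gives \( M \notin \denote{\alpha} \), contradicting that the root head \( \alpha \) is valid. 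Hence no branch is bad, \( d \) is a constructive co-proof, and \( \Seq{\emptyset}\alpha \) follows.

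I expect the main obstacle to be this final step: turning a bad (right \( \mu \)) branch into a genuine verification of \( \neg\alpha \). One must run the search with enough fairness that every infinite branch fully decomposes its head, extract the successor structure of \( M \) correctly from the modal steps despite the empty antecedent, and verify that the dualised thread meets the \( \nu \)-winning condition at every branch of the verification — in effect a dual of the soundness argument (Theorem~\ref{kmo-snd}) carried out along a single failing branch, with the disjunctive shape of \( \neg\alpha \) guaranteeing that the verifier's modal and choice moves are exactly those licensed by the cover modalities.
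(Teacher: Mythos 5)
Your setup is fine as far as it goes: the empty-antecedent invariant, the reduction of goodness to the unique right thread being \( \nu \), and the Disjunction Lemma itself (including its use of the consistency of \( L \) and of \( \Gamma \subseteq \Delta \)) are all correct, and the lemma is indeed the semantic heart of the matter. The fatal gap is the certification step. It is simply not true that a validity-guided search is automatically a co-proof: preserving validity is a \emph{local} soundness property, whereas goodness of branches is a \emph{global} winning condition -- exactly as in parity games, where staying inside the winning region is strictly weaker than following a winning strategy. Concretely, let
\[
\gamma \coloneqq \nf y\, \disj\bigl( \square\setof{y} \cup \setof{ \diamond\conj\setof{y} } \bigr),
\qquad
\alpha \coloneqq \mf x\, \disj\bigl( \square\setof{x,\gamma} \cup \setof{ \diamond\conj\setof{x,\gamma} } \bigr).
\]
Both bodies are duals of expanded cover modalities (with \( L = \emptyset \) and \( \Gamma = \Delta \)), so \( \alpha \) is conjunctive; \( \gamma \) is valid, hence \( \square\gamma \) is valid, hence \( \alpha \) is valid (one unfolding suffices). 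Now run your search on \( \seq{}{\alpha} \): after (\( \sigR \)) the head is the disjunction \( \disj\setof{\square\alpha,\ \square\gamma,\ \diamond\conj\setof{\alpha,\gamma}} \), and \emph{both} \( \square\alpha \) and \( \square\gamma \) are valid \( \square \)-disjuncts. Your Disjunction Lemma licenses selecting \( \square\alpha \); doing so at every pass produces an infinite branch with heads \( \alpha, \square\alpha, \alpha, \square\alpha, \dotsc \), all valid, whose unique right thread unfolds \( \mu x \) infinitely often and is therefore, by Proposition~\ref{thd-progress}, a \( \mu \)-thread. The branch is bad, so your derivation is not a co-proof, and the statement your last step sets out to prove (``every head valid implies every branch good'') is false.

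The attempted proof of that step fails for a structural reason: a verification of \( \neg{\alpha} \) is a \emph{tree} that must supply witnesses for every conjunct of each cover modality (every \( \diamond \)-conjunct, the \( \square\disj\Delta \)-conjunct, the literals) at every model vertex, whereas a bad branch of your deterministic search records only the one \( \square \)-disjunct it followed; dualising a single branch yields at best one thread of a would-be verification, never the verification itself. In the example, the path model read off the bad branch does not verify \( \neg{\alpha} \), since \( \neg{\alpha} \) demands \( \diamond\neg{\gamma} \) at every vertex and \( \neg{\gamma} = \mf y\,\bigl(\diamond y \wedge \square\disj\setof{y}\bigr) \) is unsatisfiable -- so no contradiction with the validity of \( \alpha \) can be extracted; the bad branch genuinely exists. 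This is precisely why the paper's proof never chooses a disjunct locally: its rule (\( \neg\nabla \)) branches over \emph{all} \( \square \)-disjuncts, proofhood is sought by pruning to one premise per (\( \neg\nabla \))-node, and when no pruning succeeds, an entire subtree (all (\( \neg\nabla \))-children, exactly one (\( \conjR \))-child, all branches \( \mu \)) is converted into a countermodel of \( \alpha \), contradicting validity -- a global, determinacy-style argument. To repair your proof you would need either that global step, or a disjunct-selection policy that is provably thread-decreasing (a signature/rank argument in the style of Walukiewicz), not mere validity.
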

The theorem may appear at odds with our referring to \( \kmoi \) as an \emph{intuitionistic} sequent calculus given that it derives all classical validities of certain syntactic shape.
The answer lies in the restrictive notion of conjunctive formulas.
Recall that disjunctive formulas only permit occurrences of conjunction in the form the expanded cover modality. Whence, by definition, if \( \beta = \nabla_L(\Gamma,\Delta) \) is an instance of the expanded cover modality, then \( \beta^\bot \) is valid iff \( L = \emptyset \) (since \( L \) must be consistent) and \( \Gamma^\bot \) contains a valid formula.
 In particular, no instance of the classical law of excluded middle \( \beta \vee \beta^\bot \) qualifies as conjunctive.
\begin{proof}
We perform an exhaustive proof search starting with the sequent \( \rseq\alpha \) while staying within the constructive fragment.
	That is, we build a derivation tree in (an expansion of) \( \kmoi \) such each vertex is annotated by a intuitionistic sequent.
	At the case that the head formula is a disjunction, it will be an instance of the binary co-\!\( \nabla \) modality, in which case the proof-search branches to explore the various premises that could be used in some derivation of the sequent.
	We will show that if \( \alpha \) is valid then there is a pruning of the proof-search tree to a constructive co-proof. 
	In the following, we denote by \( \neg \nabla \) the modality dual to \( \nabla \):
	\[ 
		\neg\nabla_L(\Gamma,\Delta) = \disj\bigl( L \cup \square \Gamma \cup \setof{ \diamond \conj \Delta } \bigr)
	 \]
	subject to the same constraints as uses of \( \nabla \), namely \( L \) is consistent and \( \Gamma \subseteq \Delta \).
	Treatment of \( \neg\nabla \) formulas is given by the  branching rule:
	\begin{equation*}
		\begin{prooftree}
			\hypo{ \seq {} { \gamma_1 }{} }
			\hypod
			\hypo{ \seq {} { \gamma_k }{} }
			\infer3[\( \neg\nabla \)]{ \seq {}{ \neg \nabla_L ( \setof{ \gamma_1 , \dotsc , \gamma_k } , \Delta) } }
		\end{prooftree}
	\end{equation*}
	Note that given a co-proof of some premise, say \( \rseq{\gamma_i} \), a co-proof of the conclusion is obtained via the (\( \sqR \)) and (\( \disjR \)) rules.
	Also,  if \( \Gamma \) is empty then the rule (\( \neg\nabla \)) has no premises.
	
	Let \( d \) be a maximal derivation with endsequent \( \rseq \alpha \) in the specified rules.	
	The assumption that \( \alpha \) is guarded entails that every branch of \( d \) passes through infinitely many instances of (\( \neg\nabla \)).
	Letting \( \sequent_u \) denote the sequent labelling the vertex \( u \in d \), on every branch \( B \) of \( d \) there contains a pair of vertices \( u < v \in B \) such that both are conclusions of (\( \neg\nabla \)) and \( \sequent_u \) and \( \sequent_v \) are identical sequents. 
	Such a pair of vertices is called a \emph{repetition}.
	In fact, \( d \) will be a regular tree: for every repetition \( (u,v) \), \( d \res v = d \res u \), i.e., the subderivations of \( d \) with root \( u \) and \( v \) are identical.
	
	We claim that if \( \alpha \) is valid then a subtree of \( d \) is a \( \kmo \)-proof, where the subtree is determined by collapsing instances of (\( \neg\nabla \)) to a single premise and replacing the rule as described above.
	To that aim, suppose that \( d \) cannot be pruned in such a way to form a \( \kmo \)-proof.
	From the regularity assumption we derive a (regular) subtree \( d' \subseteq d \) such that for all \( v \in d' \),
	\begin{enumerate}
		\item if \( \rules_d( v ) = \neg\nabla \) then all children of \( v \) are present in \( d' \),
		\item if \( \rules_d( v ) = \conjR \) then \( d' \) contains exactly one child of \( v \), 
		\item The unique thread carried by each branch of \( d' \) is \( \mu \).
	\end{enumerate}

	From \( d' \) a model \( M = \tuple{ T , A } \) of \( \neg \alpha \) can be defined.
	Let \( \sim \) be the equivalence relation on \( d' \) given by \( u \sim v \) if \( u \le v \) are not separated by an instance of (\( \neg\nabla \)).
	The domain of \( M \) is the set of \( \sim \)-equivalence classes with \( [u] \le [v] \) iff \( u \le v \).
	We say that a variable \( x \) \emph{occurs} at \( w \in T \) if there exists \( v \in w \) such that \( \sequent_{d}(v) \) either contains \( x \) or a formula \( \neg\nabla_L(\Gamma,\Delta) \) with \( x \in L \).
	The label function \( A \colon T \to E \) is given by \( x \in A(w) \) iff \( x \) does not occur at \( w \).
	As \( \alpha \) is conjunctive, if \( v \in d' \) is the sequent \( \rseq {\neg\nabla_L(\Gamma,\Delta)} \) then \( L^\bot \) is, by assumption, consistent and thus \( x \in A([v]) \) iff \( x \not\in L \).
	If, furthermore, \( \Gamma = \emptyset \) then \( [v] \) is a bud of \( M \) and \( M \res {[v]} \not\in \oper{\neg\nabla_L(\Gamma,\Delta)} \) by design.

	We have that \( M \not \in \oper{\alpha} \), for if \( M \in \oper{\alpha} \) with \( V \) the witnessing verification then there will be branches of \( V \) and \( d' \) carrying identical threads, which contradicts condition 3 above.
	Thus, if \( \alpha \) is a valid conjunctive formula, then \( \Seq \emptyset \alpha \).
\end{proof}

Reformulating Theorem~\ref{kmoi-comp-conj} we have

\begin{corollary}\label{kmoi-comp-disj}
	If \( \alpha \) is disjunctive and \( \neg\alpha \) is valid, then \( \alpha \VdashV \bot \).
\end{corollary}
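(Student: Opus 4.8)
The statement is a direct reformulation of Theorem~\ref{kmoi-comp-conj}, so the plan is essentially bookkeeping around that result. Since negation is an involution, $\alpha$ being disjunctive is equivalent to $\neg\alpha$ being conjunctive (Definition~\ref{d-conjunctive} applied to $\neg\alpha$, using $\neg\neg\alpha = \alpha$), and by hypothesis $\neg\alpha$ is valid. I would therefore feed $\neg\alpha$ into Theorem~\ref{kmoi-comp-conj} to obtain $\Seq{\emptyset}{\neg\alpha}$, and then rearrange this single constructive co-proof into the two co-proofs witnessing $\alpha \VdashV \bot$.

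For the direction $\bot \Vdash \alpha$ nothing from the theorem is needed. Since $\bot = \disj\emptyset$, a single instance of $(\disjL)$ whose principal formula is the empty disjunction has no premises and already derives $\seq\bot\alpha$. This finite derivation is vacuously a constructive co-proof: it is intuitionistic and has no infinite branch on which a good thread would be required.

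For the direction $\alpha \Vdash \bot$ I would exploit the identification of classical co-proofs with the negative fragment of the constructive calculus recorded in Proposition~\ref{kmo-kmoi}. The constructive co-proof witnessing $\Seq{\emptyset}{\neg\alpha}$ is in particular a classical $\kmo$-proof, so $\kmo \vdash \seq{\emptyset}{\neg\alpha}$. Instantiating Proposition~\ref{kmo-kmoi} at $\Phi = \emptyset$ and $\Psi = \neg\alpha$, the right-hand side is $\Seq{\neg\neg\alpha}{\bot}$, which by $\neg\neg\alpha = \alpha$ is exactly $\Seq{\alpha}{\bot}$, i.e.\ $\alpha \Vdash \bot$. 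Combining the two directions yields $\alpha \VdashV \bot$.

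There is no genuine obstacle here beyond careful negation bookkeeping, since the mathematical content lies entirely in Theorem~\ref{kmoi-comp-conj}. The only two points deserving a moment's attention are that the empty-disjunction inference really does count as a (vacuously correct) constructive co-proof, and that the passage through Proposition~\ref{kmo-kmoi} correctly converts a proof of $\neg\alpha$ on the right into a refutation of $\alpha$ on the left.
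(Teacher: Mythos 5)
Your proposal is correct and follows essentially the same route as the paper: apply Theorem~\ref{kmoi-comp-conj} to the valid conjunctive formula \( \neg\alpha \) to get \( \kmoi \vdash \rseq{\neg\alpha} \), convert this to \( \Seq{\alpha}{\bot} \) via Proposition~\ref{kmo-kmoi}, and observe that \( \Seq{\bot}{\alpha} \) holds trivially. Your explicit justification of the trivial direction (the premise-free instance of \( \disjL \) on \( \disj\emptyset \)) merely spells out what the paper dismisses as immediate.
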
%
\begin{proof}
	The hypothesis and Theorem~\ref{kmoi-comp-conj} yields \( \kmoi \vdash \rseq {\neg\alpha} \), whence \( \Seq \alpha \bot \) by Proposition~\ref{kmo-kmoi}. That also \( \Seq \bot \alpha \) is immediate.
\end{proof}



Our main result concerning the constructive fragment is:
\begin{theorem}[Admissibility of Cut in the Constructive Fragment]
	\label{admiss-charac}
	If \( \Phi \Vdash \alpha \)  and \( \alpha , \Psi \Vdash \beta \), then \( \Phi , \Psi \Vdash \beta \).
\end{theorem}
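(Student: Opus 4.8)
The plan is to eliminate the cut co-inductively. Write $\pi_1$ for a constructive co-proof of $\Phi \Rightarrow \alpha$ and $\pi_2$ for one of $\alpha, \Psi \Rightarrow \beta$, and define by corecursion a derivation $\pi$ of $\Phi, \Psi \Rightarrow \beta$ by inspecting the bottom-most inferences of the two premises. To accommodate duplication of the cut formula by left contraction in $\pi_2$ I would work with the multicut rule (\mcut), cutting the fixed proof $\pi_1$ simultaneously against all marked occurrences of $\alpha$ in the antecedent of the evolving right premise; since sequents are intuitionistic the cut formula never occurs more than once on the right, so no dual duplication arises.

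Reductions split into commutations and key steps. When $\alpha$ is not principal in the last rule of $\pi_2$ (nor, dually, the head of $\pi_1$) I permute the multicut above that rule, emitting it unchanged into $\pi$; this covers all structural rules and the logical rules acting on side formulas. When $\alpha$ is principal in both premises I apply a key reduction driven by the shape of $\alpha$: for $\alpha = \conj\Gamma$ (resp.\ $\disj\Gamma$) I match the $\conjR$/$\conjL$ (resp.\ $\disjR$/$\disjL$) pair and continue with a multicut on the selected conjunct (disjunct); for $\alpha = \sigma\phi$ I match $\sigR$ with $\sigL$ and continue on $\phi(\sigma\phi)$; for the modal case $\alpha = \square\delta$ the only configuration is $\sqR$ in $\pi_1$ against a $\diL$ in $\pi_2$ in which $\square\delta$ is a boxed side formula, and here I emit a $\diL$ whose premise contains a fresh multicut on $\delta$; literals are closed off using the variable identity rule. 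Each such step follows the immediate-subformula relation $\sf$ on the cut formula.

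Two things then require proof. First, \emph{productivity}: the corecursion must emit an inference after finitely many steps. The only non-emitting reductions are the logical key steps on $\conj$, $\disj$ and $\sigma$, and a maximal run of them traces a single thread $\tau$ in the cut formula, realised simultaneously as the (unique) right thread along an infinite branch $B_1$ of $\pi_1$ and as a left thread along an infinite branch $B_2$ of $\pi_2$. Along $B_1$ every consumed rule is a right rule leaving the context $\Phi$ static, so $B_1$ carries no infinite left thread; being a co-proof branch it must carry the right thread $\tau$, which is therefore $\nu$. Symmetrically, along $B_2$ the head $\beta$ and the context $\Psi$ are static, so the good thread of $B_2$ is the left thread $\tau$, which is therefore $\mu$. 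As $\tau$ is one and the same formula sequence, Proposition~\ref{thd-progress} assigns it a unique quantifier, contradicting $\nu = \mu$; hence no infinite run of key steps occurs, and $\pi$ is a well-defined infinite derivation (the derivation condition on logical rules being immediate from productivity).

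Second, and this is the main obstacle, I must verify that $\pi$ is a co-proof, i.e.\ that every infinite branch $B$ carries a right $\nu$- or left $\mu$-thread. If only finitely many multicuts survive above some point of $B$, its tail is a branch of $\pi_1$ or of $\pi_2$ and inherits a good thread. Otherwise $B$ meets infinitely many key reductions, and I would reconstruct a good thread by a \emph{bouncing} argument: following the good thread of the active premise within each cut-free segment and gluing consecutive segments at the cut formula across the emitted modal inferences (where $\square\delta \sf \delta$ links the threads), as in the established co-inductive cut-elimination arguments~\cite{BDS16,Baelde:2022Bouncing,AfsLei24-weyl}. The delicate point is guaranteeing that the glued thread is good rather than merely infinite; this I would control by the same quantifier-incompatibility observation used for productivity — a would-be bad glued thread would force an incompatible good thread in one of $\pi_1$, $\pi_2$ — together with the fact that constructive branches carry at most one right thread (Proposition~\ref{kmui-basic}).
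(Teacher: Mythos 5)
Your strategy is the same as the paper's: its proof is a Fortier--Santocanale-style corecursive multicut elimination (Lemma~\ref{mcut-elim}), and your productivity argument matches the paper's maximality argument almost exactly --- an infinite non-emitting run pins down a branch of \( \pi_2 \) all of whose rules act on cut-formula occurrences, hence good only via a left \( \mu \)-thread on the cut formula, and a matching branch of \( \pi_1 \) all of whose consumed rules are right rules, hence good only via its unique right thread (Proposition~\ref{kmui-basic}), and these are the same thread, giving the \( \mu \)/\( \nu \) clash via Proposition~\ref{thd-progress}. There are, however, two genuine gaps.

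First, a step that fails: your corecursion has no applicable reduction when the evolving minor premise ends in (\weakR). In a constructive co-proof of \( \seq \Phi \alpha \) the head may be introduced by right weakening; the cut formula is then principal in a \emph{structural} rule of \( \pi_1 \), so this configuration is excluded both from your commutation case (which requires the head of \( \pi_1 \) to be non-principal) and from your key steps (which match logical rules on both sides). The paper has to work around exactly this point: Proposition~\ref{mcut-app} assumes that no minor premise ends in (\weakR), and the proof of Theorem~\ref{admiss-charac} first replaces \( \alpha \) by \( \hat\alpha \), obtained by disjoining \( \bot \) onto every subformula, so that each (\weakR) in \( \pi_1 \) can be traded for a (\disjR) selecting the \( \bot \) disjunct. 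Some such device --- or an explicit reduction that grafts the subderivation above the weakening and closes with structural rules --- is required, and you give none.

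Second, the branch-goodness argument, which you yourself flag as the main obstacle, is not carried out, and the mechanism you invoke is the wrong one for this setting. No bouncing occurs here: cut-formula occurrences never appear in the sequents of the limit derivation, so no thread carried by one of its branches can be glued across the cut interface; every thread carried by a branch \( B \) of the limit lives entirely in the \( \Phi \)-part (inherited from a branch of \( \pi_1 \)) or entirely in the \( \Psi,\beta \)-part (inherited from \( h(B) \) in \( \pi_2 \)). What must be proved is a transfer principle specifying which threads are inherited from where; this is precisely the content of the paper's Lemma~\ref{mcut-elim}, which equips the limit with an order-preserving map \( h \) into \( \pi_2 \) and a relation \( \sim \) with \( \pi_1 \) satisfying three exact thread-correspondence conditions. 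Goodness is then immediate: if the good thread of \( h(B) \) starts in \( \Psi,\beta \), condition 1 hands it to \( B \); otherwise it is a left \( \mu \)-thread on the cut formula, and condition 3 together with uniqueness of right threads forces some branch \( E \sim B \) of \( \pi_1 \) to be good via a left \( \mu \)-thread in \( \Phi \), which condition 2 hands to \( B \). Your closing remark about quantifier incompatibility is the germ of this argument, but without the correspondence data it remains a sketch rather than a proof. (A minor further omission: your modal key step treats only \( \alpha = \square\delta \); cut formulas of the form \( \diamond\delta \), with (\diL) on both sides, also occur and need their own reduction.)
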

Theorem~\ref{admiss-charac} is equivalent to statement that the intuitionistic rule of cut:
\[
	\begin{prooftree}
		\subproof[\pi]{ \seq \Phi \alpha   }
		\subproof[\rho]{ \seq {\alpha , \Psi} \beta  }
		\infer[separation=2em]2[\cut]{ \seq {\Phi , \Psi } {\beta}  }
	\end{prooftree}
\]
is admissible within the constructive fragment.
The proof of the theorem proceeds by using the derivation with cut as a first approximation to the constructive co-proof of \( \seq {\Phi,\Psi} \beta \) and performs a co-inductive process of cut elimination to ultimately eliminate this instance of cut entirely.

The sequence of proof transformation that leads to elimination of the cut rule is, in essence, nothing more than Gentzen's method of `reductive' cut elimination iterated ad infinitum, producing from co-proofs \( \pi \) and \( \rho \) above a constructive derivation \( d \) with endsequent \( \seq {\Phi,\Psi} \beta \).
Following Mints~\cite{Mints-cts-ce}, reductive cut elimination is  conceived as a continuous transformation on the space of maximal derivations (relative to a suitable topology) and the challenge is not in describing the derivation \( d \) but in confirming that it is an illfounded \emph{proof}.

Preservation of proofhood under cut elimination means identifying in each branch of \( d \) either a left \( \mu \)-thread or right \( \nu \)-thread.
We will demonstrate that the threads carried by any branch of \( d \) are all carried by particular branches of \( \pi \) or \( \rho \) and that the manner in which cut elimination transforms these two proofs into \( d \) necessarily preserves sufficiently many threads to confirm that \( d \) is a co-proof.

Continuous and co-inductive cut elimination
has been established for numerous systems of illfounded proof including fixed point logics and \( \mu \)-calculi, for example \cite{ForSan13,BDS16,Savateev:2017Cut-Elimination,Baelde:2022Bouncing,Sau23,Acclavio:2024Infinitary,AfsLei24-weyl}.
The approach presented here builds on Fortier and Santocanale's cut elimination argument~\cite{ForSan13}, adding modalities and lifting the restriction on the form of sequents.
Viewed as a cut-elimination method for an intuitionistic logic, our argument is closely related to Arai's well-founded cut-elimination for fixed point predicates over Heyting arithmetic~\cite{Arai-quick}.


Rather than focusing on a binary cut rule it is convenient to utilise a multi-premise version encoding a sequence of cuts called the \emph{multicut} rule:
\[
  \begin{prooftree}
	\hypo{ \seq {\Phi_1} {\alpha_1} }
	\hypod
	\hypo{ \seq{\Phi_n} {\alpha_n} }
	\hypo{ \seq { \Psi_{0} , \alpha_1 , \Psi_1 , \dotsc , \alpha_n, \Psi_{n} } {\beta} }
	\infer4[\mcut]{ \seq{\Psi_0 , \Phi_1 , \Psi_1 , \dotsc , \Phi_n , \Psi_{n} } \beta }
	\end{prooftree}
\]
The formulas \( \alpha_1, \dotsc, \alpha_n \), namely the heads of all but the final premise, are referred to as  \emph{cut formulas}.
The rightmost premise of a multicut is called the \emph{major} premise; other premises are \emph{minor}.
A \emph{multicut derivation} is a derivation in the rules of \( \kmmi + \mcut \) such that every branch passes through exactly one instance of multicut.
If the multicut is at the root we refer to the derivation as \emph{multicut rooted}.
It is important to note that the multicut rule includes the \emph{single} premise instance:
\begin{prooftree*}
	\hypo{ \seq { \Psi } {\beta} }
	\infer1[\mcut]{ \seq{ \Psi  } \beta }
\end{prooftree*}
which provides a technical advantage in tracking the multicut reductions.

Although the above form of multicut is adequate for the present setting, to account for the reduction rules necessary for either classical or full intuitionistic logic (with implication) a more liberal conception of multi-cut would be required.
A version of multicut for intuitionistic \( \mu \)-calculus with implication is presented in~\cite{AfsLei24-weyl}, with cut elimination (not merely admissibility) established for a class of illfounded proofs. 
For continuous cut-elimination for classical logic, we refer the reader to~\cite{Sau23}.
The argument we present below can be seen as a special (and simpler) case of~\cite{AfsLei24-weyl} for the logic expanded by modal operators.%

\subsection{Multicut reductions}
The local transformations on derivations consist of a collection of `reductions' \( d \mapsto d' \) where \( d \) and \( d' \) are multicut derivations and \( d \) is multicut rooted.
Thus, we assume that \( d \) combines via a multicut the constructive derivations:
\begin{itemize}
	\item \( d_i \) with endsequent \( \seq{\Phi_i}{\alpha_i} \) for \( 1 \le i \le n \), constituting the minor premises, and
	\item \( e \) with endsequent \( \seq { \Psi_{0} , \alpha_1 , \Psi_1 , \dotsc , \alpha_n, \Psi_{n} } {\beta}  \), constituting the major premise.
\end{itemize}%
In addition, we impose the restriction that the minor premise derivations do not contain any right structural rules.
This constraint is technically motivated to simplify some cases of the multicut reductions and proof of cut admissibility.
In the proof of Theorem~\ref{admiss-charac} below we show that the restriction does not reduce the generality of the theorem.

The reduction \( d \mapsto d' \) described above is visualised as:
\begin{equation}\label{e-multicut}
  \begin{prooftree}
	\subproof*[d_1]{ \seq {\Phi_1}{\alpha_1} }
	\hypod
	\subproof*[d_n]{ \seq {\Phi_n}{\alpha_n} }
	\subproof*[e]{ \seq { \Psi_{0} , \alpha_1 , \Psi_1 , \dotsc , \alpha_n, \Psi_{n} } {\beta}}
	\infer4[\mcut]{ \seq{\Psi_0 , \Phi_1 , \Psi_1 , \dotsc , \Phi_n , \Psi_{n} } \beta }
	\end{prooftree}
	\quad\mapsto\quad d'
\end{equation}
The multicut reductions fall into three categories: 
\begin{description}
	\item [{External reductions}] These apply if the root inference of any premise is a non-modal rule and the cut formula is not principal. 
	\item [Internal reductions] These require that the final rule of \( e \) is a left rule and that \( \alpha_1 \) is principal. Some internal reductions additionally require that the final rule of \( d_1 \) is a right rule.
	\item[Modal reductions] These apply if the final rule of every premise is a modal inference.
\end{description}
Internal reductions `consume' the final inference in the appropriate derivations and, possibly, restructure the multicut, whereas
external reductions `permute' the final inference from premise(s) with the multicut.
The modal reductions exhibit features of both reductions.

\textbf{Modal reductions}
We first treat the modal reductions, which apply if all premises end in an instance of one of the two modal rules.
There are two reductions, depending on whether the final inference of \( e \) is \( \diL \) or \( \sqR \).
The latter case induces the following reduction 
\begin{multline*}
\begin{prooftree}[tight,small]
	\subproof*[d_1^*]{ \seq { \Xi_1 } {\gamma_1} }
	\infer1[\sqR]{ \seq { \square \Xi_1 } {\square \gamma_1} }
	\hypod
	\subproof*[d_n^*]{ \seq {\Xi_n } {\gamma_n} }
	\infer1[\sqR]{ \seq { \square \Xi_n } {\square \gamma_n } }
	\subproof*[e^*]{ \seq { \gamma_1 , \Lambda_1 , \dotsc , \gamma_n, \Lambda_{n} } {\delta} }
	\infer1[\sqR]{ \seq { \square \gamma_1 , \square \Lambda_1, \dotsc \square \gamma_n, \square \Lambda_{n} } {\square \delta } }
	\infer4[\mcut]{ \seq{ \square \Xi_1 , \square \Lambda_1 , \dotsc , \square \Lambda_{n}  } {\square\delta} }
\end{prooftree}
\\[.25em]\mapsto\quad
\begin{prooftree}[tight,small]
	\subproof*[d_1^*]{ \seq {\Xi_1 } {\gamma_1} }
	\hypod
	\subproof*[d_n^*]{ \seq {\Xi_n } {\gamma_n} }
	\subproof*[e^*]{ \seq { \gamma_1 , \Lambda_1 , \dotsc , \gamma_n, \Lambda_{n} } {\delta} }
	\infer4[\mcut]{ \seq{ \Xi_1 , \Lambda_1 , \dotsc , \Lambda_{n}  } {\delta} }
	\infer1[\sqR]{ \seq{ \square \Xi_1 , \square \Lambda_1 , \dotsc , \square \Lambda_{n}  } {\square \delta} }
\end{prooftree}
\end{multline*}
Following the schema of \eqref{e-multicut} we have that \( \Psi_0 = \emptyset \), and \( \Phi_i = \square \Xi_i \) and \( \Psi_i = \square \Lambda_i \) for each \( 1 \le i \le n \). 
%
The case of \( e \) ending in \( \diL \) is similar but structurally slightly more complex.

\textbf{External reductions}
The external reductions all permute the multicut with a rule at one of the premises provided that the principal formula is not a cut formula.
One such example is that the major premise is a (non-modal) right rule:
\[
  \begin{prooftree}[tight,small]
	\hypod
	\subproof*[d_n^*]{ \seq {\Phi_n}{\alpha_n} }
	\hypod
	\subproof*[e^*]{{ \seq { \Psi , \alpha_1 , \dotsc } {\gamma}}}
	\hypod
	\infer3[\starR]{ \seq { \Psi_0 , \alpha_1 , \dotsc } {\beta}}
	\infer3[\mcut]{ \seq{ \Psi_0 , \Phi_1 , \Psi_1 , \dotsc , \Phi_n , \Psi_{n} } \beta }
	\end{prooftree}
	\ \mapsto\ 
  \begin{prooftree}[tight,small]
	\hypod
	\hypod
	\subproof*[d_n^*]{ \seq {\Phi_n}{\alpha_n} }
	\subproof*[e^*]{{ \seq { \Psi_0 , \alpha_1 , \dotsc } {\gamma}}}
	\infer3[\mcut]{ \seq{ \Psi_0 , \Phi_1 , \Psi_1 , \dotsc , \Phi_n , \Psi_{n} } {\gamma} }
	\hypod
	\infer3[\starR]{ \seq { \Psi_0 , \alpha_1 , \dotsc , \Phi_n,\Psi_n} {\beta}}
	\end{prooftree}
\]
The other two scenarios are that some \( d_i \) comprises a left rule or \( e \) a right rule. The reductions in these cases is clear.

\textbf{Internal reductions}
The internal reductions cover the case that the cut formula \( \alpha_1 \) is principal in the major premise. 
We begin with reductions related to structural rules.
Suppose, therefore, that \( e \) ends with in a left structural rule and \( \Psi_0 = \epsilon \). 
This may be (\excL), (\weakL) or (\conL).
We present the latter.
%
The associated internal reduction duplicates the first minor premise of the rule and inserts a (left) contraction below the multicut unless \( \Phi_1 = \epsilon \):
\[
  \begin{prooftree}[tight,small]
	\subproof*[d_1]{ \seq {\Phi_1}{\alpha_1} }
	\hypod
	\subproof*[e^*]{{ \seq { \alpha_1 , \alpha_1 , \Psi_1 , \dotsc } {\beta}}}
	\infer1[\conL]{ \seq { \alpha_1 , \Psi_1, \dotsc } {\beta}}
	\infer3[\mcut]{ \seq{ \Phi_1 , \Psi_1 , \dotsc , \Phi_n , \Psi_{n} } \beta }
	\end{prooftree}
	\quad\mapsto\quad
  \begin{prooftree}[tight,small]
	\subproof*[d_1]{ \seq {\Phi_1}{\alpha_1} }
	\subproof*[d_1]{ \seq {\Phi_1}{\alpha_1} }
	\hypod
	\subproof*[e^*]{{ \seq { \alpha_1 , \alpha_1 , \Psi_1 , \dotsc } {\beta}}}
	\infer4[\mcut]{ \seq{ \Phi_1 , \Phi_1 , \Psi_1 , \dotsc , \Phi_n , \Psi_{n} } \beta }
	\infer1[(\con)]{ \seq{ \Phi_1 , \Psi_1 , \dotsc , \Phi_n , \Psi_{n} } \beta }
	\end{prooftree}
\]

If the final inference in \( e \) is a logical rule, then a reduction is applicable provided that \( d_1 \) also concludes with a logical rule.
The case of that \( e \) ends in (\idLR) induces the reduction:
\[
  \begin{prooftree}[]
	\axiom[\idLR]{{ \seq { x  } {x}}}
	\axiom[\idLR]{ \seq { x }{x} }
	\infer2[\mcut]{ \seq{ x } {x} }
	\end{prooftree}
\quad\mapsto \quad
 \begin{prooftree}[]
	\axiom[\idLR]{ \seq { x }{x} }
	\end{prooftree}
\]
The reduction is similar if \( e \) ends instead with (\idL), though in that case there could be two cut formulas, in which case we require that  \( d_1 \) ends in the identity rule also.

The remaining logical multicut reductions follow a common form where the first minor premise \( d_1 \) consists of a right rule and the major premise the corresponding left rule. 
For \( \star \) a logical operator other than a modality, the following reduction is available:
\begin{multline*}
  \begin{prooftree}[tight,small]
  	\hypod
	\subproof*[d_1^*]{ \seq {\Phi_1}{\alpha} }
	\hypod
	\infer3[\starR]{ \seq {\Phi_1}{\alpha_1}}
	\hypod
	\hypod
	\subproof*[e^*]{ \seq { \alpha , \Psi_1 , \dotsc } {\beta} }
	\hypod
	\infer3[\starL]{ \seq { \alpha_1 , \Psi_1 , \dotsc , \alpha_n , \Psi_n } {\beta} }
	\infer3[\mcut]{ \seq{\Phi_1 , \Psi_1 , \dotsc , \Phi_n , \Psi_{n} } \beta }
	\end{prooftree}
\\\mapsto\quad
  \begin{prooftree}[tight,small]
	\subproof*[d_1^*]{ \seq {\Phi_1}{\alpha} }
	\hypod
	\subproof*[e^*]{ \seq { \alpha , \Psi_1 , \dotsc , \alpha_n , \Psi_n } {\beta} }
	\infer3[\mcut]{ \seq{\Phi_1 , \Psi_1 , \dotsc , \Phi_n , \Psi_{n} } \beta }
	\end{prooftree}
\end{multline*}
The number of premises to \( \starR/\starL \) depends on the operator \( \star \). For \( \star = \disj \) the right rule \( \starR \) is unary and \( \starL \) is multi-premise whereas for \( \star \in \setof{\mu,\nu} \) both rules are unary. In any case, there will always be one premise of each rule with identical minor formula.

By design, every multicut with only non-trivial immediate subderivations is reducible:

\begin{proposition}
	\label{mcut-app}
	Let \( d \) be a multicut rooted derivation. 
	If no immediate subderivation of \( d \) is a bud and no minor premise ends in (\weakR), then there exists a multicut derivation \( d' \) such that \( d \mapsto d' \).
\end{proposition}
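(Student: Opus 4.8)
The plan is to assume that no external reduction is applicable to \( d \) and show that then either a modal or an internal reduction applies; as these three families exhaust the available reductions, this suffices. Write \( d_1,\dotsc,d_n \) for the minor premises, with endsequents \( \seq{\Phi_i}{\alpha_i} \), and \( e \) for the major premise, as in \eqref{e-multicut}. First I would record what the failure of every external reduction forces. A right rule concluding \( e \) has the head as principal, which is never a cut formula; a left rule concluding some \( d_i \), and a left rule concluding \( e \) whose principal lies in one of the contexts \( \Psi_j \), have a non-cut principal. Each such configuration is non-modal with a non-principal cut formula, hence admits an external reduction. Moreover, since all sequents are intuitionistic, the only right structural rule that could conclude a minor premise is \( \weakR \), excluded by hypothesis, and no immediate subderivation is a bud. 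Consequently, assuming no external reduction applies, each \( d_i \) either ends in a modal rule or presents its head \( \alpha_i \) on the right—by a matching right logical rule or an identity axiom—while \( e \) either ends in a modal rule or has some \emph{cut} formula principal; in the latter case exchange lets us take the principal cut formula to be \( \alpha_1 \).

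Next I would split on whether \( e \) ends in a modal rule. Suppose it does, so \( e \) concludes with \( \sqR \) or \( \diL \). The antecedent of either conclusion consists entirely of boxed formulas, save for the single diamond principal of a \( \diL \). Hence every cut formula \( \alpha_i \) is a box, with the sole possible exception of one diamond which, being principal, may be taken to be \( \alpha_1 \). A minor premise whose head is a box and which admits no external reduction must conclude with \( \sqR \): a right logical rule or identity axiom would yield a head of the wrong shape, and \( \diL \) yields a diamond head. Symmetrically, a minor premise with a diamond head must conclude with \( \diL \). Thus every premise of the multicut ends in a modal inference and the corresponding modal reduction applies.

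Suppose instead \( e \) ends in a non-modal rule. By the first paragraph its principal is a cut formula, taken to be \( \alpha_1 \), so \( e \) concludes with a left structural rule, a left logical rule, or an identity axiom acting on \( \alpha_1 \). For a structural rule (\( \excL \), \( \weakL \) or \( \conL \)) the corresponding internal reduction applies with no further constraint on \( d_1 \). For a left logical rule the formula \( \alpha_1 \) is a conjunction, disjunction or quantified formula of the matching shape; since \( d_1 \) has head exactly \( \alpha_1 \), admits no external reduction, and can present such a head neither by a modal rule (whose head is a modality) nor by an identity axiom (whose head is atomic), it must conclude with the dual right logical rule, so the internal reduction applies. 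If \( e \) ends in an identity axiom on \( \alpha_1 \), then \( \alpha_1 \) is a literal, forcing the relevant minor premise(s) to conclude with an identity axiom as well, and the identity reduction applies. Finally, the degenerate single-premise multicut (\( n=0 \)) is covered: \( e \) is not a bud, so it ends in an inference that is either modal, whence the modal reduction applies, or non-modal, whence—no cut formula being present—an external reduction permutes it below the single-premise multicut, the premiseless-axiom case included.

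The only step demanding real care is the matching performed in the non-modal case: confirming that the failure of all external reductions genuinely forces the final rule of \( d_1 \) to be the right introduction dual to the left rule concluding \( e \). This hinges on the head of \( d_1 \) being syntactically identical to the principal cut formula \( \alpha_1 \), together with the fact that—once left rules and right structural rules have been excluded from the minor premises—the matching right introduction (or identity axiom, for literals) is the only inference that can present a given formula as a head. The corresponding forcing in the modal case, that a boxed (respectively diamond) cut formula compels its minor premise to conclude with \( \sqR \) (respectively \( \diL \)), is of the same character but more transparent, since the shapes of \( \sqR \) and \( \diL \) admit no alternative.
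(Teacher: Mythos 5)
Your proof is correct and follows essentially the same route as the paper's (which is a three-sentence sketch of exactly this case analysis): rule out external reductions, conclude that every minor premise ends in a right or modal rule and that the major premise is modal or has a principal cut formula, and then match cases to the modal, internal-structural, internal-logical, or identity reductions. Your version merely spells out the shape-matching details (constructive sequents excluding right structural rules other than (\weakR), modal heads forcing \(\sqR\)/\(\diL\), the \(n=0\) multicut) that the paper leaves implicit.
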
%
\begin{proof}
	If no external or modal reduction is applicable then every minor premise ends in a right rule and at least one cut formula is principal in the major premise.
	If the final rule of the major subderivation is structural, then a reduction is applicable.
	Otherwise, an internal logical reduction is applicable.
\end{proof}

\subsection{Admissibility of cut}
The multicut reductions can be used to establish the admissibility of the cut rule in the constructive fragment of \( \kmo \).
As remarked, the main difficulty is showing that the maximal derivation obtained by co-recursively applying is indeed a co-proof.
The argument proceeds by relating the branches and threads of the resultant derivation to the initial co-proofs.
The desired connection is expressed by the next lemma.

Let \( {\sim} \subseteq T_0 \times T_1 \) be a relation between trees \( T_0 \) and \( T_1 \). 
For branches \( B_0 \) and \( B_1 \) through \( T_0 \) and \( T_1 \) respectively,
we write \( B_0 \sim B_1 \) iff for every \( u \in B_0 \) there exists \( v \in B_1 \) such that \( u \sim v \) and for every \( v \in B_1 \) there exists \( u \in B_0 \) such that\ \( u \sim v \).
For \( T \) a set of threads, let \( \neg{T} \coloneqq \setof{ \neg{\tau} }[\tau \in T] \).
A function \( h \colon T \to T' \) between trees is \emph{order-preserving} if \( h(u) \le h(v) \) whenever \( u \le v \in T \).
For an order-preserving function \( h \colon T \to T' \) and a branch \( B \) of \( T \), we write \( h(B) \) to denote the branch of \( T' \) traversing all \( h(B(i)) \) if this is unique, or else \( h(B) \coloneqq h(B(k)) \) if \( k \) is such that \( h(B(k)) = h(B(k+i)) \) for all \( i \).


%
\begin{lemma}
	\label{mcut-elim}
	Let \( \pi \vdash \seq \Phi \alpha \) and \( \rho \vdash \seq {\alpha,\Psi} \beta \) be constructive co-proofs and \( \pi \) not containing (\weakR).
	There exists a maximal constructive derivation \( d \) in \( \kmo \) with endsequent 
	\( \seq{ \Phi , \Psi }\beta  
	\), an {order-preserving function} \( h \colon d \to \rho \) and a relation \( {\sim} \subseteq \pi \times d \) satisfying, for every branch \( B \subseteq d \),
	\begin{enumerate}
		\item\( \Tr[\seq\Psi \beta]{\rho ,h(B)} = \Tr[\seq\Psi \beta]{ d ,B} \).
		\item 
		\( \bigcup \Setof{ \Tr[\seq{\Phi}{}]{ \pi , E }}[ B \sim E ] = \Tr[\lseq{\Phi}]{ d ,B} \).
		\item \( {\Tr[\lseq{\alpha}]{\rho,h(B)}} = \bigcup \Setof{ \nTr[\rseq{\alpha}]{ \pi , E }}[ E \sim B ] \).
	\end{enumerate}
\end{lemma}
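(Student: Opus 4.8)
The plan is to obtain $d$ as the (generally infinite) derivation produced by co-recursively applying the multicut reductions, beginning from the multicut-rooted derivation that combines $\pi$ as the sole minor premise with $\rho$ as major premise, and to construct $h$ and $\sim$ simultaneously with this process. First I would check that the standing restrictions --- $\pi$ containing no $\weakR$, and more generally minor premises carrying no right structural rule --- are preserved by every reduction, so that Proposition~\ref{mcut-app} always supplies a reduction $d\mapsto d'$ unless some immediate subderivation is a bud. The external and modal reductions are \emph{productive}: each permutes an inference below the multicut, and that inference becomes a node of $d$. The logical internal reductions are not: they consume a right rule from the first minor premise together with the matching left rule from the major premise, emitting no node. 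The crucial productivity claim is that such reductions cannot be iterated forever. An infinite run would trace a branch of $\pi$ whose rules all act on the successively unfolded cut formula on the right, and a branch of $\rho$ whose rules all act on it on the left, leaving $\Phi$, $\Psi$ and $\beta$ passive; by Proposition~\ref{thd-progress} the cut-formula thread is then $\mu$ or $\nu$, and in either case it is the only progressing thread, yielding a right-$\mu$-thread that makes the $\pi$-branch bad or a left-$\nu$-thread that makes the $\rho$-branch bad, contradicting that $\pi$ and $\rho$ are co-proofs. Productivity secured, the construction delivers a maximal constructive derivation $d$ with endsequent $\seq{\Phi,\Psi}{\beta}$.

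Next I would read $h$ and $\sim$ off the reduction bookkeeping. For $v\in d$, set $h(v)$ to be the root of the major-premise derivation feeding the multicut(s) above $v$; since major-premise inferences are only ever moved upward as the multicut ascends, $u\le v$ implies $h(u)\le h(v)$, so $h$ is order-preserving. For $u\in\pi$ and $v\in d$, declare $u\sim v$ when $u$ is the root of a minor-premise derivation feeding a multicut above $v$. The relation $\sim$ is multivalued exactly because the $\conL$ reduction duplicates the first minor premise; this duplication is precisely what generates the unions $\bigcup_{B\sim E}$ and $\bigcup_{E\sim B}$ appearing in conditions~2 and~3.

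Then I would verify the three identities by tracking formula occurrences through each reduction, using that the endsequent occurrences of $d$ split into descendants of $\Psi,\beta$ (governed by $\rho$) and descendants of $\Phi$ (governed by $\pi$). For condition~1 the occurrences of $\Psi$ and $\beta$ are never cut formulas and remain side formulas under every reduction, so along any branch $B$ their traces biject with the traces of $\rho$ along $h(B)$ and the supported infinite threads coincide. Condition~2 is the symmetric statement on the left: each occurrence of $\Phi$ in $d$ descends from $\Phi$ in some minor premise, and the duplications recorded by $\sim$ collect exactly the contributions of all $\pi$-branches $E$ synchronised with $B$. Condition~3 is the core: each logical internal reduction pairs a right rule on the cut formula in $\pi$ with the matching left rule in $\rho$, and each modal reduction aligns the modal steps on the two sides, so that the left-$\alpha$-threads gathered along $h(B)$ match the right-$\alpha$-threads of $\pi$ accumulated over $E\sim B$, recorded through the negation $\tau\mapsto\neg\tau$ built into the notation $\mathsf{Thd}^{\bot}$ to account for the cut formula passing from the head of $\pi$ to the context of $\rho$.

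The main obstacle is concentrated in the infinitary bookkeeping and is twofold. First, making productivity rigorous: I must show that a non-terminating run of internal reductions genuinely induces a bad branch in $\pi$ or $\rho$, which means matching the reduction steps to actual branches of the two co-proofs and checking that $\Phi,\Psi,\beta$ remain passive throughout such a run. Second, sustaining condition~3 across all reduction cases together with the duplication bookkeeping of $\sim$: the modal and logical internal reductions are where the $\alpha$-thread is carried from $\pi$ into $\rho$, and I must verify that the branch-synchronisation $B\sim E$ is preserved by every reduction so that the union in condition~3 neither loses nor gains a thread. By contrast, conditions~1 and~2 should be routine once the tracking data $h$ and $\sim$ are in place.
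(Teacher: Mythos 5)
Your proposal follows essentially the same route as the paper's proof: the paper also builds \( d \) as the limit of co-recursively applied multicut reductions (formalised there as a chain of ``multicut-tipped'' finite approximations \( (d_j,m_j) \)), reads \( h \) off the major-premise bookkeeping and \( \sim \) off the minor-premise bookkeeping, and proves maximality/productivity by exactly your contradiction --- an unending run of internal reductions would trace a branch of \( \rho \) and branches of \( \pi \) whose only infinite threads are cut-formula threads, which cannot all be good. The only substantive difference is that the paper's final argument tracks \emph{several} minor-premise branches \( E_1,\dotsc,E_n \) (created by contraction reductions) and the degenerate case where \( h(B) \) is eventually constant, refinements you correctly flag as the remaining bookkeeping rather than overlook.
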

%
%
\begin{proof}
The derivation \( d \) is defined as the limit of a increasing sequence of finite \( \kmo \)-derivations \( (d_j)_{j<\omega} \).
It is technically convenient to assume that the combining rules (\weak) and (\con) are available as structural rules in the formation the \( d_j \).
Each derivation \( d_j \) will be associated a function \( m_j \) which assigns to the buds of \( d_j \) instances of the multicut rule whose conclusion matches the bud sequent.
This data will be termed a \emph{multicut-tipped} derivation, defined to be a pair \( (d,m) \) of a finite derivation \( d \) and a function \( m \) mapping each bud \( b \) of \( d \) to a sequence \( m(b) = ( u_1 , \dotsc , u_n , u ) \in \pi^n \times \rho \) such that
the sequents \( \sequent_\pi( u_1)  \), \dots, \( \sequent_\pi(u_n) \) enumerate the minor premises of a multicut, \( \sequent_\rho(u) \) is the major premise and the sequent at \( b \) is the conclusion of this multicut.
Formally, \( m \) should also specify which formula occurrences in \( \sequent_\rho(u) \) are cut formulas but we leave this information implicit.
Thus, the function \( m \) in a multicut-tipped derivation \( ( d,m) \) associates to each bud \( b \) of the multicut-rooted derivation
\begin{gather}\label{e-mcut-lem}
\begin{prooftree}[compact,small]
	\subproof*[\pi\res {u_1}]{ \seq {\Phi_1}{\alpha_1} }
	\hypod
	\subproof*[\pi\res{u_n}]{ \seq{\Phi_n}{\alpha_n} }
	\subproof*[\rho \res u]{ \seq { \Psi_{0} , \alpha_1 , \Psi_1 , \dotsc , \alpha_n, \Psi_{n} } {\beta} }
	\infer4[\mcut]{ \seq{\Psi_0 , \Phi_1 , \Psi_1 , \dotsc , \Phi_n , \Psi_{n} } \beta }
\end{prooftree}
\quad\text{where } m(b) = (u_1, \dotsc, u_n , u)
\end{gather}
The construction begins with the trivial multicut-tipped derivation
\( (d_0,m_0) \) where \( d_0 \) is a single bud assigned the initial two-premise multicut:
\[
	d_0 \coloneqq \left\{
	\begin{prooftree}
		\axiom[\hyp]{\seq {\Phi , \Psi } {\beta}}
	\end{prooftree}
	\right.
	\text{ and}  \thickspace
	m_0 \colon \epsilon \to (\epsilon , \epsilon) \text{ representing }
	\begin{prooftree}
		\subproof*[\pi]{ \seq \Phi \alpha   }
		\subproof*[\rho]{ \seq {\alpha , \Psi} \beta  }
		\infer2[\mcut]{ \seq {\Phi , \Psi } {\beta}  }
	\end{prooftree}
\]
%

Let \( b \) be a bud of \( d_j \) and suppose \( m_j(b) \) describes the multicut in \eqref{e-mcut-lem} above.
Proposition~\ref{mcut-app} implies that some multicut reduction is applicable.
Applying that reduction replaces the multicut at \( b \) by a finite derivation \( d \) in which each bud is assigned a multicut built from subderivations of \( \pi\res u_1 \), \dots, \( \pi\res u_n \), \( \rho\res{u} \).
Some reductions replace the multicut in \eqref{e-mcut-lem} directly with another multicut while other reductions, such as the external reductions, insert a non-trivial derivation whose buds are multicuts.
We describe the process for the three forms of multicut reduction.

\textbf{Internal reduction} We present the case of the non-modal logical rule.
Assume that the reduction applied to the multicut in \eqref{e-mcut-lem} for the connective \( \star \) is:
\begin{multline*}
  \begin{prooftree}[tight,small]
  	\hypod
	\subproof*[\pi\res u_1 i]{ \seq {\Phi_1}{\gamma} }
	\hypod
	\infer3[\starR]{ \seq {\Phi_1}{\alpha_1}}
	\hypod
	\hypod
	\subproof*[\rho \res u j]{ \seq { \gamma , \Psi_1 , \dotsc } {\beta} }
	\hypod
	\infer3[\starL]{ \seq { \alpha_1 , \Psi_1 , \dotsc , \alpha_n , \Psi_n } {\beta} }
	\infer3[\mcut]{ \seq{\Phi_1 , \Psi_1 , \dotsc , \Phi_n , \Psi_{n} } \beta }
	\end{prooftree}
\\\mapsto\quad
  \begin{prooftree}[tight,small]
	\subproof*[\pi\res u_1 i]{ \seq {\Phi_1}{\gamma} }
	\hypod
	\subproof*[\rho\res u j]{ \seq { \gamma , \Psi_1 , \dotsc , \alpha_n , \Psi_n } {\beta} }
	\infer3[\mcut]{ \seq{\Phi_1 , \Psi_1 , \dotsc , \Phi_n , \Psi_{n} } \beta }
	\end{prooftree}
\end{multline*}
In this case, \( b \) remains a bud in \( d_{j+1} \) but is assigned the multicut \( m_{j+1}(b) = ( u_1 i , u_2 , \dotsc, u_n , u j ) \).
The internal reduction for structural rules is analogous.

\textbf{External reduction} Suppose the rule at \( u \) (in \( \rho \)) is (\( \starR \)) and the following reduction is applied.
\begin{multline*}
  \begin{prooftree}[tight,small]
	\subproof*[\pi\res u_1]{ \seq {\Phi_1}{\alpha_1} }
	\hypod
	\subproof*[\pi\res u_n]{ \seq {\Phi_n}{\alpha_n} }
	\hypod
	\subproof*[\rho\res ui]{{ \seq { \Psi_0 , \alpha_1 , \dotsc } {\beta_i}}}
	\hypod
	\infer3[\starR]{ \seq { \Psi_0 , \alpha_1 , \dotsc } {\beta}}
	\infer4[\mcut]{ \seq{ \Psi_0 , \Phi_1 , \Psi_1 , \dotsc , \Phi_n , \Psi_{n} } \beta }
	\end{prooftree}
	\\\mapsto\quad 
  \begin{prooftree}[tight,small]
	\hypod
	\subproof*[\pi\res u_1]{ \seq {\Phi_1}{\alpha_1} }
	\hypod
	\subproof*[\pi\res u_n]{ \seq {\Phi_n}{\alpha_n} }
	\subproof*[\rho\res ui]{{ \seq { \delta_{i}, \Psi , \alpha_1 , \dotsc } {\beta_i}}}
	\infer4[\mcut]{ \seq{ \Psi_0 , \Phi_1 , \Psi_1 , \dotsc , \Phi_n , \Psi_{n} } {\beta_i} }
	\hypod
	\infer3[\starR]{ \seq { \Psi_0 , \alpha_1 , \dotsc } {\beta}}
	\end{prooftree}
\end{multline*}
Let \( (d,m) \) be the finite multicut-tipped derivation comprising only the rule (\( \starL \)) with the \( i \)-th premise assigned the multicut \( (u_1 , \dotsc, u_n , ui ) \).
Then \( d_{j+1} \) extends \( d_j \) by inserting \( d \) in place of the bud \( b \), and \( m_{j+1} \) maps the new buds to the corresponding multicut.

\textbf{Modal reduction}
The case of the modal reduction combines the two reduction forms above.
\medskip

With \( (d_j)_{j<\omega} \) defined, where \( d_{j+1} \) extends \( d_j \), set \( d \) to be the limit of the sequence, defined in the obvious way.
For \( (u,v) \in \pi \times d \), set \( u \sim v \) iff \( v \) is a bud of some \( d_j \) and \( u \) occurs as a minor premise of \( m_j(v) \).
The function \( h \colon d \to \rho \) is defined as tracking the major premise of the multicuts through the approximations of \( d \).
Every vertex in \( d \) is a bud of at least one of the derivations approximating \( d \).
For \( v \in d \) set \( h(v) \) to be the major premise of the multicut \( m_j(v) \) where \( j \) is minimal such that \( v \in d_j \),
%
This function is order-preserving.

Let \( B \) be any branch of \( d \), and let \( b_j \) be the bud of \( d_j \) along \( B \).
Tracing the sequence of multicut reductions that generate \( B \), it is clear that the second condition of the lemma holds.
To establish the first and third condition we must examine \( h(B) \).
We first consider the case that \( h(B) \) is not a branch, i.e., \( h(b_j) = h(b_{j+1}) \) for all \( j > k \).
Recall that if \( b_j < b_{j+1} \) then \( h(b_{j+1}) \) is the major premise of \( m_{j+1}(b_{j+1}) \) if this exists.
Hence, for \( h(B) \) not to be a branch, no subsequent reduction alters the major premise.
That is, no branch \( E \) of \( \pi \) for which \( E \sim B \) carries an \( \alpha \)-trace due to only finitely many right rules along \( E \), meaning that the three conditions of the lemma are met in this case.

Thus, we may assume that \( B \) is a branch of \( d \) and \( h(B) \) defines a branch of \( \rho \).
Tracing the multicut reductions \( B \), every left \( \alpha \)-thread carried by \( h(B) \) induces a branch \( E \sim B \) of \( \pi \) carrying the dual thread, and vice versa. Hence, the third condition is met.
Furthermore, each non-cut thread \( \tau \in \Tr[\seq \Psi\beta]{\rho,h(B)} \) is carried by \( B \) by virtue of the external reductions, whence the first condition.

All that remains is to show that \( d \) is maximal.
Suppose the contrary, namely that \( d \) has a bud \( b \). 
Let \( k \) be such that \( b \in d_k \).
In particular, \( b \) is a bud in \( d_{k+j} \) for all \( j \) meaning that the multicut reduction which transforms  \( m_{k+j}(b) \) to \( m_{k+j+1}(b) \) must be internal for all \( j \). 
But this scenario leads to a sequence of branches \( E_1 , \dotsc, E_n \) of \( \pi \) and a branch \( F \) of \( \rho \)  such that the \( E_i \) carry only right threads, \( F \) carries left \( \alpha \)-threads only, and
\(
	\bigcup_{i\le n} \Tr{\pi,E_i} = \nTr{\rho,F}
\), contradicting that \( \pi \) and \( \rho \) are co-proofs.
\end{proof}

Theorem~\ref{admiss-charac} follows directly from Lemma~\ref{mcut-elim}:
\paragraph{Proof of Theorem~\ref{admiss-charac}:}
Suppose \( \Seq \Phi \alpha \) and \( \Seq {\alpha,\Psi} \beta \). Let \( \hat \alpha \) be the result of augmenting each (derived) subformula of \( \alpha \) by disjunction with \( \bot \). Then \( \Seq \Phi {\hat \alpha} \) by replacing in the co-proof of \( \Seq \Phi \alpha \) every head formula \( \gamma \) by \( \hat\gamma \) and inserting an instance of (\disjR) after every right logical rule. We can, furthermore, assume that the co-proof of \( \seq \Phi {\hat\alpha} \) does contain instance of (\weakR) since the principal in this case will be \( \hat\gamma \vee \bot \) for some \( \gamma \in \subform \alpha \), and a minor formula of \( \bot \) can be chosen instead.
Also, \( \Seq {\hat\alpha,\Psi} \beta \) is easily established.
Thus, we may assume constructive co-proofs \( \pi \vdash \seq \Phi {\hat\alpha} \) and \( \rho \vdash \seq {\hat\alpha,\Psi} \beta \) with \( \pi \) not containing any right structural rules.

Applying Lemma~\ref{mcut-elim}, we obtain a maximal derivation \( d \) satisfying properties 1--3.
Let \( B \) be a branch of \( d \) and consider the branch \( h(B) \) of \( \rho \) (if \( h(B) \) is not a branch a similar argument applies).
Towards a contradiction, suppose \( B \) is bad. 
As \( h(B) \) is good, there must be a left \( \mu \)-thread \( \tau \in \Tr[\lseq \alpha]{\rho,h(B)} \) initiating from the cut formula \( \alpha \). By condition 3, \( \tau \in \nTr[\rseq \alpha]{\pi,E} \) for some \( E \sim B \). 
As every path in a constructive co-proof carries at most one right trace (Proposition~\ref{kmui-basic}) and \( \tau \) is \( \mu \), there is a branch \( E \sim B \) carrying a left \( \mu \)-thread. 
Condition 2 implies that \( \Tr{d,B} \) contains the same thread, whence \( B \) is good. 
\hfill\( \square \)

\bigskip
Theorem~\ref{admiss-charac} can be generalised in two ways.
The method of multicut admissibility can be lifted to a (partial) cut elimination result to the effect that any co-proof with finitely many distinct cut formulas (but, potentially, infinitely many cut \emph{occurrences}) can be transformed into a cut-free co-proof.
The proof, described in~\cite{AfsLei24-weyl}, requires a more complex conception of multi-cut than presented above and a more careful analysis of the limit co-derivation.
The second generalisation is to a cut-admissibility theorem for the constructive modal \( \mu \)-calculus including implication.
The two generalisations are essentially equivalent: by projecting cuts into the root sequent a co-proof of \( \seq \Phi \alpha \) with finitely many distinct cut-formulas \( \beta_0 \), \dots, \( \beta_n \) can be directly transformed into a cut-free co-proof of \( \seq {\hat\beta, \Phi} \alpha \) where \( \hat \beta = \nu x\, ( \square x \wedge \conj\setof{ \beta_i \to \beta_i }[i \le n] )\).

\subsection{Guardedness}

Building on the closure properties at the start of this section, we can show that every formula is constructively equivalent to a guarded formula.
The argument follows the usual transformation (see, e.g., \cite{NiwWal:96}) of partially unfolding quantifiers, permuting propositional connectives and trivialising the remaining unguarded variable occurrences, though with a reliance on constructive co-proofs.
The next proposition, proved by syntactic induction on formulas, presents the main reductions.

\begin{proposition}
	\label{guard-reds}
	For all predicates \( \phi \), \( \psi \), formulas \( \alpha \), sets \( \Delta_0 , \dotsc, \Delta_n \), and operators \( \sigma  \in \setof{ \mu ,\nu } \) and \( \bigcirc \in \setof{ \conj,\disj} \):
	\begin{enumerate}
		\item \( \sigma x .\, \phi x \VdashV \sigma  x.\, \sigma  \phi \) where \( x \) is permitted to occur freely in \( \phi \).
		\item \( \sigma ( \phi \cdot \psi ) \VdashV \phi( \sigma  ( \psi \cdot \phi ) ) \) where \( \phi \cdot \psi  = \lambda  x.\, \phi(\psi x)\) for \( x \) not occurring in \( \phi, \psi \).
		\item \( \bigcirc \setof{\alpha} \VdashV \alpha \).
		\item \( \bigcirc \bigl( \Delta_0 \cup \setof{ \bigcirc \Delta_1 } \bigr) \VdashV \bigcirc \bigl( \Delta_0 \cup \Delta_1 \bigr) \).
		\item \( \bigcirc \Setof{ \neg\bigcirc \Delta_i }[i \le n ] \VdashV \neg\bigcirc \Setof{ \bigcirc \Delta }[ \card{\Delta \cap \Delta_i } = 1 \text{ for all }i\le n] \).
	\end{enumerate}
\end{proposition}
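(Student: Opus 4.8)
The plan is to establish the five equivalences one at a time, reading each \( \VdashV \) as a pair of constructive co-provabilities and exhibiting the witnessing co-proofs directly. Clauses (3)--(5) are finite propositional facts, whereas (1)--(2) are genuine fixed-point identities requiring cyclic co-proofs. Throughout I would use reflexivity \( \alpha \Vdash \alpha \) (Proposition~\ref{kmoi-id}), the congruence/monotonicity rules (Proposition~\ref{kmoi-mono}) to push established equivalences inside subformula contexts, and cut admissibility (Theorem~\ref{admiss-charac}) to compose the individual co-proofs into a single one.

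For (3), a singleton \( \bigcirc\setof{\alpha} \) is dispatched by one application of the relevant \( \bigcirc \)-rule on each side and a closing appeal to \( \alpha\Vdash\alpha \). For (4), the flattening \( \bigcirc(\Delta_0\cup\setof{\bigcirc\Delta_1})\VdashV\bigcirc(\Delta_0\cup\Delta_1) \) follows by finite proof search: one unravels the outer and the nested \( \bigcirc \) with the left/right \( \bigcirc \)-rules (\( \conjL/\conjR \), \( \disjL/\disjR \)) and closes the leaves by reflexivity, using that the \( \conj \)-rules select a single conjunct while the \( \disj \)-rules branch over disjuncts. Clause (5) is the distributive (DNF/CNF) law; with \( \bigcirc=\disj \) it reads \( \disj\setof{\conj\Delta_i}[i\le n]\VdashV\conj\setof{\disj\Delta}[\Delta\text{ a transversal}] \), and since the full distributive-lattice identities are intuitionistically valid, a finite co-proof is obtained by exhaustively applying the connective rules and closing with reflexivity. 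The only real content is the combinatorics of transversals (one element chosen from each \( \Delta_i \)), which is exactly the family of premises generated by the rules.

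The substance lies in (1) and (2), which I would prove by \emph{cyclic} co-proofs. For the rolling rule (2), consider \( \sigma(\phi\cdot\psi)\Vdash\phi(\sigma(\psi\cdot\phi)) \). Unfolding the antecedent with \( \sigL \) turns the left formula into \( \phi(\psi(\sigma(\phi\cdot\psi))) \); by the congruence rule of Proposition~\ref{kmoi-mono}(1) under the context \( \phi(\cdot) \) it suffices to prove \( \psi(\sigma(\phi\cdot\psi))\Vdash\sigma(\psi\cdot\phi) \). Unfolding the consequent with \( \sigR \) rewrites its right formula as \( \psi(\phi(\sigma(\psi\cdot\phi))) \), and a further congruence under \( \psi(\cdot) \) reduces the goal to \( \sigma(\phi\cdot\psi)\Vdash\phi(\sigma(\psi\cdot\phi)) \), which is the original sequent. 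Assembling these as a finite constructive derivation whose single open leaf is a copy of the endsequent, then identifying that leaf with the root and unraveling, yields the desired cyclic co-proof; the converse direction is symmetric, exchanging the roles of \( \phi \) and \( \psi \). Clause (1) is analogous but carries two nested occurrences of \( \sigma \) on one side, so each pass through the loop unfolds both the outer and the inner quantifier, and one must track the diagonal identification of the two fixed-point variables carefully so that the unfolded sequent genuinely returns to the root.

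The main obstacle is verifying the acceptance condition for the cyclic co-proofs of (1)--(2). On each infinite branch one must locate either a left \( \mu \)-thread or a right \( \nu \)-thread. Crucially, the construction unfolds the principal quantifier on \emph{both} the antecedent and the consequent on every circuit, so the looping branch simultaneously carries a recurring left thread on the \( \sigma \)-formula of the antecedent and a recurring right thread on the \( \sigma \)-formula of the consequent. When \( \sigma=\mu \) the former is good and when \( \sigma=\nu \) the latter is good, so a single cyclic derivation serves both quantifiers at once. It remains to confirm, via Proposition~\ref{thd-progress}, that the recurring quantified formula is of lowest complexity along each such thread---which holds because every unfolding strictly increases complexity until the quantifier recurs---and that the congruence co-proofs invoked from Proposition~\ref{kmoi-mono}, whose own internal branches are already good, relay rather than obstruct these threads; all sequents remain single-conclusion, so the derivations stay constructive.
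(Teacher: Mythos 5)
The paper states Proposition~\ref{guard-reds} without proof (the surrounding text says only that it is ``proved by syntactic induction on formulas''), so there is no written argument for you to diverge from; your proposal supplies the intended one and is essentially correct. Clauses (3)--(5) are indeed finite proof searches closed off by Proposition~\ref{kmoi-id} and weakening, and for (1)--(2) your loop --- unfold the fixed point on \emph{both} sides of the sequent, pass through congruence fragments from Proposition~\ref{kmoi-mono}, and tie the resulting buds back --- is precisely the technique the paper itself uses to prove Propositions~\ref{kmoi-id} and \ref{kmoi-mono}(2). Your resolution of the acceptance condition is also the right one: a looping branch carries a recurring left \( \sigma \)-thread and a recurring right \( \sigma \)-thread, so it is good via the left thread when \( \sigma=\mu \) and via the right one when \( \sigma=\nu \). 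For (1), the point you flag about the diagonal works out: the circuit stabilises on \emph{two} interleaved goals, \( \seq{\sigma x.\,\phi x}{\sigma x.\,\sigma\phi} \) and \( \seq{\sigma x.\,\phi x}{(\sigma\phi)\subs[x]{\sigma x.\,\sigma\phi}} \), each of which recurs, rather than on a single sequent returning to the root in one pass.

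Three points need repair, none fatal. First, when \( \phi \) or \( \psi \) contains quantifiers, the congruence fragments obtained from Proposition~\ref{kmoi-mono} are \emph{infinite} derivations with infinitely many buds, so you never assemble ``a finite constructive derivation whose single open leaf is a copy of the endsequent''; what the construction produces is a generally infinite derivation all of whose buds are labelled by the endsequent, to be discharged co-recursively. Your closing paragraph already accounts for the branches this creates, so the fix is expository, but the phrasing as it stands is false. Second, the recurring fixed point is minimal on the looping thread not because ``every unfolding strictly increases complexity'' (it need not), but because every formula on the segment between recurrences lies on a thread ending at the substituted occurrence of the bound variable, hence contains the recurring fixed point as a literal subformula and so has complexity at least \( \rk{\sigma\phi} \). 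Third, in clause (5) the branches of your proof search produce images of choice functions \( f(i)\in\Delta_i \); when the \( \Delta_i \) overlap, such an image need not satisfy the literal side condition \( \card{\Delta\cap\Delta_i}=1 \), and indeed for \( \Delta_0=\setof{x,y} \), \( \Delta_1=\setof{y,z} \), \( \Delta_2=\setof{x,z} \) \emph{no} set satisfies it, so the right-hand side degenerates to \( \bot \) and the clause as literally written fails. Your argument (correctly) establishes the intended reading, in which \( \Delta \) ranges over sets of representatives, one chosen from each \( \Delta_i \); this discrepancy should be made explicit when you close those branches.
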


\begin{proposition}
	\label{guard-reds-f}
	Suppose \( \alpha \VdashV \hat{\alpha} \). If \( \beta \) contains \( \alpha \) as a {literal} subformula and \( \hat{\beta} \) is the result of replacing one of these occurrence of \( \alpha \) by \( \hat{\alpha} \), then \( \beta \VdashV \hat{\beta} \).
\end{proposition}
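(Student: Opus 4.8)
The plan is to establish the statement as a congruence property of constructive co-provability, proved by induction on the position of the distinguished occurrence of \( \alpha \) in \( \beta \). Making the occurrence precise, let \( C \) be the single-hole context with \( \beta = C[\alpha] \) and \( \hat\beta = C[\hat\alpha] \), where the hole is reached from the root of \( C \) purely through the literal-subformula relation (descending into a conjunct or disjunct, under a modality, or into the body of a quantifier). It then suffices to prove \( C[\alpha] \VdashV C[\hat\alpha] \) by induction on \( C \). The base case \( C = [\,\cdot\,] \), i.e.\ \( \beta = \alpha \) and \( \hat\beta = \hat\alpha \), is exactly the hypothesis.

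For the inductive step I split on the outermost constructor of \( C \). If \( C = \triangle C' \) with \( \triangle \in \setof{\square,\diamond} \), or \( C = \bigcirc(\Delta \cup \setof{C'}) \) with \( \bigcirc \in \setof{\conj,\disj} \) and \( \Delta \) a fixed set of formulas, then the induction hypothesis gives \( C'[\alpha] \VdashV C'[\hat\alpha] \), and the result follows from Proposition~\ref{kmoi-mono}(1): with the (\( y \)-positive) predicate \( \phi = \lambda y.\,\triangle y \), respectively \( \phi = \lambda y.\,\bigcirc(\Delta \cup \setof{y}) \), for a fresh \( y \), we have \( \phi y \Vdash \phi y \) by Proposition~\ref{kmoi-id}, so each direction of \( C'[\alpha] \VdashV C'[\hat\alpha] \) yields the corresponding direction of \( \phi(C'[\alpha]) \VdashV \phi(C'[\hat\alpha]) \), that is \( C[\alpha] \VdashV C[\hat\alpha] \). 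In the modal case one may alternatively read off the conclusion from a single application of \( (\sqR) \) or \( (\diL) \) to the witnessing co-proof.

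The quantifier case \( C = \sigma x.\,C' \) is the one requiring care and is where I expect the only genuine difficulty. The induction hypothesis supplies \( C'[\alpha] \VdashV C'[\hat\alpha] \), and I would like to close under the binder via Proposition~\ref{kmoi-mono}(2) applied to \( \phi = \lambda x.\,C'[\alpha] \) and \( \psi = \lambda x.\,C'[\hat\alpha] \). That lemma, however, asks for \( \phi z \Vdash \psi z \) with \( z \) a variable fresh for \( \phi \) and \( \psi \), which unfolds to the equivalence of the two bodies after \( x \) has been relabelled to \( z \). Since the occurrence of \( \alpha \) lies within the scope of \( \sigma x \), the formula \( \alpha \) may well contain \( x \) free, so this is the \( x \mapsto z \) renaming of the induction hypothesis rather than the hypothesis verbatim. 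The remaining ingredient is thus the observation that \( \Vdash \) is invariant under injective renaming of free variables: uniformly substituting a fresh \( z \) for \( x \) throughout a witnessing constructive co-proof produces a constructive co-proof of the renamed sequent, since renaming preserves every rule instance, the thread structure, the intuitionistic shape \( \lh\Psi \le 1 \) of each sequent, and the restriction of the identity rules to variables (the literals \( x,\lnot x \) are merely relabelled to \( z,\lnot z \)). With this in hand the hypotheses of Proposition~\ref{kmoi-mono}(2) are satisfied in both directions, yielding \( \sigma\phi \VdashV \sigma\psi \), i.e.\ \( C[\alpha] \VdashV C[\hat\alpha] \), which completes the induction.
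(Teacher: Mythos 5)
Your proposal is correct and takes essentially the same route as the paper, whose entire proof reads ``Induction on the syntactic complexity of \( \beta \) using Proposition~\ref{kmoi-mono}''---your induction on the single-hole context is precisely that induction made explicit, with the connective/modal cases handled by Proposition~\ref{kmoi-mono}(1) and the binder case by Proposition~\ref{kmoi-mono}(2). The one point where you go beyond the paper is in the quantifier case, where you correctly note that invoking Proposition~\ref{kmoi-mono}(2) needs the induction hypothesis up to an injective renaming of the bound variable, and that \( \Vdash \) is invariant under such renaming (by uniformly renaming throughout the witnessing co-proof); the paper leaves this (true) detail implicit.
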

\begin{proof}
	Induction on the syntactic complexity of \( \beta \) using Proposition~\ref{kmoi-mono}.
\end{proof}

\begin{theorem}[Expressive Adequacy of the Guarded Fragment]\label{guard-equi}
	For every \( \alpha \) there exists a guarded formula \( \gamma \VdashV \alpha \).
\end{theorem}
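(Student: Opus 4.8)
The plan is to induct on the number of \emph{unguarded binders} of \( \alpha \), meaning quantifier occurrences \( \sigma x.\,\delta \in \subform \alpha \) that admit a cyclic thread \( \tau : \sigma x.\,\delta \sft \sigma x.\,\delta \) passing through no modality. If this number is \( 0 \) then \( \alpha \) is already guarded: a routine thread analysis shows that any cyclic thread in the subformula graph must traverse some quantifier-unfolding step \( \sigma \phi \sf \phi(\sigma \phi) \) (steps \( \bigcirc \Gamma \sf \gamma \) and \( \triangle \beta \sf \beta \) strictly lower literal complexity and cannot close a cycle), and the minimum-complexity formula on such a cycle is a quantifier \( \sigma \phi \) that itself cycles, hence an unguarded binder unless the cycle crosses a modality. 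Throughout I use that \( \VdashV \) is reflexive (Proposition~\ref{kmoi-id}), a congruence for literal-subformula replacement (Proposition~\ref{guard-reds-f}), compatible with predicate contexts (Proposition~\ref{kmoi-mono}), and transitive, the last being the special case \( \Phi = \alpha \), \( \Psi = \emptyset \) of cut admissibility (Theorem~\ref{admiss-charac}).

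For the inductive step I would pick an unguarded binder \( \sigma x.\,\delta \) that is \emph{innermost} among unguarded binders; it then suffices to produce \( \delta' \) with \( \sigma x.\,\delta \VdashV \sigma x.\,\delta' \) in which every occurrence of \( x \) is guarded, since replacing \( \sigma x.\,\delta \) by \( \sigma x.\,\delta' \) inside \( \alpha \) (Proposition~\ref{guard-reds-f}) strictly decreases the measure and lets the induction hypothesis apply. Because the binder is innermost among unguarded binders, every proper quantified or modal subformula of \( \delta \) is itself guarded, so these may be regarded as opaque \emph{guarded atoms}. The region of \( \delta \) above these atoms is a positive propositional combination of \( x \), the atoms and literals, which I rewrite using the connective reductions---singleton elimination (Proposition~\ref{guard-reds}(3)), flattening (Proposition~\ref{guard-reds}(4)) and distributivity (Proposition~\ref{guard-reds}(5))---into \( \disj_j \conj S_j \) when \( \sigma = \mu \) (dually \( \conj_j \disj S_j \) when \( \sigma = \nu \)), each \( S_j \) a set of atoms, literals and occurrences of \( x \); the fixed-point laws (Proposition~\ref{guard-reds}(1),(2)) are invoked where the rewriting needs to relocate a guarded atom across the binder or coalesce a spurious binder, keeping the quantifier confined to the single occurrence \( \sigma x \). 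Lifting these body-equivalences under the binder by Proposition~\ref{kmoi-mono}(2) gives \( \sigma x.\,\delta \VdashV \sigma x.\,(\disj_j \conj S_j) \).

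The final, distinctly constructive, step is \emph{trivialisation}: for \( \sigma = \mu \) I delete every disjunct \( \conj S_j \) containing an unguarded occurrence of \( x \), and dually for \( \sigma = \nu \) every conjunct containing such an occurrence; the remaining occurrences of \( x \) then lie inside guarded atoms, so \( \delta' \) is guarded at \( x \) and \( \sigma x.\,\delta' \) is guarded. Soundness of deletion rests on the absorption equivalences
\[ \mu x.\bigl( (x \wedge r) \vee \beta \bigr) \VdashV \mu x.\,\beta \qquad \nu x.\bigl( (x \vee s) \wedge \beta \bigr) \VdashV \nu x.\,\beta \]
valid whenever \( r,s,\beta \) are guarded at \( x \). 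The nontrivial inclusion \( \mu x.(x \vee \beta) \Vdash \mu x.\,\beta \) I would witness by a looping constructive co-proof: unfold the antecedent with \( \muL \), apply \( \disjL \) so that the recurring disjunct \( x \) returns the root sequent \( \seq{\mu x.(x \vee \beta)}{\mu x.\,\beta} \)---which carries a left \( \mu \)-thread on that branch---while the \( \beta \)-branch is discharged co-recursively after unfolding the succedent and applying monotonicity; the converse inclusion and the \( \nu \)-case are symmetric, and the general conjunctive form follows by Proposition~\ref{kmoi-mono}.

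The main obstacle is twofold. First, termination of the rewriting: the distributivity step can duplicate material and the fixed-point laws can spawn auxiliary binders, so one must verify that these duplicates are guarded atoms (hence harmless, guardedness being a property of the subformula graph) and that the count of unguarded binders genuinely drops at each stage while outer binders cannot be newly unguarded by deletions or duplications. Second, and specific to the constructive setting emphasised in the paper, is turning the trivialisation absorptions---semantically immediate but classically flavoured---into genuine \( \kmoi \) co-proofs; this is the step where the whole argument relies essentially on constructive co-proofs rather than mere validity, and where I expect the bulk of the careful work to lie.
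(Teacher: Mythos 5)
Your overall strategy---permute connectives into a normal form and then trivialise unguarded occurrences, with every step witnessed by \( \VdashV \)---is the same as the paper's, and your absorption equivalences \( \mu x.((x\wedge r)\vee\beta) \VdashV \mu x.\,\beta \), together with the circular \( \kmoi \) co-proofs you sketch for them, are a sound rendering of the paper's trivialisation step. But there is a genuine gap at the heart of your inductive step: the claim that, for an innermost unguarded binder \( \sigma x.\,\delta \), the unguarded occurrences of \( x \) all sit in the propositional region of \( \delta \) above the ``opaque guarded atoms''. This is false. Take \( \alpha = \mu x.\, \mu y.\,(x \vee \diamond y) \) and \( \delta = \mu y.\,(x \vee \diamond y) \). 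The inner binder is guarded (its own cycles, on \( y \), pass through \( \diamond \)), so \( \mu x \) is the innermost (indeed only) unguarded binder; yet the occurrence of \( x \) is not under any modality, and it lies underneath the inner quantifier. Whichever way you read ``atoms'', your step fails: either \( \delta \) is a single opaque atom, in which case your DNF is \( \disj\setof{\conj\setof{\delta}} \), no disjunct contains a top-level \( x \), the deletion removes nothing and \( \delta' = \delta \) is still unguarded at \( x \); or the atoms are the proper modal subformulas, in which case the ``region above the atoms'' contains the quantifier \( \mu y \) and is not propositional at all. The underlying slip is a conflation of two different properties: guardedness of the atom (no modality-free cycles on \emph{its own} bound variables) versus guardedness of the occurrences of the \emph{outer} variable \( x \) inside it (each such occurrence lying under a modality). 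Deletion of disjuncts can, by its nature, only remove occurrences of \( x \) that have been brought to the surface; it cannot reach under an inner binder.

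This is precisely where the paper's proof does its real work. The paper first uses Proposition~\ref{guard-reds}(1), \( \sigma x.\,\phi x \VdashV \sigma x.\,\sigma\phi \), to separate guarded from unguarded occurrences of the same variable, so that trivialisation only ever applies to \emph{strongly unguarded} predicates, and it then eliminates those by the substitutions \( \mu\phi \VdashV \phi\bot \) and \( \nu\phi \VdashV \phi\top \): substitution, unlike deletion, reaches underneath inner binders (in the example, \( \mu x.\,\mu y.\,(x\vee\diamond y) \VdashV \mu y.\,(\bot\vee\diamond y) \), which is guarded). The rotation law Proposition~\ref{guard-reds}(2), \( \sigma(\phi\cdot\psi) \VdashV \phi(\sigma(\psi\cdot\phi)) \), is what surfaces buried occurrences, e.g.\ \( \mu y.\,(x\vee\diamond y) \VdashV x \vee \mu z.\,\diamond(x\vee z) \), so that the remaining justification is genuinely propositional. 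You do cite Proposition~\ref{guard-reds}(1),(2), but only ``to relocate a guarded atom across the binder or coalesce a spurious binder''; to repair your argument you would have to make the extraction of unguarded occurrences of \( x \) from under inner (guarded) fixed points a systematic, terminating phase of the rewriting---and note that the termination worry you flag for distributivity then applies with equal force here, since each rotation spawns a fresh binder and duplicates \( x \).
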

The proof of the theorem makes use of transitivity of \( \Vdash \) to iterate the `single-step' reductions above.
\begin{proof}
	Call a predicate \( \phi = \lambda x.\, \alpha \) \emph{guarded} if every occurrence of \( x \) in \( \alpha \) is under the scope of a modal operator,
	and \emph{strongly unguarded} if \emph{no} occurrence of \( x \) in \( \alpha \) is under the scope of a modal operator.
	Via the first equivalence of Proposition~\ref{guard-reds} and Proposition~\ref{guard-reds-f}, every formula \( \alpha \) can be converted to a formula \( \hat{\alpha} \VdashV \alpha \) in which every subpredicate \( \phi \) in \( \hat\alpha \) is either {guarded} or {strongly unguarded}.
	To complete the reduction to guarded form it suffices to observe the following equivalences and apply Proposition~\ref{guard-reds-f} iteratively.
	\begin{itemize}
		\item If \( \phi \) is strongly unguarded, then \( \mu  \phi \VdashV \phi \bot \) and \( \nu  \phi \VdashV \phi \top \).
	\end{itemize}
	To establish the claim, first apply proposition~\ref{guard-reds} to reduce \( \phi \) to the forms \( \lambda x.\, \disj \setof{ \conj \Gamma_i }[i\le m] \) and \( \lambda x.\, \conj \setof{ \disj \Delta_i }[i\le n] \) in which all occurrences of \( x \) are as elements of the \( \Gamma_i \)s and \( \Delta_j \)s.
	Propositional logic confirms that \( \phi(\phi x) \VdashV \phi x \), whence the claim obtains.
\end{proof}

\section{Cyclic Proofs}
\label{s-cyclic-proofs}

We now introduce a calculus in which proofs are finite representations of co-inductive proofs.
That is, proofs in the calculus are finite derivations for which each bud can be connected to an earlier occurring vertex, called the companion, labelled by, essentially, the same sequent whose unravelling to a derivation induces a co-proof.
Our notion of \emph{cyclic} proof is more constrained than merely asserting that the tree unravelling of the cyclic derivation is a co-proof.
Assessing whether a cyclic derivation is a cyclic \emph{proof} in our sense is a purely syntactic condition on the local cycle between companion and bud.
This is in contrast to requiring that the tree unravelling is its self a cyclic proof, for which one would have to confirm the thread condition on every induced cycle, not merely the simple ones.

In the context of the proof of completeness of \( \kmu \), the cyclic calculus \( \cmu \) introduced below plays two roles. 
One is as an intermediate system between the illfounded and finitary calculi. 
The second is as a framework for the proof-search method on the path to the disjunctive normal theorem (cf.\ Section~\ref{s-DNF}). In the latter role, we will only utilise the concept of a derivation in \( \cmu \) and not cyclic \emph{proofs} per se.
The system \( \cmu \) is essentially the calculus \( \system{Clo} \) of cyclic proofs of~\cite{AL17lics} extended with the rule of cut, though the presentation below simplifies some details compared to earlier works.
It is, however, straightforward to show that the cut-free fragment of \( \cmu \) derives the same sequents as \( \system{Clo} \).

It was claimed in~\cite{AL17lics} that the cut-free fragment of \( \cmu \) is complete for the modal \( \mu \)-calculus.
A counter-example was recently discovered by Johanness Kloibhofer~\cite{Kloib23-counter-ex} that confirms the contrary.
As a consequence of the present results, the cut-free fragment of \( \cmu \) is complete for sequents \( \seq \Phi \alpha \) where \( \Phi \) consists entirely of \( \PD \) formulas.
In Section~\ref{s-completeness} we prove that in the presence of cut, partial completeness can be lifted to full completeness.

As mentioned, the objects of deduction in \( \cmu \) are \emph{annotated} sequents.
These are ordinary sequents in which quantified subformulas are annotated by symbols from a designated set and the sequent as a whole carries a global `control' on the annotations contained within.
Fix an infinite set \( \nms \); elements of \( \nms \) are called \emph{names}. 
An \emph{annotation} is a finite non-repeating sequence of names.
The set of annotations is denoted \( \ann \).
Each annotation \( a \) is associated variants of the quantifiers, marked as \( \nf[a] \) and \( \mf[a] \).
The plain `unannotated' quantifiers \( \nu \) and \( \mu \) are identified with the quantifiers associated to the empty annotation \( \epsilon \).
Annotated formulas are generated by the grammar that admits these new quantifiers. 
To reduce confusion, in the following we often refer to formulas without annotations as \emph{plain} formulas.

\begin{definition}
	The annotated formulas are formed by expanding the syntax of plain formulas with quantifiers \( \nu^a \) and \( \mu^a \) for each annotation \( a \):
	\[\begin{aligned}
	  \alpha &\Coloneqq 
		x \mid 
		\square  \alpha \mid 
		\diamond \alpha \mid 
		\conj \Gamma \mid
		\disj \Gamma \mid 
		\mf[a] \phi \mid 
		\nf[a] \phi 
		\quad ( a \in \ann )
	  \\
	  \Gamma &\Coloneqq \emptyset \mid \Gamma \cup \setof{\alpha}
	\\
	  \phi &\Coloneqq \lambda x.\, \alpha, \text{ for \( x \)-positive \( \alpha \).}
	\end{aligned}\]
	If the quantifier \( \sigma^a \) occurs in \( \alpha \) we say that \( a \), and \( a(i) \) for each \( i < \lh a \), \emph{occurs} in \( \alpha \).
	For \( \sigma \in \setof{\mu,\nu} \), the \( \sigma \)-annotated formulas are the annotated formulas in which only the quantifier \( \sigma \) has non-trivial annotations.
\end{definition}

Negation is lifted to annotated formulas by setting \( \neg{(\sigma^a \phi)} \coloneqq \bar\sigma^a \neg {\phi} \) where \( \bar\sigma = \neg \sigma \).
The (plain) formula obtained by eliminating all annotations from \( \alpha \) is denoted \( \alpha^- \).
We write that \( \alpha^- \) \emph{underlies} \( \alpha \), and \( \alpha \) \emph{annotates} \( \alpha^- \).

\begin{definition}
	An \emph{annotated sequent} is an expression \( \aseq [c] \Phi \Psi \) where \( c \) is an annotation, called the \emph{control}, and \( \Phi \) and \( \Psi \) are finite sequences of \( \mu \)-annotated and, respectively, \( \nu \)-annotated formulas 
	 such that every annotation occurring in \( \seq \Phi\Psi \) is a subsequence of \( c \).
	Plain sequents are identified with annotated sequents with empty control.
\end{definition}
Given an annotated sequent \( \sequent \) and a sequence or set of names \( c \), let \( \sequent^{\upharpoonright c} \) be the sequent resulting from removing all names in \( \sequent \) not present in \( c \).
Note, if \( \sequent \) is an annotated sequent, then so is \( \sequent^{\upharpoonright c} \).

We now proceed with the definition of the cyclic sequent calculus \( \cmu \).
The logical rules of \( \cmu \) are presented in fig.~\ref{f-ann-rules-struc} and \ref{f-ann-rules} and comprise the natural liftings of \( \kmo + \cut \) to annotated sequents with the addition of structural rules for manipulating annotations.
Implicit in all rules is that the premises and conclusions are annotated sequents.
In particular, in instances of (\nuR[n]) the name \( n \) acts as an eigenvariable as it cannot occur in the endsequent (since \( cn \) is an annotation), and cut formulas are always plain (as they are both \( \mu \)- and \( \nu \)-annotated).
The rules (\awR) and (\awL) allows annotations within subformulas to be appended by names present in the control. 
We denote by (\aw) the rule incorporated finite iterations of the annotation weakening and exchange rules.
The rule (\cw) expands the control by a fresh name, though in the following we assume this rule also incorporates finite iterations of the operation.
The definition of traces lifts to derivations in \( \cmu \) in the obvious way.

\begin{definition}[Cyclic proofs]\label{d-cmu}%
	The system \( \cmu \) of \emph{cyclic} proofs has the rules listed in fig.~\ref{f-ann-rules-struc} and \ref{f-ann-rules}, with the former comprising the \emph{structural} rules.
	A proof in \( \cmu \), called a \emph{cyclic proof}, is a finite derivation \( d \) whose root is an unannotated sequent and is such that for every path \( (v_i)_{i\le k} \) from the root of \( d \) to a bud, there exists \( l < k \) and \( n \in \nms \) satisfying three conditions:
	\begin{enumerate}
		\item The rule at \( v_l \) is either (\nuR[n]) or (\muL[n]),
		\item the name \( n \) occurs in the control of \( v_i \) for all \( l < i \le k  \), and 
		\item \( \sequent_d(v_k)^{\upharpoonright c_l} = \sequent_d(v_l) \) where \( c_l \) is the control of \( v_l \).
	\end{enumerate}
	The vertex \( v_l \) above is referred to as a \emph{companion} of the bud \( v_k \).
	
	The \emph{constructive fragment} of \( \cmu \) is the system \( \cmui \) restricting the rules of \( \cmu \) to intuitionistic sequents, namely sequents with at most one head formula.
\end{definition}
Inserting instances of structural rules as necessary (namely (\aw), (\cw) and (\exc)), it can be assumed that  bud--companion pairs are unique and  have the form represented in fig.~\ref{f-bud-schema}.
%
\begin{figure}
	\centering
	\ebproofset{small}
	\begin{tabular}{c@{\quad}c@{\quad}c@{\quad}c}
		\begin{prooftree}
			\hypo{ \seq[c] {\Phi , \alpha , \Lambda} \Psi }
			\infer1[\excL]{ \seq[c] {\alpha , \Phi , \Lambda} \Psi }
		\end{prooftree}
		&
		\begin{prooftree}
			\hypo{ \seq[c]{\alpha , \alpha ,  \Phi} \Psi }
			\infer1[\conL]{ \seq[c]{\alpha , \Phi} \Psi }
		\end{prooftree}
		&
		\begin{prooftree}
			\hypo{ \seq[c] \Phi \Psi }
			\infer1[\weakL]{ \seq[c] {\Lambda,\Phi} {\Psi} }
		\end{prooftree}
		&
		\begin{prooftree}
			\hypo{ \seq [c] { \phi ( \mf [a] \psi ), \Phi } \Psi }
			\infer1[\awL]{ \seq [c] { \phi ( \mf [an] \psi ) , \Phi } \Psi }
		\end{prooftree}
		\\[1.5em]
		\begin{prooftree}
			\hypo{ \seq[c] {\Phi} {\Psi, \alpha , \Lambda} }
			\infer1[\excR]{ \seq[c] {\Phi} {\Psi,\Lambda,\alpha} }
		\end{prooftree}
		&
		\begin{prooftree}
			\hypo{ \seq[c] \Phi {\Psi , \alpha , \alpha}  }
			\infer1[\conR]{ \seq[c] \Phi {\Psi , \alpha} }
		\end{prooftree}
		&
		\begin{prooftree}
			\hypo{ \seq[c] \Phi \Psi }
			\infer1[\weakR]{ \seq[c] {\Phi} {\Psi,\Pi} }
		\end{prooftree}
		&
		\begin{prooftree}
		\hypo{ \seq [c] \Phi { \Psi , \phi ( \nf [a] \psi )} }
		\infer1[\awR]{ \seq [c] \Phi { \Psi , \phi ( \nf [an] \psi )}  }
		\end{prooftree}
		\\[1.5em]
		\begin{prooftree}
			\axiom[\hyp]{\aseq \Phi \Psi}
		\end{prooftree}
		&
		\multicolumn{2}{c}{
			\begin{prooftree}
			\hypo{ \seq[c] \Phi {\Psi , \alpha} }
			\hypo{ \seq[c] {\alpha , \Phi} \Psi }
			\infer2[\cut]{ \seq[c] \Phi \Psi }
		\end{prooftree}
		}
		&
		\begin{prooftree}
			\hypo{ \seq[c d] \Phi \Psi }
			\infer1[\cw]{ \seq [c a d] \Phi \Psi }
		\end{prooftree}
	\end{tabular}
	\caption{Structural rules of \( \cmu \).}
	\label{f-ann-rules-struc}
\end{figure}
\begin{figure}
	\centering
	\ebproofset{small}
	\begin{tabular}{c@{\quad}c@{\quad}c@{\quad}c}
		\begin{prooftree}
			\hypo{ \aseq {\gamma , \Phi} \Psi }
			\infer[left label={$\gamma \in \Gamma$}]1[\conjL]{ \aseq {\conj \Gamma , \Phi} \Psi }
		\end{prooftree}
		&
		\begin{prooftree}
			\hypo{ \Setof{ \aseq {\gamma , \Phi} \Psi }[\gamma \in \Gamma] }
			\infer1[\disjL]{ \aseq {\disj \Gamma , \Phi} \Psi }
		\end{prooftree}
		&
		\begin{prooftree}
			\hypo{ \aseq {\alpha , \Phi} \Psi }
			\infer1[\diL]{ \aseq {\diamond \alpha , \square \Phi} {\diamond \Psi} }
		\end{prooftree}
		&
		\begin{prooftree}
			\hypo{ \aseq {\phi ( \sigma  \phi ) , \Phi} \Psi }
			\infer1[\nuL]{ \aseq {\sigma \phi , \Phi} \Psi }
		\end{prooftree}
		\\[1.5em]
		\begin{prooftree}
			\hypo{ \setof{ \aseq \Phi {\Psi , \gamma} }[ \gamma \in \Gamma ] }
			\infer1[\conjR]{ \aseq \Phi {\Psi , \conj \Gamma} }
		\end{prooftree}
		&
		\begin{prooftree}
			\hypo{ \aseq \Phi {\Psi , \gamma} }
			\infer[left label={$\gamma \in \Gamma$}]1[\disjR]{ \aseq \Phi {\Psi , \disj \Gamma } }
		\end{prooftree}
		&
		\begin{prooftree}
			\hypo{ \aseq \Phi {\Psi , \alpha} }
			\infer1[\sqR]{ \aseq {\square \Phi} {\diamond \Psi , \square  \alpha} }
		\end{prooftree}
		&
		\begin{prooftree}
			\hypo{ \aseq \Phi {\Psi , \phi ( \sigma  \phi ) }}
			\infer1[\muR]{ \aseq \Phi {\Psi , \sigma \phi }}
		\end{prooftree}
		\\[1.5em]
		\begin{prooftree}
			\hypo{ \aseq[cn] {\phi ( \mf[an]  \phi ) , \Phi} \Psi }
			\infer1[\amuL n]{ \aseq[c] {\mf[a] \phi , \Phi} \Psi }
		\end{prooftree}
		&
		\begin{prooftree}
			\hypo{ \aseq[cn] \Phi {\Psi , \phi ( \nf[an]  \phi ) }}
			\infer1[\anuR n]{ \aseq[c] \Phi {\Psi , \nf[a] \phi }}
		\end{prooftree}
		&
		\multicolumn{2}{l}{
		\begin{prooftree}
			\hypo{}
			\infer[left label={$\neg\Phi\cup\Psi=\setof{x,\neg x}$}]1[\idLR]{\aseq[\epsilon] { \Phi } \Psi }
		\end{prooftree}
		}
	\end{tabular}
	\caption{Logical rules of \( \cmu \).}
	\label{f-ann-rules}
\end{figure}
\begin{figure}
	\centering
	\ebproofset{small}
	\begin{prooftree}
		\bud{ \aseq[c n] \Phi {\Psi , \nf[ a n ] \phi } }
		\ellipsis{}{ \aseq [c n] \Phi {\Psi , \phi (\nf[ a n ] \phi )} }
		\infer1[\anuR n]{ \aseq [c] \Phi {\Psi , \nf[a] \phi} }
	\end{prooftree}
	\qquad
	\begin{prooftree}
		\bud{ \seq[c n] {\mf[ a n ] \phi , \Phi} \Psi  }
		\ellipsis{}{ \seq [c n] {\phi (\mf[ a n ] \phi ) , \Phi } \Psi }
		\infer1[\amuL{n}]{ \seq[c] {\mf[a] \phi , \Phi} \Psi }
	\end{prooftree}
	\caption{Schematic form of bud--companion pairs (leaf and conclusion vertex respectively).} 
	\label{f-bud-schema}
\end{figure}

The motivation behind the term `cyclic proof' is that identifying buds with their companions gives rise to a finite presentation of a certain illfounded proof. Since co-proofs do not utilise the cut rule, this reduction of \( \cmu \) to \( \kmo \) applies to cut-free proofs only. 
The next theorem illustrates the construction of a co-proof from a cyclic proof.

\begin{theorem}
	\label{cmu-in-kmo}
	If \( \cmu \vdash \seq \Phi \Psi \) via a cut-free proof, then \( \kmo \vdash \seq \Phi \Psi \).
	Likewise, for systems \( \cmui \) and \( \kmoi \) respectively.
\end{theorem}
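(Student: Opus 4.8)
The plan is to realise the slogan preceding the theorem---that identifying buds with their companions presents a co-proof---as an explicit construction. Given a cut-free cyclic proof \( d \) in \( \cmu \) of \( \seq\Phi\Psi \), I first form the regular infinite tree \( d^\infty \) obtained by grafting, onto each bud \( v_k \), a fresh copy of the subderivation rooted at its companion \( v_l \), and repeating this ad infinitum (equivalently, \( d^\infty \) is the tree-unravelling of the finite graph obtained by adding a back-edge from every bud to its companion). Since the root sequent of \( d \) carries no annotations and the identification of a bud with its companion is licensed by condition~3 of Definition~\ref{d-cmu}, every sequent of \( d^\infty \) is a legitimate annotated sequent.

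Next I apply the erasure \( \alpha \mapsto \alpha^- \) to every formula and forget all controls, obtaining a tree \( \pi \) labelled by plain sequents. Under erasure the annotation-bookkeeping rules become vacuous: an instance of (\awL), (\awR) or (\cw) has premise and conclusion with identical underlying sequent, so I contract each such step, while the remaining rules of \( \cmu \) descend to the corresponding rules of \( \kmo \)---both (\muL[n]) and (\nuL) to (\sigL), both (\nuR[n]) and (\muR) to (\sigR), and (\sqR), (\diL), together with the propositional and structural rules, to their namesakes; the restriction \( \neg\Phi\cup\Psi = \setof{x,\neg x} \) on (\idLR) is exactly the variable-restricted identity of \( \kmo \). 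This makes \( \pi \) a \( \kmm \)-derivation with identities on variables. To confirm \( \pi \) is a \( \kmo \)-\emph{derivation} I must check that each branch meets a logical rule infinitely often: an infinite branch of \( \pi \) projects to an infinite path in the finite graph of \( d \), which therefore traverses some back-edge infinitely often, and by condition~1 each such traversal passes through a (\nuR[n]) or (\muL[n]) inference, hence a logical rule. The contraction of vacuous steps cannot destroy this, precisely because the repeated companion rule is never an annotation rule.

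The substance of the argument, and the step I expect to be the main obstacle, is showing that \( \pi \) is a \emph{proof}: that every branch carries a right \( \nu \)-thread or a left \( \mu \)-thread (Definition~\ref{d-coproofs}). Here the annotation discipline does the work. Fix an infinite branch \( B \) of \( \pi \); its projection to \( d \) cycles through finitely many back-edges infinitely often, and among the companions discharged infinitely often I select the outermost one \( v_l \), with distinguished name \( n \) and rule (\nuR[n]) or (\muL[n]). The name \( n \) is a fresh eigenvariable appended to a single quantified occurrence at \( v_l \); condition~2 guarantees \( n \) stays in the control along the whole cycle, so that occurrence---never reset, as \( v_l \) is outermost---is tracked by a trace all the way to the bud, where condition~3 returns it to its companion position. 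Following the descendants of this occurrence along \( B \) yields an infinite trace whose supported thread \( \tau \) visits the erased fixed point \( (\sigma^a\phi)^- \) cofinally and with minimal complexity; by Proposition~\ref{thd-progress} this is the formula associated to \( \tau \). When the companion rule is (\nuR[n]) the occurrence lies in the consequent and \( \tau \) is a right \( \nu \)-thread; when it is (\muL[n]) it lies in the antecedent and \( \tau \) is a left \( \mu \)-thread. Either way \( B \) is good, so \( \pi \) is a co-proof of \( \seq\Phi\Psi \). Making the selection of \( v_l \) rigorous and verifying that \( n \) is never reset---that is, deriving the global thread condition from the local, name-based companion conditions---is the crux; it is the cyclic-proof soundness argument of~\cite{AL17lics} adapted to the present rules.

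Finally, the constructive claim is immediate from the same construction: erasure does not alter the number of consequent formulas, so if \( d \) is a \( \cmui \)-proof---every sequent intuitionistic---then every sequent of \( \pi \) is intuitionistic, and \( \pi \) is a proof in \( \kmoi \), the constructive fragment of \( \kmo \).
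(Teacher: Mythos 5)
Your proposal is correct and follows essentially the same route as the paper: unravel the cyclic proof by identifying buds with companions, erase annotations to obtain a maximal \( \kmo \)-derivation, and for each infinite branch extract a good thread from the name attached to the infinitely-recurring companion closest to the root (your "outermost" one), whose persistence in the controls guarantees the thread's dominant formula is the corresponding fixed point. The only cosmetic difference is that the paper first normalises bud--companion pairs to the schema of fig.~\ref{f-bud-schema} and inserts one extra (\nuR)/(\muL) instance so the identification is a literal repetition, where you graft subderivations directly; both proofs leave the final thread-extraction step at the same level of detail.
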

\begin{proof}
	Consider a cut-free cyclic proof \( d \) which, without loss of generality, we assume has (unique) bud--companion pairs matching the scheme of fig.~\ref{f-bud-schema}.
	Inserting a further instance of either (\nuR) or (\muL), we can view the bud--companion pair in as a literal repetition of the sequent labelling the child of the companion. 
	Identifying the new bud with its repeat induces a maximal derivation in \( \cmu \) which, if all annotations are removed, describes a maximal derivation in \( \kmo \).
	Let \( B \) be a branch of this derivation and let \( c \) be the longest annotation that is a prefix of infinitely many controls in \( B \). Necessarily, \( c \) has the form \( d n \) where \( n \) is the name associated to the bud--companion pair whose companion is closest to the root among those that appear infinitely often along \( B \). As every sequent in the suffix of \( B \) contains a formula with \( n \) in its annotation, this branch must be good.
	The same interpretation embeds cut-free \( \cmui \) in \( \kmoi \).
\end{proof}

The admissibility of cut in \( \kmoi \) is not sufficient to describe an interpretation of arbitrary \( \cmui \)-proofs in \( \kmoi \) because cuts in a cyclic proof which straddle the bud--companion pairing induce infinitely many cuts in tree unravelling.
A more general argument establishing cut \emph{elimination}, based on for instance~\cite{AfsLei24-weyl}, would be necessary for this result.

The converse to theorem~\ref{cmu-in-kmo}, that every \( \kmo \)-derivable sequent admits a cut-free proof in \( \cmu \), was claimed in \cite{AL17lics} but refuted in~\cite{DKMV23-det,Kloib23-counter-ex}.
In the presence of cut however, \( \cmu \) is a complete calculus (cf.~Theorems~\ref{cmui-in-kmui} and \ref{completeness} below).
A direct reduction of co-proofs to cyclic proofs is blocked by the `eigenvariable' condition implicit in admissible bud--companion pairs: the constraint on the `\( n \)' in fig.~\ref{f-bud-schema} requires that exactly one formula in the sequent is labelled by the name.
Simply annotating the traces in a co-proof with the goal that every branch has a finite prefix satisfying the cyclic path condition, cannot work unless augmented with a method to separate conflicting annotations into distinct branches~\cite{Kloib23-counter-ex}.
The cut rule offers this possibility but at an obvious cost.

More liberal notions of the bud--companion pair can be given for which the corresponding definition of cyclic proof is cut-free complete.
The first complete system of cyclic proofs for the modal \( \mu \)-calculus in the style of \( \cmu \) is the Jungteerapanich--Stirling `tablueax' system of~\cite{jungteerapanich:article,stirling2014tableau}, realised as a sequent calculus in~\cite{AL17lics,AL16-mfo}.
That calculus operates on a slightly different notion of annotated sequent but can be adapted to the present form without difficulty.
A version of the Jungteerapanich--Stirling calculus is utilised in the next section to establish the disjunctive normal form theorem.

An alternative cyclic proof system, closer in conception to \( \cmu \), was introduced by the authors in~\cite{AL16-pamm}.
That calculus is also cut-free complete~\cite{AL16-mfo,AL23normal} and has been used to establish Lyndon interpolation for the modal \( \mu \)-calculus~\cite{AL22lyndonInterp}.
Generic methods to derive cyclic proof systems from illfounded calculi are explored in~\cite{DKMV23-det,LeiWehr23GTCtoReset}.

\subsection{From cyclic to finitary proofs}
\label{s-cyclic-wproof}
Whereas the connection between cyclic and illfounded proofs is complex, the relation to finitary proofs is (relatively) simple: the  systems \( \cmu \) and \( \kmu \) prove the same (plain) sequents, and every \( \cmui \)-provable sequent is provable in \( \kmui \).
Furthermore, these result are witnessed by direct translations between cyclic and finitary proofs.
Proving these reductions will be our first task.
Some observations will be useful in this regard.
The first concerns the derivability via cyclic proofs of arbitrary identity sequents, closure under negation and the monotonicity rule. 
The argument is identical to that of Proposition~\ref{kmoi-id}, but employs annotations to discharge the buds.

\begin{proposition}
	\label{cmu-mono}\ 
	\begin{enumerate}
		\item \( \cmui \vdash \seq {\alpha} {\alpha} \) for every plain formula \( \alpha \).
		\item \( \cmu \vdash \seq \Phi \Psi  \) iff \( \cmu \vdash \lseq {\Phi , \neg \Psi } \).
		\item If \( \cmui \vdash \alpha \Rightarrow \beta \) and \( \cmui \vdash \phi x \Rightarrow \psi x \) then \( \cmui \vdash \phi\alpha \Rightarrow \psi\beta \).
	\end{enumerate}

\end{proposition}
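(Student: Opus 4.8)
The plan is to follow the proofs of Propositions~\ref{kmoi-id}, \ref{kmoi-mono} and \ref{kmo-kmoi} essentially verbatim, replacing the one illfounded manoeuvre that ``identifies buds with the root and unravels'' by the formation of a genuine bud--companion pair, the fresh name discharging the cyclic leaf. I would prove the three parts in the order 3, 1, 2: the identity of part~1 is built on the substitution construction of part~3, while the negation duality of part~2 is independent of the other two.

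For part~3 I would take a \( \cmui \)-proof of \( \seq{\phi x}{\psi x} \) and substitute \( \alpha \) for \( x \) throughout the context and \( \beta \) for \( x \) in the head, exactly as in Proposition~\ref{kmoi-mono}(1). The positivity of \( \phi \) and \( \psi \) ensures no free occurrence of \( x \) ever changes side, so the only affected leaves are the identity leaves, which in \( \cmu \) are pinned by (\idLR) to the bare sequent \( \seq{x}{x} \); after substitution each becomes a bud \( \seq\alpha\beta \). Since \( x \) is never principal in any non-identity inference, the annotations and bud--companion structure of the original proof are left untouched. Discharging each new bud by a copy of the given \( \cmui \)-proof of \( \seq\alpha\beta \) yields the required \( \cmui \)-proof of \( \seq{\phi\alpha}{\psi\beta} \).

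For part~1 I would induct on \( \rk\alpha \). Literals and identities are immediate, and the cases \( \alpha = \bigcirc\Gamma \) and \( \alpha = \triangle\beta \) follow by applying the matching left and right logical rules and invoking the induction hypothesis on the immediate subformulas. The quantifier case \( \alpha = \sigma\phi \) is where annotations are needed: since \( \rk{\phi x} = \rk\phi < \rk{\sigma\phi} \), the induction hypothesis gives a \( \cmui \)-proof of \( \seq{\phi x}{\phi x} \). Choosing a fresh name \( n \), I would place (\anuR{n}) (for \( \sigma=\nu \)) or (\amuL{n}) (for \( \sigma=\mu \)) at the root as companion, unfold the unannotated occurrence of \( \sigma\phi \) by (\nuL)/(\muR), and then apply the part~3 construction to \( \seq{\phi x}{\phi x} \), sending \( x \) to \( \sigma\phi \) on the unannotated side and to \( \sigma^{n}\phi \) on the annotated side. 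By construction the resulting buds are exactly \( \seq[n]{\nu\phi}{\nf[n]\phi} \) (resp.\ \( \seq[n]{\mf[n]\phi}{\mu\phi} \)), which form valid bud--companion pairs with the (\anuR{n})/(\amuL{n}) node: restricting the bud control to that of the companion deletes \( n \) and recovers the companion sequent, and \( n \) occurs in every intervening control. Hence \( \cmui\vdash\seq{\sigma\phi}{\sigma\phi} \).

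For part~2 I would apply the sequent-negation transformation of Proposition~\ref{kmm-neg}, lifted to annotated sequents through \( \neg{(\sigma^{a}\phi)} = \bar\sigma^{a}\neg{\phi} \): replacing every \( \seq{\Phi'}{\Psi'} \) by \( \lseq{\Phi',\neg{\Psi'}} \) turns each right rule into the corresponding left rule and leaves left rules on the left, so (\anuR{n}) becomes (\amuL{n}), the annotation-structural rules and (\cut) map to their duals, and both names and controls are preserved. As restriction commutes with negation, the three bud--companion conditions of Definition~\ref{d-cmu} transfer verbatim, giving cyclic proofs in both directions. The step I expect to be most delicate is the bookkeeping in part~1: confirming that the buds delivered by the substitution construction coincide \emph{on the nose} with the companion sequent and meet conditions (1)--(3) of Definition~\ref{d-cmu} — in particular that a single fresh name \( n \) suffices, that it labels the unique principal quantified formula, and that it survives along the whole generated cycle.
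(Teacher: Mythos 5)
Your overall strategy---redoing Propositions~\ref{kmoi-id}, \ref{kmoi-mono} and \ref{kmm-neg} with annotations and bud--companion pairs discharging what were previously unravelled buds---is exactly what the paper intends (it offers no more detail than this one-line indication), and your part~1, including the single-fresh-name bookkeeping you flag, is sound. The genuine gap is that \( \cmui \) and \( \cmu \), unlike the illfounded calculi \( \kmoi \) and \( \kmo \) whose constructions you are transplanting, contain the rule (\cut), and both of your transformations break precisely at cut instances.

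In part~3, a cut formula \( \gamma \) with \( x \) free occurs as the head of one premise and in the context of the other; your substitution puts \( \beta \) into the first occurrence and \( \alpha \) into the second, so the transformed inference is no longer a cut. Worse, an identity leaf \( \seq{x}{x} \) whose left occurrence traces into the root context but whose right occurrence traces into a cut formula becomes \( \seq{\alpha}{x} \), which is unprovable in general; and your claim that identity leaves are pinned to \( \seq{x}{x} \) overlooks the \( \cmui \)-instances \( \lseq{x, \lnot x} \) of (\idLR), which can arise once cut formulas contain \( \lnot x \). A repair is to make the substitution trace-sensitive, with all occurrences tracing to cut formulas receiving, say, \( \alpha \); but then leaves \( \seq{\alpha}{\alpha} \) and \( \lseq{\alpha,\neg{\alpha}} \) appear, so part~1 (and a left-sided non-contradiction analogue) must come \emph{before} part~3, inverting your 3-then-1 order---or else prove part~3 for cut-free inputs only, which is all your part~1 needs, since the inductively built identity proofs are cut-free. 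In part~2, the image of a cut under your uniform translation---from \( \lseq{\Phi,\neg{\Psi},\neg{\gamma}} \) and \( \lseq{\gamma,\Phi,\neg{\Psi}} \) infer \( \lseq{\Phi,\neg{\Psi}} \)---is not an instance of any \( \cmu \) rule: there is no ``dual of cut''. The paper flags exactly this for the finitary calculus in Proposition~\ref{kmu-neg} and repairs it by (\disjL) on \( \gamma \vee \neg{\gamma} \) plus a cut against \( \rseq{\gamma \vee \neg{\gamma}} \); but there that sequent is the axiom (\idR), whereas in \( \cmu \) identity is restricted to variables (Definition~\ref{d-cmu}), so you would owe a separate inductive proof of \( \cmu \vdash \rseq{\gamma , \neg{\gamma}} \). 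Alternatively, a trace-sensitive translation that moves only occurrences descending from the endsequent's consequent and leaves cut formulas in place sends cuts to cuts, and also yields the right-to-left direction of the ``iff'', which your uniform recipe does not address.
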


As in the illfounded system \( \kmo \), there is no explicit induction rule in \( \cmu \).
The rule is not derivable in \( \kmo \) but certain forms admit straightforward co-proofs in the presence of cut which can be annotated to obtain cyclic proofs:

\begin{figure}
	\centering
	\begin{prooftree}[small]
		\hypo{ \seq [\epsilon] \alpha {\phi \alpha } }
		\infer1[\cw]{ \seq [n] \alpha {\phi \alpha } }
		
		\bud[*]{ \seq[n] \alpha {\nf[n] \phi} }
		\ellipsis{\footnotesize Prop.~\ref{cmu-mono}}{ \seq [n] {\phi \alpha } {\phi ( {\nf[n] \phi } )} }

		\infer2[\cut]{ \seq [n] \alpha { \phi ( {\nf[n] \phi } )} } 
		\infer1[\nuR[n]]{ \seq [\epsilon] \alpha { \nf \phi } \thickspace (*) }
	\end{prooftree}
	\qquad
	\begin{prooftree}[small]
		\bud[*]{ \seq[n] {\mf[n] \phi } {\alpha} }
		\ellipsis{\footnotesize Prop.~\ref{cmu-mono}}{ \seq [n] {\phi ( {\mf[n] \phi } )} {\phi\alpha} }

		\hypo{ \seq [\epsilon] {\phi \alpha } \alpha }
		\infer1[\cw]{ \seq [n] {\phi \alpha } \alpha }
		
		\infer2[\cut]{ \seq [n] { \phi ( {\mf[n] \phi } )  } \alpha } 
		\infer1[\nuR[n]]{ \seq [\epsilon] { \mf \phi } \alpha \thickspace (*) }
	\end{prooftree}
	\caption{The induction rules (\indR) and (\indR) with a single side formula expressed as constructive cyclic proofs. The companion of the bud marked \( * \) is the root.}
	\label{f-cmu-ind}
\end{figure}
\begin{proposition}\label{simul-ind}\ 
	\begin{enumerate}
		\item If \( \cmui \vdash \seq \alpha {\phi\alpha} \) then \( \cmui \vdash \seq \alpha { \nu \phi} \).
		\item If \( \cmui \vdash \seq {\phi\alpha}\alpha  \) then \( \cmui \vdash \seq { \mu \phi} \alpha  \).
		\item The induction rules (\indL) and (\indR) are derivable in \( \cmu \).
	\end{enumerate}
\end{proposition}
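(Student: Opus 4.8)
The plan is to read parts 1 and 2 directly off the cyclic derivations displayed in fig.~\ref{f-cmu-ind}, and then to obtain part 3 by reducing the context-laden induction rules to these single-side-formula cases through classical propositional reasoning. For part 1, given a cyclic proof \( \pi \) of \( \seq\alpha{\phi\alpha} \), I would build the left derivation of fig.~\ref{f-cmu-ind}: apply \( \cw \) to \( \pi \) to introduce a fresh name \( n \), cut against the monotonicity derivation of \( \seq[n]{\phi\alpha}{\phi(\nf[n]\phi)} \) furnished by Proposition~\ref{cmu-mono}, and close with an instance of \( \anuR n \) whose premise is the sequent \( \seq[n]\alpha{\phi(\nf[n]\phi)} \) so obtained. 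The single leaf to be discharged is the bud \( \seq[n]\alpha{\nf[n]\phi} \) feeding the monotonicity subderivation, and I would take the root as its companion. Part 2 is entirely dual, using the right-hand derivation of fig.~\ref{f-cmu-ind}, the left fixed-point rule \( \amuL n \) and the monotonicity derivation of \( \seq[n]{\phi(\mf[n]\phi)}{\phi\alpha} \). As every sequent in either construction is intuitionistic whenever \( \pi \) is, the resulting derivations stay within \( \cmui \).

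The substance of the argument is checking that these derivations satisfy the bud--companion conditions of Definition~\ref{d-cmu}. For part 1 the companion is the root, whose rule is \( \anuR n \) as required; the name \( n \) is introduced by \( \cw \) and occurs in every control strictly above the root, hence along the entire path down to the bud; and as the control at the root is \( \epsilon \), the restriction operation strips all names, sending the bud \( \seq[n]\alpha{\nf[n]\phi} \) to precisely the root sequent \( \seq\alpha{\nf\phi} \), so that \( \sequent_d(v_k)^{\upharpoonright c_l}=\sequent_d(v_l) \) holds. The eigenvariable role of \( n \) in \( \anuR n \) is automatic, since \( n \) was freshly introduced by \( \cw \) and occurs neither in \( \alpha \) nor in \( \phi \). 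The dual checks dispatch part 2.

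For part 3 I would instantiate the \( \cmu \)-analogues of parts 1 and 2 --- given by the same constructions, now fed a classical rather than constructive premise --- at \( \alpha=\conj(\Phi\cup\neg{\Psi}) \) for \( \indR \) and \( \alpha=\disj(\neg{\Phi}\cup\Psi) \) for \( \indL \). The enabling fact is the interderivability in \( \cmu \), using only structural and identity rules, of \( \seq\Phi{\Psi,\gamma} \) with \( \seq{\conj(\Phi\cup\neg{\Psi})}\gamma \), which is the sequent-calculus form of the tautology \( \bigl(\conj\Phi\wedge\neg{\disj\Psi}\bigr)\to\gamma\;\lra\;\conj\Phi\to(\disj\Psi\vee\gamma) \). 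Thus from the premise \( \seq\Phi{\Psi,\phi\alpha} \) of \( \indR \) I would pass to \( \seq\alpha{\phi\alpha} \), apply part 1 to reach \( \seq\alpha{\nf\phi} \), and convert back to the conclusion \( \seq\Phi{\Psi,\nf\phi} \); the rule \( \indL \) is symmetric, using part 2 together with Proposition~\ref{cmu-mono}(2).

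I expect the principal difficulty to be bookkeeping rather than conceptual. On the one hand, the monotonicity subderivation supplied by Proposition~\ref{cmu-mono} is itself cyclic, so I must ensure its internal bud--companion pairs are fully self-contained and that the annotation \( n \) threads correctly through it, leaving \( \seq[n]\alpha{\nf[n]\phi} \) as the only open bud to be identified with the root; this is what makes the name-occurrence and control-restriction clauses go through. On the other hand, in part 3 I must verify that the classical passage between a multi-formula sequent and its \( \conj \)/\( \disj \)-encoding leaves the annotations on the fixed-point subformula undisturbed, so that the spliced-in cyclic proof from part 1 or 2 remains a legitimate \( \cmu \)-proof.
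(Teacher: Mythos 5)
Your proposal is correct and follows essentially the same route as the paper: parts 1 and 2 are exactly the derivations of fig.~\ref{f-cmu-ind} (apply (\cw), cut against the monotonicity derivation of Proposition~\ref{cmu-mono} with the bud \( \seq[n]\alpha{\nf[n]\phi} \) resp.\ \( \seq[n]{\mf[n]\phi}\alpha \) discharged at the root), and part 3 reduces (\indR) and (\indL) to these single-formula cases via the encoding \( \delta = \conj(\Phi\cup\neg\Psi) \) resp.\ \( \disj(\neg\Phi\cup\Psi) \) and cut, just as the paper does. The only slip is cosmetic: the interderivability of \( \seq\Phi{\Psi,\gamma} \) with \( \seq\delta\gamma \) needs the logical rules for \( \conj \)/\( \disj \) together with cut (or the negation closure of Proposition~\ref{cmu-mono}), not \emph{only} structural and identity rules, but this does not affect the argument since \( \cmu \) has all of these.
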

\begin{proof}
	Figure~\ref{f-cmu-ind} presents constructive derivations of the first two claims.
	The final claim is a consequence of the others. 
	For example, for \( \delta = \conj( \Phi \cup \neg \Psi ) \), if \( \cmu \vdash \seq {\Phi}{\Psi , \phi \delta } \), then \( \cmu \vdash \seq \delta {\phi \delta} \) by the previous proposition, whereby \( \cmu \vdash \seq \delta {\nf \phi} \) by 1. Since \( \cmu \vdash \seq {\Phi} {\Psi , \delta} \) we are done.
\end{proof}

With the admissibility of the induction rule, it is straightforward to show that every finitary proof induces a cyclic proof of the same sequent.
The constraints imposed on the bud--companion pairs opens the converse by allowing a syntactic translation of annotated sequents into plain sequents provable in \( \kmu \).
An obvious hurdle in the reduction is the transformation of a co-inductive conception of proof into a purely inductive form.
The cyclic structure itself encodes a `global' (co-)inductive argument only loosely connected to the predicates occuring within the proof.
It is precisely the annotations of \( \cmu \) which make the \emph{implicit} induction of cycles \emph{explicit} through the identification buds to companions and and relative order between nested bud--companion pairs. 

The embedding of cyclic into finitary proofs is detailed in \cite{AL17lics} for cut-free cyclic proofs in classical logic employing a slightly different notion of annotations and sequents.
Following~\cite{AL17lics}, the reduction proceeds by associating a syntactic translation to each vertex of a given cyclic proof that translates the annotated sequents into provable plain sequents. In most cases, the translation associated to a vertex is identical to the translation of its immediate successors.
The rules which manipulate annotations and, notably, the annotated (\muL) and (\nuR) rules provide the cases in which the translation associated to the premise and conclusion differ.

We present the embedding in three stages. First, we isolate the general structure of the syntactic embeddings and identify conditions under which the rules of \( \cmui \) are admissible in \( \kmui \).
These observations bootstrap the second step of a global embedding of constructive cyclic proofs in \( \kmui \).
Finally, we lift the embedding to the full, classical, logics.

Let \( * \colon \nms \to \form  \) be a partial function assigning names to (plain) formulas. We call such functions \emph{assignments}.
An assignment induces an interpretation of annotated formulas as plain formulas in the following way.
Given a plain predicate \( \mnf \phi \) and annotation \( a \) we introduce a plain formula \( \mnf[a*] \phi \) defined by \( \mnf[\epsilon *] \phi = \mnf \phi \) and 
\[
\begin{aligned}
\mnf[an*] \phi &\coloneqq
\begin{cases}
	\mnf[ a* ] \phi , &\text{if \( n \not\in \mathsf{dom} \,* \),}
	\\
	*n \circ \mnf[ a*] x.\, \phi ( *n \circ x), &\text{if \( n \in \mathsf{dom} \,* \),}
\end{cases}
\end{aligned}
\quad \text{where } {\circ} =
\begin{cases}
	\wedge, &\text{if }\sigma = \nu, \\
	\vee, &\text{if }\sigma = \mu. 
\end{cases}
\]
Given an annotated formula \( \alpha \), the plain formula \( \alpha^* \) is defined by structural recursion compositionally with \( ( \mnf[ a ] \phi )^* = \mnf[ a* ] {\phi^*} \) and \( (\lambda x.\,\alpha)^* = \lambda  x.\, \alpha^* \).
In particular, for \( a = n_1 \dotsm n_k \),
\begin{align*}
	\phi ( { \nf[ a ] \phi }) ^* 
	&= \psi(\nf \psi)
	\text{ where } \psi = \lambda x.\, \phi^* ( *n_k \vee \dotsm \vee *n_1 \vee x )
	\\
	\phi ( { \mf[ a ] \phi }) ^* 
	&= \psi( \mu \psi )
	\text{ where }
	\psi = \lambda x.\, \phi^* ( *n_k \wedge \dotsm \wedge *n_1 \wedge x )
\end{align*}
Compositionality of the  \( * \)-translation implies
\begin{lemma}
	For any assignment \( * \), the \( * \)-translation of each \( \cmui \)-rule except the annotated \( \nu \)- and \( \mu \)-rules is admissible in \( \kmui \).
\end{lemma}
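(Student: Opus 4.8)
The plan is to verify admissibility one rule at a time, leaning on the fact that the \( * \)-translation is \emph{compositional}: it commutes with \( \conj \), \( \disj \), \( \square \), \( \diamond \) and with the unannotated quantifiers, and it fixes every literal and every plain formula. This immediately splits the rules into two groups. For the propositional and modal logical rules (\( \conjL \), \( \conjR \), \( \disjL \), \( \disjR \), \( \diL \), \( \sqR \)), the identity rule \( \idLR \) (whose principal formulas are literals, hence fixed by \( * \)), and the structural rules \( \excL \), \( \excR \), \( \conL \), \( \conR \), \( \weakL \), \( \weakR \), applying \( * \) to premises and conclusion yields verbatim an instance of the same rule of \( \kmui \), so these cases are routine. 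The rule \( \cut \) translates to \( \cut \) because its cut formula is plain; the control-weakening rule \( \cw \) leaves the sequent unchanged while introducing a name fresh for it, so its translation is the trivial inference; and buds (\( \hyp \)) translate to buds. The unannotated unfolding rules \( \nuL \) and \( \muR \) reduce, through compositionality, to the fixed-point unfolding rules \( \sigL \) and \( \sigR \) of \( \kmui \) applied to the translated fixed-point formula (using, where the unfolded quantifier already carries a non-trivial annotation, that a conjoined invariant may be dropped on the left and a disjoined one introduced on the right).

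The substantive cases are the annotation-weakening rules \( \awL \) and \( \awR \), the only non-exceptional rules under which \( * \) genuinely alters a formula: appending a name \( n \in \mathsf{dom}\,* \) to the annotation \( a \) inserts the invariant \( *n \)---conjoined for \( \mu \), disjoined for \( \nu \), exactly as recorded in the displayed unfoldings of \( \phi(\sigma^a\phi)^* \). Concretely, I would first establish in \( \kmui \) the two implications
\[
	(\mf[an]\psi)^* \Rightarrow (\mf[a]\psi)^*
	\qquad\text{and}\qquad
	(\nf[a]\psi)^* \Rightarrow (\nf[an]\psi)^*.
\]
For the first, \( (\mf[n]\psi)^* \) is \( *n \wedge \mu x.\,\psi^*(*n \wedge x) \); dropping the conjunct \( *n \) and then applying monotonicity of \( \mu \) to the trivial \( *n \wedge x \Rightarrow x \) yields \( (\mf\psi)^* = \mu\psi^* \), the general \( a \) being handled by the same reasoning through the \( a \)-context. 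The second is dual, introducing the disjunct \( *n \) and using \( x \Rightarrow *n \vee x \) under \( \nu \). Both rely only on the monotonicity rule, derivable in \( \kmm \) (Proposition~\ref{kmm-mono}) and hence in \( \kmui \), together with monotonicity of the fixed-point operators, itself derivable from the induction rules and the unfoldings. Given these implications, the translated \( \awL \) is obtained from the translated premise \( \seq{\phi^*((\mf[a]\psi)^*),\Phi^*}{\Psi^*} \) by deriving \( \seq{\phi^*((\mf[an]\psi)^*)}{\phi^*((\mf[a]\psi)^*)} \) via monotonicity of the ambient predicate and composing with \( \cut \); the case of \( \awR \) is symmetric.

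The crux---and the one delicate point---is ensuring the two implications above point the \emph{correct} way: admissibility of a left rule requires the conclusion formula to imply the premise formula, whereas a right rule requires the reverse, and it is precisely the polarity of the inserted invariant (conjunction for the least fixed point, disjunction for the greatest) that delivers this. The reliance on this polarity is genuine: a one-line computation with \( \psi = \lambda y.\,y \) shows that the opposite convention would make \( \awL \) demand \( *n \Rightarrow \bot \) and so fail. I therefore expect the bulk of the work to lie not in the transparent cases but in recording these two fixed-point implications and confirming that the monotonicity they invoke is available \emph{finitarily} in \( \kmui \) rather than only in the illfounded fragment.
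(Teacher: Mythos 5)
Your proof is correct, and its skeleton---compositionality of the \( * \)-translation disposing of every rule that does not genuinely alter a formula---is exactly the paper's justification; in fact the paper offers nothing beyond the phrase ``compositionality of the \( * \)-translation implies'', so your routine cases (propositional and modal rules, identity, exchange/contraction/weakening, cut on plain formulas, (\cw), buds, and the unannotated unfoldings) coincide with the paper's implicit argument. Where you go beyond the paper is the annotation-weakening rules (\awL) and (\awR): these are \emph{not} among the excluded ``annotated \( \nu \)- and \( \mu \)-rules'' (those are (\nuR[n]) and (\muL[n]), treated separately in Lemma~\ref{cmu-in-kmu-l1}), and compositionality alone does not dispose of them, since for \( n \in \mathsf{dom}\,* \) the translation genuinely changes the principal formula. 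Your two implications \( (\mf[an]\psi)^* \Rightarrow (\mf[a]\psi)^* \) and \( (\nf[a]\psi)^* \Rightarrow (\nf[an]\psi)^* \), proved by dropping (resp.\ introducing) the invariant \( *n \) and propagating through the fixed point by monotonicity---which is indeed available finitarily in \( \kmui \) from the unfolding and induction rules together with cut---are precisely the missing content, and composing them through the ambient positive context \( \phi^* \) by Proposition~\ref{kmm-mono} and cut gives admissibility as you describe. Your polarity remark is also well taken: the lemma holds only under the convention that names are read conjunctively for \( \mu \) and disjunctively for \( \nu \), which is the convention of the displayed unfoldings of \( \phi(\sigma^a\phi)^* \) and the one actually used in Lemmas~\ref{cmu-in-kmu-l1} and~\ref{cmu-in-kmu-l2}; the case distinction in the displayed definition of \( \sigma^{an*}\phi \) states the opposite assignment of \( \wedge \)/\( \vee \), and your \( \lambda y.\,y \) computation (reducing (\awL) to \( *n \Rightarrow \bot \)) shows that under that reading the lemma would be false---so that display must be regarded as a typo, and your proof correctly follows the convention under which the statement is true.
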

The $*$-translation of the annotated quantifier rules (\nuR[n]) and (\muL[n]) is derivable in \( \kmui \) if \( n \in \mathsf{dom} \,* \) and \( *n \) carries the side formulas:
\begin{lemma}\label{cmu-in-kmu-l1}\ 
\begin{enumerate}
	\item If \( \kmui \vdash \seq \Phi {\phi( \nf[an] \phi) ^* } \) and \( *n = \conj \Phi \), then \( \kmui \vdash \seq \Phi {(\nf[a] \phi)^*}\).
	\item If \( \kmui \vdash \seq {\phi( \mf[an] \phi) ^* , \Phi } \alpha \) and \( *n = \disj (\Phi \cup \setof{\alpha} ) \), then \( \kmui \vdash \seq {(\mf[a] \phi)^* , \Phi } \alpha \).
\end{enumerate}
\end{lemma}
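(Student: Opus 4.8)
The two claims are dual, so the plan is to prove (1) in detail and obtain (2) by the symmetric argument with left and right rules interchanged (using \( \indL \) and \( \disjL \) in place of \( \indR \) and \( \conjR \), or alternatively via the negation translation of Proposition~\ref{kmu-neg}). First I would unwind the two translations occurring in (1). Writing \( a = n_1 \dotsm n_k \) and \( L = \conj\setof{ *n_i }[ i \le k ] \), and reading \( \sigma^{a*} \) off its defining recursion (so \( \circ = \wedge \) for \( \sigma = \nu \)), one computes
\[
	(\nf[a]\phi)^* = L \wedge \nu\psi, \qquad \psi = \lambda x.\, \phi^*( L \wedge x ),
\]
while the premise formula, which is one unfolding of \( (\nf[an]\phi)^* \), is
\[
	\phi( \nf[an] \phi )^* = \psi'(\nu\psi') = \phi^*\bigl( *n \wedge L \wedge \nu\psi' \bigr), \qquad \psi' = \lambda x.\, \phi^*( *n \wedge L \wedge x ).
\]
The point of these computations is that the only genuinely new fixed point to be discharged in passing from the premise to the conclusion is the one bound by the name \( n \), whose assigned invariant is exactly \( *n = \conj\Phi \); everything else (the outer bound \( L \) and the inner fixed point \( \nu\psi' \)) is inert.

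The core of the argument is then a single application of coinduction. Since \( \seq{ *n \wedge L \wedge \nu\psi' }{ L \wedge \conj\Phi } \) is provable by projection, monotonicity (Proposition~\ref{kmm-mono}) yields \( \seq{ \phi^*( *n \wedge L \wedge \nu\psi') }{ \phi^*( L \wedge \conj\Phi ) } \), and cutting this against the hypothesis \( \seq{\Phi}{ \phi( \nf[an] \phi )^* } \) produces \( \seq{\Phi}{ \psi(\conj\Phi) } \). An instance of \( \indR \) with invariant \( \conj\Phi \) then gives \( \seq{\Phi}{ \nu\psi } \). Combining this with \( \seq{\Phi}{L} \) through \( \conjR \) delivers \( \seq{\Phi}{ L \wedge \nu\psi } \), which is precisely \( \seq{\Phi}{ (\nf[a]\phi)^* } \).

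The step \( \seq{\Phi}{L} \) is where I expect the real care to be needed, and it is the main obstacle. The conjuncts of \( L \) are the invariants \( *n_i \) attached to the still-active outer names of \( a \); these are not implied by the hypothesis in isolation, and their derivability from \( \Phi \) is an invariant maintained by the global embedding of Theorem~\ref{cmui-in-kmui} (each \( *n_i \) persists in, or is derivable from, the context present at the node that discharges \( n \)). When \( a = \epsilon \) the point is vacuous, as \( L = \top \), and the lemma reduces to the transparent base case: coinduction with invariant \( \conj\Phi \) whose premise is read off the hypothesis by dropping the inner fixed point via monotonicity.

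For (2) the same scheme runs dually. One unwinds \( (\mf[a]\phi)^* = E \vee \mu\psi \) with \( E = \disj\setof{ *n_i }[ i \le k ] \) and \( \psi = \lambda x.\, \phi^*( E \vee x ) \), and \( \phi( \mf[an] \phi )^* = \phi^*( *n \vee E \vee \mu\psi' ) \). The antecedent disjunction is split with \( \disjL \): the \( E \)-branch is settled by the (dual) outer-invariant bookkeeping, and the \( \mu\psi \)-branch is discharged by \( \indL \) with the invariant supplied by \( *n \), the required premise again obtained from the hypothesis by monotonicity after absorbing the inner least fixed point. I would pay particular attention here to matching the sign conventions of the \( \indL \) invariant with the assignment \( *n \), since it is the interplay of the discharged invariant with the residual bound \( E \) that makes the \( \disjL \)-split go through.
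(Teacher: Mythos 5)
There is a genuine gap, and it originates in how you read the \( * \)-translation. You follow the displayed case distinction for \( \circ \) literally (\( \circ = \wedge \) for \( \sigma = \nu \)), but that display has its two cases swapped relative to every actual use in the paper: the computation immediately following it gives \( \phi(\nf[a]\phi)^* = \psi(\nf\psi) \) with \( \psi = \lambda x.\,\phi^*(*n_k \vee \dotsm \vee *n_1 \vee x) \), the proof of Lemma~\ref{cmu-in-kmu-l2} writes \( \nf[a*]\phi \) as a disjunction with \( \conj\Phi^* \) among its disjuncts, and the paper's own proof of the present lemma works with \( \mf[a*]x.\,\phi^*(*n \wedge x) \). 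The intended convention --- \( \vee \) for \( \nu \)-names, \( \wedge \) for \( \mu \)-names --- is forced by the shape of strong induction (\sindR)/(\sindL). Under your reading the lemma is in fact false, so no argument can close the step \( \seq{\Phi}{L} \) that you yourself flag as the main obstacle: take \( \phi = \lambda y.\,\top \), a single outer name \( m \) with \( *m = \bot \), and \( \Phi = x \) a single variable; then \( \phi(\nf[mn]\phi)^* = \top \), so the hypothesis of (1) is provable with \( *n = \conj\setof{x} \), while your \( (\nf[m]\phi)^* = \bot \wedge \nu z.\,\top \) makes the conclusion \( \seq{x}{\bot \wedge \nu z.\,\top} \) unprovable by soundness (Theorem~\ref{kmu-soundness}). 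Deferring \( \seq{\Phi}{L} \) to an invariant of Theorem~\ref{cmui-in-kmui} cannot repair this: that theorem is proved \emph{from} this lemma, so the appeal is circular, and no such invariant exists anyway, since each \( *n_i \) records the context of a strictly earlier vertex and contexts are wholly replaced at modal inferences.

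Under the intended translation the outer invariants never need to be proved: for \( \nu \) they are \emph{disjuncts} of \( (\nf[a]\phi)^* = *n_k \vee \dotsm \vee *n_1 \vee \nu\psi \), discarded on the right by (\disjR); dually for \( \mu \) they sit conjunctively on the left and are discarded by (\conjL). The paper's proof is then just two moves: one unfolding inference exhibits the hypothesis as \( \seq{\Phi}{\nu x.\,\psi(*n \vee x)} \), which for \( *n = \conj\Phi \) is \emph{exactly} the premise of strong induction (Proposition~\ref{kmu-sind}), giving \( \seq{\Phi}{\nu\psi} \); then (\disjR) finishes. Note that even after correcting the reading, your central mechanism --- a cut against a monotonicity implication followed by plain (\indR) --- does not go through: to reach the (\indR)-premise \( \seq{\Phi}{\psi(\conj\Phi)} \) your monotonicity step would need \( \seq{\nu x.\,\psi(\conj\Phi \vee x)}{*n_k \vee \dotsm \vee *n_1 \vee \conj\Phi} \), which is not even valid; the disjunctive coupling of invariant and recursion variable is precisely what (\sind) handles and (\ind) does not. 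Two smaller points: your suspicion about signs in (2) is warranted, as the statement's \( \disj(\Phi \cup \setof{\alpha}) \) should read \( \disj(\neg\Phi \cup \setof{\alpha}) \) to match (\sindL) and Lemma~\ref{cmu-in-kmu-l2}; and deriving (2) from (1) via Proposition~\ref{kmu-neg} would leave the constructive fragment --- the paper proves (2) directly, and (1) is its left--right mirror inside \( \kmui \).
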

\begin{proof}
	We treat the second claim.
	From \( \kmui \vdash \seq {\phi( \mf[an] \phi) ^* , \Phi } \alpha \) we obtain
	\( \kmui \vdash \seq {\mf[ a* ] x.\, \phi^* ( *n \wedge x ) , \Phi } {\alpha } \) by an application of (\muL) and weakening on the left.
	An application of strong induction (Proposition~\ref{kmu-sind}) yields 
	\( \kmu \vdash \seq {\mf[ a* ] \phi^* , \Phi } {\alpha } \).
\end{proof}

We require a final observation before presenting the embedding of \( \cmui \) in \( \kmui \).
Let \( \Phi \le \Psi \) express that \( \Phi = \Psi^{\upharpoonright c} \) for some \( c \in \nms^{<\omega} \).
\begin{lemma}
	\label{cmu-in-kmu-l2}
	If \( \Phi \le \Psi \) are \( \mu \)-annotated and \( \alpha \le \beta \) are \( \nu \)-annotated, then for every assignment \( * \), (plain) predicate \( \phi \) and annotation \( a \),
	\begin{enumerate}
		\item if \( \kmui \vdash \seq {\Phi^*} {\alpha^*} \), then \( \kmui \vdash \seq {\Psi^*} {\beta^*} \).
		\item if \( *n = \conj \Phi^* \) for some \( n \) in \( a \), then \( \kmui \vdash \seq{\Psi^*} {\nf[ a*] \phi} \).
		\item if \( *n = \disj ( \neg{\Phi^*} \cup \setof {\alpha^*} )  \) for some \( n \) in \( a \), then \( \kmui \vdash \seq{\mf[ a*] \phi , \Psi^*} { \beta^* } \).
	\end{enumerate}
\end{lemma}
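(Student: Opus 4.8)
The plan is to reduce all three claims to two dual monotonicity principles for the $*$-translation and then assemble them using cut. Recall that $\gamma \le \delta$ means $\gamma = \delta^{\upharpoonright c}$, so $\gamma$ and $\delta$ share the same underlying formula while $\gamma$ carries a subsequence of the names of $\delta$. The two principles are:
\begin{itemize}
	\item[(M$\nu$)] if $\alpha \le \beta$ are $\nu$-annotated, then $\kmui \vdash \seq{\alpha^*}{\beta^*}$;
	\item[(M$\mu$)] if $\gamma \le \delta$ are $\mu$-annotated, then $\kmui \vdash \seq{\delta^*}{\gamma^*}$.
\end{itemize}
Intuitively, each name $n \in \mathsf{dom}\,*$ occurring in a $\nu$-annotation contributes a disjunct $*n$ that only enlarges the translated formula, while in a $\mu$-annotation it contributes a conjunct $*n$ that only shrinks it; the remaining connectives of $\form$ are all monotone.

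Granting (M$\nu$) and (M$\mu$), the three claims follow quickly, and every sequent involved is intuitionistic. For claim 1, I chain cuts: (M$\nu$) supplies $\seq{\alpha^*}{\beta^*}$, cut against the hypothesis $\seq{\Phi^*}{\alpha^*}$ to give $\seq{\Phi^*}{\beta^*}$; then, as $\Phi_i \le \Psi_i$ componentwise, (M$\mu$) supplies $\seq{\Psi_i^*}{\Phi_i^*}$, and $\lh\Phi$ further cuts replace each $\Phi_i^*$ in the antecedent by $\Psi_i^*$, yielding $\seq{\Psi^*}{\beta^*}$. For claim 2, $*n = \conj\Phi^*$ occurs as a disjunct of $\nf[a*]\phi$, so $(\disjR)$ reduces the goal to $\seq{\Psi^*}{\conj\Phi^*}$, which $(\conjR)$ together with (M$\mu$) (and left weakening) discharges conjunct by conjunct. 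Claim 3 is dual: $*n = \disj(\neg{\Phi^*} \cup \setof{\alpha^*})$ occurs as a conjunct of $\mf[a*]\phi$, so a $(\conjL)$ projection reduces the goal to $\seq{*n, \Psi^*}{\beta^*}$; a $(\disjL)$ case split then sends the $\alpha^*$-disjunct to $\seq{\alpha^*, \Psi^*}{\beta^*}$ (closed by (M$\nu$) and weakening) and each $\neg{\Phi_i^*}$-disjunct to $\seq{\neg{\Phi_i^*}, \Psi^*}{\beta^*}$, which is closed by cutting (M$\mu$)'s $\seq{\Psi_i^*}{\Phi_i^*}$ against the identity $\lseq{\Phi_i^*, \neg{\Phi_i^*}}$ $(\idL)$ and weakening $\beta^*$ back in on the right with $(\weakR)$.

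The substance is therefore (M$\nu$) and (M$\mu$), which I would prove by structural induction on the common underlying formula, (M$\mu$) being the formal dual of (M$\nu$). For atoms and modalities the statement is immediate or follows by a single logical rule; for $\conj$, $\disj$, and the non-active (plain) quantifier one propagates the induction hypotheses through the corresponding monotone rule, as in Proposition~\ref{kmm-mono}. The active case $\alpha = \nf[a]\phi \le \nf[b]\phi' = \beta$ I would split in two and recombine by cut: first $\seq{\nf[a*]{\phi^*}}{\nf[a*]{\phi'^*}}$, obtained from the inner hypothesis $\seq{\phi^* x}{\phi'^* x}$ by monotonicity of the operator $\nf[a*](\cdot)$ in its body; and second $\seq{\nf[a*]{\phi'^*}}{\nf[b*]{\phi'^*}}$, where the annotation grows from $a$ to the superset $b$.

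The main obstacle is this second, annotation-growth step. Here I would induct on the names of $b$ absent from $a$: writing $\nf[a*]\psi$ in its flattened form $S_a \vee N_a$ with $N_a = \nf z.\,\psi(S_a \vee z)$ and $S_a$ the disjunction of the active invariants (and likewise $S_b \vee N_b$ for $b$, with $S_a \Rightarrow S_b$), a $(\disjL)$ reduces $\seq{S_a \vee N_a}{S_b \vee N_b}$ to $\seq{S_a}{S_b \vee N_b}$ — closed by $(\disjR)$ since the disjuncts of $S_a$ lie among those of $S_b$ — and to $\seq{N_a}{N_b}$. The latter is the crux: I would apply the coinduction rule $(\indR)$ with invariant $N_a$, whose premise $\seq{N_a}{\psi(S_b \vee N_a)}$ follows by unfolding $N_a = \psi(S_a \vee N_a)$ on the left via $(\nuL)$ and then using $S_a \Rightarrow S_b$ with predicate monotonicity. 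The delicate points are keeping every sequent single-headed so that the induction instances remain inside $\kmui$ (cf.\ Proposition~\ref{kmu-sind}) and matching the woven invariants across the subsequence relation $a \le b$; the $\mu$-case is verbatim dual, using $(\indL)$ in place of $(\indR)$.
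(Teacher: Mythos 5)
Your proposal is correct and follows essentially the same route as the paper: the paper's proof likewise reduces claim 1 to the two monotonicity facts \( \seq {\nf \phi} {\nf x\, \phi( \gamma \vee x )} \) and \( \seq {\mf x\, \phi( \gamma \wedge x )}{\mf \phi} \) — your (M\(\nu\))/(M\(\mu\)) — and dispatches claims 2 and 3 by observing that \( *n \) occurs as a disjunct (resp.\ conjunct) of \( \nf[a*] \phi \) (resp.\ \( \mf[a*] \phi \)). Your additional detail — the structural induction, the \( \indR \)/\( \indL \) argument for the annotation-growth step, and proving claims 2–3 directly rather than invoking part 1 — merely fleshes out what the paper dismisses as ``straightforward'' and ``trivial''.
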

\begin{proof}
	For the first claim, it suffices to show derivability of \( \seq {\nf \phi} {\nf x \, { \phi ( {\gamma \vee \var x} ) }} \) and \( \seq {\mf x\, \phi( \gamma \wedge x ) } {\mf \phi } \) for arbitrary \( \phi \) and \( \gamma \), which is straightforward.
	%
	For the second claim we show \( \kmui \vdash \seq {\Phi^*} {\nf[ a* ]\phi} \) given the assumption on \( * \) and invoke part 1. 
	That derivation is trivial as 
	\(
		\nf[a*] \phi = \gamma_1 \vee \dotsm \vee \gamma_k \vee \conj \Phi^* \vee \gamma 
	\)
	for some \( \gamma_1 \), \dots, \( \gamma_k \), \( \gamma \).
	The final claim is analogous.
\end{proof}

From the above, we can complete the reduction of cyclic to finitary proofs.

\begin{theorem}
\label{cmui-in-kmui}%
	\( \cmui \vdash \seq \Phi \alpha \) implies \( \kmui \vdash \seq \Phi \alpha \).
\end{theorem}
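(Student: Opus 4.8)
The plan is to follow the vertex-by-vertex syntactic translation sketched just above, turning the given cyclic proof into a finite, genuinely inductive \( \kmui \)-derivation by interpreting each name as the (translated) context of the unique rule instance that introduces it. Fix a \( \cmui \)-proof \( d \) of \( \seq \Phi \alpha \); since its root is unannotated, by inserting instances of \( (\aw) \), \( (\cw) \) and \( (\exc) \) we may assume every bud--companion pair has the shape of fig.~\ref{f-bud-schema}, and, after renaming across branches if necessary, that distinct companions introduce distinct names.

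First I would define a single global assignment \( * \colon \nms \to \form \). Each name \( n \) occurring in \( d \) is introduced at exactly one companion \( v_l \), an instance either of \( (\nuR[n]) \) with conclusion \( \seq[c]{\Phi_l}{\nf[a] \phi} \) or of \( (\muL[n]) \) with conclusion \( \seq[c]{\mf[a] \phi , \Phi_l}{\Psi_l} \) (where, by intuitionisticity, \( \Psi_l \) is at most one formula). Because the control at \( v_l \) is \( c \) and \( n \notin c \), every name occurring in the companion sequent lies in \( c \), and each such name is itself introduced at a companion strictly nearer the root along the path to \( v_l \). This well-foundedness lets me define \( * \) by recursion on the root-ward order of companions: set \( *n \coloneqq \conj \Phi_l^{*} \) in the \( \nu \)-case and \( *n \coloneqq \disj( \neg{\Phi_l^{*}} \cup \Psi_l^{*}) \) in the \( \mu \)-case, the superscript \( * \) referring to the already-defined restriction of the assignment to the older names of \( c \). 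By construction, \( *n \) is exactly the conjunction (resp.\ disjunction) of the side formulas demanded by the hypotheses of Lemmas~\ref{cmu-in-kmu-l1} and \ref{cmu-in-kmu-l2}.

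With \( * \) in hand, I would prove by structural induction over the finite tree \( d \) (treating buds as leaves) that \( \kmui \vdash \sequent_v^{*} \) for every vertex \( v \), where \( \sequent_v^* \) applies \( * \) to every formula of the sequent at \( v \). For a leaf that is an instance of \( (\idLR) \) the translation is again an axiom. For a bud \( v_k \) with companion \( v_l \), the cyclic condition \( \sequent_{v_k}^{\upharpoonright c_l} = \sequent_{v_l} \) says precisely that the companion sequent is the bud sequent with \( n \) deleted, so the companion context and head stand in the relation \( \le \) to those of the bud; since \( *n \) was chosen as the relevant conjunction/disjunction of the companion's side formulas, Lemma~\ref{cmu-in-kmu-l2}(2),(3) delivers \( \kmui \vdash \sequent_{v_k}^{*} \) \emph{outright}, with no appeal to the companion's translation. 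For an internal vertex whose rule is not an annotated quantifier rule, the induction step is exactly admissibility of the \( * \)-translated rule (the lemma stated just before Lemma~\ref{cmu-in-kmu-l1}). For a companion carrying \( (\nuR[n]) \) or \( (\muL[n]) \), the premise translation is provable by the induction hypothesis and \( *n \) meets the side condition, so Lemma~\ref{cmu-in-kmu-l1} yields provability of the conclusion translation. Applied at the root, whose control is empty and whose \( * \)-translation is therefore the identity, this gives \( \kmui \vdash \seq \Phi \alpha \).

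The main obstacle I anticipate is not any single inductive step but the coherent set-up of the assignment \( * \). One must check that the recursion is genuinely well-founded (that \( *n \) depends only on names introduced nearer the root), that the eigenvariable discipline on names guarantees a unique companion per name so that \( *n \) is unambiguous even when a companion governs several buds, and---most delicately---that the single fixed value \( *n \) simultaneously satisfies the hypothesis of Lemma~\ref{cmu-in-kmu-l1} at the companion (where it must carry the side formulas of the \( (\nuR[n]) \)/\( (\muL[n]) \) step) and the hypothesis of Lemma~\ref{cmu-in-kmu-l2} at every matching bud (where the \( \le \)-weakened sequent must still be reconstructible). Verifying that these two uses of \( *n \) are consistent is what ties the local bud--companion syntax to the global induction invariant of \( \kmui \), and is the crux of the argument.
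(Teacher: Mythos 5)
Your proposal is correct and takes essentially the same approach as the paper: the paper likewise interprets each name by the translated side formulas of the quantifier inference that introduces it, and then argues by induction over the finite derivation, applying Lemma~\ref{cmu-in-kmu-l1} at the annotated quantifier rules and Lemma~\ref{cmu-in-kmu-l2} at the buds, with the identity translation at the unannotated root. The only (cosmetic) difference is that the paper maintains a separate assignment \( [u] \colon \nms \to \form \) for each vertex \( u \), defined on the names of its control, rather than one global assignment obtained after renaming names apart across branches; this makes your preliminary renaming step unnecessary but otherwise the two arguments coincide.
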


\begin{proof}
	Let \( \pi \) be a constructive \( \cmu \)-proof of \( \seq \Phi \alpha \).
	Let \( \seq [c_v] {\Phi_v} {\alpha_v} \) be the annotated sequent labelling \( v \in \pi \).
	To each \( u \in \pi \) we associate an assignment \( [u] \colon \nms \to \form \).
	The domain of \( [u] \) is the set of names in \( c_u = n_0 \dotsm n_{k-1} \).
	The formula \( [u] n_{i} \) is defined by recursion on \( \lh u \) and \( i < k \) in the following way.

	Let \( c n \le c_u \) and suppose \( [u] n_i \) is defined for all \( i < \lh c \). There is a unique application of the rule (\nuR[n]) or (\muL[n]) on the path from \( u \) to the root such that \( c n \) is a prefix of the control of all sequents on the path from \( u \) to the premise of the rule.
	Let \( v \) name this vertex. 
	Suppose the rule in question is (\nuR[n]), meaning that the sequent at \( v \) has the form \( \seq[c_v] {\Phi_v} {\nf[ a n ] \phi } \) and its premise is labelled by \( \seq[c_v n] {\Phi_v} {\phi (\nf[ a n ] \phi )} \).
	As \( v < u \) we can assume that \( [v] \) is defined on the names in \( c_v \).
	In particular, \( [v] \) is defined on the names in \( c \) because \( c \) is a subword of \( c_v \). 
	We then define \( [u] n = (\conj \Phi_v)^{[v]} \).
	
	It remains to show that \( \kmu \vdash \seq { (\Phi_u)^{[u]}} {(\alpha_u)^{[u]}} \)  for every \( u \in \pi \).
	As the control of a sequent is only modified via the quantifier rules  (\cw), if no applications of either rules occurs on a path from \( u < v \) to \( v \), then the assignments \( [u] \) and \( [v] \) are identical.
	Thus, by the observations above, it suffices to consider the case that \( u \) is the conclusion of those rules or is a bud. 
	Suppose that \( u \) is the conclusion of an instance of (\nuR[n]) with \( \beta = \nf[a] \phi \) the principal formula.
	Let \( v \) be the premise and \( \Psi = \Phi_u = \Phi_v \).
	As \( \Psi \) does not contain \( n \) and \( [v] m  = [u] m \) for all \( m \neq n \),  the induction hypothesis yields
	\[ \kmui \vdash \seq {\Psi^{[v]}} {\phi(\nf[an] \phi )^{[v]}}. \]
	Furthermore, \( [v] n = \conj \Psi^{[v]} \), whereby Lemma~\ref{cmu-in-kmu-l1} implies
	\[ \kmui \vdash \seq {\Psi^{[v]}} {\beta^{[v]}}. \]
	However, this sequent is identical to the desired sequent \( \seq {\Psi^{[u]}} {\beta^{[u]}} \).
	The case of (\cw) is immediate, and that of buds is covered by Lemma~\ref{cmu-in-kmu-l2}.
\end{proof}
Starting from a cyclic proof in \( \cmu \), the same translation returns a finitary proof in \( \kmu \).
Combining the argument with Proposition~\ref{simul-ind} yields
\begin{corollary}
	\label{cmu-is-kmu}
	\( \cmu \vdash \seq \Phi \Psi \) iff \( \kmu \vdash \seq \Phi \Psi \).
\end{corollary}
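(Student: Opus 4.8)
The plan is to prove the two directions separately, reusing the machinery already in place. The direction \( \kmu \vdash \seq\Phi\Psi \) implies \( \cmu \vdash \seq\Phi\Psi \) I would obtain by induction on the given \( \kmu \)-proof, replacing each inference by a corresponding \( \cmu \)-derivation. Under the identification of plain sequents with empty-control annotated sequents, all structural and logical rules of \( \kmm \), together with cut, are literally rules of \( \cmu \) and so carry over unchanged. The initial sequents \( \seq\alpha\alpha \) are derivable by Proposition~\ref{cmu-mono}(1), and the only genuinely non-trivial case is that of the induction rules (\indL) and (\indR), which are derivable in \( \cmu \) by Proposition~\ref{simul-ind}(3). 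Each such derivation is a self-contained cyclic gadget in the style of Figure~\ref{f-cmu-ind} whose bud is discharged against a companion internal to the gadget, so splicing these gadgets into the translated proof yields a bona fide cyclic proof with the intended endsequent.

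The reverse direction \( \cmu \vdash \seq\Phi\Psi \) implies \( \kmu \vdash \seq\Phi\Psi \) is where the work lies, and here I would reuse the \( * \)-translation argument of Theorem~\ref{cmui-in-kmui} almost verbatim, now dropping the restriction to intuitionistic sequents. Given a \( \cmu \)-proof \( \pi \), to each vertex \( u \) one associates an assignment \( [u]\colon\nms\to\form \) defined by recursion along the path from the root exactly as before: when a name \( n \) is introduced by an application of (\nuR[n]) at a vertex \( v \) with conclusion \( \seq[c_v]{\Phi_v}{\Psi_v,\nf[a]\phi} \) one sets \( [u]n = (\conj(\Phi_v\cup\neg{\Psi_v}))^{[v]} \), and dually for (\muL[n]) at a vertex with conclusion \( \seq[c_v]{\mf[a]\phi,\Phi_v}{\Psi_v} \) one sets \( [u]n = (\disj(\neg{\Phi_v}\cup\Psi_v))^{[v]} \). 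The only adjustment to Lemmas~\ref{cmu-in-kmu-l1} and~\ref{cmu-in-kmu-l2} is that the side formula carried by \( *n \) is now the full classical context \( \conj(\Phi\cup\neg\Psi) \) or \( \disj(\neg\Phi\cup\Psi) \) rather than its intuitionistic specialisation; these are precisely the side formulas appearing in the classical induction rules, so the derivations go through using the (classical) strong induction rules of Proposition~\ref{kmu-sind}. One then establishes \( \kmu \vdash \seq{(\Phi_u)^{[u]}}{(\Psi_u)^{[u]}} \) for every \( u \in \pi \) by the same case analysis on the rule applied at \( u \), the interesting cases being the annotated quantifier rules (handled by the classical form of Lemma~\ref{cmu-in-kmu-l1}) and the buds (handled by the classical form of Lemma~\ref{cmu-in-kmu-l2}).

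I expect the main obstacle to be verifying that the translation remains coherent when consequents carry several formulas: one must check that the assignment \( [u]n \) faithfully records the \emph{entire} local context through negation, and that at each (\nuR[n])/(\muL[n]) vertex the strong induction rule of \( \kmu \) yields exactly the sequent demanded by the induction hypothesis. Because the annotation discipline of \( \cmu \) guarantees that each name behaves like an eigenvariable governing a unique bud--companion cycle, this bookkeeping is structurally identical to the constructive argument; the passage from intuitionistic to classical sequents affects only the shape of the side formulas, not the combinatorics of the translation. Combining this reverse embedding with the forward simulation above then gives the stated equivalence \( \cmu \vdash \seq\Phi\Psi \) iff \( \kmu \vdash \seq\Phi\Psi \).
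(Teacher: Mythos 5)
Your proposal is correct and takes essentially the same route as the paper: the paper likewise obtains the direction from \( \kmu \) to \( \cmu \) by simulating the rules via Proposition~\ref{simul-ind} (with Proposition~\ref{cmu-mono} handling identity sequents), and obtains the converse by running the \( * \)-translation of Theorem~\ref{cmui-in-kmui} on classical sequents, with the side formulas generalised to \( \conj(\Phi\cup\neg\Psi) \) and \( \disj(\neg\Phi\cup\Psi) \) so that the classical strong induction rules of Proposition~\ref{kmu-sind} apply. The paper states this only in two terse sentences, so your write-up is simply a more explicit rendering of the intended argument.
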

%

\subsection{From co-proofs to cyclic proofs}
\label{s-cyclic-coproof}

The remainder of this section addresses a partial interpretation of co-proofs as cyclic proofs which, in turn, establishes a partial completeness theorem for \( \cmu \):

\begin{theorem}
	\label{quasi-completeness}%
	Suppose \( \Phi \Vdash \alpha \) and \( \Phi \subseteq \PD \).
	Then \( \cmui \vdash \Phi \Rightarrow \alpha \). 
	Moreover, the cyclic proof is cut-free.
\end{theorem}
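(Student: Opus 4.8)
The plan is to extract a \emph{regular} constructive co-proof of \( \seq\Phi\alpha \) from the hypothesis, annotate its threads, and fold it into a finite cyclic derivation. Since the derived subformulas of \( \Phi \cup \setof\alpha \) form a finite set, the intuitionistic sequents occurring in co-proofs of \( \seq\Phi\alpha \) range over a finite set, and the co-proof acceptance condition is a parity condition on this finite arena. The hypothesis \( \Phi \Vdash \alpha \) supplies a winning strategy for this condition, so positional determinacy of the induced parity game yields a regular co-proof \( \pi \) whose unfolding is again a co-proof. By Proposition~\ref{kmoi-contr} I assume moreover that no contraction in \( \pi \) has a disjunctive principal formula, so that no instance of the expanded cover modality is ever contracted.

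I annotate \( \pi \) following figure~\ref{f-bud-schema}: each cycle on which a right \( \nu \)-thread progresses is marked by a fresh name through \( (\nuR[n]) \), and each cycle carrying a left \( \mu \)-thread through \( (\muL[n]) \), the repetitions of the finite graph \( \pi \) furnishing the bud--companion pairs. The principal obstacle is the eigenvariable condition of Definition~\ref{d-cmu}, that the name \( n \) of each pair labels exactly one formula of the looping sequent; this is precisely where the restriction \( \Phi \subseteq \PD \) is indispensable.

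For a right \( \nu \)-thread the condition is immediate, since by Proposition~\ref{kmui-basic} each branch of a constructive co-proof carries at most one right thread, and the annotation rests on the unique head of an intuitionistic sequent. For a left \( \mu \)-thread the danger is that following the thread forces a contraction of the name-bearing formula, spawning two copies carrying \( n \)---exactly the failure underlying the incompleteness of cut-free \( \cmu \) for arbitrary antecedents. Under \( \Phi \subseteq \PD \) this is avoided. The minimal formula \( \mu\phi \) of the thread lies in \( \PD \); should its recurrence pass through a conjunction \( \conj\Gamma \), then by Proposition~\ref{SC-is-WC} that conjunction is an instance of the expanded cover modality, and the analysis in Proposition~\ref{kmoi-contr} lets it be traversed by selecting a single diamond via \( (\conjL) \) and applying \( (\diL) \), with no copy created. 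Recurrence through a disjunction is handled by \( (\disjL) \) branching into the recurring disjunct, and the remaining modal and quantifier recurrences are likewise linear. Hence the left \( \mu \)-thread is processed without contracting its formula, and \( n \) stays on a single antecedent formula throughout its cycle.

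With the eigenvariable condition secured, goodness of every branch of \( \pi \) guarantees that each branch of the folded derivation closes either a left \( \mu \)-cycle or a right \( \nu \)-cycle meeting the three conditions of Definition~\ref{d-cmu}. As the derivation contains no cut, this establishes \( \cmui \vdash \seq\Phi\alpha \).
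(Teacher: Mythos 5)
Your central insight is the right one, and it matches the paper's: the hypothesis \( \Phi \subseteq \PD \) enters exactly through Propositions~\ref{kmoi-contr} and \ref{SC-is-WC} (plus Proposition~\ref{kmui-basic} on the right), guaranteeing that each good thread---hence each name---sits on a unique formula occurrence in each sequent, which is what the eigenvariable discipline of fig.~\ref{f-bud-schema} demands. But there is a genuine gap, located precisely where you declare victory: ``goodness of every branch of \( \pi \) guarantees that each branch of the folded derivation closes \dots\ meeting the three conditions of Definition~\ref{d-cmu}.'' Securing the eigenvariable condition does not by itself produce a \( \cmui \)-proof. Definition~\ref{d-cmu} requires that the companion be the conclusion of the \emph{name-introducing} rule (\nuR[n]) or (\muL[n]) with \( n \) fresh there, and that the bud's sequent, restricted to the companion's control, be \emph{literally equal} to the companion's sequent. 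In a folded co-proof the vertex you want as companion typically re-unfolds a formula whose annotation already carries the relevant name (introduced on an earlier passage around the cycle), and freshness forbids reusing it; context formulas can acquire annotations along the cycle that survive the restriction, breaking the required equality; and nested bud--companion pairs must be mutually compatible. This bookkeeping is the bulk of the paper's proof: it annotates the co-proof with \emph{primary} names under priority constraints, using auxiliary annotation-preserving rules (\nuhR[a]) and (\muhL[a]); defines invariants of repetitions; prunes via K\"onig's lemma to a finite derivation whose bud--companion pairs respect the invariant ordering; and then runs a recursion that restricts controls and injects fresh \emph{secondary} names to convert preserving rules at companions into legitimate (\nuR[n])/(\muL[n]) instances. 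None of this appears in your proposal, and the paper's own discussion after Theorem~\ref{cmu-in-kmo} warns that ``simply annotating the traces in a co-proof'' so that branches satisfy the cyclic path condition cannot work without such machinery.

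A second, lesser, problem is the regularization step. The co-proof acceptance condition is \emph{not} a parity condition on the arena of intuitionistic sequents: whether a branch is good depends on the threads it carries, which is not determined by the sequence of sequents alone. To invoke positional determinacy you must first determinize the thread automaton into a parity automaton and play on the product arena; this is standard, but as stated your claim is false, and the resulting strategy is positional only in the enlarged arena. Note that the paper sidesteps regularization entirely: it extracts repetitions along each branch of the given, possibly non-regular, co-proof directly, using boundedness of the annotated sequents together with K\"onig's lemma.
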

The proof of the theorem appeals to the special properties of constructive co-proofs and \( \PD \)-formulas. By definition, every branch of a co-proof witnessing \( \Seq \Phi \alpha \) carries at most one right-thread.
If \( \Phi \subseteq \D \) comprised only disjunctive formulas, then we can assume that every branch carries at most one left-thread for each formula in the context (Proposition~\ref{kmoi-contr}).
This constraint on threads clearly fails for a context with arbitrary \( \PD \)-formulas. 
For the reduction to cyclic proofs, however, it suffices to observe that for every branch of the co-proof there is at most one \emph{good} thread per formula in the conclusion.
Proposition~\ref{SC-is-WC} guarantees this property for a context comprising \( \PD \)-formulas.

\begin{proof}
	Let \( \Phi \) be a finite sequence of \( \PD \) formulas and \( \pi \vdash \seq \Phi \alpha \) be a constructive co-proof. By Proposition~\ref{kmoi-contr}, we may assume that no contraction in \( \pi \) has a principal disjunctive formula.
	We work with an extension of \( \cmui \) by two new quantifier rules which preserve the principal annotation without extending it:
	\[
		\begin{prooftree}
		\hypo{ \seq [c] {\phi ( \mf[ a ] \phi ) , \Phi } \alpha }
		\infer1[\muhL[a]]{ \seq[c] {\mf[a] \phi , \Phi } \alpha }
		\end{prooftree}
		\qquad\quad
		\begin{prooftree}
		\hypo{ \seq [c] \Phi {\phi ( \nf[ a ] \phi )} }
		\infer1[\nuhR[a]]{ \seq[c] \Phi {\nf[a] \phi} }
		\end{prooftree}
	\]
	The above inferences are derivable in \( \cmui \) via a combination of the annotated quantifier rules and the structural rules \( (\aw) \) and \( (\cw) \).
	For the argument below it is, however, convenient to assume the above rules are part of the calculus.
	Via these rules we can consider \( \pi \) as a \( \cmu \)-derivation in which all controls and annotations are empty.
	We can, therefore, assume that \( \pi \) contains at least one branch.
	
	The first step towards a \( \cmu \)-proof is to annotate \( \pi \) by names in a way that annotations have length at most \( 1 \) and the rules (\nuR[n]), (\muL[n]) and (\cw) are applied whenever possible.
	For this purpose, we fix two disjoint infinite sets of names, referred to as the \emph{primary} and \emph{secondary} names respectively, and initially annotate \( \pi \) by primary names only, yielding a derivation \( \pi_* \) satisfying the following conditions:
	\begin{enumerate}
		\item for every instance of the rule (\nuR[n]) or (\muL[n]) the name \( n \) is primary, the principal formula is annotated as \( \nf [\epsilon] \phi \) or \( \mf[\epsilon] \phi \) and the minor formula as \( \phi ({\nf [n] \phi}) \) or \( \phi(\mf[n]\phi ) \),
		\item all other instances of the \( \nu  \)R- and \( \mu \)L-rule are \emph{preserving}, namely instances of (\nuhR[n]) or (\muhL[n]) for a primary name \( n \) already present in the control,
		\item every vertex labelled by a sequent \( \seq [c] \Phi \alpha \) for which a name occurs in \( c \) but not in \( \seq \Phi \alpha \) is the conclusion of an instance of \( (\cw) \),
		\item the result of removing all annotations from \( \pi_* \) and merging the premise and conclusion of each instance of \( (\cw) \) is the derivation \( \pi \).
	\end{enumerate}
	Since comprises only \( \PD \) formulas and has no contractions on disjunctive formulas, Proposition~\ref{SC-is-WC} implies that every primary name occurs in at most one formula of each sequent. 
	That assumption alone does not preclude that sequents may grow unboundedly in \( \pi_* \) however.
	Let \( B \) be a branch of \( \pi_* \). Since \( B \) is good, there is a primary name \( n \) which 
	occurs in all but finitely many formulas on this thread and 
	for which the rule (\nuhR[n]) or (\muhL[n]) is applied infinitely often along \( B \).
	As \( n \) can occur in an annotation of at most one (sub)formula in each sequent, the thread witnessing that \( B \) is good passes through all the principal formulas of instances of the inferences (\nuhR[n]) or (\muhL[n]) along some suffix of \( B \).
	Let \( \c B = c n \) be the longest annotation which is a prefix of all controls on some suffix of \( B \). This is well-defined as new names are only appended to the control.
	Given an annotated sequent \( \sequent \) along \( B \) and annotation \( c \), set \( \sequent^{\upharpoonright c} \) to be the annotated sequent that results after removing all names in \( \sequent \) not present in \( c \).
	
	Let \( \sequent_u = (\seq [c_u] {\Phi_u} {\alpha_u}) \) be the annotated sequent labelling \( u \in \pi \).
	We call a pair \( ( u , v ) \in B ^2 \) a \emph{repetition} if 
	\begin{itemize}
		\item \( u < v \) and there is a modal premise on the path from \( u \) to \( v \),
		\item \( \c B \) is a prefix of \( c_w \) for every \( w \) such that \( u \le w \in B \),
		\item \( \alpha_u^{\upharpoonright \c B} = \alpha_v^{\upharpoonright \c B} \) and \( \Phi_u^{\upharpoonright \c B} \) is a subsequence of \( \Phi_v^{\upharpoonright \c B} \) up to permutation,
		\item \( u \) and \( v \) are both conclusion of an instance of (\nuhR[n]) or (\muhL[n]).
	\end{itemize}
	We refer to \( \c B \) as the \emph{invariant} of the repetition \( (u,v) \), and write it as \( \c {v} \).
	Observe that if \( (u,v) \) and \( (u',v') \) are repetitions associated to branches \( B \) and \( B' \), then \( v= v' \) implies \( \c B = \c {B'} \).
	
	As \( \pi \) is a co-proof, there is an infinite subsequence \( P \colon \omega \rightarrow B \) such that for every \( i < j \), the pair \( ( P(i) , P(j) ) \) is a repetition with invariant \( \c B \).
	We thus obtain a finite \( \cmu \)-derivation \( \rho \subseteq \pi_* \) satisfying
	\begin{enumerate}
		\item \( \rho \) is an initial segment of \( \pi_* \),
		\item every bud \( l \in \rho \) is associated a vertex \( l^c < l \), called the \emph{companion} to \( l \), such that \( ( l^c , l ) \) is a repetition in \( \pi_* \),
		\item if \( l^c \le o^c < l \) for buds \( l , o \in \rho \), then \( \c {l} \le \c {o} \).
	\end{enumerate}
	Inserting instances of weakening and exchange at buds we may assume, additionally, that \( \Phi_u^{\upharpoonright c_u } = \Phi_{u^c}^{\upharpoonright c_u } \) for each bud \( u \).
	The derivation \( \rho \) can be obtained by recursion on length of invariants for paths in \( \pi_* \). König's lemma ensures that the process of pruning \( \pi_* \) terminates in a finite derivation.
	Since \( \c l \) is a prefix of every control on the path from \( l^c \) to \( l \), if \( l^c \le o^c < l \) then \( \c l \) and \( \c o \) are comparable: indeed, \( \c l \le \c o \) iff \( \lh {\c l} \le \lh { \c o } \).

	What remains is to expand certain annotations in \( \rho \) by secondary names and replace applications of the \nuhR- and \muhL-rules by appropriately chosen annotated versions to transform \( \rho \) into a \( \cmu \)-proof with no buds.
	This operation is carried out by recursion through \( \rho \).
	First, extend the definition of the invariant to non-buds: 
	For \( u \in \rho \) let \( \c u \) be the longest invariant \( \c v \) for a bud \( v \) such that \( v^c < u \le v \).
	In particular, \( \c u \) is the empty annotation if all leaves above \( u \) are axiomatic, and \( \c {v^c} = \c v \) iff there is a companion \( u^c < v^c < u \) with \( \c u = \c v \).
	
	We define a \( \cmu \)-derivation \( \rho_u \) for each \( u \in \rho \). A bud \( v > u \) is said to be \emph{open} if \( v^c < u \).
	The buds of \( \rho_u \) are the sequents labelling the buds of \( \rho \) that are open above \( u \), with each restricted to the invariant; the endsequent of \( \rho_u \) is the sequent at \( u \) similarly restricted:
	\begin{prooftree*}
		\hypo{\Setof{ \seq [\c v] {\Phi_v^{\upharpoonright \c v}} {\alpha_v^{\upharpoonright \c v}} }[ v^c < u \le v ]}
		\ellipsis{$\rho_u$}{ \seq[\c u] {\Phi_u^{\upharpoonright \c u} } {\alpha_u^{\upharpoonright \c u}} }
	\end{prooftree*}
	For \( u \in \rho \) a bud \( \rho_u \) is simply the trivial derivation \( \seq[\c u] {\Phi_u^{\upharpoonright \c u} } {\alpha_u^{\upharpoonright \c u}} \).
	Suppose \( \rho_v \) is defined for each \( v \in \Child {\rho} u \).
	We define \( \rho_u \) by case distinction on the inference with conclusion \( u \). In the case of unary rules, \( u^+ \) names the unique child of \( u \) (in \( \rho \)).
\begin{pcases}
	\item [Case (\id).] In this case \( \Phi_u\cup \setof{\neg{\alpha_u}} \) contains an inconsistent pair of variables. As \( \c u = \epsilon \) is the empty annotation, \( \rho_u \) is the \( \cmu \)-axiom 
	\[
		\begin{prooftree}
		\axiom[\idLR]{ \seq[\epsilon] {\Phi_u^{\epsilon} } {\alpha_u^{\epsilon}}}
		\end{prooftree}
	\]
	\item [Case (\( \boldsymbol{\conjL} \)).] We have \( c_{u^+} = c_u \), \( \alpha_{u^+} = \alpha_u \), \( \Phi_{u^+} = {\Gamma , \Psi } \) for some \( \Psi \) and \( \Phi_u = \conj \Gamma , \Psi \). In particular, \( \c u = \c {u^+} \). Define \( \rho_u \) as
		\begin{prooftree*}
			\subproof*[\rho_{u^+}]{ \seq[\c u] {\Gamma^{\upharpoonright \c u} , \Psi^{\upharpoonright \c u} } {\alpha_u^{\upharpoonright \c u}} }
			\infer1[\disjL]{ \seq[\c u] {\Phi_u^{\upharpoonright \c u} } {\alpha_u^{\upharpoonright \c u}} }
		\end{prooftree*}
	\item [Case (\( \boldsymbol{\conjR} \)) or (\( \boldsymbol{\disjL} \)).] For each \( v \in \Child{\rho} u \) we have a derivation \( \rho_v \) of the sequent \( \lambda _\rho(v) \) restricted to \( \c v \). Form \( \rho_u \) by an instance of (\conjR) or (\disjL) with same principal formula and applications of (\aw) and (\cw) above the premises, as needed.
	
	\item [Case (\( \boldsymbol{ \nuR[n]} \)) or (\( \boldsymbol{\muL[n]} \)).] As \( n \) is a primary name, \( u \) is not a companion and \( \c u = \c {u^+} \). \( \rho_u \) is formed by attaching an instance of (\nuR[n]) or (\muL[n]) at the root of \( \rho_{u^+} \) with no associated bud.
	
	\item [Case (\( \boldsymbol{\nuhR[a] } \)) or (\( \boldsymbol{\muhL[a] } \)).] Without loss of generality, we treat the rule (\muhL). There are three subcases to contend:
	\begin{enumerate}
		\item \( u \) is not the companion of a bud. In this case, \( \c u = \c {u^+} \). \( \rho_u \) concludes \( \rho_{u^+} \) by an application of (\muL) as appropriate.
		
		\item \( u = v^c \) is the companion of a bud \( v \) in \( \rho \) and \( \c u < \c {u^+} \). In particular, \( \c {u^+} = \c u d n \) for some \( d \) and \( \Phi_u = \nf [n]\phi , \Psi \).
	Let \( \mathscr O \) be buds of \( \rho \) which are open above \( u \).
	By construction, \( \rho_{u^+} \) is a proof in \( \cmui \) of the sequent 
	\[ 
		\seq [\c u d n] 
			{\phi(\mf[n] \phi)^{\upharpoonright \c u d n } , \Psi^{\upharpoonright \c u d }} 
			{\alpha_u^{\upharpoonright \c u d }}  
	\]
	from the assumption \( \seq [\c ud n] {\nf \phi^{\upharpoonright \c u d n } , \Psi^{\upharpoonright \c u dn }} {\alpha_u^{\upharpoonright \c u dn}}  \) and the sequents labelling the buds in \( \mathscr O \):
	\[
		\rho_{u^+} =\left\{ \begin{prooftree}
			\hypo{ \seq[\c u d n] {( \mf[n] \phi )^{\upharpoonright \c u d n} , \Phi^{\upharpoonright \c u d } } {\alpha_u^{\upharpoonright \c u d}}}
			\hypo{\setof{(\sequent_o)^{\upharpoonright \c o}}[o \in \mathscr O]}
			\infer[dashed]2{ }
			\ellipsis{}{ \seq [\c u d n] {\phi(\mf[n] \phi)^{\upharpoonright \c u d n } , \Psi^{\upharpoonright \c u d }} {\alpha_u^{\upharpoonright \c u d}}  }
		\end{prooftree}
		\right.
	\]
	The derivation \( \rho_u \) is formed by removing the names in \( d \) from \( \rho_{u^+} \) before inserting an instance of (\nuR[n]) at the root:
	\[
		\rho_u =\left\{ \begin{prooftree}
			\hypo{ \seq[\c u n] {( \mf[n] \phi )^{\upharpoonright \c u n} , \Phi^{\upharpoonright \c u }} {\alpha_u^{\upharpoonright \c u }} }
			\hypo{ \Setof{ ( \sequent _o )^{\upharpoonright \c o} }[ o \in \mathscr O ]}
			\infer[dashed]2{ }
			\ellipsis{$(\rho_{u^+})^{\upharpoonright \c u n}$}{ \seq [\c u n] {\phi(\mf[n] \phi)^{\upharpoonright \c u n } , \Psi^{\upharpoonright \c u }} {\alpha_u^{\upharpoonright \c u}}  }
			\infer1[\muL[n]]{ \seq [\c u] {\mf \phi^{\upharpoonright \c u } , \Psi^{\upharpoonright \c u }} {\alpha_u^{\upharpoonright \c u}}  }
		\end{prooftree}
		\right.
	\]
		
		\item \( u = v^c \) is the companion of a bud in \( \rho \) and \( \c u = \c {u^+} \). In particular, \( \c u = \c v \). 
	Let \( \c u = c n \) and \( \Phi_u = \nf [n]\phi , \Psi \).
	In this case there are no names in the control that need removal but it is necessary to maintain the name \( n \) in the conclusion of \( \rho_u \). This is achieved by introducing a secondary name \( m \) that does not occur in \( \rho_u \) and first forming the derivation in which all occurrences of the name \( n \) are replaced by \( n m \) (working from the root and stopping once a vertex is reached which does not contain \( n \)). 
	Let \( c^{(m)} \) express the result of effecting this change to \( c \); likewise \( \alpha^{(m)} \) and \( \Phi^{(m)} \).
	Let \( \mathscr O \) be the open buds of \( \rho \) above \( u \). 
	The derivation \( \rho_u \) is formed by using \( m \) as the principal name for discharging the associated bud:
	\[
		\rho_u =\left\{ \begin{prooftree}
			\hypo{ \seq [\c {u} m] { (\mf[n m] \phi )^{\upharpoonright \c u m } , \Phi^{\upharpoonright \c u } } {\alpha^{\upharpoonright \c u}} }
			\hypo{ ( \sequent _o)^{\upharpoonright \c o } }
			\infer1[$\aw+\cw$]{ (\sequent _o)^{(m)}}
			\delims{\left\{}{\;\middle|\; o \in \mathscr O \right\}}
			\infer[dashed]2{ &\qquad\qquad\qquad\qquad\qquad\qquad\qquad\qquad}
			\ellipsis{}{ \seq [\c {u} m] { \phi(\mf[n m] \phi )^{\upharpoonright \c u m } , \Phi^{\upharpoonright \c u } } {\alpha^{\upharpoonright \c u}} }
			\infer1[\muL[m]]{ \seq [\c {u}] { (\mf[n] \phi )^{\upharpoonright \c u } , \Phi^{\upharpoonright \c u } } {\alpha^{\upharpoonright \c u}} }
		\end{prooftree}
		\right.
	\]
	\end{enumerate}
	\item [Other cases.] The other rules are straightforward.
\end{pcases}
	By construction, \( \rho_\epsilon \) is a \( \cmu \)-derivation with endsequent \( \seq [\epsilon] \Phi \alpha \) and no buds, that is, \( \rho_\epsilon \) is a \( \cmui \)-proof.
\end{proof}

Combining theorem~\ref{quasi-completeness} with our partial completeness result for \( \kmoi \), yields:
\begin{corollary}
	\label{cmu-comp-sc}
	If \( \alpha \) is valid and conjunctive, then \( \cmui \vdash \rseq \alpha \).
\end{corollary}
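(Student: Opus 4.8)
The plan is to obtain the result by directly composing the two completeness ingredients already established, namely Theorem~\ref{kmoi-comp-conj} and Theorem~\ref{quasi-completeness}. First I would invoke Theorem~\ref{kmoi-comp-conj}: since $\alpha$ is by hypothesis a valid conjunctive formula, the theorem furnishes a constructive co-proof of $\rseq{\alpha}$, i.e.\ $\Seq{\emptyset}{\alpha}$. This is exactly the provability datum needed to feed into the reduction from co-proofs to cyclic proofs.

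Next I would observe that the empty context trivially meets the side condition $\Phi \subseteq \PD$ of Theorem~\ref{quasi-completeness}, since the empty sequence of formulas is vacuously contained in $\PD$. Instantiating that theorem with $\Phi = \emptyset$, its premise $\Phi \Vdash \alpha$ coincides with the statement $\Seq{\emptyset}{\alpha}$ secured in the first step. The theorem then yields a cut-free cyclic proof $\cmui \vdash \emptyset \Rightarrow \alpha$, which is precisely $\cmui \vdash \rseq{\alpha}$, as required.

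There is no substantive obstacle here: all of the difficulty is absorbed into the two cited theorems---the completeness of constructive co-proofs for valid conjunctive formulas on the one hand, and the reduction of a constructive co-proof with a $\PD$-context to a cyclic proof on the other. The only thing to check is the trivial bookkeeping that an empty context satisfies the $\PD$-restriction, so that the reduction of Theorem~\ref{quasi-completeness} applies at once; in particular none of its thread-counting and repetition-finding machinery needs to be re-examined for this special case.
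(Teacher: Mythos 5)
Your proposal is correct and matches the paper's own argument exactly: the paper derives this corollary by ``combining Theorem~\ref{quasi-completeness} with our partial completeness result for \( \kmoi \)'' (Theorem~\ref{kmoi-comp-conj}), which is precisely your composition of the two theorems with the empty context \( \Phi = \emptyset \) vacuously satisfying \( \Phi \subseteq \PD \).
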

%

\section{Disjunctive Normal Form}
\label{s-DNF}

We now tend to the task of showing that the disjunctive fragment of \( \form \) is expressively adequate for constructive modal \( \mu \)-calculus, namely that every formula can be associated an equivalent disjunctive formula and the equivalence holds in \( \kmoi \).
As already mentioned, the proof of this result uses a proof-search procedure for annotated sequents.
Due to the restrictive form of proofs in \( \cmu \), rarely can we expect the proof-search to actually isolate a cyclic proof. 
Rather, annotations assist in bounding the search space relative to potential \emph{illfounded} proofs.
\begin{theorem}[Disjunctive normal form theorem]
	\label{DNF-thm}
	For every formula \( \alpha \) there exists a guarded disjunctive formula \( \gamma \VdashV \alpha \). In particular, \( \denote{\gamma} = \denote{\alpha} \).
\end{theorem}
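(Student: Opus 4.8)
The plan is to build $\gamma$ by an annotated proof search that doubles as a constructive co-proof, to read a disjunctive formula off the resulting finite regular derivation, and to verify the equivalence by analysing the acceptance condition on both directions. First I would reduce to the guarded case: by Theorem~\ref{guard-equi} there is a guarded $\alpha' \VdashV \alpha$, and since $\VdashV$ is transitive—a consequence of the admissibility of cut (Theorem~\ref{admiss-charac})—it suffices to produce a guarded disjunctive $\gamma \VdashV \alpha'$. So I assume from now on that $\alpha$ is guarded.

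Next I would set up the search, constructing bottom-up a constructive co-proof of $\seq\alpha\gamma$ for a to-be-determined head $\gamma$, decomposing $\alpha$ on the left and committing to the shape of $\gamma$ on the right. The search mirrors the verification game of Section~\ref{s-semantics}: Boolean and fixed-point structure is decomposed until a modal boundary is reached, at which point the local consistent set of literals $L$, the diamond requirements $\Gamma$ and the box content $\disj\Delta$ of the current state (with $\Gamma\subseteq\Delta$) determine an instance $\nabla_L(\Gamma,\Delta)$ of the expanded cover modality; between modal steps each state is put in propositional normal form using the equivalences of Proposition~\ref{guard-reds} (singleton absorption, flattening of nested connectives, distribution of $\conj$ over $\disj$) together with Proposition~\ref{guard-reds-f}. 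Fixed-point unfoldings are tracked by annotations exactly as in the proof of Theorem~\ref{quasi-completeness}; because $\alpha$ is guarded every infinite branch crosses a modal rule infinitely often, so only finitely many annotated states are reachable and the search can be pruned by König's lemma to a finite regular derivation whose cycles all cross a modality. Reading off $\gamma$ by recursion on this regular tree—a disjunction $\disj$ for each branching, an instance $\nabla_L(\Gamma,\Delta)$ for each modal state built from the disjunctive formulas of the successor states, and a fixed-point binder over a fresh variable for each cycle, with polarity \emph{inherited from the fixed point of $\alpha$ regenerated along that cycle}—yields a formula in which conjunction occurs only as the expanded cover modality, so $\gamma$ is disjunctive, and in which every bound variable is introduced across a cycle crossing a modality, so every cyclic thread in $\subform{\gamma}$ passes a modal operator and $\gamma$ is guarded. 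By construction the search is a co-proof of $\seq\alpha\gamma$, so $\alpha \Vdash \gamma$.

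For the converse $\gamma \Vdash \alpha$ I would run the dual construction on $\seq\gamma\alpha$ over the same regular skeleton, the base modal steps being supplied by Proposition~\ref{kmoi-mono}, the identity co-proofs of Proposition~\ref{kmoi-id}, and the representations $\square x \VdashV \nabla(\emptyset,\setof x)$ and $\diamond x \VdashV \nabla(\setof x,\setof{x,\top})$. With the matching-polarity assignment above, the acceptance condition is satisfied uniformly in both directions: along a $\nu$-cycle goodness is witnessed by the right $\nu$-thread through the $\nu$-binder, and along a $\mu$-cycle by the left $\mu$-thread through the $\mu$-binder—in the co-proof of $\seq\alpha\gamma$ these threads lie on the $\gamma$- and $\alpha$-sides respectively, and symmetrically for $\seq\gamma\alpha$.

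The step I expect to be the main obstacle is precisely this verification of the acceptance condition globally: one must show that every branch of each co-proof carries a good thread, i.e. that the $\mu$/$\nu$ polarity read off each cycle makes the thread through the matching quantifier of $\gamma$ line up with the good thread of the search, and that the propositional normalisation between modal steps does not sever these threads. Here the restriction to the \emph{expanded} (binary) cover modality is essential: with $\Gamma\subseteq\Delta$ the state-to-$\nabla$ reduction holds constructively, whereas—by Proposition~\ref{exp-vs-unary-cover}—replacing $\nabla_L(\Gamma,\Delta)$ by its classical unfolding $\nabla\Gamma\vee\nabla\Gamma'$ would break the right-to-left co-proof. Finally $\denote\gamma=\denote\alpha$ follows from $\gamma\VdashV\alpha$ by soundness of co-proofs (Theorem~\ref{kmo-snd}).
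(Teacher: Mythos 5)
Your proposal follows the same architecture as the paper: reduce to guarded \( \alpha \) via Theorem~\ref{guard-equi} and transitivity of \( \Vdash \) (Theorem~\ref{admiss-charac}), run an annotated proof search, prune it to a finite regular derivation, read off a formula whose conjunctions are instances of \( \nabla_L(\Gamma,\Delta) \), and verify both entailments by thread analysis. But there is a genuine gap at exactly the point you flag as ``the main obstacle'', and it is not a verification detail --- it is the core of the theorem. Your polarity assignment, ``inherited from the fixed point of \( \alpha \) regenerated along that cycle'', is ill-defined: a single cycle of the search typically regenerates several fixed points of \( \alpha \), of both polarities, so there is no canonical ``the'' fixed point to inherit from. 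Worse, no purely cycle-local recipe can suffice, because an infinite branch of the unravelled derivation interleaves infinitely many distinct simple cycles, and whether that branch carries a left \( \mu \)-thread depends on which unfolding is dominant (the lowest-rank formula recurring infinitely often, Proposition~\ref{thd-progress}) across the whole interleaving. The paper resolves this with machinery your proposal does not supply: names and controls compared via the Kleene--Brouwer ordering, \emph{linearly annotated} formulas (Definition~\ref{d-lin-ann}) so that each name tracks a unique unfolding, the covering condition detecting progress of the dominant \( \mu \)-thread, invariants \( (c,k) \) attached to repetitions, and --- crucially --- the requirement that the template's companion function satisfy \( \inv u \sqsubseteq \inv v \) whenever \( u \rightsquigarrow v \). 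A cycle's binder is \( \mu \) precisely when the cycle is \emph{successful} (its invariant has \( k = 0 \)), and Proposition~\ref{conj-good-thread} --- the biconditional ``a branch of the pre-template is good iff its \( \le \)-minimal infinitely-visited companion is successful'' --- is what makes this cycle-local assignment globally correct. Without it, or an equivalent, neither direction of your thread verification goes through.

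Two smaller gaps. First, finiteness of the search space does not follow from guardedness plus K\"onig's lemma: guardedness bounds the plain formulas occurring in the search but not the annotations, which grow at every unfolding. The paper bounds them with the duplication rule keyed to the order \( \le_c \) on linear annotations --- a well-quasi-order with a bound on antichains depending only on \( \alpha \) (Proposition~\ref{anno-wo}) --- together with the covering rule, and only then obtains regularity of the pre-template (Proposition~\ref{pre-temp-reg}). Second, the converse direction \( \gamma \Vdash \alpha \) is not a symmetric ``dual run over the same skeleton'': decomposing \( \nabla_L(\Gamma,\Delta) \) on the left against the boxes and diamonds of \( \alpha \) on the right requires an amalgamated modal rule (the \( \sqRs \) step in the paper's Proposition~\ref{conj-entail-2}) and a two-stage construction, not merely Propositions~\ref{kmoi-mono} and \ref{kmoi-id}. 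The final claim \( \denote{\gamma} = \denote{\alpha} \) from \( \gamma \VdashV \alpha \) by soundness (Theorem~\ref{kmo-snd}) is fine.
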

The associated disjunctive formula \( \gamma \) is designed to encode all information about how the formula \( \alpha \) can be utilised in a co-proof of a sequent \( \seq {\alpha } \beta \) independent of the choice of \( \beta \).
The construction of \( \gamma \) proceeds in two steps.
First, the role of \( \alpha \) in unspecified co-proof is expressed as a canonically chosen derivation in a modification of the cyclic calculus \( \cmu \) that merges the proof-search calculus of Theorem~\ref{kmoi-comp-conj} and the annotation method of Theorem~\ref{quasi-completeness}.
This derivation will be called the \emph{template} for \( \alpha \).
Second, we show how the template can be encoded as a (disjunctive) formula of \( \mu  \)-calculus which is provably equivalent to the initial formula \( \alpha \).
Essentially the same two-step construction is employed in~\cite{ALM21uniformInter} for establishing uniform Lyndon interpolants for the modal \( \mu  \)-calculus.
Indeed, we are able to recapture that result as a corollary of the construction.

The present section is dedicated to the proof of theorem~\ref{DNF-thm}.
With the theorem in place we obtain as a corollary completeness of the illfounded proof system:
\begin{corollary}
	\label{kmo-complete}
	If \( \alpha \) is valid then \( \neg{\alpha} \Vdash \bot \) and \( \kmo \vdash {}\Rightarrow \alpha \).
\end{corollary}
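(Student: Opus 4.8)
The statement is an immediate harvest of the preceding machinery: it combines the disjunctive normal form theorem with cut admissibility and the classical-to-constructive translation, reducing everything to the disjunctive fragment where completeness has already been secured. The plan is to pass the \emph{negation} of \( \alpha \) to its disjunctive normal form and there invoke the completeness result for disjunctive formulas.

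First I would apply Theorem~\ref{DNF-thm} to the formula \( \neg\alpha \), obtaining a guarded disjunctive formula \( \gamma \VdashV \neg\alpha \) with \( \denote{\gamma} = \denote{\neg\alpha} \). Since \( \alpha \) is valid, \( \denote{\neg\alpha} = \Mod \setminus \denote{\alpha} = \emptyset \), so \( \denote{\gamma} = \emptyset \) and hence \( \neg\gamma \) is valid. As \( \gamma \) is disjunctive and \( \neg\gamma \) is valid, Corollary~\ref{kmoi-comp-disj} applies and yields \( \gamma \VdashV \bot \); in particular \( \gamma \Vdash \bot \).

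Next I would chain the two constructive derivations. From \( \gamma \VdashV \neg\alpha \) we have \( \neg\alpha \Vdash \gamma \), and cut admissibility in the constructive fragment (Theorem~\ref{admiss-charac}), applied to \( \neg\alpha \Vdash \gamma \) and \( \gamma \Vdash \bot \), gives \( \neg\alpha \Vdash \bot \) — precisely the first assertion. Finally, the classical reading of constructive co-proofs (Proposition~\ref{kmo-kmoi}), instantiated with empty antecedent and consequent \( \alpha \), converts \( \neg\alpha \Vdash \bot \), i.e.\ \( \Seq{\neg\alpha}{\bot} \), into \( \kmo \vdash {}\Rightarrow \alpha \), which completes the argument.

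There is no genuine obstacle here, since every nontrivial ingredient is borrowed from earlier results; the proof is essentially pure bookkeeping. The one point that requires care is that the disjunctive normal form must be taken of \( \neg\alpha \) rather than of \( \alpha \): Corollary~\ref{kmoi-comp-disj} demands that the \emph{negation} of the disjunctive formula be valid, and this matches up exactly because the validity of \( \alpha \) is equivalent to the unsatisfiability of \( \neg\alpha \), hence to the validity of \( \neg\gamma \) for the disjunctive equivalent \( \gamma \) of \( \neg\alpha \).
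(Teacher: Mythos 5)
Your proof is correct and follows essentially the same route as the paper: apply Theorem~\ref{DNF-thm} to \( \neg\alpha \), deduce validity of \( \neg\gamma \), invoke the constructive completeness result for the disjunctive/conjunctive fragment to get \( \gamma \Vdash \bot \), then conclude via cut admissibility (Theorem~\ref{admiss-charac}) and Proposition~\ref{kmo-kmoi}. The only cosmetic differences are that the paper cites Theorem~\ref{kmoi-comp-conj} directly where you use its reformulation Corollary~\ref{kmoi-comp-disj}, and it derives the validity of \( \neg\gamma \) from soundness (Theorem~\ref{kmo-snd}) rather than from the semantic clause in the statement of Theorem~\ref{DNF-thm}.
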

\begin{proof}
	Let \( \alpha \) be valid and let \( \gamma \) be a disjunctive formula \( \gamma \VdashV \neg\alpha \), entailed by theorem~\ref{DNF-thm}. 
	Since \( \denote{\alpha} \subseteq \denote{\neg\gamma} \) (Theorem~\ref{kmo-snd}), \( \neg\gamma \) is a valid conjunctive formula, whence \( \gamma \Vdash \bot \) by Theorem~\ref{kmoi-comp-conj}.
	Transitivity (Theorem~\ref{admiss-charac}) implies \( \neg \alpha \Vdash \bot \) and, therefore, \( \kmo \vdash {}\Rightarrow \alpha \).
\end{proof}

The proof of theorem~\ref{DNF-thm} is split into three parts.
First, we present the construction of the disjunctive formula.
Following that, we establish each of the two provability claims.

\subsection{Construction}
Without loss of generality (Theorem~\ref{guard-equi}) we may assume that \( \alpha \) is guarded.
The construction utilises the notion of annotated formulas and sequents from the previous section. To that base, we add a few more concepts.

\begin{definition}
	A name \( n \) is \emph{covered} in an annotated sequent \( \seq[c] \Phi \Psi \) if it occurs in \( \Phi \) or \( \Psi \) and for every \( \sigma^a \phi \in \subform{\Phi,\Psi} \), if \( a_i = n \) for some \( i \) then \( \lh a > i + 1 \).
\end{definition}

Fix \( c \in \nms^{<\omega} \).
The subsequences of \( c \) form a finite tree \( S(c) \) relative to the  prefix relation. 
This tree is equipped with an ordering on non-comparable vertices, defined as \( a \) is \emph{left of} \( b \) if there exists \( k < \lh a \) such that
\begin{enumerate}
	\item \( a(i) = b(i) \) for all \( i < k \), and
	\item \( a(k) = c(k_0) \) and \( b(k) = c(k_1)\) for some \( k_0 < k_1 < \lh c \).
\end{enumerate}
The \emph{Kleene--Brouwer ordering} on \( S(c) \) is the relation \( \le_c \) defined as \( a \le_c b \) iff \( b \) is a prefix of \( a \), or \( a \) is to the left of \( b \).
Observe that \( \epsilon \) is the maximal element of \( \le_c \) for every \( c \in \nms^{<\omega} \).
\begin{proposition}\label{KB-wo}
	If \( c \) is an annotation then \( \le_c \) well-orders the subsequences of \( c \).
\end{proposition}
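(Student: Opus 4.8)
The plan is to recognise $\le_c$ as the Kleene--Brouwer ordering of the finite tree $S(c)$ over the alphabet of names occurring in $c$, and then to exploit the standard fact that such an ordering is always linear and, over a well-founded (here finite) tree, a well-order. Since $\le_c$ orders a finite set, it in fact suffices to verify that $\le_c$ is a \emph{linear} order: every total order on a finite set is a well-order, as each non-empty finite subset has a $\le_c$-least element. So the work reduces to establishing linearity.

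First I would extract the crucial consequence of the hypothesis that $c$ is an annotation, namely that it is non-repeating. Because no name occurs twice in $c$, each name $n$ appearing in $c$ has a \emph{unique} position $k_0 < \lh c$ with $c(k_0) = n$. This yields a strict linear order $\prec$ on the set of names occurring in $c$, defined by $n \prec m$ iff the position of $n$ in $c$ precedes that of $m$. Every letter of every subsequence of $c$ is such a name, so $\prec$ totally orders all letters that can appear. The relation \enquote{$a$ is left of $b$} is then precisely the assertion that, at the first coordinate $k$ at which $a$ and $b$ differ, $a(k) \prec b(k)$; totality of $\prec$ is exactly what guarantees that, of two prefix-incomparable subsequences, precisely one is left of the other.

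Next I would check that $\le_c$ is a linear order on $S(c)$. For totality: given distinct $a, b \in S(c)$, either one is a prefix of the other---in which case $a \le_c b$ when $b$ is a prefix of $a$, and $b \le_c a$ when $a$ is a prefix of $b$---or they are prefix-incomparable, and hence first disagree at some $k < \min(\lh a, \lh b)$, where by totality of $\prec$ exactly one of $a, b$ is left of the other. Antisymmetry and transitivity are routine case analyses over the two clauses defining $\le_c$ (prefix versus left-of); the only interaction to track is that \enquote{left of} behaves coherently when passing to and from common prefixes, which again rests only on $\prec$ being a strict total order. As a sanity check, since $\epsilon$ is a prefix of every subsequence it is the $\le_c$-greatest element, matching the observation preceding the proposition.

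Finally, since $S(c)$ is finite---a length-$\lh c$ non-repeating sequence has at most $2^{\lh c}$ subsequences---the linear order $\le_c$ is automatically a well-order. The one genuine point of substance is the reliance on non-repetition of $c$: without it a name could occupy two positions of $c$, the left-of clause would no longer locate an unambiguous position, and $\le_c$ could fail to be total. Thus the annotation hypothesis is exactly the ingredient that makes $\prec$, and hence $\le_c$, a linear order. I expect the transitivity bookkeeping to be the most fiddly step, but no difficulty of real substance arises.
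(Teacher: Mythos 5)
Your proof is correct. The paper in fact states Proposition~\ref{KB-wo} with no proof at all, treating it as a routine observation; your argument --- reducing well-ordering to linearity via finiteness of \( S(c) \), and using non-repetition of \( c \) to make the position-induced order \( \prec \) on names total, hence the left-of relation decisive on prefix-incomparable pairs --- supplies exactly the details the paper takes for granted, and correctly identifies the annotation (non-repetition) hypothesis as the one genuinely load-bearing ingredient.
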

\begin{proposition}
	\label{KB-mono}
	If \( c \le d \) then \( {\le_c} \subseteq {\le_d} \).
\end{proposition}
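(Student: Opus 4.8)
The plan is to unfold the definition of the Kleene--Brouwer ordering and verify the inclusion clause by clause, using only that \( c \le d \) means \( c \) is a prefix of \( d \). First I would record the ambient observation that \( S(c) \subseteq S(d) \): since \( c \) is a prefix of \( d \), every subsequence of \( c \) is a fortiori a subsequence of \( d \), so both \( \le_c \) and \( \le_d \) restrict to relations on \( S(c) \), and the assertion \( {\le_c} \subseteq {\le_d} \) is a meaningful inclusion of sets of pairs. I would then fix \( a , b \in S(c) \) with \( a \le_c b \) and split according to the two disjuncts in the definition of the Kleene--Brouwer ordering, namely that either \( b \) is a prefix of \( a \), or \( a \) is to the left of \( b \).

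In the first case \( b \) is a prefix of \( a \); this condition refers to neither \( c \) nor \( d \), so the very same clause in the definition of \( \le_d \) immediately gives \( a \le_d b \). In the second case \( a \) is to the left of \( b \) relative to \( c \): there is \( k < \lh a \) with \( a(i) = b(i) \) for all \( i < k \), and \( a(k) = c(k_0) \), \( b(k) = c(k_1) \) for some \( k_0 < k_1 < \lh c \). The agreement condition \( a(i) = b(i) \) for \( i < k \) is intrinsic to \( a \) and \( b \) and so transfers verbatim. For the position condition I would use that \( c \le d \) yields \( \lh c \le \lh d \) together with \( c(j) = d(j) \) for all \( j < \lh c \); hence \( k_0 < k_1 < \lh c \le \lh d \) and \( a(k) = c(k_0) = d(k_0) \), \( b(k) = c(k_1) = d(k_1) \). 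These are precisely witnesses that \( a \) is to the left of \( b \) relative to \( d \), so \( a \le_d b \).

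The argument is elementary and I do not anticipate a genuine obstacle; the only point requiring care is conceptual rather than technical, namely recognising that the \emph{left of} relation is defined by reference to the positions occupied by names \emph{in the governing sequence}, and that extending \( c \) to a longer sequence \( d \) of which it is a prefix leaves every relevant position unchanged (all such positions lying strictly below \( \lh c \)). Note in passing that the argument makes no use of \( c \) being non-repeating, so the inclusion holds for arbitrary \( c \le d \); the non-repetition hypothesis is required only for the companion Proposition~\ref{KB-wo}, which upgrades \( \le_c \) to a well-order.
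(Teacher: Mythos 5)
Your proof is correct: the paper states Proposition~\ref{KB-mono} without proof, treating it as an immediate consequence of the definitions, and your clause-by-clause verification is exactly the routine unfolding that justifies this. The key observation — that the witnessing positions \( k_0 < k_1 < \lh c \) refer to the governing sequence and are untouched when \( c \) is extended to \( d \) — is precisely the point, and your remark that non-repetition is only needed for Proposition~\ref{KB-wo} is also accurate.
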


The orderings \( \le_c \) will be shortly lifted to an ordering on annotated formulas that will be used to constrain the size of sequents in the template.
For this purpose it is convenient to focus attention on the particular annotated formulas that can occur in derivations. 
These are formulas for which the annotated subformulas are linearly ordered by substitution. We call such formulas \emph{linearly annotated}:
\begin{definition}\label{d-lin-ann}
	An annotated formula \( \alpha \) is {linearly \( \sigma \)-annotated} if there exist a plain formula \( \beta \) and predicates \( \phi_1 \), \dots, \( \phi_n \) and non-trivial annotations \( a_1 , \dotsc, a_n \) such that
	\begin{enumerate}
		\item \( \alpha = (\lambda x_n \dotsm ( \lambda x_1\,\beta) ({\mnf[a_1] \phi_1}) \dotsm ) ({\mnf[a_n] \phi_n}) = \beta \subs[x_1]{\mnf[a_1] \phi_1} \dotsm \subs[x_n]{\mnf[a_n] \phi_n}\),
		\item \( x_1 \) is free in \( \beta \) and \( x_{i+1} \) is free in \( \sigma  \phi_i \) for \( i \in [ 1, n] \),
		\item a name occurs in both \( a_i \) and \( a_j \) iff \( i = j \).
	\end{enumerate}
	Given a plain formula \( \alpha \) and annotation \( c \), let \( L_\sigma(\alpha,c) \) be the set of linear \( \sigma \)-annotations of \( \alpha \) in which all annotations are subsequences of \( c \).
\end{definition}

The second condition of the definition implies that a decomposition of \( \alpha \) in the form of 1 is unique, if it exists. 
The third condition states that each name occurs in a unique subformula.
This assumption assists with keeping track of names during the construction of the template.
By the first two conditions, it follows that \( \neg{x_i} \) does not occur in \( \beta , \phi_1 , \dotsc, \phi_{i-1} \).

Given linearly \( \sigma \)-annotated formulas \( \alpha \) and \( \beta \) with respective decompositions 
\begin{align*} 
	\alpha &= \alpha_0 \subs[x_1]{\mnf[a_1] \phi_1} \dotsm \subs[x_n]{\mnf[a_n] \phi_n} 
	\\
	\beta &= \beta_0 \subs[x_1]{\mnf[b_1] \psi_1} \dotsm \subs[x_m]{\mnf[b_m] \psi_m}
\end{align*} 
we set \( \alpha <_c \beta \) if 
\begin{enumerate}
	\item \( \alpha_0 = \beta_0 \), \( m = n \) and \( \phi_i = \psi_i \) for each \( i \in [1,m] \), and
	\item there exists \( k \in [1,m] \) such that \( a_k <_c b_k \) and \( a_i = b_i \) for all \( i \in [ k+1, m] \).
\end{enumerate}
The weak order is defined as \( \alpha \le_c \beta \) iff \( \alpha <_c \beta \) or \( \alpha = \beta \).
In particular, if \( \alpha \le_c \beta \) then \( \alpha^- = \beta^- \).

\begin{example}
	\( \nf[a] x( \diamond x \vee \nf[b] y.\, \square  y ) \) is linearly \( \nu \)-annotated for all \( a,b \in \ann \).
	The formula \( \mf[a] x( \diamond x \vee \mf[b] y.\, \square ( x \wedge y ) ) \) is linearly \( \mu \)-annotated iff \( b = \epsilon \)
	because this is the only case in which the formula can be expressed as a linear decomposition following Definition~\ref{d-lin-ann}.
	Similarly, the formula \( \conj \setof{ \nf[a] x.\, \square x , \nf[b] y \diamond y } \) is linearly annotated iff at least one of \( a \) and \( b \) is trivial.
	
	The ordering \( <_c \) compares linearly ordered formulas with the comparable decompositions by a lexicographic ordering on the annotations.
	Supposing that \( \phi , \psi \) are plain predicates and \( \phi \) contains \( x \) free, \( (\nf[a] \phi) \subs {\nf[b] \psi} <_c (\nf[a'] \phi) \subs {\nf[b'] \psi}  \) iff \( b <_c b' \) or \( b = b' \) and \( a <_c a' \).
\end{example}

As \( \le_c \) well-orders the subsequences of \( c \) its lifting to \( L_\sigma(\alpha,c) \) is both a partial order and a well-quasi order.
Indeed, a bound on the number of pairwise \( \le_c \)-incomparable elements set in \( L_\sigma(\alpha,c) \) depends only on \( \alpha \) (and not \( c \)).
\begin{proposition}\label{anno-wo}
	Let \( \alpha \) be a plain formula.
	There exists \( N < \omega \) such that for 
	all \( c \in \ann \) every set of \( \le_c \)-incomparable elements of \( L(\alpha,c) \) has cardinality bounded by \( N \).
\end{proposition}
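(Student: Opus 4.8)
The plan is to characterise precisely when two elements of $L_\sigma(\alpha,c)$ (the set $L(\alpha,c)$ of the statement) are $\le_c$-comparable, and then to observe that incomparability is entirely a matter of differing in the \emph{plain} skeleton of the decomposition, of which there are only finitely many, independently of $c$. First I would fix the plain formula $\alpha$ and recall from the remark after Definition~\ref{d-lin-ann} that every element of $L_\sigma(\alpha,c)$ has a unique decomposition, hence determines a unique \emph{shape}: the tuple $(\alpha_0, \phi_1, \dotsc, \phi_n)$ of plain data appearing in its decomposition, two elements of a common shape being distinguished only by their tuple of annotations $(a_1,\dotsc,a_n)$. The crucial first step is the claim that two distinct elements of $L_\sigma(\alpha,c)$ are $\le_c$-comparable if and only if they have the same shape.

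The ``only if'' direction is immediate, since clause~(1) in the definition of $<_c$ forces equality of shapes, so distinct-shaped elements satisfy neither $\alpha' <_c \beta'$ nor $\beta' <_c \alpha'$. For the ``if'' direction I would take $\alpha', \beta'$ of a common shape with distinct annotation tuples $(a_i)_i \ne (b_i)_i$, let $k$ be the largest index where $a_k \ne b_k$, and appeal to Proposition~\ref{KB-wo}: since $\le_c$ totally orders the subsequences of $c$, either $a_k <_c b_k$ or $b_k <_c a_k$, and with $a_i = b_i$ for $i > k$ this is exactly clause~(2), yielding comparability. In effect, within a fixed shape the annotations are compared lexicographically by a total order, so each shape contributes a single $\le_c$-chain rather than any nontrivial antichain. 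Consequently a set of pairwise $\le_c$-incomparable elements contains at most one element of each shape, and its cardinality is bounded by the number of shapes available to $\alpha$. It then remains to bound this number uniformly in $c$: a shape is a decomposition $\alpha = \alpha_0 \subs[x_1]{\mnf \phi_1} \dotsm \subs[x_n]{\mnf \phi_n}$ of the \emph{plain} formula $\alpha$ into a nested chain of $\sigma$-quantifiers with $\alpha_0, \phi_1, \dotsc, \phi_n$ plain, hence is determined by a nested sequence of $\sigma$-quantified subformulas of $\alpha$; as $\subform{\alpha}$ is finite there are only finitely many such chains, and this count $N$ depends on $\alpha$ alone. Setting $N$ to this value completes the argument.

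The main obstacle I anticipate is not conceptual but one of bookkeeping: checking carefully that the shape carries no residual dependence on $c$, i.e.\ that every element of $L_\sigma(\alpha,c)$ really does decompose with \emph{plain} $\phi_i$ so that all annotation data resides in the outer markers $a_i$. This rests on the convention, implicit in clause~(3) of Definition~\ref{d-lin-ann}, that the annotated quantifiers of a linearly annotated formula form a single substitution chain; once this is made precise, the counting of shapes reduces to counting nested chains in the finite set $\subform{\alpha}$, and the totality supplied by Proposition~\ref{KB-wo} supplies the rest. (Should $L(\alpha,c)$ be intended as the union over both $\sigma = \mu$ and $\sigma = \nu$, the same reasoning applies: elements annotating different quantifiers necessarily have different shapes, so the two contributions simply add, keeping the bound finite and $c$-independent.)
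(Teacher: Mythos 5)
The paper states Proposition~\ref{anno-wo} without proof, so there is nothing to compare against line by line; assessed on its own, your argument has the right structure and is essentially the natural one. Your comparability criterion is correct in both directions: clause~(1) of the definition of \( <_c \) makes elements with distinct decomposition data (your shapes) incomparable, while within a fixed shape the order is lexicographic in the annotation tuple (read from the innermost annotation outward), each coordinate being compared by \( \le_c \), which is total on subsequences of \( c \) by Proposition~\ref{KB-wo}; hence any two distinct elements of a common shape are comparable, and an antichain contains at most one element per shape. Your reading of Definition~\ref{d-lin-ann} --- that the \( \phi_i \) are plain, so that all annotation data sits in \( (a_1, \dotsc, a_n) \) --- is also the intended one: the paper's example that \( \mf[a] x ( \diamond x \vee \mf[b] y .\, \square(x \wedge y)) \) is linearly annotated only when \( b = \epsilon \) is true only under that reading.

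The one genuine flaw is in the final counting step. A shape is \emph{not} determined by a nested chain of \( \sigma \)-quantified subformulas of \( \alpha \): when a quantified subformula occurs at several positions, the decomposition also records \emph{which} occurrences each \( x_i \) abstracts. Take \( \alpha = \diamond \mnf \phi \vee \square \mnf \phi \) with \( \mnf\phi \) plain: the shapes \( ( \diamond x_1 \vee \square \mnf\phi ,\ \phi ) \), \( ( \diamond \mnf\phi \vee \square x_1 ,\ \phi ) \) and \( ( \diamond x_1 \vee \square x_1 ,\ \phi ) \) all project to the single chain \( (\mnf\phi) \), and the first two are realised by the \( \le_c \)-incomparable elements \( \diamond \mnf[a]\phi \vee \square\mnf\phi \) and \( \diamond\mnf\phi \vee \square\mnf[a]\phi \) of \( L_\sigma(\alpha,c) \), for \( a \) any nonempty subsequence of \( c \). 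So an antichain can be strictly larger than the number of chains in \( \subform{\alpha} \), and the value of \( N \) you propose is an undercount. The repair is routine and preserves your conclusion: \( n \) is bounded by the \( \sigma \)-nesting depth of \( \alpha \), and each shape component \( \alpha_0, \phi_1, \dotsc, \phi_n \) is obtained from \( \alpha \), respectively from one of its literal subformulas, by replacing a set of occurrences by a variable, so the set of shapes is finite with a bound depending only on \( \alpha \); taking \( N \) to be this number of shapes completes the proof.
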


We now proceed with the definition of the template.
The template of \( \alpha \) is a certain finite derivation in a variant of \( \cmu \) adjusted for proof-search of the sequent \( \lseq \alpha {} \).
As the calculus employed is cut-free, all formulas will be linear \( \mu \)-annotations of subformulas of \( \alpha \).
The additional rules of the calculus are presented in fig.~\ref{f-template-rules}.
We use \( (\cover) \) to denote the union of \( (\cover^n) \) for all \( n \in \nms \).
The structural rules \( (\dup) \) and \( (\cover) \) provide special instances of formula weakening and annotation weakening respectively that can be permitted in the context of proof-search.

\begin{figure}
	\centering
	\begin{prooftree}
		\hypo{ \tseq c {\Phi , \alpha} {} }
		\hypo{ \alpha \le_c \alpha'}
		\infer2[$\dup$]{ \tseq c {\Phi , \alpha , \alpha'}{} }
	\end{prooftree}
	\qquad
	\begin{prooftree}
		\hypo{ \tseq c {\Phi^{\downarrow n}}{} \vphantom\phi }
		\hypo{ n \text{ \small covered in } \Phi }
		\infer2[$\cover^n$]{ \tseq c \Phi {} }
	\end{prooftree}
	\\[2ex]
	\begin{prooftree}
		\hypo{\aseq {\Gamma , \Phi}\Psi }
		\infer1[\conjsL]{\aseq {\conj\Gamma, \Phi} \Psi }	
	\end{prooftree}
	\qquad
	\begin{prooftree}
		\hypo{ \setof{ \tseq c {\alpha , \Phi } }[ \alpha \in \Psi]  }
		\hypo{ \tseq c \Phi {}  }
		\infer[left label={$L \subseteq \Lit $}]2[$ \diLs $]{ \tseq c {\diamond \Psi , \square \Phi , L } }
	\end{prooftree}
	\caption{Structural and logical rules specific to the template. \( \Phi^{\downarrow n} \) is the restriction of each annotation in \( \Phi \) which contains \( n \) to the shortest prefix containing \( n \).}
	\label{f-template-rules}
\end{figure}
\begin{definition}\label{d-pre-template}
	Fix an enumeration of \( \subform{\alpha} \) and an enumeration \( n_0 , n_1 , \dotsc \) of \( \nms \).
	The \emph{pre-template} of \( \alpha \) is the maximal cut-free derivation with root \( \epsilon : \alpha \) in the rules of \( \cmu \) where the rules (\aw), (\cut), (\diL) and (\conjL) are replaced by the four rules in fig.~\ref{f-template-rules} which satisfies the following for every vertex \( u \):
	\begin{enumerate}
		\item If the sequent at \( u \) is the conclusion of an instance of (\cw), (\dup) or (\cover) then one of these rules labels \( u \) with that priority (so the rule is (\cw) if possible, otherwise (\dup), or (\cover) if neither other rule is applicable).
		\item If the rule at \( u \) is a non-modal logical rule, then the principal formula is the first formula in the sequent according to the enumeration of \( \subform{\alpha} \).
		\item If the rule at \( u \) is (\muL[n]) then all names of lower index occur in the sequent at \( u \), i.e., \( n = n_i \) and \( n_j \) occurs in \( u \) for all \( j < i \).
	\end{enumerate}
\end{definition}

Existence of a pre-template is immediate, and the three constraints of Definition~\ref{d-pre-template} ensure its uniqueness.
As the pre-template is required to be maximal, leaves are instances of either (\idL) or (\disjL) with principal formula \( \disj \emptyset \).
All other sequents are conclusions of a rule in the calculus.
Also of note is that a sequent of the form \( \aseq[\epsilon] \emptyset {} \) may occur in the pre-template (possibly obtained as a premise of (\diLs) where \( \Phi = \emptyset \)) in which case this vertex lies on a single branch whose suffix is the sequence \( ( \aseq[\epsilon] \emptyset {} , (\diLs))^\omega \).
The next observation makes use of the fact that only linearly annotated formulas occur in a pre-template.
\begin{proposition}\label{pre-temp-reg}
	The pre-template is a regular tree. That is, there are only finitely many maximal subtrees of the pre-template of a given formula \( \alpha \).
\end{proposition}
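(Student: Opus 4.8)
The plan is to derive regularity from two facts: \emph{determinacy}, that the subderivation rooted at a vertex $u$ is completely determined by the annotated sequent $\sequent_u$ labelling $u$, and \emph{finiteness}, that only finitely many annotated sequents occur in the pre-template. Together these bound the number of distinct maximal subtrees by the number of distinct sequents. Determinacy is immediate from the uniqueness of the pre-template asserted after Definition~\ref{d-pre-template}: the three conditions there are local, pinning down at each vertex—as a function of the labelling sequent and the two fixed global enumerations (of $\subform\alpha$ and of $\nms$)—a unique applicable rule together with its premises. Hence the subderivation at $u$ is itself the unique maximal derivation rooted at $\sequent_u$ meeting these conditions, so it depends on nothing but $\sequent_u$; in particular, whenever two vertices carry the same sequent their subtrees coincide.

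For finiteness I would bound the parameters of a sequent separately. The underlying plain formulas are drawn from $\subform\alpha$, which is finite, and every annotated formula appearing is a linear $\mu$-annotation (Definition~\ref{d-lin-ann}) of such a subformula. The number of formulas is uniformly bounded: since $(\dup)$ is applied with priority over the logical rules, whenever a logical rule fires the sequent is $\dup$-irreducible, so for each underlying subformula the annotated variants present form a $\le_c$-antichain in $L_\mu(\cdot,c)$; by Proposition~\ref{anno-wo} each such antichain has size at most $N$, whence a $\dup$-irreducible sequent has at most $N\cdot\card{\subform\alpha}$ formulas, and the formula-introducing rules ($\conjsL$, $\diLs$, unfolding) increase this only transiently by a bounded amount.

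It remains to bound the annotations, which is the crux. Each annotation is a subsequence of the control, so it suffices to bound the number of names occurring in a sequent. Here the rule $(\cover)$, applied eagerly (again with priority over the logical rules), does the work: it truncates every annotation at each covered name, preventing names from nesting beyond the depth forced by the structure of the finitely many underlying predicates, while the antichain bound of Proposition~\ref{anno-wo} caps how many incomparable annotations can coexist. I expect these to combine into a bound $K$, depending only on $\alpha$, on the number of occurring names. Finally, condition~3 of Definition~\ref{d-pre-template} forces names to be chosen by least index—$n_i$ occurs only if $n_0,\dots,n_{i-1}$ all occur—so, with at most $K$ names live at once, the occurring names lie always within $\setof{n_0,\dots,n_{K-1}}$. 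Thus only finitely many controls, hence only finitely many annotated sequents, arise; by determinacy the pre-template has finitely many distinct subtrees and is regular.

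The main obstacle is exactly the uniform bound $K$ on annotation length in the third step: verifying that the interaction of the $(\muL[n])$-unfoldings with the eager covering rule $(\cover)$ keeps controls bounded requires a careful induction tracking how covered names are pruned, and it is here that the well-quasi-order content of Proposition~\ref{anno-wo} and the linearity of the annotations (Definition~\ref{d-lin-ann}) are indispensable. By contrast, the bookkeeping for the purely control-manipulating rule $(\cw)$ and for the modal rule $(\diLs)$ is routine, and the minimal-name discipline underwriting the final confinement of names is a direct consequence of the priority conventions together with condition~3.
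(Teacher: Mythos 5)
Your proposal is correct and follows essentially the same route as the paper: both reduce regularity to the uniqueness (determinacy) of the pre-template plus a finiteness bound on the annotated sequents that can occur, established via linearity of the $\mu$-annotations, Proposition~\ref{anno-wo} together with the eager application of $(\dup)$ to bound sequent length, the eager application of $(\cover)$ (and $(\cw)$) to bound controls, and the least-index naming discipline of Definition~\ref{d-pre-template} to confine names to a fixed finite set. The step you flag as the remaining obstacle—the uniform bound on control length—is asserted in the paper's proof at the same level of detail as in your sketch, so nothing essential is missing relative to the paper's own argument.
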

\begin{proof}
	Let \( P \) be the pre-template of \( \alpha \). It suffices to show that every branch of \( P \) contains a repeated annotated sequent as regularity then follows from the uniqueness of the pre-template.
	By induction through \( P \) we see that every formula is linearly \( \mu \)-annotated (note that \( \phi(\mf[a] \phi) \) is linearly annotated if \( \mf[a] \phi \) is).
	Proposition~\ref{anno-wo} and the priority in applications of (\dup) in \( P \) ensure that each annotated sequent in the branch has bounded length. 
	Coupled with the priority on applications of (\cover) there is, therefore, a bound on the length of all controls, whence a bound on the number of distinct annotated sequents in the branch. Hence, every branch contains a repeated annotated sequent.
\end{proof}
A consequence of regularity is that the pre-template for \( \alpha \) can be identified with a finite initial segment of itself.
For our purposes it is convenient to isolate a particular initial segment of \( P \), henceforth called the \emph{template}, in which information about the eliminated parts of the tree is directly expressed in the buds.
Identifying the template is similar to isolating the basis of the cyclic proof in Theorem~\ref{quasi-completeness}.

A \emph{repetition} is a pair \( (u ,v) \in P \times P \) of vertices labelled by the same annotated sequent such that \( u < v \).
As \( \alpha \) is guarded, if \( ( u , v ) \) is a repetition then there is a vertex \( w \in [ u , v) \) such that \( \rules_P (w) = (\diLs ) \). 
As observed, every branch of the pre-template contains infinitely many repetitions.
For the present construction, the \emph{invariant} of a repetition \( ( u , v ) \) will be a pair \( ( c , k ) \in \ann \times \setof{ 0,1 } \) where \( c \) is a non-trivial prefix of all controls on the path interval \( (u,v) \) and \( k \) records whether the final name in \( c \) is covered in one of these sequents (in which case \( k = 0 \)).
If there is no such control, then \( c \) is the longest prefix of all controls and \( k = 1 \).
Thus, the invariant of a repetition \( (u , v) \) is the pair \( \inv {u,v} = ( c , k ) \in \ann \times \setof{ 0,1 } \) where:
\begin{itemize}
	\item \( c \) is the longest annotation that is a prefix of every vertex in the path segment \( [ u , v ] \) and is such that if \( \rules_P(w) = (\cover^{c(i)}) \)  for some \( w \in [u , v ) \) then \( i = \lh c - 1 \),
	\item \( k = 0 \) iff \( \rules_P(w) = (\cover^{c(i)}) \) for some \( w \in [ u , v ) \) and \( i < \lh{c} \).
\end{itemize}

Invariants are compared lexicographically:
\( ( c , k ) \sqsubseteq ( d , l ) \) iff \( c < d \) or \( c = d \) and \( k \le l \).
In particular, \( ( c , 1 ) \sqsubseteq ( d , 0 ) \) iff \( c \) is a proper prefix of \( d \).
The relation \( \sqsubseteq \) forms a partial order on invariants.

Assume a companion function on \( P \) is available, i.e., a partial function \( u \mapsto u^c \) such that if \( u^c \) is defined then \( ( u^c , u ) \) is a repetition.
If \( u^c \) is defined we write \( \inv u \) for \( \inv {u^c , u} \).
The companion function induces a more liberal `reachability' relation \( \rightsquigarrow \) on \( P \) by \( u \rightsquigarrow v \) iff \( u^c \) and \( v^c \) are both defined and \( u^c \le v^c < u \).
Note, \( \rightsquigarrow \) is not transitive in general.
However, if \( u \rightsquigarrow v \) then \( \inv u \) and \( \inv v \) are comparable: \( \inv { u } \sqsubseteq \inv { v } \) or \( \inv { v } \sqsubseteq \inv { u } \).
The template is given by a choice of companion function such that the two orderings are in agreement:

\begin{definition}
	A \emph{template} of \( \alpha \) is a finite initial subderivation \( T \subseteq P \) and a companion function \( u \mapsto u^c \) defined on the buds of \( T \) such that if \( u \rightsquigarrow v \) for buds \( u , v \) then \( \inv u \sqsubseteq \inv v \).
\end{definition}
Existence of a template \( T \) of \( \alpha \) is straightforward and can be obtained by isolating the buds of \( T \) by recursion through the finitely many invariants in \( P \).
Any template implicitly witnesses regularity of the pre-template in the sense that for every bud \( u \in T \) the maximal sub-derivations of \( P \) rooted at \( u \) and \( u^c \) are identical.

In the following we assume a fixed template of \( \alpha \), denoted \( T \).
A bud \( u \) is \emph{successful} if its invariant \( \inv u = ( c , 0 ) \) for some \( c \); otherwise \( u \) is \emph{unsuccessful}.
If \( u \) and \( v \) are buds with the same companion, then \( \inv u = \inv v \), so \( u \) is successful iff \( v \) is successful.
Thus we can unambiguously refer to a companion \( u^c \) as \emph{(un)successful} if \( u \) is (un)successful.

We now proceed with the definition of the disjjunctive formula.
As discussed, this formula simply encodes the pre-template of \( \alpha \) via the chosen template.
We assume a distinct variable \( x_u \) is available for each vertex \( u \in T \), let \( \rules_P(u) \) be the rule labelling \( u \) and let \( \tseq {c_u} {\Phi_u} \) be the sequent labelling \( u \in T \). 
The construction of \( \hat\alpha \) proceeds by associating to each \( u \in T \) disjunctive formulas \( \alpha_u^+ \) and \( \alpha_u^- \) in the following way.
An example of the association is given in fig.~\ref{f-template-ex}.

\begin{figure}
	\centering
	\begin{prooftree}[center=false]
	
		\hypo{ \tseq n {\square \beta , \diamond  \mf[n] \phi} }
		\infer1[$\dup$]{ \tseq n {\square \beta , \diamond\beta , \diamond  \mf[n] \phi } }
		\infer1[$\conj$]{ \tseq n {\square\beta , \phi( \mf[n] \phi )} }
		\infer1[$\cover$]{ \tseq {n m} {\square\beta , \phi( \mf[nm] \phi ) } }
		\infer1[$ \mu ^m$]{ \tseq n {\square \beta , \mf[n] \phi} }
		\infer1[$\nu $]{ \tseq n {\beta , \mf[n] \phi} }
		
		\hypo{ \tseq \epsilon \beta }
			\hypo{ \tseq \epsilon \emptyset }
			\infer1[$\square ^*$]{ \tseq \epsilon \emptyset \vphantom{\psi} }
		\infer[separation=2ex]2[$\square ^*$]{ \tseq \epsilon {\square  \beta \vphantom{\psi}}}
		\infer1[$\nu $]{ \tseq \epsilon {\beta \vphantom{\psi}}}
		\infer1[$\cw$]{ \tseq n {\beta \vphantom{\psi}} }
			
		\infer2[$\square ^*$]{ \tseq n {\square \beta , \diamond  \mf[n] \phi \vphantom{(\psi)}} }
		\infer1[$\conj$]{ \tseq n {\square \beta \wedge \diamond  \mf[n] \phi \vphantom{(\psi)}} }
		\infer1[$\nu ^n$]{ \tseq \epsilon {\mu  \phi }\vphantom\psi }
	\end{prooftree}
	\qquad
	\begin{prooftree}[center=false]
		\hypo  { z \vphantom{\nf[n]\phi} }
		\infer1{ z \vphantom{\nf[n]\phi} }
		\infer1{ z \vphantom{\nf[n]\phi} }
		\infer1{ z \vphantom{\nf[n]\phi} }
		\infer1{ z \vphantom{\nf[n]\phi} }
		\infer1{ z \vphantom{\nf[n]\phi} }
		
			\hypo{ y }
			\hypo   { x }
			\infer1 { \nabla(\emptyset,\setof x) }
			\infer[separation=2em]2
				   { \nabla( \setof y , \setof{ y , \gamma }) }
			\infer1{ \nu  \psi }
			\infer1{ \nu  \psi  }
			
		\infer[separation=3em]2{ \mu  z\, \nabla( \setof z , \setof{z , \nu  \psi} ) }
		\infer1{ \mu  z\, \nabla( \setof z , \setof{z , \nu  \psi} ) }
		\infer1{ \mu  z\, \nabla( \setof z , \setof{z , \nu  \psi} ) }
	\end{prooftree}

	\caption{An example of a template (left) for the formula \( \mu  \phi \) where \( \phi = \lambda  x.\, \square(\nu  x\, \diamond  x) \wedge \diamond  x \) and \( \beta = \nu  x\, \diamond  x \). On the right is the corresponding disjunctive formula associated to each vertex of the template, with \( \gamma \) and \( \psi \) abbreviating \( \mu x\, \nabla(\emptyset,\setof x) \) and \( \lambda  y.\, \nabla( \setof y , \setof{ y , \gamma }) \) respectively.}
	\label{f-template-ex}
\end{figure}

\begin{definition}\label{d-conj-comp-prelim}
The formulas \( \alpha_u^+ \) and \( \alpha_u^- \) are defined by recursion through \( u \in T \).
Assuming that \( \alpha_u^- \) is defined, set
\[ \alpha_u^+ \coloneqq 
\begin{cases}
	\mf x_u \, \alpha_u^-, &\text{if \( u \) is successful,}
	\\
	\nf x_u \, \alpha_u^-, &\text{otherwise.}
\end{cases} \]
The formula \( \alpha_u^- \) is built from the formulas \( \setof{\alpha_v^+}[v \in \Suc [T] u] \) according to the rule at \( u \in T \):
\begin{itemize}
	\item If \( u \) is a bud,  \( \alpha_u^- = x_{u^c} \). 
	\item If \( \rules_P(u) = (\disjL) \), then \( \alpha_u^- = \disj \Setof{ \alpha_v^+ }[v\in \Suc[t]{u}] \).
	\item If \( \rules_P(u) = (\diLs) \) and \( \Phi_u \) contains both \( x , \neg x \) for some variable \( x \) then \( \alpha_u^- = \bot \).
	\item If \( \rules_P(u) = (\diLs) \) and \( \Phi_u \) is not of the form above. 
	Let \( \Phi_u = \diamond \delta_0 , \dotsc, \diamond \delta_{n-1} , \square\Psi , L \) where \( L \subseteq \Lit \) and let \( \Suc[T] u = \setof{ u_1 , \dotsc , u_{n} } \) be such that \( \Phi_{u_i} = \delta_i , \Psi \) for each \( i < n \) and \( \Phi_{u_{n}} = \Psi \).
	Set 
	\[ 
		\alpha_u^- = \nabla_L( \setof{ \alpha_{u_{0}}^+ , \dotsc , \alpha_{u_{n-1}}^+ } , \setof{ \alpha_{u_{0}}^+ , \dotsc , \alpha_{u_n}^+ } ) . 
	\]
	\item In all other cases \( u \) has exactly one child \( u^+ \), and \( \alpha_u^- = \alpha_v^+ \).
\end{itemize}
\end{definition}
\begin{definition}
	The \emph{disjunctive companion} to a guarded formula \( \alpha \) is the formula \( \hat\alpha \coloneqq \alpha_\epsilon^+ \) associated to the root of the template as per the previous definition. 
	More generally, the disjunctive companion to a vertex \( u \in T \) is the formula
	\[ \hat\alpha_u \coloneqq \alpha_u^+ \subs[x_{u_n}]{ \alpha_{u_n}^+ } \dotsm \subs[x_{u_1}]{ \alpha_{u_1}^+ } 
	\]
	where \( \epsilon = u_1 < \dotsm < u_n < u \) enumerate the ancestors of \( u \).
	The disjunctive companion to a non-guarded formula is the disjunctive companion to the canonical guarded equivalent formula determined by proposition~\ref{guard-equi}.
\end{definition}

We begin with a few useful observations.
\begin{proposition} For \( \hat\alpha_u \) defined as above,
	\begin{enumerate}
		\item \( \hat\alpha_u \) is a guarded disjunctive formula.
		\item If \( u \in T \) is axiomatic then \( \kmm \vdash \hat\alpha_u \).
		\item If \( u \rightsquigarrow v \) then \( \hat\alpha_u \sft \hat\alpha_v \) for a thread in which \( {\hat\alpha_v} \) is the shortest occurring formula.
	\end{enumerate}
\end{proposition}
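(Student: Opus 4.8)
The plan is to prove the three items by a single induction through the template $T$, run in parallel with the recursion defining $\alpha_u^{\pm}$ in Definition~\ref{d-conj-comp-prelim}; the first two items are routine and the third carries the weight. Two simplifying observations drive everything. First, for a bud $v$ the variable $x_v$ does not occur in $\alpha_v^-=x_{v^c}$, so the leading quantifier of $\alpha_v^+$ is vacuous and may be elided; after the closing substitutions $\hat\alpha_v$ is then just the companion fixpoint $\hat\alpha_{v^c}$. Second, at an axiomatic vertex $\alpha_u^-=\bot$ contains no variable at all, so $\hat\alpha_u=\bot$. Both are consistent with the example in fig.~\ref{f-template-ex}, where buds carry bare variables; and the elision is in fact forced, since retaining a vacuous $\sigma x_v$ would make the thread of item~3 re-expose $\hat\alpha_{v^c}$ as a strictly shorter formula and defeat minimality.

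For item~1, disjunctiveness is read off the clauses of Definition~\ref{d-conj-comp-prelim}: the sole clause introducing a conjunction is the $(\diLs)$ clause, which yields exactly an instance $\nabla_L(\Gamma,\Delta)$ of the expanded cover modality, while every other clause produces a literal, $\bot$, a disjunction, or copies a child, and the prefix $\sigma x_u$ preserves the property. The closing substitutions $\subs[x_{u_i}]{\alpha_{u_i}^+}$ also preserve it, because the fresh bound variables $x_{u_i}$ never occur in a literal set, so a cover modality $\nabla_L(\Gamma,\Delta)$ is sent to $\nabla_L(\Gamma\subs\theta,\Delta\subs\theta)$, again a cover modality (the consistent set $L$ and the inclusion $\Gamma\subseteq\Delta$ are untouched). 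For guardedness, every cyclic thread in $\subform{\hat\alpha_u}$ must pass through a fixpoint and hence arises from a bud--companion loop $w^c\sft w$; since $\alpha$ is guarded, each such repetition has an intervening instance of $(\diLs)$, contributing a cover modality, i.e.\ a conjunction of modal formulas, so the thread traverses a modality.

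Item~2 is then immediate: $\hat\alpha_u=\bot$, and $\lseq\bot$ is derivable by the premiseless instance of $(\disjL)$ for the empty disjunction (should one keep the vacuous binder, one $(\sigL)$ step reduces $\sigma x_u\,\bot$ to $\bot$ first), so $\kmm\vdash\hat\alpha_u$. For item~3 I first produce the thread. As $u\rightsquigarrow v$ gives $u^c\le v^c$, the companion $v^c$ lies on the template path descending from $u^c$, and I would show by induction along this path that each template edge $w\to w'$ is mirrored by a finite thread $\hat\alpha_w\sft\hat\alpha_{w'}$: the relevant clause of Definition~\ref{d-conj-comp-prelim} places $\alpha_{w'}^+$ as an immediate constituent of $\alpha_w^-$ (a disjunct, a component of the cover modality, or the unique child), while the unfolding step $\sigma\phi\sf\phi(\sigma\phi)$ performs precisely the substitution that closes the occurrence of $\alpha_{w'}^+$ into $\hat\alpha_{w'}$. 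Concatenating these along $u^c\to v^c$ and using $\hat\alpha_u=\hat\alpha_{u^c}$ and $\hat\alpha_v=\hat\alpha_{v^c}$ yields a thread $\hat\alpha_u\sft\hat\alpha_v$.

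The substantive point is that $\hat\alpha_v=\hat\alpha_{v^c}$ is complexity-minimal on this thread. On the segment below $v^c$ every formula contains, after the intervening unfoldings, the closed formula $\hat\alpha_{v^c}$ as a literal subformula and so has complexity at least $\rk{\hat\alpha_{v^c}}$. The difficulty is the segment from $u^c$ down to $v^c$: the intermediate companion fixpoints $\hat\alpha_w$ are exposed only through unfoldings, so the complexity lemma does not by itself keep them above $\rk{\hat\alpha_{v^c}}$. This is the main obstacle, and it is exactly where the defining inequality of the template, namely $\inv u\sqsubseteq\inv v$ whenever $u\rightsquigarrow v$, is needed: it is engineered so that $v^c$ carries the $\sqsubseteq$-dominant fixpoint of the segment. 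The heart of the argument is therefore to match the Kleene--Brouwer/invariant ordering $\sqsubseteq$ on bud--companion pairs against the complexity ordering $\rk{}$ on the associated fixpoints $\hat\alpha_w$, from which $\rk{\hat\alpha_{v^c}}\le\rk{\hat\alpha_w}$ for every intermediate companion $w$, and hence minimality, follows.
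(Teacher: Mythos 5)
Your treatment of items 1 and 2 is essentially sound, and your preliminary observation is a genuine and necessary repair: taken verbatim, Definition~\ref{d-conj-comp-prelim} attaches a vacuous binder \( \sigma x_v \) at every bud and axiomatic leaf, and unless these are elided (equivalently, unless \( \hat\alpha_v \) is read as \( \hat\alpha_{v^c} \) for buds \( v \)) item 3 cannot hold; the paper, which states this proposition without proof, clearly intends that identification. The problem is item 3, and it is worse than a deferred step: the inequality you propose to extract from the invariant ordering is false. Your thread runs from \( \hat\alpha_u = \hat\alpha_{u^c} \) up the template path to \( \hat\alpha_v = \hat\alpha_{v^c} \), so its minimality requires in particular \( \rk{\hat\alpha_{v^c}} \le \rk{\hat\alpha_{u^c}} \). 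But \( u \rightsquigarrow v \) only says \( u^c \le v^c < u \): the bud \( u \) sits above \( v^c \), while its companion \( u^c \) may sit strictly below \( v^c \). In that configuration the variable \( x_{u^c} \) occurs free in \( \alpha_{v^c}^+ \) (the occurrence created at the bud \( u \)), so the closing substitutions place the closed formula \( \hat\alpha_{u^c} \) inside \( \hat\alpha_{v^c} \) as a \emph{literal} subformula, whence \( \rk{\hat\alpha_{u^c}} < \rk{\hat\alpha_{v^c}} \) outright. Concretely: vertices \( r < a \), a bud \( v \) with \( v^c = a \), and a bud \( u \) above \( a \) with \( u^c = r \); then \( u \rightsquigarrow v \), yet \( \hat\alpha_v = \hat\alpha_a \) properly contains \( \hat\alpha_u = \hat\alpha_r \), so the \emph{first} formula of any thread \( \hat\alpha_u \sft \hat\alpha_v \) is already strictly shorter than its last. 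The template condition \( \inv u \sqsubseteq \inv v \) does not exclude this shape --- it constrains controls, and this is just the generic nesting of an inner loop inside an outer one --- so no matching of \( \sqsubseteq \) against \( \rk{\cdot} \) can close your gap.

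What is true, and what the validation arguments (Propositions~\ref{conj-good-thread}--\ref{conj-entail-2}) actually rely on, is the statement with the two formulas exchanged: if \( u \rightsquigarrow v \) then there is a thread \( \hat\alpha_v \sft \hat\alpha_u \) on which \( \hat\alpha_u \) is the shortest formula. That thread ascends the template path from \( v^c \) to the bud \( u \) (which lies above \( v^c \) precisely because \( v^c < u \)), ending at the occurrence \( \hat\alpha_{u^c} = \hat\alpha_u \) substituted at that bud. Minimality then follows from a single observation that needs no invariant ordering at all: if \( b \) is a bud with \( w \le b \) and \( b^c < w \), then \( \hat\alpha_{b^c} \) is a literal subformula of \( \hat\alpha_w \) (the closing substitution plugs the closed \( \hat\alpha_{b^c} \) in for the free occurrence of \( x_{b^c} \) contributed by \( b \)), hence \( \rk{\hat\alpha_{b^c}} \le \rk{\hat\alpha_w} \); apply this with \( b = u \) at every vertex \( w \) of the path \( [v^c, u) \), noting that the unfolding and decomposition formulas in between each contain the next such \( \hat\alpha_w \) literally. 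The Kleene--Brouwer/invariant machinery genuinely enters the development elsewhere --- in tying successfulness to the \( \mu \)/\( \nu \) classification of threads --- not in item 3, so your proposal misplaces the one piece of structure it leans on, and the piece it actually needs (the literal-subformula lemma above, with the thread run in the opposite direction) is absent.
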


\subsection{Validation}

With \( \hat\alpha \) as the disjunctive companion to \( \alpha \), we now establish the two properties of Theorem~\ref{DNF-thm}: \( \alpha \Vdash \hat\alpha \) and \( \hat\alpha \Vdash \alpha \).
The two witnessing co-proofs are similar in construction. 
As the former is a more involved argument, we present it in detail and sketch the verification of the other claim.
Both claims are implicit in the main result of \cite{ALM21uniformInter}.

Both constructive co-proofs are obtained by modifying the pre-template for \( \alpha \) and inserting the corresponding disjunctive formula \( \hat\alpha_u \) at each vertex.
To confirm that each branch is good it is necessary to project branches of the pre-template \( P \) into the template \( T \).
Regularity of the pre-template means that every \( u \in P \setminus T \) has the form \( u = v w \) where \( v \) is a bud of \( T \) and \( v^c w \in P \).
Thus, the companion function of \( T \) induces a canonical projection \( p \colon P \to T \colon u \mapsto u^p \) of \( P \) into \( T \) defined as: if \( u = v w \) for a bud \( v \in T \) then \( u^p = ( v^c w )^p \); otherwise \( u^p = u \).
Note that \( u^p \) is never a bud of \( T \).

We appeal to the following observation relating invariants to the existence of threads in the pre-template.
Recall that a non-bud vertex is successful iff it is the companion to a successful bud.
\begin{proposition}\label{conj-good-thread}
	Let \( B \) be a branch of the pre-template of \( \alpha \) and let \( u_B \in T \) be the \( \le \)-minimal vertex such that \( u_B = B(i)^p \) for infinitely many \( i \).
	Then \( B \) is good (in the sense of the unannotated calculus where \( (\dup) \) is interpreted as weakening) iff \( u_B \) is a successful companion.
\end{proposition}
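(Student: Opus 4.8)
The plan is to first fix the combinatorial meaning of $u_B$ and then translate the invariant of the resulting outermost loop into the parity of the dominating fixed point. Since the projection sequence $(B(i)^p)_i$ lives in the finite tree $T$, some vertices recur infinitely often; let $I$ be this set and $u_B = \min_\le I$. I would first show that $u_B$ is a companion: to revisit the $\le$-minimal vertex $u_B$ infinitely often, $B$ must between consecutive visits traverse a bud $v$ with $v^c \le u_B$, and minimality together with the defining clause of $p$ (which replaces a bud $v$ by $v^c$) forces $v^c = u_B$. Thus $B$ eventually cycles through a fixed repetition $(u_B,v)$ and $\inv{u_B} = \inv{u_B,v} = (c,k)$ is well-defined. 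Because $\alpha$ is guarded, every repetition along $B$ crosses an instance of $(\diLs)$, so the associated left thread takes a modal step infinitely often and is a genuine infinite thread; Proposition~\ref{thd-progress} then supplies a unique $\le$-minimal-complexity formula $\eta = \sigma\phi$ occurring infinitely often on it, and by definition $B$ is good iff $\sigma = \mu$ (there are no right threads, the consequent being empty).

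The core of the argument is the equivalence $\inv{u_B} = (c,0)$ iff $\eta = \mf\psi$. I would prove it by tracking the annotation dynamics along the eventual loop. Names arise only from $(\muL[n])$, so each marks an unfolding of a $\mu$-subformula, and $(\cover^n)$ becomes applicable precisely when the $\mu$-formula marked by $n$ is regenerated after its deeper unfoldings have been discharged; crucially, applying $(\cover^n)$ truncates the annotations at $n$, discarding the deeper names while \emph{retaining} $n$ itself. Consequently the persistent prefix $c$ of the loop terminates at the outermost name that is reset along the loop, and the side-condition in the definition of the invariant (covers of $c(i)$ only at $i = \lh c - 1$) forces that reset to occur at the frontier $c(\lh c - 1)$, with $k = 0$ recording its occurrence. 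Minimality of $u_B$ guarantees that this frontier reset concerns the outermost fixed point regenerated infinitely often, namely $\eta$. Hence if $\eta = \mf\psi$ its marking name occupies the frontier of $c$ and is covered infinitely often, giving $k = 0$; whereas if $\eta = \nf\psi$ the regenerating outermost formula carries no name, no cover fires at the frontier of $c$, and $k = 1$. Combining this with Proposition~\ref{thd-progress} yields $u_B$ successful iff $k = 0$ iff $\sigma = \mu$ iff $B$ is good.

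The main obstacle is exactly this last equivalence: showing that, at the $\le$-minimal loop, the frontier cover of $c$ corresponds to the minimal-complexity formula of the thread rather than to some strictly inner $\mu$ that is also reset within the loop. I expect to dispatch this by exploiting minimality of $u_B$ against the two clauses defining $c$ in tandem: the longest-common-prefix clause pushes $c$ down to the innermost infinitely-persistent name, while the covers-only-at-$\lh c - 1$ clause excludes every inner reset from contributing to $k$, so that $k = 0$ isolates precisely the regeneration of the outermost infinitely-unfolded fixed point handled at $u_B$ (deeper regenerations being absorbed by larger invariants at strictly descendant companions). A secondary point to verify is that reading $(\dup)$ as weakening does not perturb the thread analysis: the left thread witnessing goodness follows principal formulas and never enters a $(\dup)$-introduced side formula from below, so the unannotated notion of goodness in the statement agrees with the annotated loop structure used throughout the argument.
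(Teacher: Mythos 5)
Your proposal founders on a point it treats as settled in the first paragraph: that a branch of the pre-template has a single ``associated left thread'' whose dominating formula \( \eta \) decides goodness. This is false. Traces branch at \( (\conjsL) \), which keeps \emph{all} conjuncts in one premise, so a single branch can carry several infinite threads with different dominating fixed points. For instance, starting from \( \conj\setof{\nu x\,\square x,\ \mu y\,\square y} \), after one modal step both fixed points travel up the \emph{same} branch, which therefore carries a \( \nu \)-thread and a \( \mu \)-thread simultaneously; the branch is good, yet ``the'' \( \eta \) of your argument could equally be the \( \nu \)-formula. Since goodness is existential over threads, your pivotal equivalence ``\( \inv{u_B} = (c,0) \) iff \( \eta = \mf\psi \)'' is not even well-posed, and the half ``\( \eta = \nf\psi \) implies \( k = 1 \)'' is outright wrong: an inner \( \mu \) dominating a \emph{different} thread can force covers and hence \( k=0 \) while your chosen thread is \( \nu \).

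Because of this, the two implications need separate arguments, and both are missing from the sketch. For \emph{successful \( \Rightarrow \) good} one must actually construct a witnessing \( \mu \)-thread; the paper does this by applying weak K\"onig's lemma to the infinite tree of traces through formulas containing the last name \( n \) of \( c \), and then uses linearity of annotations to conclude that the recurring formula \( \mf[an]\phi \) dominates the resulting thread. For \emph{good \( \Rightarrow \) successful}, your phrase ``its marking name occupies the frontier of \( c \)'' has no content: each application of \( (\muL[n]) \) introduces a \emph{fresh} name, so no single name marks the dominating formula along the branch. The paper instead runs a pigeonhole on the annotations \( a_i \) decorating successive occurrences of the dominating \( \mu \)-formula on the witnessing thread (between consecutive occurrences either \( a_i < a_{i+1} \) or some name of \( a_i \) is covered), extracts a shortest infinitely recurring annotation whose last name is covered infinitely often inside loops of \( B^p \), and then derives a contradiction with \( k = 1 \) from the template coherence condition that \( u \rightsquigarrow v \) implies \( \inv u \sqsubseteq \inv v \). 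Your proposal never invokes that condition, yet it is precisely what ties covers observed along the branch to the invariant of the \emph{minimal} companion \( u_B \); ``exploiting minimality against the two clauses defining \( c \)'' does not substitute for it. (Your preliminary observations — that \( u_B \) is a companion, and that \( (\dup) \)-as-weakening does not create spurious threads — are fine, but they are the easy part.)
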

For the proof of the proposition it is helpful to fix invariants to companions.
Since buds with the same companion are associated identical invariants we can, without confusion, refer to the invariant of a companion.
\begin{proof}
	Let \( B^p \) be the vertices in \( T \) that are projection of infinitely many vertices in \( B \).
	Let \( C_B \) be the set of companions in \( B^p \).
	There is a unique \( \le \)-minimal element of \( B^p \) and this is also an element of \( C_B \).
	Let \( u_B \) be this vertex and \( ( c , k ) \) the associated invariant.
	By the choice of \( T \), for every \( v \in C_B \) we have \( \inv {u_B} \sqsubseteq \inv v \).
	Let \( c_B \) be the longest prefix of all controls for vertices in \( B^p \).
	In particular, \( c \le c_B \).
	Henceforth, we treat only the template and pre-template rooted at \( u_B \) and assume \( B(0) = u_B \).

	Suppose \( B \) is good and, seeking a contradiction, that \( k = 1 \).
	So \( c_B = c \).
	Consider the plain formula \( \mf \phi \) of lowest complexity for which annotations occur infinitely often in the thread and let \( \tau \) be the suffix of the thread in which all formulas have complexity at least \( \rk{\mf \phi} \).
	Let \( ( \mf[a_i] \phi_i )_i \) enumerate the annotations of \( \mf \phi \) through which \( \tau \) passes.
	Linearity of the annotations means that \( \phi_i = \phi_j \) for all \( i,j \). Let \( \psi = \phi_0 \).
	Let \( a \) be the longest common prefix among the \( a_i \).
	Consider the vertex at which \( \mf[a_i] \psi \) is principal. The minor formula of this rule is the annotated formula \( \psi(\mf[a_i n] \psi) \) for some name \( n \) that is not in the control of the conclusion.
	As \( \mf[a_{i+1}] \psi \) is a descendent of \( \psi(\mf[a_i n] \psi ) \) in the pre-template \( P \), we have that either \( a_i < a_{i+1} \) or there is a sequent on the path between the two formulas at which a name in \( a_i \) is covered.
	Given that there are finitely many annotations in the template, there is a shortest annotation \( a \) such that \( a = a_i \) for infinitely many \( i \) and the last name in \( a \), say \( m \), must be covered in some sequent in \( B^p \).
	Since \( a \) is the prefix of an annotation that occurs in every sequent in \( B^p \) it follows that \( a \) is a subsequence of \( c_B \).
	But then the invariant of the repetition \( ( v^c, v) \) that surrounds the instance of \( (\cover^m) \) must have invariant \( ( c , 0 ) \), contradicting that \( \inv {u_B} \sqsubseteq \inv v \).

	For the converse direction, suppose \( \inv {u_B} = ( c , 0 ) \).	
	Let \( n \) be the final name in \( c \) and
	consider the tree of annotated formula traces through \( B \) that follow only formulas in which \( n \) occurs.
	By the choice of \( u_B \), \( n \) occurs in all controls of \( B^p \), whence  this tree of traces is infinite.
	By weak König's lemma there is therefore an infinite such annotated formula trace. Call this \( t \).
	From the assumption that \( u_B \) is successful, some instance of \( (\cover^n) \) is applied infinitely often in \( B \).
	For this to be the case, \( t \) must infinitely often pass through a formula in which \( n \) is covered, whence \( t \) must pass through a formula \( \mf[an] \phi \) followed by \( \phi(\mf[anm] \phi) \) infinitely often for some \( a \) and \( m \). 
	Linearity of the annotations enforces that it is specifically the formula \( \mf[an] \phi \) which witnesses that \( t \) supports a left \( \mu \)-thread.
\end{proof}

\begin{proposition}
	\label{conj-entail-1}
	\( \alpha \Vdash \hat\alpha \).
\end{proposition}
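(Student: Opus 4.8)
The plan is to realise the disjunctive‐companion construction as an explicit constructive co-proof whose underlying tree is the pre-template $P$ of $\alpha$, decorated on the right by the formulas $\hat\alpha_u$. By Theorem~\ref{guard-equi} I may assume $\alpha$ is guarded, so the template $T$, its buds, companions and the projection $p\colon P\to T$ are at hand. I would build a derivation $d$ of $\seq{\alpha}{\hat\alpha}$ following the shape of $P$: at a vertex $u\in P$ the antecedent is the plain sequent $\Phi_u^{-}$ labelling $u$ in $P$ and the head is $\hat\alpha_{u^p}$, the disjunctive companion of the projected vertex (since $u^p$ is never a bud, $\hat\alpha_{u^p}$ is a genuine disjunctive formula).

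The inferences of $d$ are obtained by pairing each left rule of $P$ with matching right rules that transform the head. As $\alpha_u^+=\sigma x_u\,\alpha_u^-$, the head at each vertex carries an outermost binder $\sigma x_u$ that is first unfolded by $(\muR)$, turning $\hat\alpha_u$ into $\alpha_u^-$ with $\alpha_u^+$ substituted for $x_u$, i.e.\ into the head prescribed by the children. For a unary or structural left step ($\nuL$, $\amuL{n}$, $\conjsL$, $\muhL$, $\nuhR$, $\cw$, $\dup$, $\cover$), where $\alpha_u^-=\alpha_{u^+}^+$, this single $(\muR)$ passes to $\hat\alpha_{u^+}$. For a $(\disjL)$ step, after $(\muR)$ the head is $\disj\{\hat\alpha_v\}_v$, and in the premise handling a given disjunct I select the corresponding $\hat\alpha_v$ by $(\disjR)$. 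For a $(\diLs)$ step I unfold to $\nabla_L(\Gamma',\Delta')=\conj(L\cup\diamond\Gamma'\cup\{\square\disj\Delta'\})$ and apply $(\conjR)$: literal conjuncts close by identity on a variable of $L$; each diamond conjunct $\diamond\hat\alpha_{u_i}$ is reached by $(\diL)$ from the premise $\seq{\delta_i,\Psi^-}{\hat\alpha_{u_i}}$ (weakening the other diamonds and $L$); and the box conjunct $\square\disj\Delta'$ is reached by $(\sqR)$ followed by $(\disjR)$ selecting $\hat\alpha_{u_n}$, matching the box premise. The inconsistent and empty‑$\disj$ leaves close against $\bot$ by identity and weakening. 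Every sequent carries a single head, so $d$ is constructive, and all identity rules are applied to variables, as required for $\Vdash$.

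The crux is to show that $d$ is a co-proof, i.e.\ that every infinite branch is good; here guardedness guarantees that infinite branches pass through $(\diLs)$ infinitely often, so both the left and right threads genuinely progress. An infinite branch $B$ of $d$ projects to an infinite branch of $P$ with the same antecedents, so its left threads are exactly those of $P$. Let $u_B$ be the $\le$-minimal vertex of $T$ that is the projection of infinitely many points of $B$. I would split on its invariant via Proposition~\ref{conj-good-thread}: if $u_B$ is a successful companion then $B$ is good in the unannotated sense, i.e.\ carries a left $\mu$-thread, and this thread survives into $d$ (the relevant diamond or box conjunct branch preserves it through the modal decomposition), so $B$ is good. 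If $u_B$ is unsuccessful then $\alpha_{u_B}^+=\nf x_{u_B}\,\alpha_{u_B}^-$, and I claim the unique right thread of $B$ (unique by Proposition~\ref{kmui-basic}) is a $\nu$-thread: $B$ loops around $u_B$ infinitely, so the binder $\sigma x_{u_B}$ is unfolded and re-entered infinitely often through the occurrences $x_{u_B}$ at buds with companion $u_B$, while $\le$-minimality of $u_B$ means no proper ancestor binder recurs. Since substitution inflates the complexity of the inner recurring binders above that of $\nu x_{u_B}$, the outermost recurring binder $\nu x_{u_B}$ is the minimal-complexity formula of Proposition~\ref{thd-progress}, whence the thread is $\nu$ and $B$ is good.

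The main obstacle is the second horn of this dichotomy: correctly identifying the dominant fixed point of the right thread with the binder at the $\le$-minimal recurring projected vertex $u_B$. This requires matching the recurrence structure of $B$ under $p$ with the nesting of the binders $\sigma x_u$ inside $\hat\alpha$, and verifying through the complexity analysis underlying Proposition~\ref{thd-progress} that the outermost recurring binder governs the thread's character. A parallel, easier, check is needed in the successful case to confirm that the good left $\mu$-thread is not destroyed by the weakenings inserted at the modal steps.
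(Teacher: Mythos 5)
Your proposal is correct and takes essentially the same route as the paper: the paper's proof likewise decorates the template (equivalently, your pre-template via the projection $p$) with the companions $\hat\alpha_u$ as heads, identifies buds with companions to obtain a maximal constructive derivation, and settles goodness of every branch by exactly your dichotomy through Proposition~\ref{conj-good-thread} --- either a left $\mu$-thread of the (pre-)template survives, or $u_B$ is an unsuccessful companion and the unique right thread is $\nu$. Your elaboration of the right-rule pairing at $(\diLs)$/$(\disjL)$ steps and of why the binder at the $\le$-minimal recurring vertex dominates the right thread is in fact more detailed than the paper's own, very terse, justification.
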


\begin{proof}
The desired co-proof is obtained from the template of \( \alpha \) by inserting \( \hat\alpha_u \) as the consequent of the sequent at \( u \in T \) and inserting new inferences where appropriate. The definition of \( \hat\alpha_u \) entails that buds continue to have the same sequent as their companion, and identifying buds with companions yields a maximal \( \kmo \)-derivation \( \pi \).
That every branch of \( \pi  \) is good is guaranteed by the choice of \( \hat\alpha \): If a branch of \( \pi \) does not carry a left \( \mu \)-thread then, by virtue of proposition~\ref{conj-good-thread}, the unique right thread carried by the branch will be \( \nu \).
\end{proof}

The converse provability claim, presented next, employs a similar argument but is complicated by the disjunctive interpretation of modalities.
For \( u \in T \), let \( \alpha_u^+ \), \( \alpha_u^- \) be as in definition~\ref{d-conj-comp-prelim}.
For each \( u \in T \), let \( \hat{\alpha}_u^- \) be the subformula of \( \hat\alpha_u \) corresponding to the choice of \( \alpha_u^- \), namely the following formula where \( \epsilon = u_1 < \dotsm < u_n < u \) is an enumeration of the ancestors of \( u \):
\[
\begin{aligned}
	\hat{\alpha}_u^- &\coloneqq \alpha_u^- \subs[x_u]{ \alpha_u^+ } \subs[x_{u_n}]{ \alpha_{u_n}^+ } \dotsm \subs[x_{u_1}]{ \alpha_{u_1}^+ }
	\\
	&\thinspace = \alpha_u^- \subs[x_{u_1}]{ \hat\alpha_{u_1} } \dotsm \subs[x_{u_n}]{ \hat\alpha_{u_n} }  \subs[x_u]{ \hat\alpha_u } 
	&&\text{by definition of the $\hat\alpha_v$.}
	\\
	\hat\alpha_u &\thinspace = \sigma_u x_u\, \hat\alpha_u^-
\end{aligned}
\]
\begin{proposition}
	\label{conj-entail-2}
	\( \hat\alpha \Vdash \alpha \).
\end{proposition}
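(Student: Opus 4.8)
The plan is to mirror the construction of Proposition~\ref{conj-entail-1}, but with the two formulas exchanging sides: I build the desired constructive co-proof of $\seq{\hat\alpha}{\alpha}$ from the pre-template of $\alpha$, now placing (the decomposition of) $\hat\alpha_u$ in the antecedent and a single subformula drawn from $\Phi_u$ in the succedent at each vertex $u\in T$, and identifying buds with their companions to obtain a maximal $\kmo$-derivation $\pi$. Because $\hat\alpha_u$ is a guarded disjunctive formula whose shape tracks the template tree, its only conjunctive subformulas are the cover modalities $\nabla_L(\Gamma,\Delta)$ produced at the $\diLs$-vertices, every other connective being a disjunction or a quantifier; hence the left-decomposition of $\hat\alpha_u$ can be made to follow $T$ step for step, with the $\rightsquigarrow$-structure on buds governing how the cycles close up.

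First I would dispatch the non-modal rules, which are routine. A template step $\disjL$ becomes a $\disjL$ applied to $\hat\alpha_u=\sigma_u x_u\,\disj\setof{\cdots}$ in the antecedent, together with the matching right rule ($\disjR$, $\conjR$ or $\sigR$) building the corresponding subformula of $\alpha$ in the succedent; the vacuous binders $\sigma_u x_u$ introduced by the single-child rules are stripped by $\sigL$. In each such case the succedent remains a single $\alpha$-subformula dictated by the template, so all sequents stay intuitionistic, and the axiomatic leaves are closed directly since the underlying plain sequent is axiomatic.

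The hard part will be the modal step, where the disjunctive interpretation of modalities intervenes. At a $\diLs$-vertex $u$ with $\Phi_u=\diamond\delta_0,\dots,\diamond\delta_{n-1},\square\Psi,L$ the companion formula is $\hat\alpha_u^-=\nabla_L(\Gamma,\Delta)=\conj\bigl(L\cup\diamond\Gamma\cup\setof{\square\disj\Delta}\bigr)$ with $\Gamma=\setof{\hat\alpha_{u_i}}[i<n]$ and $\Delta=\Gamma\cup\setof{\hat\alpha_{u_n}}$. Whereas Proposition~\ref{conj-entail-1} \emph{assembles} this cover modality on the right from the $n{+}1$ premises, here I must \emph{decompose} it on the left and reproduce the several modal demands of $\alpha$ through a single succedent. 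The move is to split the conjunctive goal by a right $\conjR$-pattern into its components: each literal $\ell\in L$ is closed by identity against the copy exposed by $\conjL$; each diamond target $\diamond\delta_i$ is derived by $\diL$ from the conjunct $\diamond\hat\alpha_{u_i}$ and the shared box $\square\disj\Delta$, leaving the premise $\seq{\hat\alpha_{u_i},\disj\Delta}{\delta_i}$; and each box target $\square\psi$ ($\psi\in\Psi$) is derived by $\sqR$ and then $\disjL$ on $\disj\Delta$, which distributes the goal $\psi$ to every child (each child inherits $\Psi$, so $\psi$ occurs among its formulas). The obstacle is exactly the bookkeeping forced by the single-head constraint: the one cover modality must be contracted across these branches (legitimate, as $\nabla_L$ is a conjunction), and the box component $\square\disj\Delta$ must simultaneously supply the context $\Psi$ passed to each diamond-child $u_i$ and to the box-child $u_n$, with surplus context removed by weakening. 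I would then verify that the antecedent at the image of each child $v$ is again $\hat\alpha_v$ up to weakenable context, that buds reload their companions via $\alpha_u^-=x_{u^c}$, and hence that the construction closes up coinductively.

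Finally, goodness of every branch $B$ of $\pi$ follows by applying Proposition~\ref{conj-good-thread} symmetrically. Let $u_B\in T$ be the $\le$-minimal vertex with $u_B=B(i)^p$ for infinitely many $i$. If $u_B$ is a successful companion then $\alpha_{u_B}^+=\mf x_{u_B}\,\alpha_{u_B}^-$, so the $\hat\alpha$-thread carried on the \emph{left} is a left $\mu$-thread and $B$ is good. If $u_B$ is unsuccessful then $\alpha_{u_B}^+$ is a greatest fixed point, making the left $\hat\alpha$-thread a bad $\nu$-thread; but Proposition~\ref{conj-good-thread} then guarantees that $B$ carries no left $\mu$-thread of the template, so the unique $\alpha$-thread—which now lies on the \emph{right}, there being at most one right thread by Proposition~\ref{kmui-basic}—must be $\nu$, and $B$ is good once more. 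Note that no negation is involved: the plain $\alpha$-subformulas appear literally in the succedent, so their thread type is unchanged by the relocation from the left-hand side used in the template. Thus $\pi$ is a constructive co-proof and $\hat\alpha\Vdash\alpha$, completing the proof of the disjunctive normal form theorem together with Proposition~\ref{conj-entail-1}.
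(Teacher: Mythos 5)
Your construction is correct and takes essentially the same route as the paper: there too the co-proof of \( \seq{\hat\alpha}{\alpha} \) is built by following the pre-template with a single \( \alpha \)-subformula as head, the literal/diamond/box head cases at a \( (\diLs) \)-vertex are handled exactly as you describe (the box case, distributing the goal to all children via \( \sqR \) and \( \disjL \) on \( \disj\Delta \), is packaged in the paper as an auxiliary rule \( (\sqRs) \) used in a first pass proving \( \seq{\alpha}{\alpha} \), whose antecedents are then replaced by the \( \hat\alpha_{p(u)} \) and decomposed by \( \sigL \), \( \conjL \), \( \sqR \), \( \disjL \) in a second pass), and goodness of every branch is deduced from Proposition~\ref{conj-good-thread} just as in your final paragraph. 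The only difference is presentational: you merge the paper's two-step construction into one direct pass.
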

\begin{proof}
	Let \( P \) be the pre-template of \( \alpha \) with \( c_u : \lseq {\Phi_u} \) being the sequent at \( u \in P \).
	The witnessing co-proof is constructed in two steps.
	First, we co-recursively construct a co-proof of \( \seq \alpha \alpha \) maintaining the invariant that each bud of the partially determined derivation is a sequent \( \seq {(\Phi_u)^-} \gamma \) for an associated \( u \in P \) satisfying \( \gamma \in (\Phi_{u})^- \). 
	In place of the usual right modal rule \( (\sqR) \), however, is an amalgamation of (\( \sqR \)) and the branching rule (\( \diLs \)):
	\begin{prooftree*}

	\hypo{ \seq { \beta_1 , \Phi  } {\delta} }

	\hypod
	
	\hypo{ \seq { \beta_n , \Phi  } {\delta} }
	
	\hypo{ \seq { \Phi  } {\delta} }
	
	\infer4[\sqRs]{ \seq{ \diamond \beta_1 , \dotsc , \diamond \beta_n , \square \Phi , L  }{\square \delta} }
	\end{prooftree*}
	The rule is sound because each premise is a weakening of the final one.
	The motivation for the rule comes from the second step in the process, wherein each antecedent \( (\Phi_{u})^- \) is replaced by the corresponding subformula of \( \hat\alpha \), with the modal case necessitating the new rule above.
	
	The co-proof in the first part is easily to construct.
	At any bud whose sequent satisfies the desired invariant, the derivation is continued by copying the left rules from the pre-template \( P \) until either the rule (\( \diLs \)) is reached or the next rule in \( P \) will destroy the invariant.
	We first treat the case of a non-modal rule in \( P \) which destroys the invariant.
	This means that the construction of the desired co-proof has reached a sequent \( \seq {(\Phi_u)^-} \gamma \) where \( \gamma \) is the principle formula at \( u \).
	In all scenarios we can continue by applying the left rule from \( P \) followed by the corresponding right rule.
	For example, if \( \gamma = \disj \setof{\gamma_1, \dotsc, \gamma_n} \) and \( (\Phi_u)^- = \gamma , \Psi \), the following derivation is inserted.
	\begin{prooftree*}

	\hypo{\seq {\gamma_1, \Psi } {\gamma_1} }
	\infer1[\disjR]{ \seq {\gamma_1, \Psi } {\gamma} }

	\hypod
	
	\hypo{\seq {\gamma_n, \Psi } {\gamma_n} }
	\infer1[\disjR]{ \seq {\gamma_n, \Psi } {\gamma} }
	
	\infer3[\disjL]{ \seq{ \gamma , \Psi }{\gamma} }
	\end{prooftree*}
	
	Now suppose \( \rules_P(u) = \diLs \) and that \( (\Phi_u)^- = \diamond \Psi , \square \Phi , L \) where \( L \) is a set of literals.
	If \( L \) is inconsistent or \( \gamma \in L \), the sequent is initial, and if \( \gamma \in \diamond \Psi \) the rule \( (\diL) \) is inserted with the corresponding successor of \( u \).
	For \( \gamma = \square \delta \in \square \Phi \) we insert the special modal rule (\( \sqRs \)) presented above with \( \Psi = \beta_1 , \dotsc, \beta_n \).
	The buds of the extended derivation each satisfy the invariant and correspond to the immediate successors of \( u \) in \( P \).
	
	The second step of the argument is now simply the matter of replacing each antecedent \( (\Phi_u)^- \) by the formula \( \hat \alpha_{p(u)} \) where \( p : P \to T \) is the projection of the pre-template \( P \) into the template from which \( \hat\alpha \) is constructed.
	This step requires inserting instances of the left rules (\( \muL \)) and (\( \nuL \)) to accommodate the quantifiers in \( \hat\alpha \). 
	But the logical steps transfer directly, with instances of the new rule (\( \sqRs \)) corresponding to
	\begin{prooftree*}

	\subproof{ \seq { \hat \alpha_{p(u_0)} } {\delta} }

	\hypod
	
	\subproof{ \seq { \hat \alpha_{p(u_{n-1})}  } {\delta} }
	
	\subproof{ \seq { \hat \alpha_{p(u_n)}  } {\delta} }
	
	\infer4[\disjL]{ \seq{ \disj \setof{\hat \alpha_{p(u_0)} , \dotsc, \hat \alpha_{p(u_n)} }  }{ \delta} }
	\infer1[\sqR,\conjL]{ \seq {\hat\alpha_{p(u)}^-} \delta }
	\infer1[\sigL]{ \seq {\hat\alpha_{p(u)}} \delta }
	\end{prooftree*}
	That the constructed derivation is a proof is a consequence of Proposition~\ref{conj-good-thread}.
\end{proof}

Let \( \Lit(\alpha) \) denote the set of literals occurring free in \( \alpha \), i.e., \( \Lit(\alpha) = \setof{ x \in \Lit }[\alpha \sft x] \).
As a corollary of the proof of Theorem~\ref{DNF-thm}, we obtain
\begin{corollary}[Uniform Lyndon interpolation for constructive co-proofs]\label{uni-int}
	For every formula \( \alpha \) and set of literals \( L \), there exists a formula \( \iota \) satisfying
	\begin{enumerate}
		\item \( \Lit(\iota) \subseteq \Lit(\alpha) \cap L \).
		\item For every \( \beta \) with \( \Lit(\beta) \subseteq L \),
		\(
			\Seq \alpha \beta \) iff \( \Seq \iota \beta .
		\)
	\end{enumerate}
\end{corollary}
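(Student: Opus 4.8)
The plan is to build \( \iota \) as a \emph{relativised} disjunctive companion of \( \alpha \), obtained from the template construction of Theorem~\ref{DNF-thm} by discarding every free literal outside \( L \). Recall from Definition~\ref{d-conj-comp-prelim} that the only free literals of \( \hat\alpha \) are those occurring in the literal set \( L' \) of a cover modality \( \nabla_{L'}(\cdots) \) introduced at a \( (\diLs) \)-node, and that \( L' \subseteq \Lit(\alpha) \) for every such node. I would therefore define \( \iota \) by the \emph{identical} construction on the same template \( T \), replacing each modal clause \( \alpha_u^- = \nabla_{L'}(\cdots) \) by \( \nabla_{L'\cap L}(\cdots) \); equivalently, \( \iota \) is \( \hat\alpha \) with each cover-modality literal not in \( L \) deleted. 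Condition (1) is then immediate: the free literals of \( \iota \) are exactly those \( x \in L'\cap L \) appearing at some node, so \( \Lit(\iota) \subseteq \Lit(\alpha) \cap L \).

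For the direction \( \Seq\iota\beta \Rightarrow \Seq\alpha\beta \) I would first establish \( \alpha \Vdash \iota \). Deleting conjuncts from each cover modality yields a provably weaker formula, so \( \hat\alpha \Vdash \iota \): the witnessing co-proof is the one of Proposition~\ref{conj-entail-1} carrying the smaller cover modalities on the right, and since only complexity-\( 0 \) literals are removed the thread structure — and hence the goodness analysis of Proposition~\ref{conj-good-thread} — is untouched. Combining \( \alpha \Vdash \hat\alpha \) (Theorem~\ref{DNF-thm}) with \( \hat\alpha \Vdash \iota \) through cut admissibility (Theorem~\ref{admiss-charac}) gives \( \alpha \Vdash \iota \), whence any co-proof of \( \seq\iota\beta \) composes with it to yield \( \seq\alpha\beta \).

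The substantive direction is \( \Seq\alpha\beta \Rightarrow \Seq\iota\beta \) for \( \Lit(\beta)\subseteq L \). Using \( \hat\alpha \VdashV \alpha \) and cut, it suffices to turn a co-proof \( \pi \) of \( \seq{\hat\alpha}\beta \) into one of \( \seq\iota\beta \). The key structural facts are: co-proofs are cut-free, so every head formula is a subformula of \( \beta \) and every right literal lies in \( \Lit(\beta) \subseteq L \); and the antecedent \( \hat\alpha \) is disjunctive, so within each modal layer its literals come from a single cover modality with \emph{consistent} literal set, resetting at each \( (\diL) \)-step. Consequently no branch of \( \pi \) closes by an antecedent pair \( x, \neg x \), and a closing identity always matches an antecedent literal against an equal \( L \)-head. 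A left literal \( x \notin L \) can therefore never be matched, and being complexity-\( 0 \) it terminates any thread running through it; such occurrences are inert. Deleting them throughout \( \pi \) — equivalently, declining to extract the non-\( L \) literal conjuncts when decomposing the antecedent cover modalities — produces a derivation whose antecedent is exactly \( \iota \), in which every branch retains its left-\( \mu \) or right-\( \nu \) thread and every leaf remains closed. This is the desired co-proof of \( \seq\iota\beta \).

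I expect this last step to be the main obstacle: one must verify that the pruning is a well-defined coinductive transformation of the infinite proof and, crucially, that it preserves the \emph{global} thread condition on every infinite branch while keeping all leaves axiomatic. The three facts isolated above — the subformula property of cut-free constructive co-proofs, thread-terminality of literals, and consistency of the cover-modality literal sets in a disjunctive antecedent — are precisely what guarantee soundness of the deletion; assembling them into a branch-by-branch verification (in the style of the goodness argument of Proposition~\ref{conj-good-thread}) is the technical heart, whereas condition (1) and the \( \Seq\iota\beta\Rightarrow\Seq\alpha\beta \) direction are routine given the disjunctive-normal-form machinery.
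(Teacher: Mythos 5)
Your interpolant is the paper's own: both proposals intersect every cover\nobreakdash-modality literal set of the disjunctive companion \( \hat\alpha \) with \( L \), both obtain condition (1) from the fact that the construction of Definition~\ref{d-conj-comp-prelim} places free literals only inside the sets \( K \) of \( \nabla_K(\Gamma,\Delta) \), and both obtain the direction \( \Seq{\iota}{\beta} \Rightarrow \Seq{\alpha}{\beta} \) from \( \alpha \Vdash \hat\alpha \Vdash \iota \) together with cut admissibility (Theorem~\ref{admiss-charac}). Up to that point you are tracking the paper's proof.

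The gap is in the direction \( \Seq{\alpha}{\beta} \Rightarrow \Seq{\iota}{\beta} \), and it is exactly the structural claim your pruning rests on: that in a cut-free constructive co-proof of \( \seq{\hat\alpha}{\beta} \) the antecedent literals of each modal layer come from a single cover modality, so that no branch closes by \( \idL \) on a pair \( x, \lnot x \). This claim is false. On the left, \( \nabla_K(\Gamma,\Delta) \) decomposes into \( K \), \( \diamond\Gamma \) and \( \square\disj\Delta \); an instance of \( \diL \) then leaves an antecedent \( \gamma, \disj\Delta \) with \( \gamma \in \Gamma \), and \( \disjL \) places \( \gamma \) beside an arbitrary \( \delta \in \Delta \). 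In \( \hat\alpha \) the members of \( \Gamma \) and \( \Delta \) are the formulas \( \alpha_{u_i}^+ \) attached to \emph{distinct} children of a template node, and their inner cover modalities may carry jointly inconsistent literal sets: already for \( \alpha = \diamond(x \wedge p) \wedge \diamond(\lnot x \wedge q) \) with \( x \notin L \), one modal step followed by one \( \disjL \) yields an antecedent containing both \( x \) and \( \lnot x \). A given co-proof of \( \seq{\hat\alpha}{\beta} \) may close such a branch by this clash; after deleting the two literals that leaf is no longer an axiom, and no local repair is possible, because the relativised sequent standing there (the clash removed) need not even be valid. So the three facts you isolate do not suffice: what actually has to be proved is that clash closures on non-\( L \) literals can be \emph{avoided}, a global reorganisation of the co-proof rather than a property preserved by subformula-wise deletion.

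This is also where the correct route differs from yours. The paper does not prune a co-proof whose antecedent is \( \hat\alpha \); the biconditional \( \Seq{\iota}{\beta} \) iff \( \Seq{\hat\alpha}{\beta} \) is extracted from the template machinery of Theorem~\ref{DNF-thm}, i.e.\ from relativised versions of the constructions behind Propositions~\ref{conj-entail-1} and~\ref{conj-entail-2}, in which the \emph{entire} antecedent of a sequent is replaced by the single formula \( \iota_u \) attached to the matching template node. Under that alignment every antecedent clash is internal to one template sequent --- when the formula being decomposed on the left is the plain \( \alpha \), the rule \( \diL \) keeps the diamond formula together with \emph{all} box formulas, exactly as the template does --- and Definition~\ref{d-conj-comp-prelim} maps such clashing nodes to \( \bot \), whatever the literals involved and whether or not they lie in \( L \); the corresponding \( \iota_u \) then entails anything. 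The cross-branch mixing that defeats your deletion is manufactured by the component \( \square\disj\Delta \) of \( \hat\alpha \) itself and never arises on the template-following route. To salvage your argument you would need an additional lemma that every co-proof of \( \seq{\hat\alpha}{\beta} \) can be normalised so that each incompatible branch \( \seq{A,B}{\psi} \) is proved by contraction and weakening from its sibling \( \seq{A,A}{\psi} \) instead of by the clash; that lemma, not the deletion, is the technical heart of the corollary.
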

\begin{proof}
	Let \( \gamma \) be the disjunctive formula constructed by the proof of Theorem~\ref{DNF-thm} and define \( \iota \) by replacing each subformula \( \nabla_K(\Gamma,\Delta) \) of \( \gamma \) by \( \nabla_{K \cap L}(\Gamma,\Delta) \).
	Then \( \Lit(\iota) \subseteq \Lit(\gamma) \cap L \subseteq \Lit(\alpha) \cap L \).
	Also, \( \Seq \iota \beta \) iff \( \Seq \gamma \beta \) provided \( \Lit(\beta) \subseteq L \). 
	Since \( \alpha \VdashV \gamma \), we are done.
\end{proof}

\section{Completeness}
\label{s-completeness}

We can now 
establish completeness of the finitary proof system \( \kmu \).
The result arises as a corollary of the stronger result that the constructive calculus \( \kmu \) is complete for `negative' sequents, i.e., \( \kmui \vdash \seq {\neg \alpha} \bot \) if \( \alpha \) is valid.
This partial completeness result for the constructive \( \mu \)-calculus arises from the partial interpretation of constructive co-proofs as cyclic proofs (Theorem~\ref{quasi-completeness}) by applying the result to a valid sequent \( \seq{\delta} \bot \) where \( \delta \in \PD \) is such that \( \cmui \vdash \seq {\neg\alpha} \delta \).
As the cyclic and finitary calculi coincide, (full) completeness for \( \cmu \) and \( \kmu \) obtains.

A natural candidate for the \( \PD \)-formula \( \delta \) is the disjunctive normal form given by Theorem~\ref{DNF-thm}.
While this choice is sufficient (and is the choice in \cite{Waluk00comp-lc}), the argument behind \( \cmu \vdash \seq {\neg \alpha} {\delta}  \) is simplified by a more careful definition.

\begin{definition}
	\label{d-conj-comp}
	Each formula \( \alpha \) is associated a \emph{companion} formula \( \alpha_d \in \PD \) as follows.
	Let \( \hat\alpha \) be the disjunctive companion of \( \alpha \).
	\begin{itemize}
		\item If \( \alpha \in \Lit \) atomic, \( \alpha_d \coloneqq \alpha \).
		
		\item If \( \alpha = \bigcirc\Gamma \) for \( \bigcirc \in \setof{ \disj, \conj } \), then \( \alpha_d \coloneqq \bigcirc\setof{ \gamma_d }[\gamma \in \Gamma] \).
		
		\item If \( \alpha = \triangle \beta \) for \( \triangle \in \setof{ \square   , \diamond  } \), then \( \alpha_d \coloneqq \triangle  \beta_d \).
		
		\item If \( \alpha = \nf \phi \), then \( \alpha_d \coloneqq \nf {\phi_d} \) where \( \phi_d = \lambda  x.\, ({\phi x})_d \).

		\item If \( \alpha = \mf \phi \), then \( \alpha_d \coloneqq \hat\alpha \).

	\end{itemize}
\end{definition}

The definition of \( \alpha_d \) applies the disjunctive normal form theorem to at most on formula on each \( \alpha \)-thread.
%
An induction on the complexity of \( \alpha \), applying Proposition~\ref{guard-reds-f}, transitivity of \( \Vdash \) and the disjunctive normal form theorem, yields
\begin{proposition}
	\label{compl-lemma-1}
	\( \alpha_d \VdashV \alpha \).
\end{proposition}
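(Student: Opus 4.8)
The plan is to argue by induction on the complexity $\rk\alpha$, exploiting that $\VdashV$ is a congruence on formulas. First I would record the relevant structural facts about $\VdashV$: it is reflexive by Proposition~\ref{kmoi-id}, symmetric by its very definition, and transitive because $\Vdash$ is transitive, the latter being the special case of cut admissibility (Theorem~\ref{admiss-charac}) in which one takes a singleton antecedent and empty side contexts (from $\alpha_0 \Vdash \alpha$ and $\alpha \Vdash \beta$ one gets $\alpha_0 \Vdash \beta$, and dually for the reverse direction). Combined with Proposition~\ref{guard-reds-f}, which permits replacing a literal subformula by a $\VdashV$-equivalent one, this makes $\VdashV$ an equivalence relation that is moreover a congruence for the propositional and modal formula constructors.

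The base case $\alpha \in \Lit$ is immediate, since $\alpha_d = \alpha$ and $\VdashV$ is reflexive. For the connective cases $\alpha = \bigcirc\Gamma$ and the modal cases $\alpha = \triangle\beta$, the companion is obtained by replacing each immediate subformula $\gamma$ (respectively $\beta$) by $\gamma_d$ (respectively $\beta_d$). The induction hypothesis supplies $\gamma_d \VdashV \gamma$ (respectively $\beta_d \VdashV \beta$) for these strictly less complex subformulas, so Proposition~\ref{guard-reds-f} yields a $\VdashV$-equivalent formula after each single replacement; as $\Gamma$ is finite, chaining the replacements by transitivity of $\VdashV$ produces $\alpha \VdashV \alpha_d$. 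For the greatest-fixed-point case $\alpha = \nf\phi$, observe that $\phi_d x = (\phi x)_d$ by definition of $\phi_d = \lambda x.\,(\phi x)_d$, while $\rk{\phi x} = \rk\phi < \rk{\nf\phi}$, so the induction hypothesis gives $\phi_d x \VdashV \phi x$; Proposition~\ref{kmoi-mono}(2) then lifts this equivalence of predicate bodies to $\nf{\phi_d} \VdashV \nf\phi$, i.e.\ $\alpha_d \VdashV \alpha$.

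The only genuinely substantive case is the least-fixed-point case $\alpha = \mf\phi$, where $\alpha_d$ is defined to be the disjunctive companion $\hat\alpha$. Here no appeal to the induction hypothesis is made; instead the required equivalence $\hat\alpha \VdashV \alpha$ is exactly the disjunctive normal form theorem (Theorem~\ref{DNF-thm}), whose two provability halves are Propositions~\ref{conj-entail-1} and~\ref{conj-entail-2}. This is precisely the point at which the construction of $\alpha_d$ \emph{spends} the normal form theorem: it is invoked on at most one formula per $\alpha$-thread, namely at each outermost $\mu$, while the surrounding $\nu$, modal and connective structure is dispatched by the routine congruence argument above.

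I expect this $\mu$-case to be the main obstacle only in the sense that all the real work has been delegated to Theorem~\ref{DNF-thm}; the remaining induction is bookkeeping. The sole delicate ingredient is the reliance on cut admissibility (Theorem~\ref{admiss-charac}) to guarantee that $\VdashV$ is transitive, and hence that the iterated literal-subformula replacements in the connective case compose to a single equivalence. No guardedness assumption is needed anywhere, as the disjunctive companion in the $\mu$-case already folds in the passage to a guarded equivalent (Theorem~\ref{guard-equi}) internally.
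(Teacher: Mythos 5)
Your proposal is correct and takes essentially the same route as the paper, which proves the proposition by induction on the complexity of \( \alpha \) using Proposition~\ref{guard-reds-f}, transitivity of \( \Vdash \) (i.e.\ cut admissibility, Theorem~\ref{admiss-charac}), and the disjunctive normal form theorem for the \( \mu \)-case. Your additional detail — dispatching the \( \nu \)-case via Proposition~\ref{kmoi-mono} and noting that guardedness is absorbed into the definition of the disjunctive companion — is a faithful unpacking of the same argument rather than a deviation from it.
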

%

Inserting Proposition~\ref{compl-lemma-1} into Theorem~\ref{quasi-completeness} gives rise to

\begin{proposition}
	\label{compl-a-hat}
	\( \cmui\vdash \seq{\alpha_d}{\alpha} \).
\end{proposition}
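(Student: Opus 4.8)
The plan is to read the statement off Theorem~\ref{quasi-completeness} directly, instantiated with the one-element context \( \alpha_d \). That theorem asserts that whenever \( \Phi \Vdash \alpha \) and \( \Phi \subseteq \PD \), there is a cut-free cyclic proof witnessing \( \cmui \vdash \seq\Phi\alpha \). Taking \( \Phi \) to be the single formula \( \alpha_d \), it therefore suffices to establish two things: that \( \alpha_d \in \PD \), so the side condition \( \Phi \subseteq \PD \) is satisfied; and that \( \alpha_d \Vdash \alpha \), which is exactly one half of Proposition~\ref{compl-lemma-1} (the relation \( \alpha_d \VdashV \alpha \) unpacks to \( \alpha_d \Vdash \alpha \) together with \( \alpha \Vdash \alpha_d \), and only the former is needed here).

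First I would verify \( \alpha_d \in \PD \) by induction on the complexity \( \rk\alpha \), following the clauses of Definition~\ref{d-conj-comp}. The base case \( \alpha \in \Lit \) is immediate since \( \Lit \subseteq \PD \). For \( \alpha = \bigcirc\Gamma \) the companion is \( \bigcirc\setof{\gamma_d}[\gamma\in\Gamma] \); as \( \rk\gamma < \rk\alpha \) for each \( \gamma \in \Gamma \), the induction hypothesis gives \( \gamma_d \in \PD \), and closure of \( \PD \) under \( \disj \) and \( \conj \) applies. The modal case \( \alpha = \triangle\beta \) uses the induction hypothesis on \( \beta \) together with closure of \( \PD \) under \( \square \) and \( \diamond \). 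For \( \alpha = \nf\phi \) we have \( \alpha_d = \nf{\phi_d} \) with \( \phi_d x = (\phi x)_d \); since \( \rk{\phi x} < \rk{\nf\phi} \), the induction hypothesis yields \( \phi_d x \in \PD \), whence \( \nf{\phi_d} \in \PD \) by the closure clause for \( \nu \).

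The one case carrying genuine content---and the step I would flag as the main obstacle---is \( \alpha = \mf\phi \), where the recursion halts and sets \( \alpha_d = \hat\alpha \), the disjunctive companion. Here the induction hypothesis is unavailable, so instead I would invoke the fact, established in the construction of the disjunctive companion (cf.\ Theorem~\ref{DNF-thm}), that \( \hat\alpha \) is a \emph{guarded disjunctive} formula. Every disjunctive formula lies in \( \D \), and \( \D \subseteq \PF\D = \PD \) by the first clause defining \( \PD \), so \( \hat\alpha \in \PD \). This also settles well-foundedness of the recursion defining \( \alpha_d \): every clause other than the terminal \( \mu \)-clause strictly decreases \( \rk\alpha \). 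With \( \alpha_d \in \PD \) confirmed and \( \alpha_d \Vdash \alpha \) supplied by Proposition~\ref{compl-lemma-1}, Theorem~\ref{quasi-completeness} delivers the desired (cut-free) cyclic proof \( \cmui \vdash \seq{\alpha_d}\alpha \), completing the argument.
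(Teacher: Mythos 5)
Your proposal is correct and follows exactly the paper's route: the paper's proof is precisely "insert Proposition~\ref{compl-lemma-1} into Theorem~\ref{quasi-completeness}", i.e.\ take \( \Phi = \setof{\alpha_d} \), use \( \alpha_d \Vdash \alpha \) from Proposition~\ref{compl-lemma-1}, and note \( \alpha_d \in \PD \) (which the paper builds into Definition~\ref{d-conj-comp} and you verify explicitly by induction, with the \( \mu \)-case handled via \( \hat\alpha \in \D \subseteq \PF\D \)). The extra detail you supply is a faithful elaboration, not a different argument.
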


Provability of the other implication is by induction on formulas:

\begin{proposition}
	\label{compl-hat-a}
	\( \cmui \vdash \seq\alpha {\alpha_d} \).
\end{proposition}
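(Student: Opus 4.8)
The plan is to prove $\cmui \vdash \seq{\alpha}{\alpha_d}$ by induction on the complexity $\rk{\alpha}$, building in each case a constructive cyclic proof of $\seq{\alpha}{\alpha_d}$ out of cyclic proofs for the immediate subformulas. In the atomic case $\alpha \in \Lit$ we have $\alpha_d = \alpha$ and the claim is the identity $\cmui \vdash \seq{\alpha}{\alpha}$, an instance of $\idLR$ (also Proposition~\ref{cmu-mono}(1)). Throughout, every sequent produced has a single head formula, so the derivations stay inside $\cmui$.

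For the propositional and modal cases the companion operation commutes with the outer symbol, $(\bigcirc\Gamma)_d = \bigcirc\setof{\gamma_d}[\gamma\in\Gamma]$ and $(\triangle\beta)_d = \triangle\beta_d$, so a single decomposition followed by the matching re-introduction suffices. For $\alpha = \disj\Gamma$ I would apply $\disjL$ to reduce to premises $\seq{\gamma}{\alpha_d}$ for $\gamma\in\Gamma$, and then $\disjR$ (choosing the disjunct $\gamma_d$) to reduce each to $\seq{\gamma}{\gamma_d}$, which holds by the inductive hypothesis. Dually, for $\alpha = \conj\Gamma$ apply $\conjR$ and then $\conjL$ to land on $\seq{\gamma}{\gamma_d}$. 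For $\alpha = \square\beta$ a single $\sqR$ applied to the inductive hypothesis $\seq{\beta}{\beta_d}$ gives $\seq{\square\beta}{\square\beta_d}$, and for $\alpha = \diamond\beta$ a single $\diL$ gives $\seq{\diamond\beta}{\diamond\beta_d}$.

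For $\alpha = \nf\phi$ we have $\alpha_d = \nf{\phi_d}$ with $\phi_d x = (\phi x)_d$ and $\rk{\phi x} < \rk{\nf\phi}$, so the inductive hypothesis supplies $\cmui \vdash \seq{\phi x}{\phi_d x}$. From this I would construct the cyclic proof of $\seq{\nf\phi}{\nf\phi_d}$ exactly as in the cyclic rendering of quantifier monotonicity (the cyclic analogue of Proposition~\ref{kmoi-mono}(2)): apply $\anuR n$ on the right (introducing a fresh name $n$) and $\nuL$ on the left to reach $\seq[n]{\phi(\nf\phi)}{\phi_d(\nf[n]\phi_d)}$, and then apply the monotonicity substitution of Proposition~\ref{cmu-mono} to the proof of $\seq{\phi x}{\phi_d x}$, substituting $\nf\phi$ for $x$ in the context and $\nf[n]\phi_d$ for $x$ in the head. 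The resulting buds $\seq[n]{\nf\phi}{\nf[n]\phi_d}$ match, under $\res\epsilon$, the companion created by $\anuR n$ at the root, closing the cycle with $n$ witnessing a right $\nu$-thread, so the conditions of Definition~\ref{d-cmu} are met.

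The remaining case $\alpha = \mf\phi$, where $\alpha_d$ is the disjunctive companion $\hat\alpha$, is the main obstacle, and one cannot simply recurse: Theorem~\ref{quasi-completeness} is unavailable here because it demands the antecedent to lie in $\PD$, whereas $\mf\phi\notin\PD$. Instead I would revisit the construction of $\hat\alpha$. The proof of Proposition~\ref{conj-entail-1} already yields a constructive co-proof of $\seq{\mf\phi}{\hat\alpha}$ obtained from the finite template $T$ of $\alpha$ by placing $\hat\alpha_u$ on the right at each $u\in T$ and identifying buds with companions; being the unravelling of a finite object with an explicit companion function, this co-proof is regular, and by Proposition~\ref{conj-good-thread} each branch is certified good either by a left $\mu$-thread (at successful companions) or by a right $\nu$-thread in $\hat\alpha$ (at unsuccessful ones). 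The plan is to annotate this regular co-proof so that every bud--companion pair satisfies the eigenname conditions of Definition~\ref{d-cmu}: the left $\mu$-subformulas on successful cycles exactly as in the proof of Theorem~\ref{quasi-completeness}, and, dually, the right $\nu$-subformulas of $\hat\alpha$ on unsuccessful cycles. Verifying that the template's invariants guarantee a consistent, conflict-free name assignment across nested cycles is precisely where the Kleene--Brouwer bookkeeping of the template does the work, and is the step I expect to demand the most care.
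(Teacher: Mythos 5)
Your atomic, propositional, modal and \( \nu \) cases are correct and essentially match the paper (your \( \nu \)-case cycle is just a hands-on rendering of the cyclic monotonicity underlying Propositions~\ref{cmu-mono} and~\ref{simul-ind}). The genuine gap is in the \( \mu \) case, which is the entire substance of the proposition. Your plan there---re-annotate the regular co-proof of \( \seq{\mf\phi}{\hat\alpha} \) from Proposition~\ref{conj-entail-1} into a \emph{cut-free} \( \cmui \)-proof---is exactly the kind of direct co-proof-to-cyclic-proof translation that the paper explains is blocked: the bud--companion discipline of Definition~\ref{d-cmu} requires, in effect, that each eigenname track a single formula occurrence, and resolving conflicting annotations in general requires cut (this is the substance of Kloibhofer's counterexample~\cite{Kloib23-counter-ex} to cut-free completeness of \( \cmu \)). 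Concretely, the antecedents \( (\Phi_u)^- \) in that co-proof are arbitrary subformulas of \( \alpha \), not \( \PD \)-formulas, so Proposition~\ref{SC-is-WC} and the contraction normalisation of Proposition~\ref{kmoi-contr} are unavailable; these are precisely what the annotation procedure of Theorem~\ref{quasi-completeness} uses to guarantee that each primary name occurs in at most one formula of each sequent. After an instance of \( (\conjsL) \) a name can spread into several context formulas, and a bud then fails to match its companion under restriction. Nor can you discharge those cycles on the right: at a successful companion the head \( \hat\alpha_u \) is a \( \mu \)-formula, which cannot be principal for \( (\nuR[n]) \), so the conflict on the left is unavoidable. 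The template's \( (\dup) \) and \( (\cover) \) rules, which do the Kleene--Brouwer bookkeeping you appeal to, have no counterpart in \( \cmu \), so that hope is not substantiated.

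The missing idea is that one should \emph{not} avoid cut here but exploit it, using \( \alpha_d \) itself as an explicit induction invariant. From Proposition~\ref{compl-lemma-1} and Proposition~\ref{kmoi-mono} one obtains the chain
\[
\phi_d \alpha_d \Vdash \phi(\mf\phi) \Vdash \mf\phi \Vdash \alpha_d ,
\]
whence \( \phi_d\alpha_d \Vdash \alpha_d \) by cut admissibility (Theorem~\ref{admiss-charac}). Theorem~\ref{quasi-completeness} \emph{is} applicable to this sequent, since its antecedent \( \phi_d\alpha_d \) lies in \( \PD \), giving \( \cmui \vdash \seq{\phi_d\alpha_d}{\alpha_d} \). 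The induction hypothesis together with Proposition~\ref{cmu-mono} gives \( \cmui \vdash \seq{\phi\alpha_d}{\phi_d\alpha_d} \); a cut yields \( \cmui \vdash \seq{\phi\alpha_d}{\alpha_d} \), and the derived \( \mu \)-induction rule (Proposition~\ref{simul-ind}, itself a cut-based cyclic derivation) concludes \( \cmui \vdash \seq{\mf\phi}{\alpha_d} \). So where you correctly noted that Theorem~\ref{quasi-completeness} cannot be applied to \( \seq{\mf\phi}{\hat\alpha} \), the fix is not to abandon it but to apply it to a different sequent whose antecedent is in \( \PD \), bridging back to \( \mf\phi \) with cut; your cut-free alternative leaves the hardest step unproven.
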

\begin{proof}
	By induction on \( \alpha \). We treat the quantifier cases as the others are straightforward. 
	Let \( \vdash \) denote provability in \( \cmui \).
	Suppose \( \alpha = \nf \phi \), so \( \alpha_d = \nf \phi_d \). 
	The induction hypothesis yields \( \vdash \seq {\phi x}{\phi_d x}  \). 
	So \( \vdash \seq \alpha {\phi_d \alpha}  \) by substitution and \( \vdash \seq \alpha {\alpha_d} \) by derived \( \nu \)-induction (Proposition~\ref{simul-ind}).
	
	Now suppose \( \alpha = \mf \phi \). 
	Proposition~\ref{compl-lemma-1} implies \( \Seq {\phi_d x} {\phi x} \),  Proposition~\ref{kmoi-mono} gives rise to
	\[ 
		\phi_d \alpha_d
		\Vdash \phi \alpha
		\Vdash \mf \phi
		\Vdash \alpha_d
		. 
	\] 
	Therefore, \( \vdash \seq {\phi _d \alpha_d }{\alpha_d} \) by Theorem~\ref{quasi-completeness} as \( \phi_d \alpha_d \in \PD \).
	Also, \( \vdash \seq {\phi \alpha_d } { \phi_d \alpha_d } \) via the induction hypothesis, so \( \vdash \seq {\alpha}{\phi_d \alpha_d} \) by derived \( \mu \)-induction.
	An application of cut yields \( \vdash \seq \alpha {\alpha_d} \).
\end{proof}

The preceding propositions are the final ingredient to the reduction of co-proofs to cyclic proofs.

\begin{theorem}
	\label{kmoi-in-cmui}
	If \( \Phi \Vdash \alpha \) then \( \cmui \vdash \Phi \Rightarrow \alpha \).
\end{theorem}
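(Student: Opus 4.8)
The plan is to sandwich the context $\Phi$ between its tuple of companion formulas and then invoke the machinery already connecting $\Vdash$, the $\PD$-fragment, and $\cmui$. Write $\Phi = \gamma_1, \dotsc, \gamma_k$ and let $\Phi_d = \gamma_{1,d}, \dotsc, \gamma_{k,d}$ be the sequence of companion formulas of Definition~\ref{d-conj-comp}. Two structural facts drive everything: each $\gamma_{i,d} \in \PD$ (immediate by induction on Definition~\ref{d-conj-comp}, since literals, connectives, modalities and the $\nu$-clause remain inside $\PD$, while the sole $\mu$-clause returns the disjunctive companion $\hat\gamma_i \in \D \subseteq \PD$), and $\gamma_{i,d} \VdashV \gamma_i$ by Proposition~\ref{compl-lemma-1}.

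First I would pass from $\Phi \Vdash \alpha$ to $\Phi_d \Vdash \alpha$. Since $\gamma_{i,d} \Vdash \gamma_i$ for each $i$, repeated application of admissibility of cut (Theorem~\ref{admiss-charac}) replaces each context formula by its companion: cutting $\gamma_{i,d} \Vdash \gamma_i$ against $\gamma_i, \Delta \Vdash \alpha$ yields $\gamma_{i,d}, \Delta \Vdash \alpha$, and iterating over $i = 1, \dotsc, k$ (using closure of $\Vdash$ under exchange to bring the active formula to the front) produces a constructive co-proof witnessing $\Phi_d \Vdash \alpha$.

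With $\Phi_d \subseteq \PD$ in hand, Theorem~\ref{quasi-completeness} applies directly and gives a (cut-free) cyclic proof $\cmui \vdash \Phi_d \Rightarrow \alpha$. Finally I would restore the original context. By Proposition~\ref{compl-hat-a} we have $\cmui \vdash \gamma_i \Rightarrow \gamma_{i,d}$ for each $i$; composing these with $\Phi_d \Rightarrow \alpha$ by successive applications of cut in $\cmui$ — replacing $\gamma_{i,d}$ in the context by $\gamma_i$ one formula at a time, exactly as cut is used in the proof of Proposition~\ref{compl-hat-a}, the head $\alpha$ being left untouched — delivers $\cmui \vdash \Phi \Rightarrow \alpha$, as required. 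Note that, unlike Theorem~\ref{quasi-completeness}, the resulting cyclic proof genuinely uses cut, in line with the surrounding discussion.

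The argument is essentially an assembly of prior results, so the points to check are bookkeeping ones. The step I would be most careful about is the iterated cut reductions on the context in both directions: one must confirm that replacing context formulas one at a time keeps every intermediate sequent intuitionistic, so that we never leave $\cmui$ or the constructive fragment of $\kmo$, and that the side context $\Delta$ is handled uniformly under exchange. Neither is a genuine difficulty, since Theorems~\ref{admiss-charac} and~\ref{quasi-completeness} and Propositions~\ref{compl-lemma-1} and~\ref{compl-hat-a} are all constructive and the head formula $\alpha$ is preserved throughout.
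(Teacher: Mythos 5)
Your proposal is correct and follows essentially the same route as the paper: replace the context by companion formulas via cut admissibility (Theorem~\ref{admiss-charac}) together with Proposition~\ref{compl-lemma-1}, apply Theorem~\ref{quasi-completeness} to the resulting $\PD$-context, and restore the original context with Proposition~\ref{compl-hat-a} and cuts in $\cmui$. The paper compresses the first step into the phrase ``transitivity of $\Vdash$'' and leaves the bookkeeping implicit, but the argument is the same.
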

\begin{proof}
	Let \( \Phi_d = \setof{\beta_d}[\beta \in \Phi] \). Transitivity of \( \Vdash \) implies \( \Phi_d \Vdash \alpha \), whence Theorem~\ref{quasi-completeness} gives rise to \( \cmui \vdash \Phi_d \Rightarrow \alpha \). 
	Proposition~\ref{compl-hat-a} yields \( \cmui \vdash \seq \Phi \alpha \).
\end{proof}

Combining Corollary~\ref{kmo-complete} and Theorems~\ref{kmoi-in-cmui} and \ref{cmui-in-kmui}, we deduce

\begin{corollary}
	If \( \alpha \) is valid, then \( \seq {\neg\alpha} \bot \) is provable in \( \cmui \) and \( \kmui \).\label{completeness-i}
\end{corollary}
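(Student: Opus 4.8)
The plan is to prove the corollary by threading a single sequent, $\seq{\neg\alpha}{\bot}$, through the three proof systems, invoking one previously established result at each transition. Since $\bot = \disj\emptyset$ is a genuine formula, $\seq{\neg\alpha}{\bot}$ is an intuitionistic sequent (its head has length one), so each of the calculi $\kmo$, $\cmui$ and $\kmui$ can speak about it and every passage between them is type-correct.

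First I would invoke Corollary~\ref{kmo-complete}: from validity of $\alpha$ it yields $\neg\alpha \Vdash \bot$, that is, a constructive illfounded co-proof of $\seq{\neg\alpha}{\bot}$. Next I would feed this co-proof into Theorem~\ref{kmoi-in-cmui}, instantiating the context $\Phi$ as the one-element sequence $\neg\alpha$ and the head as $\bot$; since $\neg\alpha\Vdash\bot$, the theorem delivers a constructive \emph{cyclic} proof, $\cmui \vdash \seq{\neg\alpha}{\bot}$. Finally I would apply Theorem~\ref{cmui-in-kmui} to this cyclic proof to obtain the \emph{finitary} proof $\kmui \vdash \seq{\neg\alpha}{\bot}$. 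The two provability claims of the corollary are then exactly the conclusions of the second and third steps.

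There is essentially no obstacle internal to the corollary: it is a bookkeeping composition, and the only points requiring care are that the hypotheses of the intermediate theorems are genuinely met for this particular instance. Here it is worth noting that Theorem~\ref{kmoi-in-cmui} is stated for \emph{arbitrary} $\Phi$ with $\Phi \Vdash \alpha$, and its proof already absorbs the $\PD$-restriction of Theorem~\ref{quasi-completeness} by passing to the disjunctive companions $\Phi_d$; hence I need impose no disjunctivity condition on $\neg\alpha$ myself, and in particular need not check that $\neg\alpha \in \PD$. The genuine difficulty has all been discharged earlier: in Corollary~\ref{kmo-complete} (resting on the disjunctive normal form Theorem~\ref{DNF-thm} and on cut admissibility) and in the partial completeness Theorem~\ref{quasi-completeness} underlying Theorem~\ref{kmoi-in-cmui}. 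So the hard part is not this final step but the accumulated machinery it rests on; the corollary's own proof is three lines, one per system.
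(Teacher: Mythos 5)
Your proof is correct and is exactly the paper's own argument: the paper obtains this corollary by combining Corollary~\ref{kmo-complete} with Theorems~\ref{kmoi-in-cmui} and~\ref{cmui-in-kmui}, which is precisely the three-step composition you describe. Your side remark that Theorem~\ref{kmoi-in-cmui} already absorbs the \( \PD \)-restriction of Theorem~\ref{quasi-completeness} (via the companions \( \Phi_d \)) is also accurate, so no additional hypothesis on \( \neg\alpha \) needs checking.
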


The closure of the classical calculi under negation yields completeness these logics:
\begin{corollary}
	\( \kmu \) and \( \cmu \) are complete: If \( \alpha \) is valid then \( \kmu \vdash \rseq \alpha \) and \( \cmu \vdash \rseq \alpha \).\label{completeness}
\end{corollary}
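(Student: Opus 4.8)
The plan is to obtain completeness of the classical calculi directly from the partial completeness of their constructive fragments, namely Corollary~\ref{completeness-i}, by exploiting the negation symmetry that the classical systems (but not their constructive fragments) enjoy. Suppose $\alpha$ is valid. Corollary~\ref{completeness-i} then furnishes proofs of $\seq{\neg\alpha}\bot$ in both $\kmui$ and $\cmui$. Since $\kmui$ and $\cmui$ are by definition the restrictions of $\kmu$ and $\cmu$ to intuitionistic sequents, these are \emph{a fortiori} proofs in the classical calculi, so $\kmu \vdash \seq{\neg\alpha}\bot$ and $\cmu \vdash \seq{\neg\alpha}\bot$.

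Next I would eliminate the trailing $\bot$. Since $\bot = \disj\emptyset$, the sequent $\lseq\bot$ is an instance of $(\disjL)$ with empty premise set, hence a leaf of any derivation; cutting $\seq{\neg\alpha}\bot$ against it yields $\kmu \vdash \lseq{\neg\alpha}$. (One could instead observe that a $\bot$ standing on the right can only have been introduced by $(\weakR)$ and so removed, but cutting is cleanest, cut being primitive in $\kmu$.) It then remains to turn the negative sequent $\lseq{\neg\alpha}$ into $\rseq\alpha$, which is precisely the negation-closure of $\kmu$: Proposition~\ref{kmu-neg}, instantiated with empty antecedent and consequent $\alpha$, gives $\kmu \vdash \rseq\alpha$ iff $\kmu \vdash \lseq{\neg\alpha}$. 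Hence $\kmu \vdash \rseq\alpha$.

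For the cyclic calculus I would simply invoke Corollary~\ref{cmu-is-kmu}, which equates $\cmu$- and $\kmu$-provability of plain sequents, transferring $\kmu \vdash \rseq\alpha$ to $\cmu \vdash \rseq\alpha$. Alternatively the argument of the previous paragraph applies verbatim, using the cyclic negation-closure of Proposition~\ref{cmu-mono}(2) in place of Proposition~\ref{kmu-neg}. The remaining illfounded case is already recorded in Corollary~\ref{kmo-complete}.

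I do not anticipate a genuine obstacle here: all of the substantive work has been discharged in establishing Corollary~\ref{completeness-i}, which rests on the disjunctive normal form theorem and the reduction of co-proofs to cyclic proofs. The only point requiring attention is conceptual rather than technical, namely that one must pass to the classical systems precisely because the step from the negative, intuitionistically provable sequent $\seq{\neg\alpha}\bot$ to the positive sequent $\rseq\alpha$ relies on the classical negation symmetry of Propositions~\ref{kmu-neg} and~\ref{cmu-mono}, which is not available inside $\kmui$ or $\cmui$.
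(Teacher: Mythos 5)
Your proposal is correct and is essentially the paper's own argument: the paper derives this corollary immediately from Corollary~\ref{completeness-i} together with closure of the classical calculi under negation (Proposition~\ref{kmu-neg}, respectively Proposition~\ref{cmu-mono} or Corollary~\ref{cmu-is-kmu} for \( \cmu \)), which is precisely your route, with the cut against the premise-free \( (\disjL) \) instance \( \lseq{\bot} \) being the natural way to discharge the trailing \( \bot \). The only blemish is the parenthetical claim that a right-hand \( \bot \) ``can only have been introduced by \( (\weakR) \)''---it can, for instance, be principal in an identity rule---but since your actual argument relies on cut, nothing is affected.
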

%

\bibliography{}

\begin{thebibliography}{10}
\providecommand{\url}[1]{\texttt{#1}}
\providecommand{\urlprefix}{URL }
\expandafter\ifx\csname urlstyle\endcsname\relax
  \providecommand{\doi}[1]{doi:\discretionary{}{}{}#1}\else
  \providecommand{\doi}{doi:\discretionary{}{}{}\begingroup
  \urlstyle{rm}\Url}\fi
\providecommand{\eprint}[2][]{\url{#2}}

\bibitem{Acclavio:2024Infinitary}
Acclavio M, Curzi G, Guerrieri G.
\newblock Infinitary cut-elimination via finite approximations (extended
  version), 2024.
\newblock \doi{10.48550/arXiv.2308.07789}.
\newblock ArXiv:2308.07789.

\bibitem{AL16-mfo}
Afshari B, Leigh GE.
\newblock Finitary proof systems for Kozen's $\mu$.
\newblock Oberwolfach preprints ({OWP}) {ISSN} 1864-7596, Mathematisches
  Forschungsinstitut Oberwolfach, 2016.

\bibitem{AL16-pamm}
Afshari B, Leigh GE.
\newblock Circular proofs for the modal mu-calculus.
\newblock \emph{Proceedings in Applied Mathematics and Mechanics}, 2016.
\newblock \textbf{16}(1):893--894.
\newblock \doi{10.1002/pamm.201610435}.

\bibitem{AL17lics}
Afshari B, Leigh GE.
\newblock Cut-free completeness for modal mu-calculus.
\newblock In: 32nd Annual {ACM/IEEE} Symposium on Logic in Computer Science,
  {LICS} 2017, Reykjavik, Iceland, June 20-23, 2017. {IEEE} Computer Society,
  2017 pp. 1--12.
\newblock \doi{10.1109/LICS.2017.8005088}.

\bibitem{AL22lyndonInterp}
Afshari B, Leigh GE.
\newblock Lyndon interpolation for modal $\mu$-calculus.
\newblock In: {\"O}zg{\"u}n A, Zinova Y (eds.), Language, Logic, and
  Computation. Springer International Publishing, Cham, 2022 pp. 197--213.
\newblock \doi{10.1007/978-3-030-98479-3_10}.

\bibitem{AL23normal}
Afshari B, Leigh GE.
\newblock Normal forms for cyclic proofs, 2023.
\newblock Unpublished.

\bibitem{AfsLei24-weyl}
Afshari B, Leigh GE.
\newblock A metapredicative study of $\mu$-arithmetics, 2025.
\newblock To appear.

\bibitem{ALM21uniformInter}
Afshari B, Leigh GE, Men{\'e}ndez~Turata G.
\newblock Uniform interpolation from cyclic proofs: The case of modal
  mu-calculus.
\newblock In: Das A, Negri S (eds.), Automated Reasoning with Analytic Tableaux
  and Related Methods. TABLEAUX 2021, volume 12842 of \emph{Lecture Notes in
  Computer Science}. Springer, Cham, 2021 \doi{10.1007/978-3-030-86059-2_20}.

\bibitem{Arai-quick}
Arai T.
\newblock Quick cut-elimination for strictly positive cuts.
\newblock \emph{Annals of Pure and Applied Logic}, 2011.
\newblock \textbf{162}(10):807--815.
\newblock \doi{10.1016/j.apal.2011.03.002}.

\bibitem{Arnold:2001Rudiments}
Arnold A, Niwi{\'n}ski D.
\newblock Rudiments of $\mu$-calculus, volume 146 of \emph{Studies in {Logic}
  and the {Foundations} of {Mathematics}}.
\newblock Elsevier, 2001.
\newblock ISBN 978-0-444-50620-7.

\bibitem{Baelde:2022Bouncing}
Baelde D, Doumane A, Kuperberg D, Saurin A.
\newblock Bouncing {Threads} for {Circular} and {Non}-{Wellfounded} {Proofs}:
  {Towards} {Compositionality} with {Circular} {Proofs}.
\newblock In: Proceedings of the 37th {Annual} {ACM}/{IEEE} {Symposium} on
  {Logic} in {Computer} {Science}. ACM.
\newblock ISBN 978-1-4503-9351-5, 2022 pp. 1--13.
\newblock \doi{10.1145/3531130.3533375}.

\bibitem{BDS16}
Baelde D, Doumane A, Saurin A.
\newblock Infinitary proof theory: the multiplicative additive case.
\newblock In: Talbot J, Regnier L (eds.), 25th {EACSL} Annual Conference on
  Computer Science Logic, {CSL} 2016, August 29 - September 1, 2016, Marseille,
  France, volume~62 of \emph{LIPIcs}. Schloss Dagstuhl - Leibniz-Zentrum
  f{\"{u}}r Informatik, 2016 pp. 42:1--42:17.

\bibitem{Das:2023On-Intuitionistic}
Das A, Marin S.
\newblock On {Intuitionistic} {Diamonds} (and {Lack} {Thereof}).
\newblock In: Ramanayake R, Urban J (eds.), Automated {Reasoning} with
  {Analytic} {Tableaux} and {Related} {Methods}. Springer Nature Switzerland,
  Cham.
\newblock ISBN 978-3-031-43513-3, 2023 pp. 283--301.
\newblock \doi{10.1007/978-3-031-43513-3_16}.

\bibitem{DKMV23-det}
Dekker M, Kloibhofer J, Marti J, Venema Y.
\newblock Proof Systems for the Modal {\(\mu\)}-Calculus Obtained by
  Determinizing Automata.
\newblock In: Ramanayake R, Urban J (eds.), Automated Reasoning with Analytic
  Tableaux and Related Methods - 32nd International Conference, {TABLEAUX}
  2023, Prague, Czech Republic, September 18-21, 2023, Proceedings, volume
  14278 of \emph{Lecture Notes in Computer Science}. Springer, 2023 pp.
  242--259.
\newblock \doi{10.1007/978-3-031-43513-3_14}.

\bibitem{Doumane:2017Constructive}
Doumane A.
\newblock Constructive completeness for the linear-time $\mu$-calculus.
\newblock In: 2017 32nd {Annual} {ACM}/{IEEE} {Symposium} on {Logic} in
  {Computer} {Science} ({LICS}). IEEE, Reykjavik, Iceland.
\newblock ISBN 978-1-5090-3018-7, 2017 pp. 1--12.
\newblock \doi{10.1109/LICS.2017.8005075}.

\bibitem{EnqSV18-dynamics}
Enqvist S, Seifan F, Venema Y.
\newblock Completeness for the modal $\mu$-calculus: Separating the
  combinatorics from the dynamics.
\newblock \emph{Theoretical Computer Science}, 2018.
\newblock \textbf{727}:37--100.
\newblock \doi{10.1016/j.tcs.2018.03.001}.

\bibitem{Fitch:1948Intuitionistic}
Fitch F.
\newblock Intuitionistic modal logic with quantifiers.
\newblock \emph{Port. Math.}, 1948.
\newblock \textbf{7}(2):113--118.

\bibitem{ForSan13}
Fortier J, Santocanale L.
\newblock Cuts for circular proofs: semantics and cut-elimination.
\newblock In: Rocca SRD (ed.), Computer Science Logic 2013 {(CSL} 2013), {CSL}
  2013, September 2-5, 2013, Torino, Italy, volume~23 of \emph{LIPIcs}. Schloss
  Dagstuhl - Leibniz-Zentrum f{\"{u}}r Informatik, 2013 pp. 248--262.
\newblock \doi{10.4230/LIPICS.CSL.2013.248}.

\bibitem{JanWal95}
Janin D, Walukiewicz I.
\newblock Automata for the modal $\mu$-calculus and related results.
\newblock In: Wiedermann J, H{\'a}jek P (eds.), Mathematical Foundations of
  Computer Science 1995: 20th International Symposium, MFCS '95 Prague, Czech
  Republic, August 28--September 1, 1995 Proceedings. Springer Berlin
  Heidelberg, Berlin, Heidelberg, 1995 pp. 552--562.

\bibitem{jungteerapanich:article}
Jungteerapanich N.
\newblock A tableau system for the modal \(\mathrm{\mu}\)-calculus.
\newblock In: Giese M, Waaler A (eds.), Automated Reasoning with Analytic
  Tableaux and Related Methods, 18th International Conference, {TABLEAUX} 2009,
  Oslo, Norway, July 6-10, 2009. Proceedings, volume 5607 of \emph{Lecture
  Notes in Computer Science}. Springer, 2009 pp. 220--234.

\bibitem{Kaivola:1995Axiomatising}
Kaivola R.
\newblock Axiomatising linear time mu-calculus.
\newblock In: Goos G, Hartmanis J, Leeuwen J, Lee I, Smolka SA (eds.), {CONCUR}
  '95: {Concurrency} {Theory}, volume 962 of \emph{Lecture Notes in Computer
  Science}, pp. 423--437. Springer Berlin Heidelberg, Berlin, Heidelberg.
\newblock ISBN 978-3-540-60218-7 978-3-540-44738-2, 1995.
\newblock \doi{10.1007/3-540-60218-6_32}.

\bibitem{Kloib23-counter-ex}
Kloibhofer J.
\newblock A note on the incompleteness of {Afshari} \& {Leigh}'s system {Clo},
  2023.
\newblock \doi{10.48550/arXiv.2307.06846}.
\newblock ArXiv:2307.06846 [math].

\bibitem{Koz83}
Kozen D.
\newblock Results on the propositional $\mu$-calculus.
\newblock \emph{Theoretical Computer Science}, 1983.
\newblock \textbf{27}:333--354.

\bibitem{Mints-cts-ce}
Mints GE.
\newblock Finite investigations of transfinite derivations.
\newblock \emph{Journal of Soviet Mathematics}, 1978.
\newblock \textbf{10}(4):548--596.
\newblock \doi{10.1007/BF01091743}.

\bibitem{LeiWehr23GTCtoReset}
Leigh GE, Wehr D.
\newblock From {GTC} to Reset: {Generating} reset proof systems from cyclic
  proof systems.
\newblock \emph{Annals of Pure and Applied Logic}, 2024.
\newblock \textbf{175}(10):103485.
\newblock \doi{10.1016/j.apal.2024.103485}.

\bibitem{NiwWal:96}
Niwinski D, Walukiewicz I.
\newblock Games for the mu-calculus.
\newblock \emph{Theoretical Computer Science}, 1996.
\newblock \textbf{163}(1{\&}2):99--116.

\bibitem{Plotkin:1986A-framework}
Plotkin G, Stirling C.
\newblock A framework for intuitionistic modal logics.
\newblock In: Proceedings of the 1986 {Conference} on {Theoretical} aspects of
  reasoning about knowledge. Morgan Kaufmann Publishers Inc., San Francisco,
  CA, USA.
\newblock ISBN 978-0-934613-04-0, 1986 pp. 399--406.

\bibitem{Sau23}
Saurin A.
\newblock A linear perspective on cut-elimination for non-wellfounded sequent
  calculi with least and greatest fixed-points.
\newblock In: Ramanayake R, Urban J (eds.), Automated Reasoning with Analytic
  Tableaux and Related Methods - 32nd International Conference, {TABLEAUX}
  2023, Prague, Czech Republic, September 18-21, 2023, Proceedings, volume
  14278 of \emph{Lecture Notes in Computer Science}. Springer, 2023 pp.
  203--222.
\newblock \doi{10.1007/978-3-031-43513-3_12}.

\bibitem{Savateev:2017Cut-Elimination}
Savateev Y, Shamkanov D.
\newblock Cut-{Elimination} for the {Modal} {Grzegorczyk} {Logic} via
  {Non}-well-founded {Proofs}.
\newblock In: Kennedy J, De~Queiroz RJ (eds.), Logic, {Language},
  {Information}, and {Computation}, volume 10388 of \emph{Lecture Notes in
  Computer Science}, pp. 321--335. Springer Berlin Heidelberg, Berlin,
  Heidelberg.
\newblock ISBN 978-3-662-55385-5 978-3-662-55386-2, 2017.
\newblock \doi{10.1007/978-3-662-55386-2_23}.

\bibitem{Shamkanov:2014Circular}
Shamkanov D.
\newblock Circular {Proofs} for the {G{\"o}del}-{L{\"o}b} {Provability}
  {Logic}.
\newblock \emph{Mathematical Notes}, 2014.
\newblock \textbf{96}(3-4):575--585.
\newblock \doi{10.1134/S0001434614090326}.

\bibitem{Simpson:1994The-Proof}
Simpson AK.
\newblock The {Proof} {Theory} and {Semantics} of {Intuitionistic} {Modal}
  {Logic}.
\newblock {PhD}, University of Edinburgh, School of Informatics, 1994.
\newblock \urlprefix\url{http://hdl.handle.net/1842/407}.

\bibitem{stirling2014tableau}
Stirling C.
\newblock A tableau proof system with names for modal mu-calculus.
\newblock In: Voronkov A, Korovina MV (eds.), {HOWARD-60:} {A} Festschrift on
  the Occasion of Howard Barringer's 60th Birthday, volume~42 of \emph{EPiC
  Series in Computing}, pp. 306--318. EasyChair, 2014.

\bibitem{StrEmer89}
Streett RS, Emerson EA.
\newblock An automata theoretic decision procedure for the propositional
  mu-calculus.
\newblock \emph{Information and Computation}, 1989.
\newblock \textbf{81}:249--264.

\bibitem{Studer:2009Proof-Theoretic}
Studer T.
\newblock Proof-{Theoretic} {Contributions} to {Modal} {Fixed} {Point}
  {Logics}.
\newblock Habilitationsschrift, Universit\"at Bern, 2009.

\bibitem{Waluk95compl-lics}
Walukiewicz I.
\newblock Completeness of Kozen's axiomatisation of the propositional
  $\mu$-calculus.
\newblock In: Proceedings of Tenth Annual IEEE Symposium on Logic in Computer
  Science. 1995 pp. 14--24.
\newblock \doi{10.1109/LICS.1995.523240}.

\bibitem{Waluk00comp-lc}
Walukiewicz I.
\newblock Completeness of {Kozen}'s {Axiomatisation} of the {Propositional}
  \(\mu\)-{Calculus}.
\newblock \emph{Information and Computation}, 2000.
\newblock \textbf{157}(1-2):142--182.
\newblock \doi{10.1006/inco.1999.2836}.

\bibitem{Wijesekera:1990Constructive}
Wijesekera D.
\newblock Constructive modal logics {I}.
\newblock \emph{Annals of Pure and Applied Logic}, 1990.
\newblock \textbf{50}(3):271--301.
\newblock \doi{10.1016/0168-0072(90)90059-B}.

\end{thebibliography}
\end{document}